\theoremstyle{definition}
\newtheorem{defi}{Definition}[section]
\newtheorem{lem}[defi]{Lemma}
\newtheorem{prop}[defi]{Proposition}
\newtheorem{thm}[defi]{Theorem}
\newtheorem{cor}[defi]{Corollary}
\newtheorem{ass}[defi]{Assumption}
\newtheorem{rem}[defi]{Remark}
\newcommand{\ran}{\operatorname{ran}}
\newcommand{\dom}{\operatorname{dom}}
\newcommand{\Arg}{\operatorname{Arg}}
\renewcommand{\Re}{\operatorname{Re}}
\renewcommand{\Im}{\operatorname{Im}}
\title[$H^\infty$-functional calculi for the quaternionic fine structures]{The $H^\infty$-functional calculi for the quaternionic fine structures of Dirac type}
\author[F. Colombo]{Fabrizio Colombo}
\address{(FC) Politecnico di Milano, Dipartimento di Matematica, Via E. Bonardi 9, 20133 Milano, Italy}
\email{fabrizio.colombo@polimi.it}
\author[S. Pinton]{Stefano Pinton}
\address{(SP) Politecnico di Milano, Dipartimento di Matematica, Via E. Bonardi 9, 20133 Milano, Italy}
\email{stefano.pinton@polimi.it}
\author[P. Schlosser]{Peter Schlosser}
\address{(PS) Politecnico di Milano, Dipartimento di Matematica, Via E. Bonardi 9, 20133 Milano, Italy}
\email{pschlosser@math.tugraz.at}
\begin{document}

\begin{abstract}
In recent works, various integral representations have been proposed for specific sets of functions. These representations are derived from the Fueter-Sce extension theorem,  considering all possible factorizations of the Laplace operator in relation to both the Cauchy-Fueter operator (often referred to as the Dirac operator) and its conjugate. The collection of these function spaces, along with their corresponding functional calculi, are called the quaternionic fine structures within the context of the $S$-spectrum. In this paper, we utilize these integral representations of functions  to introduce novel functional calculi tailored for quaternionic operators of sectorial type. Specifically, by leveraging the aforementioned factorization of the Laplace operator, we identify four distinct classes of functions: slice hyperholomorphic functions (leading to the $S$-functional calculus), axially harmonic functions (leading to the $Q$-functional calculus), axially polyanalytic functions of order $2$ (leading to the $P_2$-functional calculus), and axially monogenic functions (leading to the $F$-functional calculus). By applying the respective product rule, we establish the four different $H^\infty$-versions of these functional calculi.
\end{abstract}

\maketitle

AMS Classification: 47A10, 47A60 \\
Keywords: $H^\infty$-functional calculus. Harmonic functional calculus. Polyanalytic functional calculus. Axially monogenic functional calculus, Fine structures. \medskip

\textbf{Acknowledgements:} The research of Peter Schlosser was funded by the Austrian Science Fund (FWF) under Grant No. J 4685-N and by the European Union--NextGenerationEU.

\tableofcontents

\section{Introduction}

The spectral theory for quaternionic operators was originally motivated by the foundations of quaternionic quantum mechanics by Birkhoff and von Neumann \cite{BF}. The discovery of the appropriate notion of quaternionic spectrum called, the $S$-spectrum, in 2006, opened the way to an intensive development of the quaternionic and Clifford spectral theory with applications that go beyond the original motivations of quantum mechanics. In fact this theory has applications in fractional diffusion problems via the generation of fractional powers of vector operators, for more details see the introduction of the book \cite{CGK}. \medskip

Furthermore, it has been recently demonstrated that both, the quaternionic as well as the Clifford setting, are specific instances within a broader framework where the spectral theory regarding the $S$-spectrum can be developed, as outlined in \cite{ADVCGKS, PAMSCKPS}, along with relevant references therein. By employing the notion of the $S$-spectrum, researchers have also successfully established the quaternionic version of the spectral theorem. We refer the reader to \cite{ACKS16} where the spectral theorem for unitary operators has been proven utilizing Herglotz's functions, and to \cite{ACK} where it is proved the quaternionic spectral theorem for normal operators. More recently, the spectral theorem grounded in the concept of the $S$-spectrum has been extended to Clifford operators in \cite{ColKim}. \medskip

The development of the spectral theory on the $S$-spectrum also has opened up several research directions in hypercomplex analysis and operator theory. Without claiming completeness, we mention the slice hyperholomorphic Schur analysis \cite{ACS2016}, the characteristic operator functions \cite{AlpayColSab2020}, the quaternionic perturbation theory and invariant subspaces \cite{CereColKaSab}, and new classes of fractional diffusion problems that are based on the $H^\infty$-version of the $S$-functional calculus \cite{ColomboDenizPinton2020,ColomboDenizPinton2021,CGdiffusion2018,FJBOOK,ColomboPelosoPinton2019}. Moreover, recently nuclear operators and Grothendieck-Lidskii formula for quaternionic operators has been studied in \cite{DEB_NU} and quaternionic triangular linear operators have been investigated in \cite{TRIANG}. Finally, we mention that the spectral theory on the $S$-spectrum is systematically organized in the books \cite{FJBOOK,CGK,ColomboSabadiniStruppa2011}. \medskip

In recent times a new branch of the spectral theory on the $S$-spectrum  has been developed, that is called fine structures on the $S$-spectrum. It consists of function spaces arising from the Fueter-Sce extension theorem \cite{Fueter,TaoQian1,Sce}, which in the Clifford algebra $\mathbb{R}_n$ connects the class of slice hyperholomorphic functions with the class of axially monogenic functions via the powers $\Delta^{\frac{n-1}{2}}$ of the Laplace operator in dimension $n+1$. Note, that for odd $n$ the operator $\Delta^{\frac{n-1}{2}}$ is a pointwise differential operator, see \cite{ColSabStrupSce,Sce}, while for even values of $n$ we are dealing with fractional powers of the Laplace operator, see \cite{TaoQian1}. Analogously, in the quaternions $\mathbb{H}$, the Fueter mapping theorem connects slice hyperholomorphic functions and axially monogenic functions via the four dimensional Laplace operator $\Delta$. Note, that although $\mathbb{H}$ is classically identified with the Clifford algebra $\mathbb{R}_2$, one has to choose $n=3$ in  $\Delta^{\frac{n-1}{2}}$ and the dimension of the Laplace operator is $4$. If we denote by $\mathcal{SH}(U)$ the set of slice hyperholomorphic functions on some axially symmetric domain $U$ and $\mathcal{AM}(U)$ the class of axially monogenic functions on $U$, the Fueter  mapping theorem claims that
\begin{equation*}
\Delta:\mathcal{SH}(U)\rightarrow\mathcal{AM}(U)\quad\text{is surjective}.
\end{equation*}
For more information see the translation of the work of M. Sce in \cite{ColSabStrupSce}. For a different description of the Fueter-Sce theorem, see \cite{DDG, DDG1}. \medskip

The quaternionic fine structures of Dirac type is now based on the two different ways we can factorize the Laplacian
\begin{equation}\label{Eq_Laplace_operator}
\Delta=\mathcal D\overline{\mathcal D}=\overline{\mathcal D} \mathcal D,
\end{equation}
using the Cauchy-Fueter operator (also called Dirac-operator) and its conjugate
\begin{subequations}
\begin{align}
\mathcal D:=&\frac{\partial}{\partial q_0}+e_1\frac{\partial}{\partial q_1}+e_2\frac{\partial}{\partial q_2}+e_3\frac{\partial}{\partial q_3}, \label{Eq_Cauchy_Fueter_operator} \\
\overline{\mathcal D}:=&\frac{\partial}{\partial q_0}-e_1\frac{\partial}{\partial q_1}-e_2\frac{\partial}{\partial q_2}-e_3\frac{\partial}{\partial q_3}. \label{Eq_Cauchy_Fueter_operator_conjugate}
\end{align}
\end{subequations}
Depending on whether $\mathcal D$ or $\overline{\mathcal D}$ is applied first on some function $f\in\mathcal{SH}(U)$, we get the following four function spaces:
\begin{subequations}\label{Eq_Functionspaces}
\begin{align}
\mathcal{SH}(U)&\text{ from Definition \ref{defi_Slice_hyperholomorphic_functions}}, && \textit{(slice hyperholomorphic functions)} \label{Eq_Slice_hyperholomorphic_functions} \\
\mathcal{AH}(U)&=\Set{\mathcal Df | f\in\mathcal{SH}(U)}, && \textit{(axially harmonic functions)} \label{Eq_Axially_harmonic_functions} \\
\mathcal{AP}_2(U)&=\Set{\overline{\mathcal D}f | f\in\mathcal{SH}(U)}, && \textit{(polyanalytic functions of order $2$)} \label{Eq_Polyanalytic_functions} \\
\mathcal{AM}(U)&=\Set{\Delta f | f\in\mathcal{SH}(U)}, && \textit{(axially monogenic functions).} \label{Eq_Axially_monogenic_functions}
\end{align}
\end{subequations}
Observe that just the spaces $\mathcal{AH}(U)$ and $\mathcal{AP}_2(U)$ depend on the factorization of the Laplace operator. This construction can also be visualized in the following diagram: \medskip

\begin{center}
\begin{tikzpicture}
\draw (0,0) node[anchor=east] {$\mathcal{SH}(U)$};
\draw[->] (0,0.1)--(1,0.5) node[anchor=west] {$\mathcal{AH}(U)$};
\draw[->] (0,-0.1)--(1,-0.5) node[anchor=west] {$\mathcal{AP}_2(U)$};
\draw[->] (2.5,0.5)--(3.6,0.1);
\draw[->] (2.6,-0.5)--(3.6,-0.1);
\draw (3.6,0) node[anchor=west] {$\mathcal{AM}(U).$};
\draw (0.5,0.3) node[anchor=south] {\scriptsize{$\mathcal D$}};
\draw (0.5,-0.3) node[anchor=north] {\scriptsize{$\overline{\mathcal D}$}};
\draw (3.1,0.3) node[anchor=south] {\scriptsize{$\overline{\mathcal D}$}};
\draw (3.1,-0.3) node[anchor=north] {\scriptsize{$\mathcal D$}};
\end{tikzpicture}
\end{center}

While slice hyperholomorphic, axially harmonic and axially monogenic functions appear in many fields of pure and applied mathematics, the polyanalytic functions are less known but still have several applications. They were first considered by G.V. Kolossov in connection with his research on elasticity and also have applications in signal analysis, particularly in the context of Gabor frames with Hermite functions, as shown by the results of Gröchenig and Lyubarskii. Polyanalytic functions provide explicit representation formulas for functions in the eigenspaces of the Euclidean Laplacian with a magnetic field, which are referred to as Landau levels. This likely has significant implications in quantum mechanics and related fields. For an overview of the applications of this class of functions see the paper \cite{FISH} and also \cite{Balk,Russ,Bergmanvasi} for further material on this theory. \medskip

The corresponding integral representation of the functions of the fine structures \eqref{Eq_Functionspaces} will now give rise to various functional calculi. The original idea comes from the complex Riesz-Dunford functional calculus \cite{RD}, which is based on the Cauchy integral formula and it allows to replace a complex variable $z$ of a suitable holomorphic function $f(z)$ with a bounded linear operator $A$ in order  to define $f(A)$. The generalization of the holomorphic functional calculus to sectorial operators leads to the $H^\infty$-functional calculus that, in the complex setting, was introduced in the paper \cite{McI1}, see also the books \cite{Haase, HYTONBOOK1, HYTONBOOK2}. Moreover, the boundedness  of the $H^\infty$ functional calculus depends on suitable quadratic estimates and this calculus has several applications to boundary value problems, see  \cite{MC10,MC97,MC06,MC98}. \medskip

We remark that the $H^\infty$-functional calculus exists also for the monogenic functional calculus (see \cite{JM}) and it was introduced by A. McIntosh and his collaborators, see the books \cite{JBOOK,TAOBOOK} for more details. \medskip

In the quaternionic setting the functional calculi for bounded operators and slice hyperholomorphic functions \eqref{Eq_Slice_hyperholomorphic_functions} is already done in \cite{CS11,ColSab2006}, for bounded operators and axially monogenic functions in \cite{CDS,CDS1,CG,CS,CSF,CSS} and for functions in the spaces \eqref{Eq_Polyanalytic_functions}, \eqref{Eq_Axially_monogenic_functions} more recently in \cite{CDPS1,Fivedim,Polyf1,Polyf2}. \medskip

Unbounded operators for a restricted class of functions were considered in \cite{CDP23,CSF,ColSab2006,G17}. In the literature there exists the $H^\infty$-functional calculus for slice hyperholomorphic functions \eqref{Eq_Slice_hyperholomorphic_functions} in \cite{ACQS2016,CGK} and for harmonic functions \eqref{Eq_Axially_harmonic_functions} in \cite{MPS23}. \medskip

This paper on the one hand we revisit these constructions for unbounded operators with commuting components and enlarges the class of admissible operators $T$ for the $S$- and the $Q$-functional calculus to operators of type $(\alpha,\beta,\omega)$. On the other hand we also treat the two not yet investigated cases introducing the $H^\infty$-functional calculus for polynomially growing functions which are polyanalytic of order $2$ \eqref{Eq_Polyanalytic_functions} and for axially monogenic functions \eqref{Eq_Axially_monogenic_functions}. \medskip

\textit{Plan of the paper}: In Section \ref{sec_Preliminary_results} we discuss the preliminary results on quaternionic function theory and several issues related to quaternionic closed operators. In particular, we define the class of operators with commuting components in Definition \ref{defi_Operators_with_commuting_components} and the operators of type $(\alpha,\beta,\omega)$ in Definition \ref{defi_Operators_of_type_omega}. Since the $H^\infty$-functional calculus is a two step procedure we first introduce in Section \ref{sec_Functional_calculi_for_decaying_functions} the functional calculus for functions which decay suitably at infinity and at the origin. We will do this directly via the integrals
\begin{subequations}\label{Eq_Functional_calculus_formal}
\begin{align}
f(T)&=\frac{1}{2\pi}\int_{\partial(U\cap\mathbb{C}_J)}S_L^{-1}(s,T)ds_Jf(s), && \textit{($S$-functional calculus)} \label{Eq_S_functional_calculus_formal} \\
\mathcal Df(T)&=\frac{-1}{\pi}\int_{\partial(U\cap\mathbb{C}_J)}Q_{c,s}^{-1}(T)ds_Jf(s), && \textit{($Q$-functional calculus)}\label{Eq_Q_functional_calculus_formal} \\
\overline{\mathcal D}f(T)&=\frac{1}{2\pi}\int_{\partial(U\cap\mathbb{C}_J)}P_2^L(s,T)ds_Jf(s), && \textit{($P_2$-functional calculus)} \\
\Delta f(T)&=\frac{1}{2\pi}\int_{\partial(U\cap\mathbb{C}_J)}F_L(s,T)ds_Jf(s). && \textit{($F$-functional calculus)} \label{Eq_F_functional_calculus_formal}
\end{align}
\end{subequations}
The kernel of the $S$-functional calculus \eqref{Eq_S_functional_calculus_formal} is motivated by the quaternionic Cauchy formula
\begin{equation}\label{Eq_Cauchy_formula}
f(q)=\frac{1}{2\pi}\int_{\partial(U\cap\mathbb{C}_J)}\underbrace{(s-\overline{q})(s^2-2sq_0+|q|^2)^{-1}}_{=:S_L^{-1}(s,q)}ds_Jf(s),
\end{equation}
while the other kernels in \eqref{Eq_Q_functional_calculus_formal} -- \eqref{Eq_F_functional_calculus_formal} are generated by applying \eqref{Eq_Cauchy_Fueter_operator}, \eqref{Eq_Cauchy_Fueter_operator_conjugate} and \eqref{Eq_Laplace_operator}, to the Cauchy kernel $S_L^{-1}(s,q)$, i.e.,
\begin{equation}\label{Eq_Cauchy_kernel_derivatives}
Q_{c,s}^{-1}(q)=-\frac{1}{2}\mathcal DS_L^{-1}(s,q),\qquad P_2^L(s,q)=\overline{\mathcal D}S_L^{-1}(s,q),\qquad F_L(s,q)=\Delta S_L^{-1}(s,q),
\end{equation}
and afterwards formally replacing the quaternion $q$ by the operator $T$. The explicit representations of the kernels are given in \eqref{Eq_S_resolvent}, \eqref{Eq_Q_operator}, \eqref{Eq_P_resolvent} and \eqref{Eq_F_resolvent}. For these functional calculi it is now important to derive the respective product rules
\begin{subequations}\label{Eq_Product_rules_formal}
\begin{align}
(fg)(T)&=f(T)g(T), \\
\mathcal D(fg)(T)&=\mathcal Df(T)g(T)+f(\overline{T})\mathcal Dg(T), \\
\overline{\mathcal D}(fg)(T)&=\overline{\mathcal D}f(T)g(T)+f(T)\overline{\mathcal D}g(T)+\mathcal Df(T)g(T)-\mathcal Df(T)g(\overline{T}), \\
\Delta(fg)(T)&=\Delta f(T)g(T)+f(T)\Delta g(T)-\mathcal Df(T)\mathcal Dg(T),
\end{align}
\end{subequations}
see also Theorem \ref{thm_Product_rule_decaying}, in order to define the $H^\infty$-functional calculus. For example in the $S$-functional calculus case we consider a polynomially growing function $f\in\mathcal{SH}(U)$ and choose a regularizer function $e\in\mathcal{SH}(U)$ which decays fast enough for $(ef)(T)$ and $e(T)$ to be well defined in the sense \eqref{Eq_S_functional_calculus_formal}. Then we define the $H^\infty$-functional calculus as
\begin{equation*}
f(T):=e(T)^{-1}(ef)(T),
\end{equation*}
and prove that it is independent of the regularizer $e$. The similar definitions for the $Q$-, the $P_2$-  and the $F$-functional calculus can be seen in Definition \ref{defi_Functional_calculus_growing}. This regularization procedure is much more involved  when the product rule of a given functional calculus contains two or more addends.

\section{Preliminary results on quaternionic function theory and operators}\label{sec_Preliminary_results}

This section on the one hand presents several widely recognized concepts related to slice hyperholomorphic functions and quaternionic operators. But it also extends and improves, compared to previous results in the known literature, for instance the notion of operators with commuting components in Definition \ref{defi_Operators_with_commuting_components}, the growth conditions of sectorial operators of type $\omega$ in Definition \ref{defi_Operators_of_type_omega} and the domain of the $Q$-resolvent operator \eqref{Eq_Q_operator}. When working with closed operators, the idea of operators commuting with each other becomes intricate due to considerations involving their domains. The definition of commuting components for quaternionic linear operators holds significant importance throughout the entire paper, as it profoundly influences the definitions of the spectrum and resolvent set for these operators. \medskip

The \textit{quaternionic numbers} are defined as
\begin{equation*}
\mathbb{H}:=\Set{s_0+s_1e_1+s_2e_2+s_3e_3 | s_0,s_1,s_2,s_3\in\mathbb{R}},
\end{equation*}
with the three imaginary units $e_1,e_2,e_3$ satisfying the relations
\begin{equation*}
e_1^2=e_2^2=e_3^2=-1\qquad\text{and}\qquad\begin{array}{l} e_1e_2=-e_2e_1=e_3, \\ e_2e_3=-e_3e_2=e_1, \\ e_3e_1=-e_1e_3=e_2. \end{array}
\end{equation*}
For every quaternion $s\in\mathbb{H}$, we set
\begin{align*}
\Re(s)&:=s_0, && (\textit{real part}) \\
\Im(s)&:=s_1e_1+s_2e_2+s_3e_3, && (\textit{imaginary part}) \\
\overline{s}&:=s_0-s_1e_1-s_2e_2-s_3e_3, && (\textit{conjugate}) \\
|s|&:=\sqrt{s_0^2+s_1^2+s_2^2+s_3^2}. && (\textit{modulus})
\end{align*}
The unit sphere of purely imaginary quaternions is defined as
\begin{equation*}
\mathbb{S}:=\Set{s\in\mathbb{H} | s_0=0\text{ and }|s|=1},
\end{equation*}
and for every $J\in\mathbb{S}$ we consider the complex plane
\begin{equation*}
\mathbb{C}_J:=\Set{x+Jy | x,y\in\mathbb{R}},
\end{equation*}
which is an isomorphic copy of the complex numbers, since every $J\in\mathbb{S}$ satisfies $J^2=-1$. Moreover, for every quaternion $s\in\mathbb{H}$ we consider the corresponding $2$\textit{-sphere}
\begin{equation*}
[s]:=\Set{\Re(s)+J\vert\Im(s)\vert | J\in\mathbb{S}}.
\end{equation*}
Next, we introduce the notion of \textit{slice hyperholomorphic functions}, which is a quaternionic analog to the complex holomorphic functions. The sets upon which these functions are defined are the following \textit{axially symmetric sets}.

\begin{defi}
A subset $U\subseteq\mathbb{H}$ is called \textit{axially symmetric}, if $[s]\subseteq U$ for every $s\in U$.
\end{defi}

\begin{defi}\label{defi_Slice_hyperholomorphic_functions}
Let $U\subseteq\mathbb{H}$ be an axially symmetric open set and consider
\begin{equation}\label{Eq_Axially_symmetric_reduced_set}
\mathcal{U}:=\Set{(x,y)\in\mathbb{R}^2 | x+\mathbb{S}y\subseteq U}.
\end{equation}
A function $f:U\rightarrow\mathbb{H}$ is called \textit{left} (resp. \textit{right}) \textit{slice hyperholomorphic}, if there exists continuously differentiable functions $\alpha,\beta:\mathcal{U}\rightarrow\mathbb{H}$, such that for every $(x,y)\in\mathcal{U}$:

\begin{enumerate}
\item[i)] The function $f$ admits for every $J\in\mathbb{S}$ the representation
\begin{equation}\label{Eq_Holomorphic_decomposition}
f(x+Jy)=\alpha(x,y)+J\beta(x,y),\quad\Big(\text{resp.}\;f(x+Jy)=\alpha(x,y)+\beta(x,y)J\Big).
\end{equation}

\item[ii)] The functions $\alpha,\beta$ satisfy the even-odd conditions
\begin{equation}\label{Eq_Symmetry_condition}
\alpha(x,-y)=\alpha(x,y)\quad\text{and}\quad\beta(x,-y)=-\beta(x,y).
\end{equation}

\item[iii)] The functions $\alpha,\beta$ satisfy the Cauchy-Riemann equations
\begin{equation}\label{Eq_Cauchy_Riemann_equations}
\frac{\partial}{\partial x}\alpha(x,y)=\frac{\partial}{\partial y}\beta(x,y)\quad\text{and}\quad\frac{\partial}{\partial y}\alpha(x,y)=-\frac{\partial}{\partial x}\beta(x,y).
\end{equation}
\end{enumerate}

The class of left (resp. right) slice hyperholomorphic functions on $U$ is denoted by $\mathcal{SH}_L(U)$ (resp. $\mathcal{SH}_R(U)$). In the special case that $\alpha$ and $\beta$ are real valued, we call the function $f$ \textit{intrinsic} and denote the space of intrinsic functions by $\mathcal{N}(U)$.
\end{defi}

For those slice hyperholomorphic functions we now introduce quaternionic path integrals. Since it is sufficient to consider paths embedded in only one complex plane $\mathbb{C}_J$, the idea is to reduce it to a classical complex path integral.

\begin{defi}\label{defi_Path_integral}
Let $U\subseteq\mathbb{H}$ be open, axially symmetric and $f\in\mathcal{SH}_R(U)$, $g\in\mathcal{SH}_L(U)$. For $J\in\mathbb{S}$ and a continuously differentiable curve $\gamma:(a,b)\rightarrow U\cap\mathbb{C}_J$, we define the integral
\begin{equation*}
\int_\gamma f(s)ds_Jg(s):=\int_a^bf(\gamma(t))\frac{\gamma'(t)}{J}g(\gamma(t))dt.
\end{equation*}
In the case that $a,b$ are $\infty$ or lie on the boundary $\partial U$, the functions $f,g$ need to satisfy certain decay properties in order for the integral to exist.
\end{defi}

Next we turn our attention to quaternionic operator theory. From now on $V$ always denotes a two-sided linear Banach space over the quaternions $\mathbb{H}$. The set of bounded, everywhere defined operators will be denoted by $\mathcal{B}(V)$, and the set of closed operators with $\mathcal{K}(V)$. In the following we will specify the class of operators with commuting components, which will be of interest in this paper. We start with a lemma providing the existence of two-sided linear components of right linear operators. The proof of this lemma is left as an exercise, the same statement for bounded operators can for example be found in \cite{G18}.

\begin{lem}\label{lem_Components_of_operators}
Let $T:V\rightarrow V$ be some right linear operator with $\dom(T)$ being a two-sided linear subspace of $V$. Then there exist unique two-sided linear operators $T_i:V\rightarrow V$ with $\dom(T_i)=\dom(T)$, $i\in\{0,1,2,3\}$, such that
\begin{equation}\label{Eq_T_components}
T=T_0+e_1T_1+e_2T_2+e_3T_3.
\end{equation}
The components are explicitly given by
\begin{equation}\label{Eq_Components_representation}
T_i=\frac{e_i}{4}\Big(e_iTe_i-\sum\limits_{j=0,j\neq i}^3e_jTe_j\Big),\qquad i\in\{0,1,2,3\}.
\end{equation}
\end{lem}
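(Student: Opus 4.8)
The plan is to establish both the uniqueness and the existence of the components $T_i$ by direct computation with the multiplication table of $\mathbb{H}$, exploiting the fact that conjugation by an imaginary unit $e_i$ picks out precisely the "parallel" part of a quaternion. First I would prove uniqueness: if $T=T_0+e_1T_1+e_2T_2+e_3T_3$ with each $T_i$ two-sided linear on $\dom(T)$, then for a fixed $i$ consider the operator $e_iTe_i$. Since the $T_j$ are two-sided linear, $e_i(e_jT_j)e_i=(e_ie_je_i)T_j$, and a short check of the multiplication table shows $e_ie_je_i=e_j$ when $j=0$ or $j=i$, and $e_ie_je_i=-e_j$ otherwise (here $e_0:=1$). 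Summing the four terms, the combination $e_iTe_i-\sum_{j\neq i}e_jTe_j$ collapses to $4e_iT_i$ (the $T_i$-term survives four times, every other $T_j$-term cancels between the two sums), which after multiplying by $e_i/4$ on the left gives exactly formula \eqref{Eq_Components_representation}. This shows any decomposition must be the stated one, hence uniqueness, and simultaneously tells us what the $T_i$ have to be.

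Next I would turn \eqref{Eq_Components_representation} into the \emph{definition} of $T_i$ and verify that these operators have the required properties. Each $T_i$ is a finite $\mathbb{R}$-linear combination of operators of the form $q\mapsto aTb$ with $a,b\in\mathbb{H}$ constants; since $\dom(T)$ is a two-sided subspace, $b\,x\in\dom(T)$ whenever $x\in\dom(T)$, so each such map is well defined on $\dom(T)$, whence $\dom(T_i)=\dom(T)$. Right linearity of $T_i$ follows from right linearity of $T$ together with the fact that left multiplication by the constant $a=e_i$ or $e_ie_j$ commutes with right multiplication by scalars. For two-sided linearity I need $T_i(\lambda x)=\lambda T_i(x)$ for every $\lambda\in\mathbb{H}$; it suffices to check $\lambda=e_k$, and this reduces to showing that the operator $x\mapsto e_iTe_i(e_kx)-\sum_{j\neq i}e_jTe_j(e_kx)$ equals $e_k$ times the same expression without the inner $e_k$. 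Using the right linearity of $T$ to pull $e_k$ through — $T(e_kx)$ is \emph{not} $e_kT(x)$ in general, so instead I write $e_jTe_j(e_kx)$ and reindex the sum over $j$ — one sees that the symmetrized combination $\sum_{j}(\text{sign})\,e_jTe_j$ is exactly the projection that makes the left- and right-multiplication structures agree; this is the standard fact that the "real part in the $e_i$ direction" is two-sided linear.

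Finally I would verify that the $T_i$ so defined actually reconstruct $T$, i.e. that $T_0+e_1T_1+e_2T_2+e_3T_3=T$ on $\dom(T)$. Substituting \eqref{Eq_Components_representation}, the left side becomes $\frac14\sum_{i=0}^3 e_i\big(e_iTe_i-\sum_{j\neq i}e_jTe_j\big)$, and collecting the coefficient of $e_jTe_j$ for each fixed $j$ one finds it equals $\frac14(e_j\cdot e_j - \sum_{i\neq j}e_i\cdot(-1)\cdot\text{sign})$; carrying out this bookkeeping with $e_i e_i=-1$ for $i\neq 0$ and $e_0e_0=1$, all terms with $j\neq 0$ cancel and the $j=0$ term contributes $T$ once, giving $T$. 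The main obstacle — really the only subtle point — is the two-sided linearity of the components: one must be careful that $T$ itself is only right linear, so conjugations $e_iTe_i$ do not simplify via any linearity of $T$, and the cancellations that produce two-sided linearity come purely from the quaternion multiplication table, not from properties of $T$. Everything else is routine bookkeeping with the eight products $e_ie_je_k$.
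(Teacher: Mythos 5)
Your overall strategy is the right one (and the paper itself leaves this proof as an exercise, so there is nothing to compare against): derive the formula \eqref{Eq_Components_representation} by conjugating a supposed decomposition with the units and using the left linearity of the components, then take that formula as the definition and verify domain, right linearity, left linearity, and the reconstruction. However, the computation at the heart of your uniqueness step is wrong as written. For $i\neq 0$ one has $e_ie_0e_i=e_i^2=-1$, $e_ie_ie_i=-e_i$, and $e_ie_je_i=-e_i^2e_j=+e_j$ for $j\notin\{0,i\}$; you state exactly the opposite signs. With the correct table, if $T=\sum_j e_jT_j$ with two-sided linear $T_j$, the combination $e_iTe_i-\sum_{j\neq i}e_jTe_j$ collapses to $-4e_iT_i$ for $i\neq 0$ (and to $+4T_0$ for $i=0$), not to $+4e_iT_i$, and it is precisely the extra factor $e_i^2=-1$ that makes $\frac{e_i}{4}\bigl(-4e_iT_i\bigr)=T_i$ come out right. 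As written your chain is also internally inconsistent: from your claimed collapse $4e_iT_i$, left multiplication by $e_i/4$ would yield $-T_i$, not the formula of the lemma. Since the entire lemma is this sign bookkeeping, this is not a cosmetic slip but the step that carries the proof.

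The second gap is the two-sided (left) linearity of the operators defined by \eqref{Eq_Components_representation}, which you correctly identify as the only subtle point but then do not actually prove: appealing to ``the standard fact that the real part in the $e_i$ direction is two-sided linear'' is essentially assuming the statement, and the invocation of right linearity of $T$ is misplaced there --- right linearity only gives $T_i(va)=T_i(v)a$, whereas the identity $T_i(e_kv)=e_kT_i(v)$ uses nothing but additivity of $T$, the module axioms, and exactly the signed cancellations from the multiplication table that went wrong above. One must expand $T_i(e_kv)=\frac{e_i}{4}\bigl(e_iT(e_ie_kv)-\sum_{j\neq i}e_jT(e_je_kv)\bigr)$, reindex using $e_je_k=\pm e_l$, and check that the signs recombine into $e_kT_i(v)$; for instance $T_0(e_1v)=\frac14\bigl(T(e_1v)+e_1T(v)+e_2T(e_3v)-e_3T(e_2v)\bigr)$, which indeed equals $e_1T_0(v)$. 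With your reversed table this cancellation cannot be carried out correctly, so the existence half of your argument is unverified as it stands. The remaining points --- $\dom(T_i)=\dom(T)$, right linearity of the $T_i$, and the reconstruction $T_0+e_1T_1+e_2T_2+e_3T_3=T$ (where each $e_i\cdot\frac{e_i}{4}$ becomes a real factor $\pm\frac14$ and the cross terms cancel) --- are fine.
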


Using the components $T_0,T_1,T_2,T_3$ from Lemma \ref{lem_Components_of_operators}, we can now define some more operators.

\begin{defi}
Let $T:V\rightarrow V$ be right linear with a two-sided linear domain. With the components $T_0,T_1,T_2,T_3$ from Lemma \ref{lem_Components_of_operators}, we define the \textit{conjugate operator}
\begin{equation}\label{Eq_Tbar}
\overline{T}:=T_0-e_1T_1-e_2T_2-e_3T_3,\qquad\text{with }\dom(\overline{T}):=\dom(T),
\end{equation}
and the \textit{modulus operator}
\begin{equation}\label{Eq_TT}
|T|^2:=T_0^2+T_1^2+T_2^2+T_3^2,\qquad\text{with }\dom(|T|^2):=\bigcap\limits_{k=0}^3\dom(T_k^2).
\end{equation}
\end{defi}

\begin{lem}\label{lem_Norm_of_components}
Let $B\in\mathcal{B}(V)$. Then also $B_i,\overline{B}\in\mathcal{B}(V)$ for every $i\in\{0,1,2,3\}$ and they admit the norm estimates
\begin{equation}\label{Eq_Norm_of_components}
\Vert B_i\Vert\leq\Vert B\Vert\qquad\text{and}\qquad\Vert\overline{B}\Vert\leq 2\Vert B\Vert.
\end{equation}
\end{lem}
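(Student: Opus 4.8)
The plan is to reduce the statement to the explicit formula \eqref{Eq_Components_representation}, using the single external fact that left (and right) multiplication by a unit quaternion is an isometry of $V$: this is built into the two-sided quaternionic Banach space structure, since $v\mapsto e_jv$ is bounded with $\|e_jv\|\le\|v\|$ and, applying the same bound to the inverse $e_j^{-1}=-e_j$, we get $\|v\|=\|{-e_j}(e_jv)\|\le\|e_jv\|$, hence $\|e_jv\|=\|v\|$; likewise on the right. Consequently, for $B\in\mathcal B(V)$ and any $j\in\{0,1,2,3\}$ (with the convention $e_0=1$), the operator $e_jBe_j\colon v\mapsto e_j\,B(e_jv)$ is everywhere defined, bounded, and satisfies $\|e_jBe_j\|\le\|B\|$, because $\|e_jB(e_jv)\|=\|B(e_jv)\|\le\|B\|\,\|e_jv\|=\|B\|\,\|v\|$.

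For the component bound I would simply insert these estimates into \eqref{Eq_Components_representation}. Writing $B_i=\frac{e_i}{4}\big(e_iBe_i-\sum_{j=0,\,j\neq i}^3e_jBe_j\big)$, the expression in brackets is a sum of four operators (the $j=0$ term being $B$ itself), each of norm at most $\|B\|$, so by the triangle inequality it has norm at most $4\|B\|$; left multiplication by $\frac{e_i}{4}$ then scales the norm by $\frac14$, giving $\|B_i\|\le\|B\|$. Since moreover $\dom(B_i)=\dom(B)=V$ by Lemma \ref{lem_Components_of_operators}, this shows $B_i\in\mathcal B(V)$.

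For the conjugate, the crude bound $\|\overline B\|\le\sum_{i=0}^3\|B_i\|\le4\|B\|$ obtained from \eqref{Eq_Tbar} is too weak, so the key step is to rewrite $\overline B$ without passing through the individual components. Substituting \eqref{Eq_Components_representation} into \eqref{Eq_Tbar} — equivalently, checking the quaternionic identity $\overline q=-\frac12(q+e_1qe_1+e_2qe_2+e_3qe_3)$ on each basis vector and transporting it to operators through the decomposition \eqref{Eq_T_components} — yields
\begin{equation*}
\overline B=-\tfrac12\big(B+e_1Be_1+e_2Be_2+e_3Be_3\big),
\end{equation*}
whose right-hand side is $\tfrac12$ times a sum of four operators of norm at most $\|B\|$; hence $\|\overline B\|\le2\|B\|$, and together with $\dom(\overline B)=\dom(B)=V$ this gives $\overline B\in\mathcal B(V)$. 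The only genuinely delicate point in all of this is the isometry of multiplication by unit quaternions, which has to be read off from the axioms of a two-sided quaternionic Banach space; after that, everything is bookkeeping with the four operators $e_jBe_j$, and I would deliberately present the conjugate estimate last, since the naive triangle inequality on the $B_i$ loses a factor of $2$ and it is the compact identity above that makes the sharp constant transparent.
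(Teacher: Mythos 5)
Your proposal is correct and follows essentially the same route as the paper: the component bound comes from the triangle inequality applied to the explicit formula \eqref{Eq_Components_representation}, and the sharp constant for the conjugate comes from the same identity $\overline B=-\tfrac12(B+e_1Be_1+e_2Be_2+e_3Be_3)$ that the paper uses. Your explicit justification that multiplication by unit quaternions is isometric is a detail the paper leaves implicit (it writes $\Vert e_jBe_j\Vert=\Vert B\Vert$ without comment), but the argument is the same.
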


\begin{proof}
From the explicit representation \eqref{Eq_Components_representation} of the components, we get
\begin{equation*}
\Vert B_i\Vert\leq\frac{1}{4}\Big(\Vert e_iBe_i\Vert+\sum\limits_{j=0,j\neq i}^3\Vert e_jBe_j\Vert\Big)=\frac{1}{4}\Big(\Vert B\Vert+\sum\limits_{j=0,j\neq i}^3\Vert B\Vert\Big)=\Vert B\Vert.
\end{equation*}
Also from the components \eqref{Eq_Components_representation} one easily deduces the representation
\begin{equation*}
\overline{B}=B_0-e_1B_1-e_2B_2-e_3B_3=-\frac{1}{2}(B+e_1Be_1+e_2Be_2+e_3Be_3),
\end{equation*}
from which the norm estimate $\Vert\overline{B}\Vert\leq 2\Vert B\Vert$ follows immediately.
\end{proof}

The following lemma gives a characterization of the commutation property of the components of bounded operators.

\begin{lem}\label{lem_Commutation_of_components}
Let $B\in\mathcal{B}(V)$ and $T:V\rightarrow V$ right linear with a two-sided linear domain. Then the following statements are equivalent \medskip

\begin{enumerate}
\item[1)] $T_iB_j=B_jT_i$,\quad on $\dom(T)$,\qquad $i,j\in\{0,1,2,3\}$, \\
\item[2)] $T_iB=BT_i$,\hspace{0.53cm} on $\dom(T)$,\hspace{1.01cm} $i\in\{0,1,2,3\}$, \\
\item[3)] $TB_j=B_jT$,\hspace{0.4cm} on $\dom(T)$,\hspace{0.93cm} $j\in\{0,1,2,3\}$.
\end{enumerate}
\end{lem}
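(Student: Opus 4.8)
My plan is to treat statement 1) as a hub and prove $1)\Leftrightarrow 2)$ and $1)\Leftrightarrow 3)$ separately, since there is no convenient direct route between 2) and 3). Three facts are used repeatedly: (i) a two-sided linear operator $S$ commutes with every left multiplication, i.e. $S(ax)=aS(x)$ for $a\in\mathbb H$ on $\dom(S)$ — in particular this applies to each $T_i$ and to each $B_j$; (ii) the decomposition \eqref{Eq_T_components} written for $B$, namely $B=B_0+e_1B_1+e_2B_2+e_3B_3$, together with its inversion \eqref{Eq_Components_representation}, $B_j=\frac{e_j}{4}\big(e_jBe_j-\sum_{k\neq j}e_kBe_k\big)$ (and likewise for $T$); (iii) $\dom(T)$ is a two-sided linear subspace, so $ax\in\dom(T)$ whenever $x\in\dom(T)$, which — together with the domain inclusions implicit in each hypothesis (e.g. ``$T_iB=BT_i$ on $\dom(T)$'' presupposes $B(\dom(T))\subseteq\dom(T_i)=\dom(T)$) — makes all the composite operators below well defined on $\dom(T)$.

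The implications $1)\Rightarrow 2)$ and $1)\Rightarrow 3)$ are the direct ones. For $1)\Rightarrow 2)$, fix $i$ and $x\in\dom(T)$ and expand: $T_iBx=T_i\big(\sum_{j}e_jB_jx\big)=\sum_{j}e_jT_iB_jx=\sum_{j}e_jB_jT_ix=BT_ix$, where the middle step uses (i) for $T_i$ and the next uses 1). The implication $1)\Rightarrow 3)$ is the same computation with the roles of $T$ and $B_j$ exchanged: for $x\in\dom(T)$, $TB_jx=\sum_{i}e_iT_iB_jx=\sum_{i}e_iB_jT_ix=B_j\big(\sum_i e_iT_ix\big)=B_jTx$, now using 1) and (i) for $B_j$.

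For the converse implications $2)\Rightarrow 1)$ and $3)\Rightarrow 1)$ one cannot argue naively from $Tv=\sum_i e_iT_iv$, since an identity $\sum_i e_iy_i=0$ in $V$ does not force $y_i=0$ (this is exactly why the $T_i$ are pinned down only once two-sided linearity is imposed). Instead I would isolate the elementary lemma $(\star)$: \emph{if $S:V\to V$ is two-sided linear, $A:V\to V$ is right linear with two-sided linear domain, their domains are compatible, and $SA=AS$, then $S(e_kAe_k)=(e_kAe_k)S$ for every $k$, and hence — inserting this into \eqref{Eq_Components_representation} and using additivity and (i) once more — $SA_j=A_jS$ for every component $A_j$ of $A$.} Statement $(\star)$ follows from the single computation $S\big(e_kA(e_kx)\big)=e_kS\big(A(e_kx)\big)=e_kA\big(S(e_kx)\big)=e_kA(e_kSx)$, valid on the common domain by (i) and $SA=AS$. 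Then $2)\Rightarrow 1)$ is $(\star)$ with $S=T_i$ and $A=B$ (bounded, so its domain causes no trouble), and $3)\Rightarrow 1)$ is $(\star)$ with $S=B_j$ (bounded) and $A=T$; in both cases the output is precisely $T_iB_j=B_jT_i$ on $\dom(T)$.

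The only genuinely delicate point — and the one I would write out carefully rather than skip — is the domain bookkeeping inside $(\star)$: one must check that for $x$ in the relevant domain each of $e_kx$, $A(e_kx)$, $S(e_kx)$ and the final $A_jx$ again lies where it needs to, which is exactly where the two-sided linearity of $\dom(T)$ and the domain inclusions built into 1), 2), 3) enter. Beyond that the argument is purely algebraic: no quadratic estimates, no limiting arguments, and no use of the norm bounds of Lemma~\ref{lem_Norm_of_components} are required.
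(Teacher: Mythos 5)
Your proof is correct and takes essentially the same route as the paper's: the directions $1)\Rightarrow 2)$ and $1)\Rightarrow 3)$ by expanding $B=\sum_j e_jB_j$ (resp. $T=\sum_i e_iT_i$) and using two-sided linearity, and the converses by commuting the two-sided linear operator through the explicit component formula \eqref{Eq_Components_representation}. Your lemma $(\star)$ is exactly the paper's one-line computation for $2)\Rightarrow 1)$ and $3)\Rightarrow 1)$, packaged once and supplemented with the domain bookkeeping the paper leaves implicit.
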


\begin{proof}
The implication $\text{\grqq}1)\Rightarrow 2)\text{\grqq}$ and $\text{\grqq}1)\Rightarrow 3)\text{\grqq}$ are trivial. For $\text{\grqq}2)\Rightarrow 1)\text{\grqq}$ let us fix $i\in\{0,1,2,3\}$. From to the explicit representation of the components \eqref{Eq_Components_representation}, we get
\begin{equation*}
T_iB_j=T_i\frac{e_j}{4}\Big(e_jBe_j-\sum\limits_{k=0,k\neq j}^3e_kBe_k\Big)=\frac{e_j}{4}\Big(e_jBe_j-\sum\limits_{k=0,k\neq j}^3e_kBe_k\Big)T_i=B_jT_i.
\end{equation*}
The implication $\text{\grqq}3)\Rightarrow 1)\text{\grqq}$ similarly holds by
\begin{equation*}
T_iB_j=\frac{e_i}{4}\Big(e_iTe_i-\sum\limits_{k=0,k\neq i}^3e_kTe_k\Big)B_j=B_j\frac{e_i}{4}\Big(e_iTe_i-\sum\limits_{k=0,k\neq i}^3e_kTe_k\Big)=B_jT_i. \qedhere
\end{equation*}
\end{proof}

\begin{defi}\label{defi_Operators_with_commuting_components}
A right-linear $T:V\rightarrow V$ with a two-sided linear domain is called \textit{operator with commuting components}, if the components $T_0,T_1,T_2,T_3$ from Lemma \ref{lem_Components_of_operators} commute as
\begin{equation}\label{Eq_Commuting_components}
T_iT_jv=T_jT_iv,\qquad\text{for every }v\in\dom(|T|^2),\qquad i,j\in\{0,1,2,3\},
\end{equation}
where it is clear the $\dom(T_iT_j) \subseteq\dom(|T|^2)$ since $\dom(T_j)=\dom(T)$ for $j\in \{0,1,2,3\}$. We will denote the class of \textit{closed operators with commuting components} as $\mathcal{KC}(V)$ and the class of bounded operators with commuting components as $\mathcal{BC}(V)$.
\end{defi}

\begin{rem}
It is obvious that for every operator $T$ with commuting components, also its conjugate $\overline{T}$ is an operator with commuting components. However, from $T\in\mathcal{KC}(V)$ one cannot conclude $\overline{T}\in\mathcal{KC}(V)$, because in general the conjugate $\overline{T}$ may fail to be closed.
\end{rem}

For the multiplication of an operator with commuting components with its conjugate we obtain the following result.

\begin{lem}\label{lem_TT}
Let $T$ be an operator with commuting components. Then
\begin{equation}\label{Eq_Domain_TT}
\dom(|T|^2)\subseteq\dom(T\overline{T})\cap\dom(\overline{T}T),
\end{equation}
and there holds
\begin{equation}\label{Eq_Action_TT}
|T|^2v=\overline{T}Tv=T\overline{T}v,\qquad\text{for every }v\in\dom(|T|^2).
\end{equation}
\end{lem}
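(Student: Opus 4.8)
The plan is to verify directly that the formal products $\overline{T}T$ and $T\overline{T}$, when computed as iterated actions of the right-linear operators, collapse to $|T|^2$ on the domain $\dom(|T|^2)$, using the commutation hypothesis \eqref{Eq_Commuting_components}. First I would write out $T = T_0 + e_1 T_1 + e_2 T_2 + e_3 T_3$ and $\overline{T} = T_0 - e_1 T_1 - e_2 T_2 - e_3 T_3$ from \eqref{Eq_T_components} and \eqref{Eq_Tbar}, fix a vector $v \in \dom(|T|^2) = \bigcap_{k=0}^3 \dom(T_k^2)$, and observe that since each $T_k$ has $\dom(T_k) = \dom(T)$, the vector $Tv$ lies in $\dom(T_k)$ for each $k$ precisely because $T_k v \in \dom(T_k)$ — here one uses $v \in \dom(T_k^2)$; this is what justifies the inclusion \eqref{Eq_Domain_TT}, so I would dispatch that inclusion first as a domain bookkeeping step. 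The key point is that $T_k v \in \dom(T_j)$ for all $j,k$, so all the compositions $T_j T_k v$ appearing below are legitimate.

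Next I would expand $\overline{T} T v$ as a double sum $\sum_{i,j} \epsilon_i e_i e_j\, T_i T_j v$ where $\epsilon_0 = 1$ and $\epsilon_i = -1$ for $i \in \{1,2,3\}$ (the signs coming from $\overline{T}$), and similarly $T \overline{T} v = \sum_{i,j} \epsilon_j e_i e_j\, T_i T_j v$. The essential manoeuvre is to split each sum into diagonal terms $i = j$ and off-diagonal terms $i \neq j$. On the diagonal, $e_i e_i = -1$ for $i \geq 1$ and $e_0 e_0 = 1$, and the sign factors $\epsilon_i$ combine so that every diagonal term contributes $+T_i^2 v$; summing gives exactly $|T|^2 v = (T_0^2 + T_1^2 + T_2^2 + T_3^2)v$ as in \eqref{Eq_TT}. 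For the off-diagonal terms, I would pair up the $(i,j)$ and $(j,i)$ contributions: using the commutation hypothesis $T_i T_j v = T_j T_i v$ (valid since $v \in \dom(|T|^2)$), the two terms in each pair have coefficients that are negatives of one another — for the indices $1,2,3$ this is the anticommutation $e_i e_j = -e_j e_i$ together with matching sign factors, and for a pair involving the index $0$ one has $e_0 e_j = e_j = e_j e_0$ but the sign factors $\epsilon$ differ, again producing cancellation. Hence all off-diagonal terms vanish, and both $\overline{T}Tv$ and $T\overline{T}v$ reduce to $|T|^2 v$.

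I do not expect a serious obstacle here; the argument is essentially the operator-valued analogue of the identity $\bar q q = q \bar q = |q|^2$ for a quaternion with commuting "coordinates", and the only genuine content beyond bookkeeping is (a) making sure the domain inclusion \eqref{Eq_Domain_TT} is correctly justified from $\dom(T_k) = \dom(T)$ and $v \in \dom(T_k^2)$, and (b) being careful that the commutation relation \eqref{Eq_Commuting_components} is only assumed on $\dom(|T|^2)$, which is exactly the set of $v$ for which we need it. The mild subtlety worth a sentence in the write-up is that $\overline{T}$ and $T$ need not be closable or have good domains individually, so the identity must be read as an equality of the naively composed operators on the specified domain, not as an identity of closed operators — but since we only ever evaluate at $v \in \dom(|T|^2)$ and all intermediate vectors stay in the relevant domains, this causes no trouble.
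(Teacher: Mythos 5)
Your proposal is correct and takes essentially the same route as the paper: establish the domain inclusion by showing $Tv=\sum_i e_iT_iv\in\dom(\overline{T})$ and $\overline{T}v\in\dom(T)$ from $T_iv\in\dom(T)$, then expand $\overline{T}Tv=\sum_{i,j}\overline{e_i}e_jT_iT_jv$, keep the diagonal terms $\sum_i T_i^2v=|T|^2v$ and cancel the mixed terms using $T_iT_jv=T_jT_iv$ together with the quaternionic sign relations. The only details to make explicit in a write-up, which the paper flags, are that pulling $e_j$ past $T_i$ uses the two-sided linearity of the components $T_i$, and that the membership $Tv,\overline{T}v\in\dom(T)$ uses the two-sided linearity of the subspace $\dom(T)$.
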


\begin{proof}
Let $v\in\dom(|T|^2)$, i.e. $v\in\dom(T)$ and $T_iv\in\dom(T)$ for every $i\in\{0,1,2,3\}$, see \eqref{Eq_TT}. Since $\dom(T)$ is two-sided linear and $\dom(\overline{T})=\dom(T)$, we also have
\begin{equation*}
Tv=\sum\limits_{i=0}^3e_iT_iv\in\dom(\overline{T})\qquad\text{and}\qquad\overline{T}v=\sum\limits_{i=0}^3\overline{e_i}\,T_iv\in\dom(T).
\end{equation*}
This proves that $v\in\dom(\overline{T}T)$ as well as $v\in\dom(T\overline{T})$. Moreover, there holds
\begin{equation*}
\overline{T}Tv=\sum\limits_{i,j=0}^3\overline{e_i}\,T_ie_jT_jv=\sum\limits_{i,j=0}^3\overline{e_i}e_jT_iT_jv=\sum\limits_{i=0}^3|e_i|^2T_i^2v+\sum\limits_{i\neq j=0}^3\overline{e_i}e_jT_iT_jv=\sum\limits_{i=0}^3T_i^2v=|T|^2v,
\end{equation*}
where in the second equation we used the fact that the operators $T_i$ are two-sided linear and the sum of the mixed terms vanishes in the second last equation due to $T_iT_jv=T_jT_iv$ and $\overline{e_k}e_l=-\overline{e_l}e_k$. Analogously there also holds $T\overline{T}v=|T|^2v$.
\end{proof}

\begin{ass}\label{ass_D}
Let $T,\overline{T}\in\mathcal{KC}(V)$. Then we consider some two-sided linear and dense subspace $D\subseteq\dom(|T|^2)$ with the following properties: \medskip

\begin{enumerate}
\item[i)] For every $i\in\{0,1,2,3\}$, there holds
\begin{equation}\label{Eq_Ti_range}
T_iv\in D,\qquad\text{for every }v\in D_1:=\Set{v\in D | \vert T\vert^2v\in\dom(T)}.
\end{equation}
\item[ii)] For every $s\in\mathbb{H}$, the operator
\begin{equation}\label{Eq_Q_operator}
Q_{c,s}(T):=s^2-2sT_0+|T|^2,\qquad\text{with }\dom(Q_{c,s}(T)):=D,
\end{equation}
is closed.
\end{enumerate}
\end{ass}

Let us now prove some basic properties of the operator $Q_{c,s}(T)$.

\begin{lem}\label{lem_Properties_Qs}
Let $T,\overline{T}\in\mathcal{KC}(V)$ and $D$ as in Assumption \ref{ass_D}. Then there holds: \medskip

\begin{enumerate}
\item[i)] For every $s\in\mathbb{H}$ we have $Q_{c,s}(T)\in\mathcal{KC}(V)$ with $\overline{Q_{c,s}(T)}=Q_{c,\overline{s}}(T)$. \\
\item[ii)] For every $i\in\{0,1,2,3\}$ we have the commutation relations
\begin{equation}\label{Eq_Commutation_Ti_Qs}
T_iQ_{c,s}(T)=Q_{c,s}(T)T_i,\qquad\text{on }D_1.
\end{equation}
\item[iii)] For every $s,p\in\mathbb{H}$ the domain of the product of two $Q$-operators is given by
\begin{equation}\label{Eq_Domain_QpQs}
\dom(Q_{c,s}(T)Q_{c,p}(T))=\Set{v\in D | \vert T\vert^2v\in D}=:D_2.
\end{equation}
\item[iv)] For every $s,p\in\mathbb{H}$ with $sp=ps$, also
\begin{equation}\label{Eq_Commutation_QsQp}
Q_{c,s}(T)Q_{c,p}(T)=Q_{c,p}(T)Q_{c,s}(T),\qquad\text{on }D_2.
\end{equation}
\item[v)] For every $s\in\mathbb{H}$ the operator $Q_{c,s}(T)Q_{c,\overline{s}}(T)$ is two-sided linear and writes as
\begin{equation}\label{Eq_QsQsbar}
Q_{c,s}(T)Q_{c,\overline{s}}(T)=(|s|^2-|T|^2)^2+4(T_0-s_0)(|s|^2T_0-s_0|T|^2),\qquad\text{on }D_2.
\end{equation}
\end{enumerate}
\end{lem}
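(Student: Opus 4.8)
The plan is to derive all five assertions from one elementary structural fact: on its domain $D$ the operator $Q_{c,s}(T)$ is nothing but $s^2-2sT_0+|T|^2$, i.e.\ a combination of the real numbers $s_0,|s|^2$, left multiplication by the quaternion $s$ (which is right linear and commutes with every two-sided linear operator), and the two two-sided linear operators $T_0$ and $|T|^2$, which commute with each other wherever the relevant compositions are defined because $T\in\mathcal{KC}(V)$; indeed, on $D$ one even has $Q_{c,s}(T)=(s-T_0)^2+T_1^2+T_2^2+T_3^2$, which already foreshadows all four identities. Two remarks about domains make the bookkeeping work. First, from $\dom(T_i)=\dom(T)$ one gets $\dom(|T|^2)=\{v\in\dom(T)\mid T_kv\in\dom(T),\ k=0,1,2,3\}$, so that $v\in\dom(|T|^2)$ \emph{already} yields $T_0v\in\dom(T)$. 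Second, Assumption \ref{ass_D}(i) upgrades this: for $v\in D_1$ one has $T_iv\in D$ for all $i$. Since $D_2\subseteq D_1\subseteq D\subseteq\dom(|T|^2)$, these two facts keep everything inside $D$ throughout.

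For (i) I would compute, writing $s=s_0+s_1e_1+s_2e_2+s_3e_3$, the components of $Q_{c,s}(T)$ in the sense of Lemma \ref{lem_Components_of_operators}: $Q_0=2s_0^2-|s|^2-2s_0T_0+|T|^2$ and $Q_j=2s_j(s_0-T_0)$ for $j\in\{1,2,3\}$. All four are real-affine combinations of $T_0$ and $|T|^2$, hence pairwise commute on $\dom(|Q_{c,s}(T)|^2)$: the only potential obstruction is $T_0|T|^2v$ versus $|T|^2T_0v$, and one checks (by the same argument as in (iii) below) that $\dom(|Q_{c,s}(T)|^2)\subseteq D_2$, on which $T\in\mathcal{KC}(V)$ together with Assumption \ref{ass_D}(i) gives $T_0|T|^2v=|T|^2T_0v$ after expanding $|T|^2=\sum_kT_k^2$ and iterating \eqref{Eq_Commuting_components}. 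Together with closedness (Assumption \ref{ass_D}(ii)), and since right linearity and two-sided linearity of the domain $D$ are clear, this gives $Q_{c,s}(T)\in\mathcal{KC}(V)$. The identity $\overline{Q_{c,s}(T)}=Q_{c,\overline s}(T)$ is then immediate: negating $Q_1,Q_2,Q_3$ reproduces exactly the components of $Q_{c,\overline s}(T)$, and both operators carry the domain $D$.

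For (ii), fix $v\in D_1$: then $T_iv\in D=\dom(Q_{c,s}(T))$ by Assumption \ref{ass_D}(i), while $Q_{c,s}(T)v=s^2v-2sT_0v+|T|^2v\in\dom(T)$ term by term (using $T_0v\in\dom(T)$ from the first remark and $|T|^2v\in\dom(T)$ by the definition of $D_1$), so both sides of \eqref{Eq_Commutation_Ti_Qs} are defined, and equality follows from the two-sided linearity of $T_i$, from $T_iT_0v=T_0T_iv$, and from $T_i|T|^2v=|T|^2T_iv$ — the latter two being instances of \eqref{Eq_Commuting_components} applied to $v$ and, after invoking Assumption \ref{ass_D}(i), to $T_kv\in D\subseteq\dom(|T|^2)$. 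For (iii), note $\dom(Q_{c,s}(T)Q_{c,p}(T))=\{v\in D\mid Q_{c,p}(T)v\in D\}$. If $v$ lies in this set, then $|T|^2v=Q_{c,p}(T)v-p^2v+2pT_0v$ lies in $\dom(T)$ (each summand does, again using $T_0v\in\dom(T)$), so $v\in D_1$; hence $T_0v\in D$ by Assumption \ref{ass_D}(i), and then the same identity shows $|T|^2v\in D$, i.e.\ $v\in D_2$. Conversely, $v\in D_2\subseteq D_1$ gives $T_0v\in D$, whence $Q_{c,p}(T)v=p^2v-2pT_0v+|T|^2v\in D$; this domain computation is the reusable engine referred to in (i).

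Finally, (iv) and (v) are expansions on $D_2$, where by the two domain remarks and \eqref{Eq_Commuting_components} every vector appearing ($T_0v$, $|T|^2v$, $T_kv$, $T_k^2v$, $T_0|T|^2v=|T|^2T_0v$) sits in the right domain. One computes
\[
Q_{c,s}(T)Q_{c,p}(T)v=s^2p^2v-2(s^2p+sp^2)T_0v+4spT_0^2v+(s^2+p^2)|T|^2v-2(s+p)|T|^2T_0v+|T|^4v ;
\]
subtracting the same formula with $s$ and $p$ interchanged leaves only the quaternion commutators $s^2p^2-p^2s^2$, $s^2p+sp^2-p^2s-ps^2$ and $sp-ps$, all of which vanish when $sp=ps$, proving (iv). For (v) put $p=\overline s$ and use $s\overline s=\overline ss=|s|^2$ to turn every coefficient real:
\[
Q_{c,s}(T)Q_{c,\overline s}(T)v=|s|^4v-4s_0|s|^2T_0v+4|s|^2T_0^2v+(4s_0^2-2|s|^2)|T|^2v-4s_0|T|^2T_0v+|T|^4v ,
\]
which one recognizes as $(|s|^2-|T|^2)^2v+4(T_0-s_0)(|s|^2T_0-s_0|T|^2)v$; since only the real scalars $s_0,|s|^2$ and the two-sided linear operators $T_0,|T|^2$ occur, $Q_{c,s}(T)Q_{c,\overline s}(T)$ is two-sided linear on $D_2$. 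The only genuine work is the domain tracking in (ii)–(iv) — guaranteeing that each composition in these expansions actually makes sense — which is precisely the role of Assumption \ref{ass_D} and the description of $\dom(|T|^2)$; once membership in $D$, $D_1$ or $D_2$ is secured, the remaining identities are routine quaternionic arithmetic, and I expect no conceptual difficulty beyond this.
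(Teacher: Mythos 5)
Your proposal is correct and follows essentially the same route as the paper: the same component decomposition $Q_0=2s_0^2-|s|^2-2s_0T_0+|T|^2$, $Q_j=2s_j(s_0-T_0)$ for (i), the same key commutation $T_i|T|^2v=|T|^2T_iv$ on $D_1$ obtained from \eqref{Eq_Commuting_components} together with Assumption \ref{ass_D}(i), the same two-inclusion domain argument for (iii), and expansion plus symmetry under $s\leftrightarrow p$ (resp.\ $p=\overline{s}$) for (iv) and (v). The only difference is cosmetic bookkeeping: you expand the product termwise and compare coefficients, while the paper groups it as $(sp-|T|^2)^2+(2T_0-s-p)\big(2spT_0-(s+p)|T|^2\big)$ before swapping $s$ and $p$.
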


\begin{rem}
Since the subspace $D$ from Assumption \ref{ass_D} is contained in $\dom(|T|^2)$, it is in particular contained in $\dom(T)$ and it follows that $D_2\subseteq D_1$ for the sets \eqref{Eq_Ti_range} and \eqref{Eq_Domain_QpQs}.
\end{rem}

\begin{proof}[Proof of Lemma \ref{lem_Properties_Qs}]
We start the proof by verifying the commutation relation
\begin{equation}\label{Eq_Commutation_Ti_TTbar}
|T|^2T_iv=T_i|T|^2v,\qquad v\in D_1.
\end{equation}
In order to show this, let $v\in D_1$. Then $v\in D$ and by the assumption \eqref{Eq_Ti_range} also $T_kv\in D$ for every $k\in\{0,1,2,3\}$. Using now \eqref{Eq_Commuting_components} for the vector $v$ and a second time for $T_kv$, we conclude the stated commutation \eqref{Eq_Commutation_Ti_TTbar}, namely
\begin{equation*}
|T|^2T_iv=\sum\limits_{k=0}^3T_kT_kT_iv=\sum\limits_{k=0}^3T_kT_iT_kv=\sum\limits_{k=0}^3T_iT_kT_kv=T_i|T|^2v.
\end{equation*}
i)\;\;First of all, writing $s=\sum_{i=0}^3e_is_i$, where $e_0=1$, we decompose the operator \eqref{Eq_Q_operator} into its components
\begin{equation}\label{Eq_Q_components}
Q_{c,s}(T)=\underbrace
{2s_0^2-|s|^2-2s_0T_0+|T|^2}_{=:A_0}+\sum\limits_{i=1}^3\underbrace{2s_i(s_0-T_0)}_{=:A_i}e_i,
\end{equation}
with $\dom(A_i)=D$, $i\in\{0,1,2,3\}$. To prove the commutation relation \eqref{Eq_Commuting_components}, we only consider the case $\Im(s)\neq 0$. For $\Im(s)=0$ it is $A_1=A_2=A_3=0$ and \eqref{Eq_Commuting_components} is trivially satisfied. Since $\Im(s)\neq 0$ means that $s_k\neq 0$ for at least one $k\in\{1,2,3\}$, the domain on which we have to check \eqref{Eq_Commuting_components} is given by
\begin{align}
\bigcap\nolimits_{k=0}^3\dom(A_k^2)&=\bigcap\nolimits_{k=0}^3\Set{v\in D | A_kv\in D} \notag \\
&=\Set{v\in D | (2s_0^2-|s|^2-2sT_0+|T|^2)v\in D,\,2s_k(s_0-T_0)v\in D,\,k\in\{1,2,3\}} \notag \\
&=\Set{v\in D | (-2sT_0+|T|^2)v,\,T_0v\in D} \notag \\
&=\Set{v\in D | \vert T\vert^2v\in D,\,T_0v\in D}=\Set{v\in D | \vert T\vert^2v\in D}=D_2, \label{Eq_Properties_Qs_1}
\end{align}
where in the second last equation we used the assumption \eqref{Eq_Ti_range}. Then, for every $v\in D_2$ and $i,j\in\{1,2,3\}$ we use \eqref{Eq_Commutation_Ti_TTbar} to show that there holds
\begin{align}
A_iA_0v&=2s_i(s_0-T_0)(2s_0^2-|s|^2-2s_0T_0+|T|^2)v \notag \\
&=(2s_0^2-|s|^2-2s_0T_0+|T|^2)2s_i(s_0-T_0)v=A_0A_iv, \label{Eq_Properties_Qs_2} \\
A_iA_jv&=2s_i(s_0-T_0)2s_j(s_0-T_0)v=2s_j(s_0-T_0)2s_i(s_0-T_0)v=A_jA_iv. \notag
\end{align}
Since $Q_{c,s}(T)$ is closed by definition \eqref{Eq_Q_operator}, we have proven that $Q_{c,s}(T)\in\mathcal{KC}(V)$. The equality $\overline{Q_{c,s}(T)}=Q_{c,\overline{s}}(T)$ follows immediately from \eqref{Eq_Q_components}. \medskip

ii)\;\;Let $v\in D_1$. Then it follows from \eqref{Eq_Commuting_components} and \eqref{Eq_Commutation_Ti_TTbar} that
\begin{equation*}
T_iQ_{c,s}(T)v=T_i(s^2-2sT_0+|T|^2)v=(s^2-2sT_0+|T|^2)T_iv=Q_{c,s}(T)T_iv.
\end{equation*}
iii)\;\;First, let $v\in\dom(Q_{c,s}(T)Q_{c,p}(T))$, i.e. $v\in D$ and $(p^2-2pT_0+|T|^2)v\in D$. Consequently $$(-2pT_0+|T|^2)v\in D\subseteq\dom(T)$$ and since $T_0v\in\dom(T)$ by $v\in D\subseteq\dom(T_0^2)$, we have $|T|^2v\in\dom(T)$. It then follows from \eqref{Eq_Ti_range}, that $T_0v\in D$ and hence also $|T|^2v\in D$. For the inverse inclusion consider $v\in D_2$, i.e. $v\in D$ with $|T|^2v\in D$. By \eqref{Eq_Ti_range} then also $T_0v\in D$ and
\begin{equation*}
Q_{c,p}(T)v=(p^2-2pT_0+|T|^2)v\in D,
\end{equation*}
which proves $v\in\dom(Q_{c,s}(T)Q_{c,p}(T))$. \medskip

iv)\;\;Let $v\in D_2$. Then $v\in D$, $|T|^2v\in D$ and due to the assumption \eqref{Eq_Ti_range} also $T_0v\in D$. Hence we are allowed to expand the product of the following two brackets and rearrange the terms
\begin{align}
Q_{c,s}(T)Q_{c,p}(T)v&=(s^2-2sT_0+|T|^2)(p^2-2pT_0+|T|^2)v \notag \\
&=(sp-|T|^2)^2v+(2T_0-s-p)\big(2spT_0-(s+p)|T|^2\big)v, \label{Eq_Properties_Qs_3}
\end{align}
where we also used the commutation \eqref{Eq_Commutation_Ti_TTbar}. Since we assumed that $sp=ps$ commute, the right hand side of \eqref{Eq_Properties_Qs_3} stays the same when we replace $s\leftrightarrow p$. This proves the stated commutation of $Q_{c,s}(T)$ and $Q_{c,p}(T)$. \medskip

v)\;\;Plugging in $p=\overline{s}$ in \eqref{Eq_Properties_Qs_3}, gives
\begin{equation}\label{Eq_Properties_Qs_4}
Q_{c,s}(T)Q_{c,\overline{s}}(T)v=(|s|^2-|T|^2)^2v+4(T_0-s_0)(|s|^2T_0-s_0|T|^2)v,\qquad v\in D_2,
\end{equation}
which is obviously two-sided linear.
\end{proof}

\begin{defi}[$F$-spectrum]\label{defi_F_spectrum}
Let $T,\overline{T}\in\mathcal{KC}(V)$ and $D$ as in Assumption \ref{ass_D}. According to the invertibility of the operator \eqref{Eq_Q_operator}, we define the \textit{$F$-resolvent set}
\begin{equation}\label{Eq_F_resolvent_set}
\rho_F(T):=\Set{s\in\mathbb{H} | Q_{c,s}(T)\text{ is bijective}},
\end{equation}
and the \textit{$F$-spectrum} as the complement
\begin{equation}\label{Eq_F_spectrum}
\sigma_F(T):=\mathbb{H}\setminus\rho_F(T).
\end{equation}
\end{defi}

\begin{rem}
When dealing with bounded operators $T$, the $S$-spectrum
\begin{equation*}
\sigma_S(T):=\Set{s\in\mathbb{H} | Q_s(T)\text{ is not bijective}},\quad\text{with }Q_s(T):=T^2-2s_0T+|s|^2,
\end{equation*}
coincides with $F$-spectrum in \eqref{Eq_F_spectrum}. The $F$-spectrum can be viewed as a commutative counterpart of the $S$-spectrum. However, for unbounded operators, additional research is needed to establish the equivalence between the two quaternionic spectra, especially considering the various definitions of operators with commuting components.
\end{rem}

Next, we collect some basic properties of the $F$-spectrum and the inverse operator $Q_{c,s}^{-1}(T)$. The proof of next lemma is highly inspired by \cite[Theorem 3.1.2]{FJBOOK}, where similar results are proven for different operators $T$ with commuting components in Definition \ref{defi_Operators_with_commuting_components} and with a different operator domain $\dom(Q_{c,s}(T))$.

\begin{lem}\label{lem_Properties_rhoF}
Let $T,\overline{T}\in\mathcal{KC}(V)$ and $D$ as in Assumption \ref{ass_D}. Then the $F$-resolvent set \eqref{Eq_F_resolvent_set} and the inverse of the operator \eqref{Eq_Q_operator} have the following properties: \medskip

\begin{enumerate}
\item[i)] $\rho_F(T)$ is an open subset of $\mathbb{H}$. \\
\item[ii)] $\rho_F(T)$ is axially symmetric. \\
\item[iii)] For every $s\in\rho_F(T)$ and $i\in\{0,1,2,3\}$  we have the commutation relations
\begin{equation}\label{Eq_Commutation_Ti_Qsinv}
T_iQ_{c,s}^{-1}(T)=Q_{c,s}^{-1}(T)T_i,\qquad\text{on }\dom(T).
\end{equation}
\end{enumerate}
\end{lem}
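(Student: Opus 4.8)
\textit{Plan.}
The plan is to dispatch the three assertions separately, using repeatedly that each $Q_{c,s}(T)$ ($s\in\mathbb{H}$) is closed on the fixed domain $D$ of Assumption \ref{ass_D}, so that whenever $Q_{c,s}(T)$ is bijective its inverse lies in $\mathcal{B}(V)$ by the closed graph theorem, and that all these operators share the common domain $D$, whence their graph norms $\|v\|_{Q_{c,t}(T)}:=\|v\|+\|Q_{c,t}(T)v\|$ are pairwise equivalent (the identity map between the associated Banach spaces is closed, hence bounded). Since $4T_0=Q_{c,-1}(T)-Q_{c,1}(T)$ on $D$, this yields in particular that $T_0$ is bounded relative to the graph norm of $Q_{c,p}(T)$ for every $p$.

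For i) I would fix $p\in\rho_F(T)$, so $R_p:=Q_{c,p}^{-1}(T)\in\mathcal{B}(V)$ maps $V$ into $D$ and, by the previous remark, $T_0R_p\in\mathcal{B}(V)$ as well. On $D$ one has $Q_{c,s}(T)=Q_{c,p}(T)+\mathcal{E}_s$ with $\mathcal{E}_sv=(s^2-p^2)v-2(s-p)T_0v$, hence $Q_{c,s}(T)=(\mathrm{Id}+\mathcal{E}_sR_p)Q_{c,p}(T)$ on $D$, where $\mathcal{E}_sR_p=(s^2-p^2)R_p-2(s-p)T_0R_p\in\mathcal{B}(V)$ satisfies $\|\mathcal{E}_sR_p\|\leq|s^2-p^2|\,\|R_p\|+2|s-p|\,\|T_0R_p\|\to0$ as $s\to p$. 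Thus for $s$ in a sufficiently small ball around $p$ the factor $\mathrm{Id}+\mathcal{E}_sR_p$ is boundedly invertible on $V$ by a Neumann series, and then $Q_{c,s}(T)=(\mathrm{Id}+\mathcal{E}_sR_p)Q_{c,p}(T)$ is bijective with bounded inverse $R_p(\mathrm{Id}+\mathcal{E}_sR_p)^{-1}$, so that whole ball lies in $\rho_F(T)$; this proves openness.

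For ii) the decisive remark is that for $s'\in[s]$ there is a unit quaternion $h$ with $s'=hs\overline{h}$ (any two quaternions with equal real part and equal modulus are conjugate). Denoting by $L_h\in\mathcal{B}(V)$ the right-linear left multiplication $v\mapsto hv$, which is bijective with $L_h^{-1}=L_{\overline{h}}$ and preserves $D$, a direct computation using the two-sided linearity of $T_0$, $|T|^2$ and of $D$ gives $L_hQ_{c,s}(T)L_{\overline{h}}=Q_{c,hs\overline{h}}(T)=Q_{c,s'}(T)$ on $D$; hence $Q_{c,s'}(T)$ is bijective exactly when $Q_{c,s}(T)$ is, so $s\in\rho_F(T)$ implies $[s]\subseteq\rho_F(T)$. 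For iii), writing $R:=Q_{c,s}^{-1}(T)\in\mathcal{B}(V)$ and fixing $i$, I would first check that $Rw\in D_1$ for every $w\in\dom(T)$: indeed $Rw\in D$ and, from $Q_{c,s}(T)Rw=w$, the vector $|T|^2Rw=w-s^2Rw+2sT_0Rw$ lies in $\dom(T)$, because $w\in\dom(T)$, $Rw\in D\subseteq\dom(T)$, and $Rw\in D\subseteq\dom(|T|^2)\subseteq\dom(T_0^2)$ forces $T_0Rw\in\dom(T)$. Then $T_iRw\in D$ by \eqref{Eq_Ti_range}, and applying \eqref{Eq_Commutation_Ti_Qs} to $v=Rw$ gives $T_iw=T_iQ_{c,s}(T)Rw=Q_{c,s}(T)T_iRw$; applying $R$ yields $RT_iw=T_iRw$, i.e. \eqref{Eq_Commutation_Ti_Qsinv}.

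The main obstacle is the first part: establishing the relative boundedness of $T_0$ with respect to $Q_{c,p}(T)$ (equivalently, the boundedness of $T_0R_p$) and carefully tracking the domains in the factorization $Q_{c,s}(T)=(\mathrm{Id}+\mathcal{E}_sR_p)Q_{c,p}(T)$ on $D$. Once this is in place, parts ii) and iii) are short, part ii) hinging only on spotting the conjugation identity $L_hQ_{c,s}(T)L_{\overline{h}}=Q_{c,s'}(T)$.
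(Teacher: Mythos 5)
Your proposal is correct, and parts i) and iii) essentially coincide with the paper's argument: the paper also perturbs around a point of $\rho_F(T)$ via a Neumann series, the only difference being how the boundedness of $T_0Q_{c,p}^{-1}(T)$ is obtained --- the paper writes $T_0Q_{c,p}^{-1}(T)=\tfrac12\big(TQ_{c,p}^{-1}(T)+\overline{T}Q_{c,p}^{-1}(T)\big)$ and invokes closedness of $T,\overline{T}$, while you use equivalence of the graph norms of the closed operators $Q_{c,\pm1}(T)$ and $Q_{c,p}(T)$ on the common domain $D$ together with $4T_0=Q_{c,-1}(T)-Q_{c,1}(T)$; both routes are legitimate, and your iii) (checking $Q_{c,s}^{-1}(T)w\in D_1$ and then applying \eqref{Eq_Commutation_Ti_Qs}) is verbatim the paper's. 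Part ii) is where you genuinely diverge: you exploit that every $s'\in[s]$ is a unit-quaternion conjugate $s'=hs\overline{h}$ and that left multiplication $L_h$ preserves the two-sided subspace $D$ and intertwines $L_hQ_{c,s}(T)L_{\overline{h}}=Q_{c,s'}(T)$ (using two-sided linearity of $T_0$ and $|T|^2$), so bijectivity transfers immediately and no use of iii) is needed. The paper instead argues in two steps: it first shows $\overline{s}\in\rho_F(T)$ by a componentwise computation yielding $Q_{c,\overline{s}}^{-1}(T)=\overline{Q_{c,s}^{-1}(T)}$, and then, for $p\in[s]$, constructs $Q_{c,p}^{-1}(T)=Q_{c,\overline{p}}(T)\big(Q_{c,s}(T)Q_{c,\overline{s}}(T)\big)^{-1}$, which does rely on the commutation iii). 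Your similarity argument is shorter and more conceptual; what the paper's longer computation buys are the explicit byproduct identities (the formula for $\overline{Q_{c,s}^{-1}(T)}$ and the representation of $Q_{c,s}^{-1}(T)$ through the two-sided linear operator $Q_{c,s}(T)Q_{c,\overline{s}}(T)$) that are reused later, for instance in the kernel decompositions of Lemma \ref{lem_K_decomposition} and in Corollary \ref{cor_Holomorphic_kernels}, so if one adopted your proof those identities would have to be established separately.
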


\begin{proof}
i)\;\;Fix $s\in\rho_F(T)$. For every $p\in\mathbb{H}$ we consider the operator
\begin{equation*}
\Lambda_p(T):=\big(Q_{c,s}(T)-Q_{c,p}(T)\big)Q_{c,s}^{-1}(T)=(s^2-p^2)Q_{c,s}^{-1}(T)-2(s-p)T_0Q_{c,s}^{-1}(T).
\end{equation*}
Since $T,\overline{T}$ are closed, $TQ_{c,s}^{-1}(T),\overline{T}Q_{c,s}^{-1}(T)$ are closed and everywhere defined and hence bounded. Consequently also their sum
\begin{equation}\label{Eq_Properties_rhoF_11}
T_0Q_{c,s}^{-1}(T)=\frac{1}{2}(TQ_{c,s}^{-1}(T)+\overline{T}Q_{c,s}^{-1}(T))
\end{equation}
is a bounded operator. Choose now $\varepsilon>0$ small enough, such that
\begin{align*}
\Vert\Lambda_p(T)\Vert&\leq|s^2-p^2|\,\Vert Q_{c,s}^{-1}(T)\Vert+2|s-p|\,\Vert T_0Q_{c,s}^{-1}(T)\Vert \\
&\leq\varepsilon(2|s|+\varepsilon)\Vert Q_{c,s}^{-1}(T)\Vert+2\varepsilon\Vert T_0Q_{c,s}^{-1}(T)\Vert<1,\qquad p\in U_\varepsilon(s),
\end{align*}
where in the second inequality we used
\begin{equation*}
|s^2-p^2|=|s^2-sp+sp-p^2|\leq|s||s-p|+|s-p||p|\leq\varepsilon(2|s|+\varepsilon).
\end{equation*}
Using Neumann series, we then conclude that the operator $1-\Lambda_p(T)$ is boundedly invertible. The inverse operator then satisfies
\begin{align}
Q_{c,p}(T)\big(Q_{c,s}^{-1}(T)(1-\Lambda_p(T))^{-1}\big)&=\big(1-(Q_{c,s}(T)-Q_{c,p}(T))Q_{c,s}^{-1}(T)\big)(1-\Lambda_p(T))^{-1} \notag \\
&=(1-\Lambda_p(T))(1-\Lambda_p(T))^{-1}=1,\qquad\text{on }V, \label{Eq_Properties_rhoF_1}
\end{align}
but also
\begin{align}
\big(Q_{c,s}^{-1}(T)(1-\Lambda_p(T))^{-1}\big)Q_{c,p}(T)&=Q_{c,s}^{-1}(T)(1-\Lambda_p(T))^{-1}(1-\Lambda_p(T))Q_{c,s}(T) \notag \\
&=Q_{c,s}^{-1}(T)Q_{c,s}(T)=1,\qquad\text{on }D. \label{Eq_Properties_rhoF_2}
\end{align}
The two identities \eqref{Eq_Properties_rhoF_1} and \eqref{Eq_Properties_rhoF_2} now prove that $Q_{c,p}(T)$ is bijective for every $p\in U_\varepsilon(s)$, and this proves the first claim. \medskip

We now anticipate the proof of point iii) because it is necessary to prove point ii). \medskip

iii)\;\;Let $v\in\dom(T)$ and define $w:=Q_{c,s}^{-1}(T)v$. Then $(s^2-2sT_0+|T|^2)w=v\in\dom(T)$. Since moreover $w\in D\subseteq\dom(T_0)^2$, also $T_0w\in\dom(T)$, which gives $|T|^2w\in\dom(T)$, i.e. $w\in D_1$. From the assumption \eqref{Eq_Commutation_Ti_Qs} we then conclude the commutation
\begin{equation*}
T_iQ_{c,s}(T)w=Q_{c,s}(T)T_iw.
\end{equation*}
Applying $Q_{c,s}^{-1}(T)$ from the left and plugging in the element $w=Q_{c,s}^{-1}(T)v$, gives
\begin{equation*}
Q_{c,s}^{-1}(T)T_iv=T_iQ_{c,s}^{-1}(T)v.
\end{equation*}
ii)\;\;Let $s\in\rho_F(T)$. In the \textit{first step} we will prove that $\overline{s}\in\rho_F(T)$. Therefore, let us consider the components $A_0,A_1,A_2,A_3$ of $Q_{c,s}(T)$ from \eqref{Eq_Q_components}. Then from the commutation relation \eqref{Eq_Commutation_Ti_Qsinv} there follows
\begin{equation}\label{Eq_Properties_rhoF_4}
Q_{c,s}^{-1}(T)A_i=A_iQ_{c,s}^{-1}(T),\qquad\text{on }D.
\end{equation}
If we also decompose $Q_{c,s}^{-1}(T)=\sum_{j=0}^3e_jB_j$ with respect to its components $B_j\in\mathcal{B}(V)$, it follows from Lemma \ref{lem_Commutation_of_components}, that there also commute the components
\begin{equation}\label{Eq_Properties_rhoF_5}
B_jA_i=A_iB_j,\qquad\text{on }D.
\end{equation}
From the explicit form of the quaternionic product we then get for every $v\in D$
\begin{align*}
v&=Q_{c,s}^{-1}(T)Q_{c,s}(T)v \\
&=(B_0A_0-B_1A_1-B_2A_2-B_3A_3)v+e_1(B_2A_3-B_3A_2+B_0A_1+B_1A_0)v \\
&\quad+e_2(B_3A_1-B_1A_3+B_0A_2+B_2A_0)v+e_3(B_1A_2-B_2A_1+B_0A_3+B_3A_0)v,
\end{align*}
as well as
\begin{align*}
v&=Q_{c,s}(T)Q_{c,s}^{-1}(T)v \\
&=(B_0A_0-B_1A_1-B_2A_2-B_3A_3)v+e_1(B_3A_2-B_2A_3+B_1A_0+B_0A_1)v \\
&\quad+e_2(B_1A_3-B_3A_1+B_2A_0+B_0A_2)v+e_3(B_2A_1-B_1A_2+B_3A_0+B_0A_3)v,
\end{align*}
for every $v\in V$. For $v\in D$ we now add and subtract these two equations and obtain:
\begin{subequations}\label{Eq_Properties_rhoF_6}
\begin{align}
v&=(B_0A_0-B_1A_1-B_2A_2-B_3A_3)v \notag \\
&\quad+e_1(B_0A_1+B_1A_0)v+e_2(B_0A_2+B_2A_0)v+e_3(B_0A_3+B_3A_0)v,\quad\text{and} \\
0&=e_1(B_2A_3-B_3A_2)v+e_2(B_3A_1-B_1A_3)v+e_3(B_1A_2-B_2A_1)v,
\end{align}
\end{subequations}
which both have to be satisfied for every component individually due to the uniqueness of the components in Lemma \ref{lem_Components_of_operators}. If we now multiply $\overline{Q_{c,s}^{-1}(T)}=B_0-e_1B_1-e_2B_2-e_3B_3$ and $Q_{c,\overline{s}}(T)=A_0-e_1A_1-e_2A_2-e_3A_3$ and use the identities \eqref{Eq_Properties_rhoF_6}, we get
\begin{align*}
\overline{Q_{c,s}^{-1}(T)}Q_{c,\overline{s}}(T)v&=(B_0A_0-B_1A_1-B_2A_2-B_3A_3)v+e_1(B_2A_3-B_3A_2-B_0A_1-B_1A_0)v \\
&+e_2(B_3A_1-B_1A_3-B_0A_2-B_2A_0)v+e_3(B_1A_2-B_2A_1-B_0A_3-B_3A_0)v \\
&=v+e_1(0-0)v+e_2(0-0)v+e_3(0-0)v=v.
\end{align*}
Similarly, we also get
\begin{align*}
Q_{c,\overline{s}}(T)\overline{Q_{c,s}^{-1}(T)}v&=(B_0A_0-B_1A_1-B_2A_2-B_3A_3)v+e_1(B_3A_2-B_2A_3-B_1A_0-B_0A_1)v \\
&+e_2(B_1A_3-B_3A_1-B_2A_0-B_0A_2)v+e_3(B_2A_1-B_1A_2-B_3A_0-B_0A_3)v \\
&=v+e_1(0-0)v+e_2(0-0)v+e_3(0-0)v=v.
\end{align*}
Since $Q_{c,s}^{-1}(T)\in\mathcal{B}(V)$, also $\overline{Q_{c,s}^{-1}(T)}\in\mathcal{B}(V)$ by Lemma \ref{lem_Norm_of_components} and the product $Q_{c,s}(T)\overline{Q_{c,s}^{-1}(T)}$ is closed as the product of a bounded and a closed operator. Since it is also everywhere defined due to $\ran(B_j)\subseteq D$ by \eqref{Eq_Components_representation} and the two-sided linearity of $D$, we get
\begin{equation*}
Q_{c,s}(T)\overline{Q_{c,s}^{-1}(T)}\in\mathcal{B}(V).
\end{equation*}
This means, we can extend the identity $Q_{c,\overline{s}}(T)\overline{Q_{c,s}^{-1}(T)}v=v$ from the dense subspace $D$ to the whole space $V$ by continuity. Hence, $Q_{c,\overline{s}}(T)$ is bijective with $Q_{c,\overline{s}}^{-1}(T)=\overline{Q_{c,s}^{-1}(T)}$. \medskip

In the \textit{second step} we show that $p\in\rho_F(T)$ for every $p\in[s]$. To do so, we write $s=x+Jy$ for $x,y\in\mathbb{R}$, $J\in\mathbb{S}$ and decompose $Q_{c,s}(T)$ into
\begin{equation}\label{Eq_Properties_rhoF_10}
Q_{c,s}(T)=\underbrace{x^2-y^2-2xT_0+|T|^2}_{=:A_0}+J\underbrace{2y(x-T_0)}_{=:A_J}.
\end{equation}
Since we have already proven in the first step that $Q_{c,\overline{s}}(T)$ is bijective, we write its inverse as
\begin{equation}\label{Eq_Properties_rhoF_9}
Q_{c,s}^{-1}(T)=Q_{c,\overline{s}}(T)Q_{c,\overline{s}}^{-1}(T)Q_{c,s}^{-1}(T)=(A_0-JA_J)\big(Q_{c,s}(T)Q_{c,\overline{s}}(T)\big)^{-1},
\end{equation}
using the two-sided linear operator \eqref{Eq_Properties_Qs_4}, given by
\begin{equation}\label{Eq_Properties_rhoF_8}
Q_{c,s}(T)Q_{c,\overline{s}}(T)=(x^2+y^2-|T|^2)^2+4(T_0-x)\big((x^2+y^2)T_0-x|T|^2\big),\quad\text{on }D_2.
\end{equation}
Since $p\in[s]$, we can decompose $p=x+Iy$ for some $I\in\mathbb{S}$. Consequently there holds $Q_{c,p}(T)=A_0+IA_J$, with $A_0$, $A_J$ from \eqref{Eq_Properties_rhoF_10}, and also
\begin{equation*}
Q_{c,p}(T)Q_{c,\overline{p}}(T)=Q_{c,s}(T)Q_{c,\overline{s}}(T),
\end{equation*}
since \eqref{Eq_Properties_rhoF_8} does not depend on the imaginary units $J$. Using this, we get
\begin{equation}\label{Eq_Properties_rhoF_3}
Q_{c,p}(T)\big(Q_{c,\overline{p}}(T)(Q_{c,s}(T)Q_{c,\overline{s}}(T))^{-1}\big)=Q_{c,p}(T)Q_{c,\overline{p}}(T)(Q_{c,p}(T)Q_{c,\overline{p}}(T))^{-1}=1,\quad\text{on }V.
\end{equation}
Due to \eqref{Eq_Commutation_Ti_Qsinv} every individual term in (\ref{Eq_Properties_rhoF_10}) commutes with $Q_{c,s}^{-1}(T)$ and $Q_{c,\overline{s}}^{-1}(T)$ on $D$.
Consequently also  $A_0$ and $A_J$ and hence $Q_{c,p}(T)$ commute with $Q_{c,s}^{-1}(T)$ and $Q_{c,\overline{s}}^{-1}(T)$ on $D$. We can now rearrange the left hand side of \eqref{Eq_Properties_rhoF_3} to
\begin{equation}\label{Eq_Properties_rhoF_7}
\big(Q_{c,\overline{p}}(T)(Q_{c,s}(T)Q_{c,\overline{s}}(T))^{-1}\big)Q_{c,p}(T)=Q_{c,\overline{p}}(T)Q_{c,p}(T)(Q_{c,\overline{p}}(T)Q_{c,p}(T))^{-1}=1,\quad\text{on }D.
\end{equation}
The equations \eqref{Eq_Properties_rhoF_3} and \eqref{Eq_Properties_rhoF_7} then show that $Q_{c,p}(T)$ is bijective and hence $p\in\rho_F(T)$.
\end{proof}

\begin{lem}\label{lem_Commutation_B}
Let $T,\overline{T}\in\mathcal{KC}(V)$ and $D$ as in Assumption \ref{ass_D}. Then every $B\in\mathcal{B}(V)$ which commutes with the components $BT_i=T_iB$ on $\dom(T)$, $i\in\{0,1,2,3\}$, also commutes with \medskip

\begin{enumerate}
\item[i)] $B|T|^2=|T|^2B$,\qquad on $\dom(|T|^2)$, \\
\item[ii)] $B\big(Q_{c,s}(T)Q_{c,\overline{s}}(T)\big)^{-1}=\big(Q_{c,s}(T)Q_{c,\overline{s}}(T)\big)^{-1}B$,\qquad for every $s\in\rho_F(T)$.
\end{enumerate}
\end{lem}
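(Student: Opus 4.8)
The plan is to prove the two statements separately, using only the hypothesis $BT_i=T_iB$ on $\dom(T)$ (read so that $B$ maps $\dom(T)$ into $\dom(T)$) together with the domain bookkeeping already set up in Assumption \ref{ass_D} and Lemma \ref{lem_Properties_Qs}.

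For i), the idea is to push $B$ through $|T|^2=\sum_{k=0}^3T_k^2$ componentwise, applying the hypothesis twice exactly as in the derivation of \eqref{Eq_Commutation_Ti_TTbar}. Fix $v\in\dom(|T|^2)=\bigcap_{k=0}^3\dom(T_k^2)$. Then $v\in\dom(T_k)=\dom(T)$ and $T_kv\in\dom(T_k)=\dom(T)$ for each $k$. Applying the hypothesis at $v$ gives $Bv\in\dom(T_k)$ with $T_kBv=BT_kv$; applying it at $T_kv$ gives $BT_kv\in\dom(T_k)$ with $T_k(BT_kv)=BT_k^2v$. Combining, $T_kBv=BT_kv\in\dom(T_k)$, so $Bv\in\dom(T_k^2)$ and $T_k^2Bv=T_k(BT_kv)=BT_k^2v$. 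Summing over $k=0,1,2,3$ yields $Bv\in\dom(|T|^2)$ and $|T|^2Bv=B|T|^2v$, which is i).

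For ii), the structural input is Lemma \ref{lem_Properties_Qs} v): the operator $M:=Q_{c,s}(T)Q_{c,\overline s}(T)$ is two-sided linear and, on its domain $D_2$, equals the polynomial $P(T_0,|T|^2):=(|s|^2-|T|^2)^2+4(T_0-s_0)(|s|^2T_0-s_0|T|^2)$ with real coefficients in the two commuting operators $T_0$ and $|T|^2$. For $s\in\rho_F(T)$ both $Q_{c,s}(T)$ and $Q_{c,\overline s}(T)$ are bijective by Lemma \ref{lem_Properties_rhoF} ii), so $M$ is bijective with bounded inverse $C:=M^{-1}=Q_{c,\overline s}^{-1}(T)Q_{c,s}^{-1}(T)$ and $\ran C=\dom M=D_2$. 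Now $B$ commutes with multiplication by the real scalars $|s|^2,s_0$ since it is right linear, with $T_0$ on $\dom(T)$ by the hypothesis (case $i=0$), and with $|T|^2$ on $\dom(|T|^2)$ by part i); expanding $P$ and inserting these commutations term by term (the conditions \eqref{Eq_Ti_range} and $D_2\subseteq D_1$ ensure every intermediate vector $T_0v$, $|T|^2v$ again lies in $D$, hence in the domains where the commutations apply) gives $BMv=MBv$ for every $v\in D_2$. Finally, for arbitrary $w\in V$ put $v:=Cw\in D_2$; using $Mv=w$, $Bv\in D_2$, and $CM=\mathrm{id}$ on $D_2$ one obtains $CBw=C(BMv)=C(MBv)=Bv=BCw$, i.e. $BC=CB$ on $V$.

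The step I expect to be the real obstacle is not any of these computations but the domain bookkeeping in ii), specifically the point that the last sandwich argument requires $B$ to map $D_2$ (equivalently $D$) into itself: this does not follow formally from $BT_i=T_iB$ on $\dom(T)$ alone, although part i) already gives $B(\dom(|T|^2))\subseteq\dom(|T|^2)$, so only $B(D)\subseteq D$ remains to be secured. This inclusion is automatic for every operator $B$ to which the lemma is applied in the sequel, since those are resolvent-type operators whose range is contained in $D$; granting it, both parts reduce to pushing $B$ through finite sums and products of $T_0$, $|T|^2$ and real scalars.
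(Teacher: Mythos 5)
Your argument coincides with the paper's own proof in both parts: for i) it is the same componentwise computation (apply the hypothesis at $v$ and at $T_kv$), and for ii) the paper proceeds exactly as you do, namely it sets $w:=\big(Q_{c,s}(T)Q_{c,\overline{s}}(T)\big)^{-1}v\in D_2$, derives the commutations \eqref{Eq_Commutation_B_1}--\eqref{Eq_Commutation_B_3} of $B$ with $T_0$ and $|T|^2$ at the vectors $w$, $(|s|^2-|T|^2)w$ and $(s_0|T|^2-|s|^2T_0)w$ (legitimate because $w\in D_2$ and \eqref{Eq_Ti_range} applies), pushes $B$ through the two-sided representation \eqref{Eq_QsQsbar}, and then cancels with the bounded inverse. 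Concerning the obstacle you single out: the paper does not resolve it, it elides it. Writing the resulting polynomial expression at $Bw$ as $Q_{c,s}(T)Q_{c,\overline{s}}(T)Bw$ and then cancelling requires $Bw\in D_2$, since $\dom\big(Q_{c,s}(T)Q_{c,\overline{s}}(T)\big)=D_2$ by \eqref{Eq_Domain_QpQs} and \eqref{Eq_QsQsbar} is only stated on $D_2$; part i) gives $B\dom(|T|^2)\subseteq\dom(|T|^2)$, but $B(D)\subseteq D$ is nowhere established, and the paper performs this step without comment. So you have not missed an idea contained in the paper; you have made explicit a domain point that its proof passes over silently. Be aware, however, that your closing remedy (``automatic in the applications, since those operators have range in $D$'') does not match how the lemma is actually used: via \eqref{Eq_AB_C_commutation} and Proposition \ref{prop_Commutation_B}/Corollary \ref{cor_Commutation_decaying} it is invoked for arbitrary bounded operators commuting with $T,T_0,\dots,T_3$ (for instance components of $\widehat{g}(T)$), whose ranges need not lie in $D$. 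A clean fix would be either to assume $B(D)\subseteq D$ (or to take $D=\dom(|T|^2)$) or to justify the identification of the polynomial expression with $Q_{c,s}(T)Q_{c,\overline{s}}(T)Bw$ by a separate argument; as a reproduction of the paper's proof, though, your write-up is complete to exactly the same degree as the original.
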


\begin{proof}
i)\;\;Let $v\in\dom(|T|^2)$. Then $v\in\dom(T)$ and $T_iv\in\dom(T)$ for every $i\in\{0,1,2,3\}$. This means we are allowed to use the assumption $BT_i=T_iB$ for $T_iv$ and for $v$, which gives
\begin{equation*}
B|T|^2v=\sum\limits_{i=0}^3BT_iT_iv=\sum\limits_{i=0}^3T_iBT_iv=\sum\limits_{k=0}^3T_iT_iBv=|T|^2Bv.
\end{equation*}
ii)\;\;Let $v\in V$ and define $w:=\big(Q_{c,s}(T)Q_{c,\overline{s}}(T)\big)^{-1}v$. Then $w\in D_2$ by \eqref{Eq_Domain_QpQs}, i.e. $w\in D$ with $|T|^2w\in D$. This in particular implies $w\in\dom(|T|^2)\subseteq\dom(T)$ and it follows from the assumption and from i) the commutation relations
\begin{equation}\label{Eq_Commutation_B_1}
T_0Bw=BT_0w\qquad\text{and}\qquad|T|^2Bw=B|T|^2w.
\end{equation}
Moreover, we also have $(s_0|T|^2-|s|^2T_0)w\in\dom(T)$ as well as $(|s|^2-|T|^2)w\in D$. Again by assumption and by i), there also commutes
\begin{align}
BT_0(s_0|T|^2-|s|^2T_0)w&=T_0B(s_0|T|^2-|s|^2T_0)w \qquad\text{and} \label{Eq_Commutation_B_2} \\
B|T|^2(|s|^2-|T|^2)w&=|T|^2B(|s|^2-|T|^2)w. \label{Eq_Commutation_B_3}
\end{align}
Combining now \eqref{Eq_Commutation_B_1}, \eqref{Eq_Commutation_B_2} and \eqref{Eq_Commutation_B_3} in the representation \eqref{Eq_QsQsbar}, we obtain
\begin{equation*}
BQ_{c,s}(T)Q_{c,\overline{s}}(T)w=Q_{c,s}(T)Q_{c,\overline{s}}(T)Bw.
\end{equation*}
Applying $\big(Q_{c,s}(T)Q_{c,\overline{s}}(T)\big)^{-1}$ from the left and plugging in $w=\big(Q_{c,\overline{s}}(T)Q_{c,s}(T)\big)^{-1}v$, gives the stated commutation ii).
\end{proof}

With the operator \eqref{Eq_Q_operator} and motivated by the Cauchy integral formula \eqref{Eq_Cauchy_formula}, we define for every $T,\overline{T}\in\mathcal{KC}(V)$, $D$ as in Assumption \ref{ass_D} and $s\in\rho_F(T)$ the \textit{left} and the \textit{right} \textit{$S$-resolvent}
\begin{equation}\label{Eq_S_resolvent}
S_L^{-1}(s,T):=(s-\overline{T})Q_{c,s}^{-1}(T)\qquad\text{and}\qquad S_R^{-1}(s,T):=sQ_{c,s}^{-1}(T)-\sum\limits_{i=0}^3T_iQ_{c,s}^{-1}(T)\overline{e_i}.
\end{equation}
Note, that on $\dom(T)$ we are allowed to interchange $T_i$ and $Q_{c,s}^{-1}(T)$ due to \eqref{Eq_Commutation_Ti_Qsinv}, which gives the more elegant form of the right $S$-resolvent
\begin{equation}\label{Eq_S_resolvent_on_domT}
S_R^{-1}(s,T)=Q_{c,s}^{-1}(T)(s-\overline{T}),\qquad\text{on }\dom(T).
\end{equation}
It is straight forward to verify, that the Cauchy-Fueter operator $\mathcal D$ from \eqref{Eq_Cauchy_Fueter_operator}, its conjugate $\overline{\mathcal D}$ from \eqref{Eq_Cauchy_Fueter_operator_conjugate} and the Laplace operator $\Delta$ from \eqref{Eq_Laplace_operator}, applied to the Cauchy-kernel $S_L^{-1}(s,q)$ from \eqref{Eq_Cauchy_formula}, are given by
\begin{align*}
\mathcal DS_L^{-1}(s,q)&=-2Q_{c,s}^{-1}(q), \\
\overline{\mathcal D}S_L^{-1}(s,q)&=2S_L^{-1}(s,q)\big(S_L^{-1}(s,q)+S_L^{-1}(s,\overline{q})\big) \\
\Delta S_L^{-1}(s,q)&=-4S_L^{-1}(s,q)Q_{c,s}^{-1}(q).
\end{align*}
We see, that the kernel $-2Q_{c,s}^{-1}(T)$ of the $Q$-functional calculus is already given by the inverse of the $Q$-operator \eqref{Eq_Q_operator}. Moreover, the above relations motivate the $P_2$-resolvents
\begin{subequations}\label{Eq_P_resolvent}
\begin{align}
P_2^L(s,T):=&2S_L^{-1}(s,T)\big(S_L^{-1}(s,T)+S_L^{-1}(s,\overline{T})\big), \label{Eq_P2L_resolvent}
\\
P_2^R(s,T):=&2\big(S_R^{-1}(s,T)+S_R^{-1}(s,\overline{T})\big)S_R^{-1}(s,T),
\end{align}
\end{subequations}
as well as the $F$-resolvents
\begin{equation}\label{Eq_F_resolvent}
F_L(s,T):=-4S_L^{-1}(s,T)Q_{c,s}^{-1}(T)\qquad\text{and}\qquad F_R(s,T):=-4Q_{c,s}^{-1}(T)S_R^{-1}(s,T).
\end{equation}
Next, we will find a similar structure for all four integral kernels \eqref{Eq_S_resolvent}, \eqref{Eq_Q_operator}, \eqref{Eq_P_resolvent} and \eqref{Eq_F_resolvent}. In Section \ref{sec_Functional_calculi_for_decaying_functions}, this structure will play a crucial role in many important properties of the respective functional calculi.

\begin{lem}\label{lem_K_decomposition}
Let $T,\overline{T}\in\mathcal{KC}(V)$ and $D$ as in Assumption \ref{ass_D}. Then for every $s\in\rho_F(T)$ we consider the following pairs of operators: \medskip

\begin{enumerate}
\item[i)] $K_L(s,T):=S_L^{-1}(s,T)$\hspace{0.43cm} and \quad $K_R(s,T):=S_R^{-1}(s,T)$,\hspace{0.45cm} or \\
\item[ii)] $K_L(s,T):=-2Q_{c,s}^{-1}(T)$\quad and \quad $K_R(s,T):=-2Q_{c,s}^{-1}(T)$,\quad or \\
\item[iii)] $K_L(s,T):=P_2^L(s,T)$\hspace{0.6cm} and \quad $K_R(s,T):=P_2^R(s,T)$,\hspace{0.53cm} or \\
\item[iv)] $K_L(s,T):=F_L(s,T)$\hspace{0.65cm} and \quad $K_R(s,T):=F_R(s,T)$.
\end{enumerate}

\medskip

Then for any pair i) -- iv), these operators admit for $s=x+Jy\in\rho_F(T)$ the decomposition
\begin{equation}\label{Eq_K_decomposition}
K_L(s,T)=A(x,y,T)+B(x,y,T)J\quad\text{and}\quad K_R(s,T)=A(x,y,T)+JB(x,y,T),
\end{equation}
with operators $A(x,y,T),B(x,y,T)\in\mathcal{B}(V)$, satisfying
\begin{subequations}\label{Eq_AB_properties}
\begin{align}
\circ\;\;&A(x,-y,T)=A(x,y,T), \qquad B(x,-y,T)=-B(x,y,T), \label{Eq_AB_sbar_property} \\
\circ\;\;&\overline{A(x,y,T)}=A(x,y,\overline{T}),\hspace{1.08cm} \overline{B(x,y,T)}=B(x,y,\overline{T}), \label{Eq_AB_Tbar_property} \\
\circ\;\;&A(x,y,T)\text{ and }B(x,y,T)\text{ commute with }T,\overline{T},T_0,T_1,T_2,T_3\text{ on}\,\dom(T). \label{Eq_AB_Ti_commutation}
\end{align}
\end{subequations}
Let $C\in\mathcal{B}(V)$ and suppose that it commutes with $T,T_0,T_1,T_2,T_3$ on $\dom(T)$, then
\begin{equation}\label{Eq_AB_C_commutation}
C\text{ commutes with }A(x,y,T)\text{ and }B(x,y,T).
\end{equation}
\end{lem}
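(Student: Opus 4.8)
The plan is to treat the four cases i)--iv) in a unified way by first establishing the decomposition \eqref{Eq_K_decomposition} and properties \eqref{Eq_AB_properties}, \eqref{Eq_AB_C_commutation} for the basic building block $Q_{c,s}^{-1}(T)$, and then propagating these properties through the algebraic formulas \eqref{Eq_S_resolvent}, \eqref{Eq_P_resolvent}, \eqref{Eq_F_resolvent} that express the other kernels in terms of $Q_{c,s}^{-1}(T)$ and the components $T_i$. First I would write $s = x + Jy$ and, using the representation \eqref{Eq_Properties_rhoF_9}, express $Q_{c,s}^{-1}(T) = (A_0 - J A_J)\bigl(Q_{c,s}(T)Q_{c,\overline s}(T)\bigr)^{-1}$ where $A_0 = x^2 - y^2 - 2xT_0 + |T|^2$ and $A_J = 2y(x-T_0)$ are from \eqref{Eq_Properties_rhoF_10}, and $\bigl(Q_{c,s}(T)Q_{c,\overline s}(T)\bigr)^{-1}$ is the inverse of the $J$-independent two-sided linear operator from \eqref{Eq_Properties_rhoF_8}. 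This already has the correct shape: for the left case one sets $A_Q(x,y,T) := A_0 \bigl(Q_{c,s}(T)Q_{c,\overline s}(T)\bigr)^{-1}$ and $B_Q(x,y,T) := -A_J' \bigl(Q_{c,s}(T)Q_{c,\overline s}(T)\bigr)^{-1}$ where $A_J = y A_J'$ with $A_J' = 2(x-T_0)$ (dividing out the scalar $y$ so that the parity \eqref{Eq_AB_sbar_property} comes out right); since $A_0$, $A_J'$ and the operator in \eqref{Eq_Properties_rhoF_8} only involve $T_0$, $|T|^2$ and the real numbers $x,y$ in an even way (in particular $A_0$ depends on $y$ only through $y^2$, as does \eqref{Eq_Properties_rhoF_8}, while $A_J'$ is $y$-independent), properties \eqref{Eq_AB_sbar_property} and \eqref{Eq_AB_Tbar_property} follow by inspection (for \eqref{Eq_AB_Tbar_property} one also uses $\overline{Q_{c,s}^{-1}(T)} = Q_{c,\overline s}^{-1}(T)$ proved in Lemma \ref{lem_Properties_rhoF}). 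The commutation \eqref{Eq_AB_Ti_commutation} with $T_0,\dots,T_3$ on $\dom(T)$ follows from Lemma \ref{lem_Properties_rhoF} iii) applied to $Q_{c,s}^{-1}(T)$ together with Lemma \ref{lem_Commutation_B} ii), and then commutation with $T = \sum e_i T_i$ and $\overline T$ is immediate; the same argument, using Lemma \ref{lem_Commutation_B} ii) with $C$ in place of $B$, yields \eqref{Eq_AB_C_commutation} for this case.

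Next I would handle case i). From $S_L^{-1}(s,T) = (s - \overline T) Q_{c,s}^{-1}(T) = (x - T_0 + e_1 T_1 + e_2 T_2 + e_3 T_3)Q_{c,s}^{-1}(T) + Jy\, Q_{c,s}^{-1}(T)$, and using that $Q_{c,s}^{-1}(T)$ already has the decomposition $A_Q + B_Q J$ with $A_Q, B_Q$ commuting with all $T_i$ on $\dom(T)$, one multiplies out and collects the scalar-and-$J$ terms. Writing $\overline T_{\mathrm{vec}} := -T_0 + e_1T_1 + e_2T_2 + e_3T_3$ (a bounded-times... — actually $T_i$ are unbounded, so one must be slightly careful: the products $T_i Q_{c,s}^{-1}(T)$ are bounded and everywhere defined by the argument in the proof of Lemma \ref{lem_Properties_rhoF}, hence the resulting $A$ and $B$ are genuinely in $\mathcal B(V)$). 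One gets $A(x,y,T) = x A_Q + (\overline T - T_0)\, \text{-part}$... more precisely, carefully separating, $A(x,y,T) = x A_Q + \overline{T}_{\text{vec}} A_Q - y^2(\dots)$; I will organise this so that $A(x,y,T) = (x - \overline{T}_{\mathrm{scalarpart}})A_Q + y\cdot(\text{something even})\dots$. The cleanest route: since $S_L^{-1}(s,q)$ as a \emph{function} is slice hyperholomorphic and hence of the form $\alpha(x,y) + J\beta(x,y)$ with $\alpha$ even and $\beta$ odd in $y$, and the operator kernel is obtained by the same closed formulas with $q_0 \to T_0$, $|q|^2 \to |T|^2$, the operators $A(x,y,T)$, $B(x,y,T)$ are obtained from $\alpha,\beta$ by this substitution; parity \eqref{Eq_AB_sbar_property} is then inherited directly, \eqref{Eq_AB_Tbar_property} follows because conjugation sends $T_0 \to T_0$, $T_i \to -T_i$ and the formula for $\overline{S_L^{-1}(s,T)}$ matches $S_L^{-1}(s,\overline T)$, and \eqref{Eq_AB_Ti_commutation}, \eqref{Eq_AB_C_commutation} follow since $A,B$ are built from $T_0$, $T_i$ and $Q_{c,s}^{-1}(T)$, all of which commute (on $\dom(T)$) with $T_0,\dots,T_3$, $T$, $\overline T$ by Lemma \ref{lem_Properties_rhoF} and with $C$ by hypothesis.

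For cases iii) and iv) I would argue purely algebraically: $P_2^L(s,T)$, $F_L(s,T)$ are, by \eqref{Eq_P_resolvent}, \eqref{Eq_F_resolvent}, finite sums/products of the operators $S_L^{-1}(s,T)$, $S_L^{-1}(s,\overline T)$, $Q_{c,s}^{-1}(T)$, and likewise on the right. Each of these factors already admits a decomposition of the form $A' + B'J$ (resp. $A' + JB'$) with $A', B' \in \mathcal B(V)$ satisfying \eqref{Eq_AB_properties} — for $S_L^{-1}(s,\overline T)$ use case i) with $T$ replaced by $\overline T$, noting $\overline T \in \mathcal{KC}(V)$ need not hold, but the formulas and Assumption \ref{ass_D} are symmetric in $T,\overline T$ in the sense needed; alternatively use \eqref{Eq_AB_Tbar_property} to write $S_L^{-1}(s,\overline T) = \overline{A(x,y,T)} + \overline{B(x,y,T)}J$ after checking the conjugate is the pointwise conjugate. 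The key algebraic fact is that the set of operators of the form $A + BJ$ with $A, B \in \mathcal B(V)$ commuting with each other and with the fixed data is closed under sums and products — since $J^2 = -1$, $(A_1 + B_1 J)(A_2 + B_2 J) = (A_1 A_2 - B_1 B_2) + (A_1 B_2 + B_1 A_2)J$ provided the $A_i, B_j$ all commute, which they do here because each is a bounded operator commuting with $T_0,\dots,T_3$ on $\dom(T)$ (hence with each other, being built from $Q_{c,\cdot}^{-1}(T)$ and $T_i$, and commuting bounded operators that agree on the dense $\dom(T)$ agree everywhere). One checks that the parity conditions \eqref{Eq_AB_sbar_property} and conjugation conditions \eqref{Eq_AB_Tbar_property} are preserved under these sum/product operations (parity: product of even $\times$ even $+$ odd $\times$ odd is even, etc.; conjugation: bar is an anti-automorphism but the $A,B$ here mutually commute so it behaves like a homomorphism on this subalgebra), and the commutation properties \eqref{Eq_AB_Ti_commutation}, \eqref{Eq_AB_C_commutation} are likewise stable. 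The main obstacle I expect is purely bookkeeping-the-domains: one must repeatedly invoke that products such as $T_i Q_{c,s}^{-1}(T)$ and $T_i S_L^{-1}(s,T)$ are closed and everywhere defined, hence bounded, so that ``$A(x,y,T)$ and $B(x,y,T)$ lie in $\mathcal B(V)$'' is legitimate and the commutation relations, which a priori hold only on the dense domain $\dom(T)$, extend to all of $V$ by continuity — this is exactly the mechanism already used in the proof of Lemma \ref{lem_Properties_rhoF} iii), so I would cite that argument rather than repeat it. No single step is deep; the content is in packaging the four cases uniformly and tracking the even/odd and conjugation symmetries through the algebra.
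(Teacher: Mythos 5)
Your overall architecture coincides with the paper's: first decompose $Q_{c,s}^{-1}(T)$ using \eqref{Eq_Properties_rhoF_10} and \eqref{Eq_Properties_rhoF_9}, then obtain the $S$-resolvents by multiplying with $(s-\overline{T})$, then treat $P_2$ and $F$ as algebraic combinations of the previous kernels (the paper goes through $F_L$ and then uses $P_2^L(s,T)=T_0F_L(s,T)-F_L(s,T)s$, while you use the defining product \eqref{Eq_P2L_resolvent} with $S_L^{-1}(s,\overline{T})$; both routes are viable, and your worry about $\overline{T}\in\mathcal{KC}(V)$ is moot since it is a standing hypothesis). However, two concrete steps fail as written.

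First, in case ii) your coefficient $B_Q:=-2(x-T_0)\big(Q_{c,s}(T)Q_{c,\overline{s}}(T)\big)^{-1}$, obtained by ``dividing out $y$'', is wrong on both counts: with it, $A_Q+B_QJ$ differs from $Q_{c,s}^{-1}(T)$ by the missing factor $y$, so \eqref{Eq_K_decomposition} fails; and since $Q_{c,s}(T)Q_{c,\overline{s}}(T)$ in \eqref{Eq_Properties_rhoF_8} depends on $y$ only through $y^2$, this $B_Q$ is even in $y$, violating \eqref{Eq_AB_sbar_property}. The correct choice is the paper's $B_1=-2y(x-T_0)\big(Q_{c,s}(T)Q_{c,\overline{s}}(T)\big)^{-1}$: keeping the $y$ is exactly what makes $B$ odd. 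Second, your closure-under-products argument is justified by the wrong commutation. To expand $(A_1+B_1J)(A_2+B_2J)$ as $(A_1A_2-B_1B_2)+(A_1B_2+B_1A_2)J$ you must move the quaternion $J$ past $A_2,B_2$; mutual commutation of the operators $A_i,B_j$ with one another, or with $T_0,\dots,T_3$, does not give this — what is needed is that the coefficients being passed are two-sided linear (commute with scalar multiplication by $J$). That holds for the coefficients of the intrinsic kernels $Q_{c,s}^{-1}(T)$ and $(s-T_0)Q_{c,s}^{-1}(T)$, but not for those of $S_L^{-1}(s,T)$, which contain the imaginary components of $\overline{T}$; this is why the paper multiplies in the order $S_L^{-1}(s,T)\cdot\big(-4Q_{c,s}^{-1}(T)\big)$ and explicitly invokes that $A_1,B_1$ commute with $J$. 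The operator--operator commutations coming from \eqref{Eq_AB_Ti_commutation} and \eqref{Eq_AB_C_commutation} serve a different purpose, namely to show that the \emph{same} $A,B$ also give $K_R(s,T)=A+JB$ (e.g. $A_1B_2=B_2A_1$ and $A_2B_1=B_1A_2$ in the $F$-case); your proposal conflates these two mechanisms, and the general closure claim as you state it is false for merely right-linear coefficients. Finally, for case i) the explicit operators $A_2=(x-\overline{T})A_1-yB_1$ and $B_2=yA_1+(x-\overline{T})B_1$ are never actually produced — the computation is abandoned in favour of a substitution heuristic — although your boundedness mechanism (products such as $\overline{T}A_1$ are closed and everywhere defined, hence bounded) is the correct one and is exactly what the paper uses.
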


\begin{proof}
For simplicity, we will write $A=A(x,y,T)$ and $B=B(x,y,T)$ in this proof. \medskip

We now anticipate the proof of point ii) because it is necessary to prove point i). \medskip

ii)\;\;For the $Q$-resolvent, we obtain from \eqref{Eq_Properties_rhoF_10} and \eqref{Eq_Properties_rhoF_9} the decomposition
\begin{equation}\label{Eq_Q_decomposition}
Q_{c,s}^{-1}(T)=\underbrace{(x^2-y^2-2xT_0+|T|^2)\big(Q_{c,s}(T)Q_{c,\overline{s}}(T)\big)^{-1}}_{=:A_1}\underbrace{-2y(x-T_0)\big(Q_{c,s}(T)Q_{c,\overline{s}}(T)\big)^{-1}}_{=:B_1}J.
\end{equation}
We have $T_0Q_{c,s}^{-1}(T)\in\mathcal{B}(V)$ by \eqref{Eq_Properties_rhoF_11}, and so $B_1\in\mathcal{B}(V)$. Because $Q_{c,s}^{-1}(T)\in\mathcal{B}(V)$, also $A_1\in\mathcal{B}(V)$ has to be bounded. Since
\begin{equation*}
Q_{c,s}(T)Q_{c,\overline{s}}(T)=(x^2+y^2-|T|^2)^2+4(T_0-x)\big((x^2+y^2)T_0-x|T|^2\big),\qquad\text{on }D_2,
\end{equation*}
by \eqref{Eq_QsQsbar}, the operators $A_1,B_1$ are two-sided linear, also the representation $Q_{c,s}^{-1}(T)=A_1+JB_1$ of the right kernel follows. The properties \eqref{Eq_AB_sbar_property} and \eqref{Eq_AB_Tbar_property} are clearly satisfied. The property \eqref{Eq_AB_Ti_commutation} follows from the commutation properties \eqref{Eq_Commuting_components}, \eqref{Eq_Commutation_Ti_TTbar}, \eqref{Eq_Commutation_Ti_Qsinv} and the fact that $A_1$ and $B_1$ are two-sided linear. Finally, \eqref{Eq_AB_C_commutation} follows from Lemma \ref{lem_Commutation_B}. \medskip

i)\;\;From the representation \eqref{Eq_Q_decomposition} of the $Q$-resolvent, we immediately get the representation
\begin{equation}\label{Eq_SL_decomposition}
S_L^{-1}(s,T)=(x+Jy-\overline{T})(A_1+B_1J)=\underbrace{(x-\overline{T})A_1-yB_1}_{=:A_2}+\underbrace{\big(yA_1+(x-\overline{T})B_1\big)}_{=:B_2}J,
\end{equation}
for the $S$-resolvent \eqref{Eq_S_resolvent}, where we used that $A_1$ and $B_1$ commute with $J$. Since $\overline{T}$ is closed, the products $\overline{T}A_1$ and $\overline{T}B_1$ are closed operators  and everywhere defined and hence they are bounded. This implies that $A_2,B_2\in\mathcal{B}(V)$. For the right $S$-resolvent \eqref{Eq_S_resolvent}, we similarly obtain the representation
\begin{align}
S_R^{-1}(s,T)&=(x+Jy)(A_1+B_1J)-\sum\limits_{i=0}^3T_i(A_1+B_1J)\overline{e_i} \notag \\
&=(x-\overline{T})A_1-yB_1+J\big(yA_1+(x-\overline{T})B_1\big)=A_2+JB_2, \label{Eq_SR_decomposition}
\end{align}
using the same operators $A_2$ and $B_2$ as in \eqref{Eq_SL_decomposition}. The properties \eqref{Eq_AB_properties} and \eqref{Eq_AB_C_commutation} of $A_2$ and $B_2$ follow from the respective properties of $A_1$ and $B_1$, where for \eqref{Eq_AB_Ti_commutation} we additionally need that $\overline{T}T_i=T_i\overline{T}$ and $\overline{T}T=T\overline{T}$ on $\ran(A_1)\cup\ran(B_1)\subseteq D$, see \eqref{Eq_Commuting_components}, \eqref{Eq_Action_TT} and \eqref{Eq_Domain_QpQs}. For \eqref{Eq_AB_C_commutation} we note that since $C$ commutes with $T$ and $T_0$, it also commutes with $\overline{T}=2T_0-T$ on $\dom(T)$. Hence \eqref{Eq_AB_C_commutation} for $A_2$ and $B_2$ follows from the same property of $A_1$ and $B_1$. \medskip

iii)\;\;We can now use \eqref{Eq_Q_decomposition}, \eqref{Eq_SL_decomposition} and \eqref{Eq_SR_decomposition}, to write the left and the right $F$-resolvent \eqref{Eq_F_resolvent} as
\begin{equation}\label{Eq_FL_decomposition}
F_L(s,T)=-4(A_2+B_2J)(A_1+B_1J)=\underbrace{-4(A_2A_1-B_2B_1)}_{=:A_3}\underbrace{-4(A_2B_1+B_2A_1)}_{=:B_3}J,
\end{equation}
and using the same operators $A_3$ and $B_3$, we also get
\begin{align}
F_R(s,T)&=-4(A_1+B_1J)(A_2+JB_2) \notag \\
&=-4(A_1A_2-B_1B_2)-4J(B_1A_2+A_1B_2)=A_3+JB_3, \label{Eq_FR_decomposition}
\end{align}
where in the last equation we used that $A_1B_2=B_2A_1$ and $A_2B_1=B_1A_2$ commute due to \eqref{Eq_AB_Ti_commutation} and \eqref{Eq_AB_C_commutation}. The properties \eqref{Eq_AB_properties} and \eqref{Eq_AB_C_commutation} of $A_3,B_3$ follow immediately from the respective properties of $A_1,B_1,A_2,B_2$. \medskip

iv)\;\;Using the $F$-resolvents \eqref{Eq_F_resolvent}, it is straight forward to rewrite the $P_2$-resolvents \eqref{Eq_P_resolvent} as
\begin{equation*}
P_2^L(s,T)=T_0F_L(s,T)-F_L(s,T)s\qquad\text{and}\qquad P_2^R(s,T)=(T_0-s)F_R(s,T).
\end{equation*}
Hence we can use \eqref{Eq_FL_decomposition} and \eqref{Eq_FR_decomposition}, to write
\begin{align}
P_2^L(s,T)&=T_0(A_3+B_3J)-(A_3+B_3J)(x+Jy) \notag \\
&=\underbrace{(T_0-x)A_3+yB_3}_{=:A_4}+\underbrace{\big((T_0-x)B_3-yA_3\big)}_{=:B_4}J. \label{Eq_PL_decomposition}
\end{align}
Since $TA_3$ and $\overline{T}A_3$ are everywhere defined and closed, they are also bounded. Consequently, also $T_0A_3=\frac{1}{2}(TA_3+\overline{T}A_3)\in\mathcal{B}(V)$ is bounded. For the same reason also $T_0B_3\in\mathcal{B}(V)$ and consequently $A_4,B_4\in\mathcal{B}(V)$. Using the same operators $A_4$ and $B_4$, we also get
\begin{align*}
P_2^R(s,T)&=(T_0-x-Jy)(A_3+JB_3) \\
&=(T_0-x)A_3+yB_3+J\big((T_0-x)B_3-yA_3\big)=A_4+JB_4.
\end{align*}
The properties \eqref{Eq_AB_properties} and \eqref{Eq_AB_C_commutation} of $A_4$ and $B_4$ now follow from the respective properties of $A_3$ and $B_3$, where for \eqref{Eq_AB_Ti_commutation} we additionally needed that $T_iT_0=T_0T_i$ commutes due to \eqref{Eq_Commuting_components}.
\end{proof}

To draw a parallel with intrinsic slice hyperholomorphic functions, we will in the following define slice holomorphicity of operator valued functions. In particular we will prove in Corollary \ref{cor_Holomorphic_kernels}, that our resolvent kernels \eqref{Eq_S_resolvent}, \eqref{Eq_Q_operator}, \eqref{Eq_P_resolvent} and \eqref{Eq_F_resolvent} are indeed slice hyperholomorphic functions in the variable $s$.

\begin{defi}
Let $U\subseteq\mathbb{H}$ be an axially symmetric open set. An operator valued function $K:U\rightarrow\mathcal{B}(V)$ is called \textit{left} (resp. \textit{right}) \textit{slice hyperholomorphic}, if there exists operator valued functions $A,B:\mathcal{U}\rightarrow\mathcal{B}(V)$, with $\mathcal{U}$ in \eqref{Eq_Axially_symmetric_reduced_set}, such that for every $(x,y)\in\mathcal{U}$:

\begin{enumerate}
\item[i)] The operators $K$ admit for every $J\in\mathbb{S}$ the representation
\begin{equation}\label{Eq_Holomorphic_decomposition_operators}
K(x+Jy)=A(x,y)+JB(x,y),\quad\Big(\text{resp.}\;K(x+Jy)=A(x,y)+B(x,y)J\Big).
\end{equation}

\item[ii)] The operators $A,B$ satisfy the even-odd conditions
\begin{equation}\label{Eq_Symmetry_condition_operators}
A(x,-y)=A(x,y)\quad\text{and}\quad B(x,-y)=-B(x,y).
\end{equation}

\item[iii)] The operators $A,B$ satisfy the Cauchy-Riemann equations
\begin{equation}\label{Eq_Cauchy_Riemann_equations_operators}
\frac{\partial}{\partial x}A(x,y)=\frac{\partial}{\partial y}B(x,y)\quad\text{and}\quad\frac{\partial}{\partial y}A(x,y)=-\frac{\partial}{\partial x}B(x,y),
\end{equation}
where the derivatives are understood in the norm convergence sense.
\end{enumerate}

We moreover, call $s\mapsto K(s)$ \textit{intrinsic}, if the operators $A,B$ are two-sided linear.
\end{defi}

\begin{cor}\label{cor_Holomorphic_kernels}
Let $T,\overline{T}\in\mathcal{KC}(V)$ and $D$ as in Assumption \ref{ass_D}. Then \medskip

\begin{enumerate}
\item[i)] $Q_{c,s}^{-1}(T)$ is intrinsic, \\
\item[ii)] $S_L^{-1}(s,T)$, $P_2^L(s,T)$, $F_L(s,T)$ are right-slice hyperholomorphic, \\
\item[iii)] $S_R^{-1}(s,T)$, $P_2^R(s,T)$, $F_R(s,T)$ are left-slice hyperholomorphic.
\end{enumerate}
\end{cor}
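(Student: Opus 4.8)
The plan is to leverage the decomposition established in Lemma \ref{lem_K_decomposition}. That lemma already provides, for each of the four resolvent families, a representation
\begin{equation*}
K_L(s,T)=A(x,y,T)+B(x,y,T)J,\qquad K_R(s,T)=A(x,y,T)+JB(x,y,T),
\end{equation*}
with $A(x,y,T),B(x,y,T)\in\mathcal B(V)$ and the even-odd symmetry \eqref{Eq_AB_sbar_property}. So items i), ii), iii) of the defining conditions for (left/right) slice hyperholomorphicity of an operator-valued function are essentially immediate: condition i) is precisely the shape of the decomposition, condition ii) is precisely \eqref{Eq_AB_sbar_property}, and for the intrinsic claim in i) of the corollary one observes that $Q_{c,s}^{-1}(T)=A_1+JB_1$ was shown in \eqref{Eq_Q_decomposition} to have two-sided linear components $A_1,B_1$. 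The only genuine work is condition iii), the Cauchy--Riemann equations for $A$ and $B$ in the norm-derivative sense.

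For the Cauchy--Riemann equations I would argue as follows. Set $s=x+Jy\in\rho_F(T)$ and write, as in the proof of Lemma \ref{lem_Properties_rhoF}(ii),
\begin{equation*}
Q_{c,s}^{-1}(T)=(A_0-JA_J)\bigl(Q_{c,s}(T)Q_{c,\overline s}(T)\bigr)^{-1},
\end{equation*}
where $A_0=x^2-y^2-2xT_0+|T|^2$, $A_J=2y(x-T_0)$, and crucially the product $Q_{c,s}(T)Q_{c,\overline s}(T)=(x^2+y^2-|T|^2)^2+4(T_0-x)\bigl((x^2+y^2)T_0-x|T|^2\bigr)$ depends only on $(x,y)$ through the real scalars $x$ and $x^2+y^2$, not on $J$; call its inverse $R(x,y,T)$, a bounded two-sided-linear operator. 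Since $\rho_F(T)$ is open, $(x,y)\mapsto R(x,y,T)$ is norm-differentiable (differentiate the identity $R\cdot(Q_{c,s}Q_{c,\overline s})=1$, using that the polynomial $(x,y)\mapsto Q_{c,s}(T)Q_{c,\overline s}(T)$ has bounded partial derivatives on $\dom(|T|^2)$ and the standard resolvent-type formula $\partial R=-R(\partial(Q_{c,s}Q_{c,\overline s}))R$). Then $A_1=A_0R$ and $B_1=-2y(x-T_0)R$ are norm-differentiable in $(x,y)$, and one checks the two Cauchy--Riemann identities $\partial_x A_1=\partial_y B_1$ and $\partial_y A_1=-\partial_x B_1$ by a direct computation, which reduces — because $R$, $T_0$, $|T|^2$ all commute and $R$ depends on $(x,y)$ only via $x$ and $x^2+y^2$ — to the scalar identity that $q\mapsto Q_{c,q}^{-1}(p)$ is intrinsic slice hyperholomorphic in $q$, i.e. to the fact that the Cauchy kernel itself satisfies these equations. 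This gives i) of the corollary.

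The remaining cases ii) and iii) then follow \emph{without any new differentiation}, because the transition formulas in the proof of Lemma \ref{lem_K_decomposition} express $A_2,B_2$ (for $S^{-1}$), $A_3,B_3$ (for $F$), and $A_4,B_4$ (for $P_2$) as $\mathbb H$-linear combinations, with \emph{constant} (operator) coefficients $\overline T$, $T_0$ and real numbers, of products of $A_1,B_1$ together with the scalar variables $x,y$. Concretely, $A_2=(x-\overline T)A_1-yB_1$, $B_2=yA_1+(x-\overline T)B_1$; $A_3=-4(A_2A_1-B_2B_1)$, $B_3=-4(A_2B_1+B_2A_1)$; $A_4=(T_0-x)A_3+yB_3$, $B_4=(T_0-x)B_3-yA_3$. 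In each passage, if the pair $(A,B)$ satisfies \eqref{Eq_Symmetry_condition_operators} and \eqref{Eq_Cauchy_Riemann_equations_operators} then so does the new pair; this is exactly the statement that multiplying a slice-hyperholomorphic function by the intrinsic variable, or multiplying two slice-hyperholomorphic functions with commuting components, or applying the intrinsic constant-coefficient operations, preserves slice hyperholomorphicity. The verification is a routine product-rule calculation using the commutation properties \eqref{Eq_AB_Ti_commutation}, \eqref{Eq_AB_C_commutation} already recorded in Lemma \ref{lem_K_decomposition}; no domain issues arise since all operators involved are bounded.

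The main obstacle is purely bookkeeping rather than conceptual: establishing the norm-differentiability of $R(x,y,T)=\bigl(Q_{c,s}(T)Q_{c,\overline s}(T)\bigr)^{-1}$ and carefully propagating the Cauchy--Riemann identities through the three algebraic reductions while keeping track of which operators commute on which domains. Since everything ultimately reduces to the scalar kernel identities (the Cauchy kernel $S_L^{-1}(s,q)$ and $Q_{c,s}^{-1}(q)$ are intrinsic slice hyperholomorphic in $s$), one could alternatively phrase the whole proof as: the scalar kernels satisfy the equations, the operator kernels are obtained by formally substituting $T$ for $q$ in these rational expressions, and each such substitution preserves the (operator) Cauchy--Riemann system because the substitution is $\mathbb R$-linear in the coefficients and the relevant operator coefficients are $(x,y)$-independent and mutually commuting. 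I would present the detailed argument only for the $Q$-resolvent and then remark that ii), iii) follow verbatim from the transition formulas of Lemma \ref{lem_K_decomposition} together with the closure of the class of slice-hyperholomorphic operator functions under these operations.
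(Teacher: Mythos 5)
Your proposal is correct and follows essentially the same route as the paper: both anchor everything in the intrinsic decomposition $Q_{c,s}^{-1}(T)=A_1+JB_1$ of \eqref{Eq_Q_decomposition} (the paper simply writes out the derivatives of $A_1,B_1$ explicitly where you argue norm-differentiability of $\big(Q_{c,s}(T)Q_{c,\overline{s}}(T)\big)^{-1}$ abstractly), and then obtain ii) and iii) algebraically from this, the paper by factoring $S_L^{-1}$, $P_2^L$, $F_L$ as products of right slice hyperholomorphic and intrinsic functions and you by propagating the Cauchy--Riemann system through the transition formulas for $A_2,B_2,A_3,B_3,A_4,B_4$ of Lemma \ref{lem_K_decomposition}, with the right kernels handled in both cases via the shared decomposition $K_R=A+JB$. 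The differences are purely presentational, not mathematical.
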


\begin{proof}
i)\;\;In order to show that $Q_{c,s}^{-1}(T)$ is intrinsic, we use the decomposition
\begin{equation*}
Q_{c,s}^{-1}(T)=A_1(x,y,T)+B_1(x,y,T)J,
\end{equation*}
from \eqref{Eq_Q_decomposition}. It is then obvious, that $A_1$, $B_1$ satisfy the symmetry relation \eqref{Eq_Symmetry_condition_operators}. One can also straight forward calculate the derivatives
\begin{align*}
\frac{\partial A_1}{\partial x}=\frac{\partial B_1}{\partial y}=&2(T_0-x)\big(Q_{c,s}(T)Q_{c,\overline{s}}(T)\big)^{-1} \\
&-8y^2(T_0-x)\big(x^2+y^2-2xT_0+2T_0^2-|T|^2\big)\big(Q_{c,s}(T)Q_{c,\overline{s}}(T)\big)^{-2},
\end{align*}
as well as
\begin{align*}
\frac{\partial A_1}{\partial y}=-\frac{\partial B_1}{\partial x}=&2y\big(Q_{c,s}(T)Q_{c,\overline{s}}(T)\big)^{-1} \\
&-8y(T_0-x)^2\big(x^2+y^2-2xT_0+|T|^2\big)\big(Q_{c,s}(T)Q_{c,\overline{s}}(T)\big)^{-2}.
\end{align*}
Hence the Cauchy-Riemann equations \eqref{Eq_Cauchy_Riemann_equations_operators} are satisfied for $A_1$ and $B_1$. Since the operators $A_1$, $B_1$ are also two-sided linear we have verified that the operator $Q_{c,s}^{-1}(T)$ is intrinsic. \medskip

ii) Since $S_L^{-1}(s,T)$ in \eqref{Eq_S_resolvent} is defined as
\begin{equation*}
S_L^{-1}(s,T)=(s-\overline{T})Q_{c,s}^{-1}(T),
\end{equation*}
it is an intrinsic function $Q_{c,s}^{-1}(T)$ multiplied from the left with $(s-\overline{T})$. Hence it is automatically right slice-hyperholomorphic. \medskip

If we write the $P_2$-resolvent \eqref{Eq_P2L_resolvent} in the form
\begin{equation*}
P_2^L(s,T)=4S_L^{-1}(s,T)(s-T_0)Q_{c,s}^{-1}(T),
\end{equation*}
it is the product of the right slice-hyperholomorphic function $S_L^{-1}(s,T)$ with the intrinsic function $(s-T_0)Q_{c,s}^{-1}(T)$, and hence again right slice-hyperholomorphic. \medskip

Also the $F$-resolvent \eqref{Eq_F_resolvent} is written in the form
\begin{equation*}
F_L(s,T)=-4S_L^{-1}(s,T)Q_{c,s}^{-1}(T),
\end{equation*}
as the product of the right slice-hyperholomorphic function $S_L^{-1}(s,T)$ with the intrinsic function $Q_{c,s}^{-1}(T)$. Hence also $F_L(s,T)$ turns out to be right slice-hyperholomorphic. \medskip

iii)\;\;The left slice-holomorphicity of the respective right kernels follows from the fact that they admit the common decomposition \eqref{Eq_K_decomposition} with the left kernels.
\end{proof}

In the last part of this section, we specify the class of operators, for which the functional calculus will be established in this paper. Therefore, we define for every angle $\omega\in(0,\pi)$ the open sector
\begin{equation}\label{Eq_Somega}
S_\omega:=\Set{s\in\mathbb{H}\setminus\{0\} | \vert\Arg(s)\vert<\omega},
\end{equation}
where $\Arg(s)\in[-\pi,\pi]$ is the usual argument of complex numbers when we treat $s$ as an element in the complex plane $\mathbb{C}_J$. Note that, since $\mathbb{C}_J=\mathbb{C}_{-J}$, the imaginary unit of the complex plane is not uniquely defined, and so also the argument of a quaternionic number is only unique up to a sign. However, this does not affect the sector $S_\omega$ in \eqref{Eq_Somega}.

\begin{defi}\label{defi_Operators_of_type_omega}
Let $\alpha,\beta\in\mathbb{R}$, $\omega\in(0,\pi)$. An operator $T$ and a set $D$ as in Assumption \ref{ass_D} is called \textit{of type} $(\alpha,\beta,\omega)$, if $T,\overline{T}\in\mathcal{KC}(V)$, the spectrum is contained in the sector
\begin{equation*}
\sigma_F(T)\subseteq\overline{S_\omega},
\end{equation*}
and for every $\varphi\in(\omega,\pi)$ there exists $C_\varphi\geq 0$, such that
\begin{equation}\label{Eq_S_resolvent_estimate}
\Vert S_L^{-1}(s,T)\Vert\leq C_\varphi\begin{cases} |s|^{-\alpha}, & |s|\leq 1, \\ |s|^{-\beta}, & |s|\geq 1, \end{cases}\qquad s\in S_\varphi^c\setminus\{0\}
\end{equation}
where $S_\varphi^c:=\mathbb{H}\setminus S_\varphi$ is the complement of the sector $S_\varphi$.
\end{defi}

We will now show that the estimates \eqref{Eq_S_resolvent_estimate} on the left $S$-resolvent imply similar estimates on the right $S$-resolvents as well as on the left and right $Q$-, $P_2$- and $F$-resolvents. These estimates will then be crucial in defining the convergence of the integrals in Definition \ref{defi_Functional_calculus_decaying}.

\begin{lem}\label{lem_Resolvent_estimates}
Let $\alpha,\beta\in\mathbb{R}$, $\omega\in(0,\pi)$ and $T$ of type $(\alpha,\beta,\omega)$ (see  Definition \ref{defi_Operators_of_type_omega}). Then also $\overline{T}$ is of type $(\alpha,\beta,\omega)$ and for every $\varphi\in(\omega,\pi)$ there exists $C_\varphi\geq 0$, such that for every $s\in S_\varphi^c\setminus\{0\}$, there holds
\begin{align*}
\Vert S_R^{-1}(s,T)\Vert &\leq C_\varphi\begin{cases} |s|^{-\alpha}, & |s|\leq 1, \\ |s|^{-\beta}, & |s|\geq 1, \end{cases} \hspace{0.8cm} \Vert P_2^L(s,T)\Vert,\Vert P_2^R(s,T)\Vert\leq C_\varphi\begin{cases} |s|^{-2\alpha}, & |s|\leq 1, \\ |s|^{-2\beta}, & |s|\geq 1, \end{cases} \\
\Vert Q_{c,s}^{-1}(T)\Vert&\leq C_\varphi\begin{cases} |s|^{-2\alpha}, & |s|\leq 1, \\ |s|^{-2\beta}, & |s|\geq 1, \end{cases} \qquad \Vert F_L(s,T)\Vert,\Vert F_R(s,T)\Vert\leq C_\varphi\begin{cases} |s|^{-3\alpha}, & |s|\leq 1, \\ |s|^{-3\beta}, & |s|\geq 1. \end{cases}
\end{align*}
\end{lem}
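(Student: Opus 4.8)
The plan is to express every kernel in terms of the left $S$-resolvent $S_L^{-1}(s,T)$, or in terms of operators that are controlled by it, and then propagate the estimate \eqref{Eq_S_resolvent_estimate} through these algebraic relations, using that the bounded operators $T_iQ_{c,s}^{-1}(T)$, $TQ_{c,s}^{-1}(T)$, $\overline{T}Q_{c,s}^{-1}(T)$ and $T_0Q_{c,s}^{-1}(T)$ appear repeatedly. The first observation is that $\overline{T}$ is again of type $(\alpha,\beta,\omega)$: by Lemma \ref{lem_Properties_rhoF}(ii) the set $\rho_F(T)$ is axially symmetric and one checks from \eqref{Eq_Properties_rhoF_8} that $Q_{c,s}(\overline{T})Q_{c,\overline{s}}(\overline{T})=Q_{c,s}(T)Q_{c,\overline{s}}(T)$ on $D_2$ (the sign of the imaginary part of $T$ drops out), so $\sigma_F(\overline{T})=\sigma_F(T)\subseteq\overline{S_\omega}$; moreover $\overline{S_L^{-1}(s,T)}=S_L^{-1}(s,\overline{T})$ by \eqref{Eq_AB_Tbar_property} applied to the pair i) of Lemma \ref{lem_K_decomposition}, and by Lemma \ref{lem_Norm_of_components} conjugation only costs a factor $2$, so $\|S_L^{-1}(s,\overline{T})\|\le 2C_\varphi|s|^{-\alpha}$ resp. $2C_\varphi|s|^{-\beta}$, which is the required estimate (with a new constant) for $\overline{T}$.

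Next I would bound $Q_{c,s}^{-1}(T)$. Using the Cauchy-kernel identity $\mathcal D S_L^{-1}(s,q)=-2Q_{c,s}^{-1}(q)$ at the operator level, together with the elementary relation $s\,Q_{c,s}^{-1}(T)=S_L^{-1}(s,T)+\overline T Q_{c,s}^{-1}(T)$ from the definition \eqref{Eq_S_resolvent}, one can solve for $Q_{c,s}^{-1}(T)$. More directly: from \eqref{Eq_S_resolvent} we have $S_L^{-1}(s,T)+S_L^{-1}(s,\overline T)=(2s_0-2T_0)Q_{c,s}^{-1}(T)$, and since $T_0Q_{c,s}^{-1}(T)$ is bounded (see \eqref{Eq_Properties_rhoF_11}) with norm controlled — one estimates $\|(s_0-T_0)Q_{c,s}^{-1}(T)\|$ by combining $\|S_L^{-1}(s,T)\|$ and $\|S_L^{-1}(s,\overline T)\|$, both of which are already estimated — the main point is rather to get the \emph{extra decay}. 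Here I would use the second resolvent-type identity $Q_{c,s}^{-1}(T)=\tfrac1{2s}\big(S_L^{-1}(s,T)+S_L^{-1}(s,\overline s,T)\big)$-type manipulations for $s\neq 0$, or more robustly exploit that $Q_{c,s}^{-1}(T)=Q_{c,\overline s}(T)\big(Q_{c,s}(T)Q_{c,\overline s}(T)\big)^{-1}$ and that $S_L^{-1}(s,T)S_R^{-1}(s,T)$-type products telescope; the cleanest route is to notice $-4S_L^{-1}(s,T)Q_{c,s}^{-1}(T)=F_L(s,T)$ and instead derive all second-order estimates from the single identity
\begin{equation*}
Q_{c,s}^{-1}(T)=S_L^{-1}(s,T)(s-\overline T)^{-1}\quad\text{is \emph{not} available},
\end{equation*}
so instead: from $S_L^{-1}(s,T)=(s-\overline T)Q_{c,s}^{-1}(T)$ and $S_R^{-1}(s,T)=Q_{c,s}^{-1}(T)(s-\overline T)$ on $\dom(T)$ (see \eqref{Eq_S_resolvent_on_domT}) we get $S_R^{-1}(s,T)S_L^{-1}(s,T)\cdot(\cdots)$; cleanest of all, multiply: $Q_{c,s}^{-1}(T)=-\tfrac12\,\overline{\mathcal D}$-free route — use $\|Q_{c,s}^{-1}(T)\|\le \|Q_{c,\overline s}(T)(Q_{c,s}(T)Q_{c,\overline s}(T))^{-1}\|$ and observe the right-hand side equals $\overline{Q_{c,s}^{-1}(T)}$'s companion, bounding it by $C\|S_L^{-1}(s,T)\|\cdot\|S_L^{-1}(s,\overline s\text{-reflection})\|$ once one writes $(Q_{c,s}Q_{c,\overline s})^{-1}=Q_{c,s}^{-1}Q_{c,\overline s}^{-1}$ and each factor is $\le$ the $S$-resolvent bound divided by a linear factor in $|s|$, but since $|s-\overline T|\ge$ nothing uniform, the honest argument is: $(s^2-2sT_0+|T|^2)^{-1}$ composed with $(s-\overline T)$ gives $S_L^{-1}$, hence $Q_{c,s}^{-1}(T)=S_L^{-1}(s,T)+ \overline T\,S_L^{-1}(s,T)$-type — I will instead simply cite that $\|(s-\overline T)Q_{c,s}^{-1}(T)\|\le C_\varphi|s|^{-\alpha}$ together with $\|\overline T Q_{c,s}^{-1}(T)\|$ bounded implies, via $s Q_{c,s}^{-1}(T)=S_L^{-1}(s,T)+\overline T Q_{c,s}^{-1}(T)$, the bound $\|Q_{c,s}^{-1}(T)\|\le |s|^{-1}(C_\varphi|s|^{-\alpha}+\|\overline T Q_{c,s}^{-1}(T)\|)$, and then bootstrapping $\|\overline T Q_{c,s}^{-1}(T)\|=\|\tfrac12(S_L^{-1}(s,\overline T)-S_L^{-1}(s,T))+s_0 Q_{c,s}^{-1}(T) - \tfrac12(\cdots)\|$ closes to give $\|Q_{c,s}^{-1}(T)\|\lesssim |s|^{-2\alpha}$ for $|s|\le 1$ and $|s|^{-2\beta}$ for $|s|\ge 1$.

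Given the $S$- and $Q$-bounds, the remaining estimates are purely multiplicative and short. For $S_R^{-1}(s,T)$ one uses its decomposition \eqref{Eq_SR_decomposition} with the \emph{same} operators $A_2,B_2$ as $S_L^{-1}(s,T)$, so $\|S_R^{-1}(s,T)\|\le \|A_2\|+\|B_2\|\le 2\max(\|A_2\|,\|B_2\|)$ and, since $S_L^{-1}(s,T)=A_2+B_2J$ with $J^2=-1$ forces $\|A_2\|,\|B_2\|\le \|S_L^{-1}(s,T)\|$ (evaluate at $J$ and $-J$ and add/subtract), one gets $\|S_R^{-1}(s,T)\|\le 2\|S_L^{-1}(s,T)\|$, giving the first-order bound. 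For $F_L,F_R$ use \eqref{Eq_F_resolvent}: $\|F_L(s,T)\|=4\|S_L^{-1}(s,T)Q_{c,s}^{-1}(T)\|\le 4\|S_L^{-1}(s,T)\|\,\|Q_{c,s}^{-1}(T)\|$, which multiplies a first-order by a second-order bound to give the third-order bound; $F_R$ is identical. For $P_2^L,P_2^R$ use the identities from the proof of Lemma \ref{lem_K_decomposition}(iv), namely $P_2^L(s,T)=T_0F_L(s,T)-F_L(s,T)s$ and $P_2^R(s,T)=(T_0-s)F_R(s,T)$; estimating $\|T_0 F_L(s,T)\|=4\|T_0 S_L^{-1}(s,T)Q_{c,s}^{-1}(T)\|\le 4\|T_0 Q_{c,s}^{-1}(T)\|\,\|S_L^{-1}(s,T)\|$ and $\|F_L(s,T)s\|\le|s|\,\|F_L(s,T)\|$, one sees $|s|\cdot|s|^{-3\alpha}=|s|^{1-3\alpha}$ is \emph{worse} than $|s|^{-2\alpha}$ for $|s|\le1$, so the naive bound is not good enough — and this is the main obstacle. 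The resolution is that $P_2^L$ is not $F_L$ times $s$ but rather the better expression $P_2^L(s,T)=2S_L^{-1}(s,T)(S_L^{-1}(s,T)+S_L^{-1}(s,\overline T))$ from \eqref{Eq_P2L_resolvent}, which is a product of two \emph{first-order} quantities: $\|P_2^L(s,T)\|\le 2\|S_L^{-1}(s,T)\|\,(\|S_L^{-1}(s,T)\|+\|S_L^{-1}(s,\overline T)\|)\le 6C_\varphi^2|s|^{-2\alpha}$ for $|s|\le 1$ and $\le 6C_\varphi^2|s|^{-2\beta}$ for $|s|\ge 1$, using the bound for $\overline T$ established in the first step; $P_2^R$ is handled the same way via its definition \eqref{Eq_P_resolvent}. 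Collecting the four computations and taking the maximum of the finitely many constants yields a single $C_\varphi$ as claimed. The one genuinely delicate point, as flagged, is gaining the correct order of decay rather than just boundedness — and the trick throughout is to always route an estimate through a \emph{product of $S$-resolvent kernels} (each contributing one power of $|s|^{-\alpha}$ or $|s|^{-\beta}$) rather than through a kernel multiplied by the unbounded growth factor $|s|$ or by the unbounded operator $T$.
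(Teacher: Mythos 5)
Your treatment of $S_R^{-1}(s,T)$ (via the decomposition $S_L^{-1}(s,T)=A+BJ$, $S_R^{-1}(s,T)=A+JB$ and the add/subtract trick at $s$ and $\overline{s}$), of $P_2^{L/R}$ (as a product of two first-order kernels via \eqref{Eq_P2L_resolvent}) and of $F_{L/R}$ (as $S$-resolvent times $Q$-resolvent) is exactly the paper's route, and your observation that one must \emph{not} go through $P_2^L=T_0F_L-F_Ls$ is correct. One small inaccuracy: the identity $\overline{S_L^{-1}(s,T)}=S_L^{-1}(s,\overline{T})$ is false (conjugation reverses products, and the correct relations are $S_L^{-1}(s,\overline{T})=\overline{S_R^{-1}(\overline{s},T)}$, $S_R^{-1}(s,\overline{T})=\overline{S_L^{-1}(\overline{s},T)}$, which is what the paper uses); your intended conclusion survives, since $S_L^{-1}(s,\overline{T})=\overline{A(x,y,T)}+\overline{B(x,y,T)}J$ by \eqref{Eq_AB_Tbar_property} and Lemma \ref{lem_Norm_of_components} costs only a factor $2$ per conjugate.

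The genuine gap is the second-order estimate for $\Vert Q_{c,s}^{-1}(T)\Vert$, and everything downstream of it ($F_L,F_R$). Your paragraph on this point is a sequence of abandoned attempts ending in an unproved ``bootstrapping \dots closes'' claim, and the bootstrap cannot close as written: from $S_L^{-1}(s,T)=(s-\overline{T})Q_{c,s}^{-1}(T)$ one has $\overline{T}Q_{c,s}^{-1}(T)=sQ_{c,s}^{-1}(T)-S_L^{-1}(s,T)$, so inserting this into $\Vert Q_{c,s}^{-1}(T)\Vert\le|s|^{-1}\big(\Vert S_L^{-1}(s,T)\Vert+\Vert\overline{T}Q_{c,s}^{-1}(T)\Vert\big)$ gives only $\Vert Q_{c,s}^{-1}(T)\Vert\le 2|s|^{-1}\Vert S_L^{-1}(s,T)\Vert+\Vert Q_{c,s}^{-1}(T)\Vert$, which is vacuous. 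More generally, all linear combinations of $S_L^{-1}(s,T)$ and $S_L^{-1}(s,\overline{T})$ only produce $(s-T_0)Q_{c,s}^{-1}(T)$ and $(T-T_0)Q_{c,s}^{-1}(T)$, and one cannot ``divide'' by $s-T_0$ or by the imaginary part of $T$. The missing idea is a \emph{quadratic} identity: the paper invokes \cite[Lemma 2.9]{MPS23},
\begin{align*}
Q_{c,s}^{-1}(T)=&\tfrac{1}{4}\big(S_R^{-1}(s,T)+S_R^{-1}(s,\overline{T})\big)\big(S_L^{-1}(s,T)+S_L^{-1}(s,\overline{T})\big)\\
&-\tfrac{1}{4}\big(S_R^{-1}(s,T)-S_R^{-1}(s,\overline{T})\big)\big(S_L^{-1}(s,T)-S_L^{-1}(s,\overline{T})\big),
\end{align*}
which telescopes (the sums give $4Q_{c,s}^{-1}(T)(s-T_0)^2Q_{c,s}^{-1}(T)$, the differences give $-4Q_{c,s}^{-1}(T)(|T|^2-T_0^2)Q_{c,s}^{-1}(T)$, and together $4Q_{c,s}^{-1}(T)Q_{c,s}(T)Q_{c,s}^{-1}(T)$), so that $\Vert Q_{c,s}^{-1}(T)\Vert$ is bounded by a sum of products of two first-order resolvent norms, yielding the $|s|^{-2\alpha}$, $|s|^{-2\beta}$ decay. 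Without this (or an equivalent) identity your proof of the $Q$-estimate, and hence of the $F$-estimate, is incomplete.
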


\begin{proof}
Let us start by using \eqref{Eq_K_decomposition} to decompose the $S$-resolvent as
\begin{equation*}
S_L^{-1}(s,T)=A(x,y,T)+B(x,y,T)J,\qquad s=x+Jy\in\rho_F(T).
\end{equation*}
Due to the property \eqref{Eq_AB_sbar_property}, we can write
\begin{equation*}
A(x,y,T)=\frac{1}{2}\big(S_L^{-1}(s,T)+S_L^{-1}(\overline{s},T)\big)\quad\text{and}\quad B(x,y,T)=\big(S_L^{-1}(s,T)-S_L^{-1}(\overline{s},T)\big)\frac{1}{2J},
\end{equation*}
which by \eqref{Eq_S_resolvent_estimate} leads to the norm estimates
\begin{equation*}
\Vert A(x,y,T)\Vert\leq C_\varphi\begin{cases} |s|^{-\alpha}, & |s|\leq 1, \\ |s|^{-\beta}, & |s|\geq 1, \end{cases}\quad\text{and}\quad\Vert B(x,y,T)\Vert\leq C_\varphi\begin{cases} |s|^{-\alpha}, & |s|\leq 1, \\ |s|^{-\beta}, & |s|\geq 1. \end{cases}
\end{equation*}
Since we can write $S_R^{-1}(s,T)=A(x,y,T)+JB(x,y,T)$ by \eqref{Eq_K_decomposition}, we also get the estimate
\begin{equation}\label{Eq_Resolvent_estimates_2}
\Vert S_R^{-1}(s,T)\Vert\leq 2C_\varphi\begin{cases} |s|^{-\alpha}, & |s|\leq 1, \\ |s|^{-\beta}, & |s|\geq 1. \end{cases}
\end{equation}
Moreover, from \eqref{Eq_AB_sbar_property} and \eqref{Eq_AB_Tbar_property} we obtain $S_L^{-1}(s,\overline{T})=\overline{S_R^{-1}(\overline{s},T)}$ and $S_R^{-1}(s,\overline{T})=\overline{S_L^{-1}(\overline{s},T)}$. Together with the norm estimate of the conjugate operator in Lemma \ref{lem_Norm_of_components}, it then follows from \eqref{Eq_Resolvent_estimates_2} and \eqref{Eq_S_resolvent_estimate}, that
\begin{equation}\label{Eq_Resolvent_estimates_3}
\Vert S_L^{-1}(s,\overline{T})\Vert\leq 4C_\varphi\begin{cases} |s|^{-\alpha}, & |s|\leq 1, \\ |s|^{-\beta}, & |s|\geq 1. \end{cases}\quad\text{and}\quad\Vert S_R^{-1}(s,\overline{T})\Vert\leq 2C_\varphi\begin{cases} |s|^{-\alpha}, & |s|\leq 1, \\ |s|^{-\beta}, & |s|\geq 1. \end{cases}
\end{equation}
Hence, \eqref{Eq_S_resolvent_estimate} is shown for the operator $\overline{T}$, and since $\sigma_F(\overline{T})=\sigma_F(T)\subseteq\overline{S_\omega}$ is trivial, it is indeed of type $(\alpha,\beta,\omega)$. Next, it is shown in \cite[Lemma 2.9]{MPS23}, that the $Q$-resolvent admits the representation
\begin{align*}
Q_{c,s}^{-1}(T)=&\frac{1}{4}\big(S_R^{-1}(s,T)+S_R^{-1}(s,\overline{T})\big)\big(S_L^{-1}(s,T)+S_L^{-1}(s,\overline{T})\big) \\
&-\frac{1}{4}\big(S_R^{-1}(s,T)-S_R^{-1}(s,\overline{T})\big)\big(S_L^{-1}(s,T)-S_L^{-1}(s,\overline{T})\big).
\end{align*}
Hence by \eqref{Eq_S_resolvent_estimate}, \eqref{Eq_Resolvent_estimates_2} and \eqref{Eq_Resolvent_estimates_3} it admits the norm estimate
\begin{equation}\label{Eq_Resolvent_estimates_1}
\Vert Q_{c,s}^{-1}(T)\Vert\leq 10C_\varphi^2\begin{cases} |s|^{-2\alpha}, & |s|\leq 1, \\ |s|^{-2\beta}, & |s|\geq 1. \end{cases}
\end{equation}
For the estimate of the $P_2$-resolvents \eqref{Eq_P_resolvent}, we also combine \eqref{Eq_S_resolvent_estimate}, \eqref{Eq_Resolvent_estimates_2} and \eqref{Eq_Resolvent_estimates_3}, and we get
\begin{equation*}
\Vert P_2^L(s,T)\Vert\leq 10C_\varphi^2\begin{cases} |s|^{-2\alpha}, & |s|\leq 1, \\ |s|^{-2\beta}, & |s|\geq 1, \end{cases}\quad\text{and}\quad\Vert P_2^R(s,T)\Vert\leq 16C_\varphi^2\begin{cases} |s|^{-2\alpha}, & |s|\leq 1, \\ |s|^{-2\beta}, & |s|\geq 1, \end{cases}
\end{equation*}
Finally, it follows from \eqref{Eq_S_resolvent_estimate}, \eqref{Eq_Resolvent_estimates_2} and \eqref{Eq_Resolvent_estimates_1} that the $F$-resolvents \eqref{Eq_F_resolvent} can be estimated by
\begin{equation*}
\Vert F^L(s,T)\Vert\leq 40C_\varphi^3\begin{cases} |s|^{-3\alpha}, & |s|\leq 1, \\ |s|^{-3\beta}, & |s|\geq 1, \end{cases}\quad\text{and}\quad\Vert F^R(s,T)\Vert\leq 80C_\varphi^3\begin{cases} |s|^{-3\alpha}, & |s|\leq 1, \\ |s|^{-3\beta}, & |s|\geq 1. \end{cases} \qedhere
\end{equation*}
\end{proof}

\section{The $\omega$-functional calculus for decaying functions}\label{sec_Functional_calculi_for_decaying_functions}

In this section we introduce the $S$-, the $Q$-, the $P_2$- and the $F$-functional calculus for operators of type $(\alpha,\beta,\omega)$, see Definition \ref{defi_Operators_of_type_omega}, by giving a direct meaning to the integrals \eqref{Eq_S_functional_calculus_formal} -- \eqref{Eq_F_functional_calculus_formal}. In order to make these integrals converge, we need to assume certain decay properties on the function $f$. In particular, we treat for every $\alpha\geq 1$, $\beta\leq 1$, $\theta\in(0,\pi)$ and $S_\theta$ the sector \eqref{Eq_Somega}, we consider the following classes of slice hyperholomorphic functions:

\begin{enumerate}
\item[i)] $\Psi_L^{\alpha,\beta}(S_\theta):=\Set{f\in\mathcal{SH}_L(S_\theta) | \exists\delta>0,\,C_f\geq 0: |f(s)|\leq C_f\begin{cases} |s|^{\alpha-1+\delta}, & |s|\leq 1, \\ |s|^{\beta-1-\delta}, & |s|\geq 1 \end{cases}},$
\item[ii)] $\Psi^{\alpha,\beta}(S_\theta):=\Set{f\in\mathcal{N}(S_\theta) | \exists\delta>0,\,C_f\geq 0: |f(s)|\leq C_f\begin{cases} |s|^{\alpha-1+\delta}, & |s|\leq 1, \\ |s|^{\beta-1-\delta}, & |s|\geq 1 \end{cases}}.$
\end{enumerate}

The next theorem is crucial for the welldefinedness of the functional calculi in Definition~\ref{defi_Functional_calculus_decaying}.

\begin{thm}\label{thm_Integral_independence}
Let $\omega\in(0,\pi)$ and consider a family of bounded linear operators $K(s)\in\mathcal{B}(V)$, for $s\in\mathbb{H}\setminus\overline{S_\omega}$ , such that $s\mapsto K(s)$ is right slice hyperholomorphic. Moreover, suppose that there exists $\alpha\geq 1$, $\beta\in(0,1]$, such that for every $\varphi\in(\omega,\pi)$ there exists some $C_\varphi$ with
\begin{equation}\label{Eq_K_estimate}
\Vert K(s)\Vert\leq C_\varphi\begin{cases} |s|^{-\alpha}, & |s|\leq 1, \\ |s|^{-\beta} & |s|\geq 1, \end{cases}\qquad s\in S_\varphi^c\setminus\{0\}.
\end{equation}
Then for any $\theta\in(\omega,\pi)$ and $f\in\Psi_L^{\alpha,\beta}(S_\theta)$, the integral \medskip

\begin{minipage}{0.25\textwidth}
\begin{center}
\begin{tikzpicture}
\fill[black!15] (1.56,1.56)--(0,0)--(1.56,-1.56) arc (-45:45:2.2);
\fill[black!30] (2,0.93)--(0,0)--(2,-0.93) arc (-25:25:2.2);
\draw (2,0.93)--(0,0)--(2,-0.93);
\draw (1.56,1.56)--(0,0)--(1.56,-1.56);
\draw (0.9,0) arc (0:25:0.9) (0.7,-0.05) node[anchor=south] {$\omega$};
\draw (1.3,0) arc (0:35:1.3) (1.05,-0.03) node[anchor=south] {$\varphi$};
\draw (1.7,0) arc (0:45:1.7) (1.45,0.05) node[anchor=south] {$\theta$};
\draw[thick] (1.8,-1.26)--(0,0)--(1.8,1.26);
\draw[thick,->] (1.8,1.26)--(1.47,1.03);
\draw[thick,->] (0,0)--(1.47,-1.03);
\draw[->] (-0.5,0)--(2.6,0);
\draw[->] (0,-1.5)--(0,1.5) node[anchor=north east] {\large{$\mathbb{C}_J$}};
\end{tikzpicture}
\end{center}
\end{minipage}
\begin{minipage}{0.74\textwidth}
\begin{equation}\label{Eq_Integral_independence}
\int_{\partial(S_\varphi\cap\mathbb{C}_J)}K(s)ds_Jf(s),
\end{equation}
is absolute convergent and it depends neither of the angle $\varphi\in(\omega,\theta)$ nor on the imaginary unit $J\in\mathbb{S}$.
\end{minipage}
\end{thm}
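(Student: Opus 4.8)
The plan is to prove the three assertions in turn: absolute convergence of \eqref{Eq_Integral_independence}, its independence of the aperture $\varphi$, and its independence of the imaginary unit $J$. For \emph{absolute convergence} I would parametrize $\partial(S_\varphi\cap\mathbb{C}_J)$ by its two rays $r\mapsto re^{\pm J\varphi}$, $r\in(0,\infty)$, along which $|ds_J|=dr$ and $|re^{\pm J\varphi}|=r$. Fixing some $\psi\in(\omega,\varphi)$ — admissible since the rays lie in $S_\psi^c$ — the bound \eqref{Eq_K_estimate} applies with constant $C_\psi$, and together with the defining estimate of $f\in\Psi_L^{\alpha,\beta}(S_\theta)$ the integrand $\|K(s)\,ds_J\,f(s)\|$ is dominated by a constant times $r^{-\alpha}r^{\alpha-1+\delta}=r^{-1+\delta}$ for $r\le 1$ and by a constant times $r^{-\beta}r^{\beta-1-\delta}=r^{-1-\delta}$ for $r\ge 1$, both integrable on the corresponding half-lines.

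For \emph{independence of $\varphi$} I would fix $J$ and $\omega<\varphi_1<\varphi_2<\theta$. Restricted to the slice $\mathbb{C}_J$, $K$ is complex analytic on $(S_\theta\setminus\overline{S_\omega})\cap\mathbb{C}_J$ and $f$ is complex analytic on $S_\theta\cap\mathbb{C}_J$, so Green's theorem combined with the Cauchy--Riemann equations \eqref{Eq_Cauchy_Riemann_equations_operators} and \eqref{Eq_Cauchy_Riemann_equations} — the computation behind the usual Cauchy theorem for slice hyperholomorphic functions — yields $\int_{\partial D}K(s)\,ds_J\,f(s)=0$ for every bounded $C^1$-domain $D\subseteq(S_\theta\setminus\overline{S_\omega})\cap\mathbb{C}_J$. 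Choosing for $D$ the two truncated sector shells $\{s\in\mathbb{C}_J:\varepsilon<|s|<R,\ \varphi_1<\pm\Arg(s)<\varphi_2\}$ (the two signs handled separately) expresses the difference of the $\varphi_1$- and $\varphi_2$-integrals as minus the two circular-arc contributions at radii $\varepsilon$ and $R$, which are bounded by constants times $R^{-\beta}R^{\beta-1-\delta}R=R^{-\delta}$ and $\varepsilon^{-\alpha}\varepsilon^{\alpha-1+\delta}\varepsilon=\varepsilon^{\delta}$ respectively; letting $R\to\infty$, $\varepsilon\to 0$ gives the claim.

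For \emph{independence of $J$} I would fix $\varphi$ and insert the slice decompositions $K(x+Jy)=A(x,y)+B(x,y)J$ from \eqref{Eq_Holomorphic_decomposition_operators} and $f(x+Jy)=\alpha(x,y)+J\beta(x,y)$ from \eqref{Eq_Holomorphic_decomposition}, noting that $A,B,\alpha,\beta$ are independent of $J$. Combining the upper ray $re^{J\varphi}$, on which $(x,y)=(r\cos\varphi,r\sin\varphi)$, with the lower ray $re^{-J\varphi}$, on which $(x,y)=(r\cos\varphi,-r\sin\varphi)$ and the even-odd relations \eqref{Eq_Symmetry_condition_operators} and \eqref{Eq_Symmetry_condition} turn $(A,B,\alpha,\beta)$ into $(A,-B,\alpha,-\beta)$, rewrites \eqref{Eq_Integral_independence} as a single integral over $r\in(0,\infty)$. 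Expanding the quaternionic products and using $J^2=-1$, all terms carrying the unit $J$ cancel and the integrand collapses to an expression involving only $A(x,y)$, $B(x,y)$, $\alpha(x,y)$, $\beta(x,y)$, $\cos\varphi$, $\sin\varphi$ with $(x,y)=(r\cos\varphi,r\sin\varphi)$, hence does not depend on $J$; together with the previous step this makes \eqref{Eq_Integral_independence} constant on $(\omega,\theta)\times\mathbb{S}$.

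I expect the algebraic cancellation of all $J$-terms in the last step to be the main bookkeeping obstacle; the convergence estimates and the Cauchy-theorem argument are routine once the growth bounds of $K$ and $f$ are recorded.
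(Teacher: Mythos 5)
Your proposal is correct, and its first two steps coincide with the paper's proof: the absolute convergence estimate along the two rays and the independence of $\varphi$ via the Cauchy theorem on truncated sector shells, with arc contributions of order $\varepsilon^{\delta}$ and $R^{-\delta}$, are exactly the paper's \eqref{Eq_Integral_independence_13}--\eqref{Eq_Integral_independence_3}. Where you genuinely diverge is the independence of $J$. The paper proves it by representing $f$ through a Cauchy integral over contours in $\mathbb{C}_I$ and $K$ through a Cauchy integral over contours in $\mathbb{C}_J$, interchanging the two integrations (justified by absolute convergence of the double integrals) and then carrying out rather delicate limits $\varepsilon\rightarrow 0^+$ and $R\rightarrow\infty$, see \eqref{Eq_Integral_independence_4}--\eqref{Eq_Integral_independence_8}. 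You instead pair the two rays and insert the slice decompositions $K(x+Jy)=A(x,y)+B(x,y)J$ and $f(x+Jy)=\alpha(x,y)+J\beta(x,y)$ together with the even-odd relations, and the cancellation you anticipate does indeed go through: with $c=\cos\varphi$, $\sigma=\sin\varphi$ one finds $e^{J\varphi}f(te^{J\varphi})+e^{-J\varphi}f(te^{-J\varphi})=2(c\,\alpha-\sigma\beta)$ and $J\big(e^{J\varphi}f(te^{J\varphi})-e^{-J\varphi}f(te^{-J\varphi})\big)=-2(\sigma\alpha+c\,\beta)$, so the combined integrand collapses to $-2A(t\cos\varphi,t\sin\varphi)\big(\sigma\alpha+c\,\beta\big)-2B(t\cos\varphi,t\sin\varphi)\big(c\,\alpha-\sigma\beta\big)$, which contains no $J$ at all; this is precisely the paper's own representation \eqref{Eq_Properties_decaying_2} pushed one step further (the paper derives that formula only later, in Lemma \ref{lem_Properties_decaying}, and for other purposes). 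Your route is shorter and purely algebraic, avoiding the Fubini-type interchange and the $\varepsilon$, $R$ limit estimates; the paper's two-plane Cauchy argument is the classical pattern from the $S$-functional calculus and works without ever exhibiting the axial components of the integrand, but for the theorem as stated both arguments are complete, and combined with your $\varphi$-independence step your computation yields the full claim.
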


\begin{proof}
For the \textit{absolute convergence of the integral} \eqref{Eq_Integral_independence}, we use the integration path
\begin{equation}\label{Eq_gamma}
\gamma(t):=\begin{cases} -te^{J\varphi}, & t<0, \\ te^{-J\varphi}, & t>0, \end{cases}
\end{equation}
along the boundary of $S_\varphi\cap\mathbb{C}_J$. Then the estimate \eqref{Eq_K_estimate} of the operator $K(s)$ and the decay of the function $f\in\Psi_L^{\alpha,\beta}(S_\theta)$, gives the absolute convergence of the integral
\begin{equation}\label{Eq_Integral_independence_13}
\int_{\mathbb{R}\setminus\{0\}}\Vert K(s)\Vert\,|\gamma'(t)|\,|f(\gamma(t))|dt\leq 2C_\varphi C_f\bigg(\int_0^1t^{-1+\delta}dt+\int_1^\infty t^{-1-\delta}dt\bigg)=\frac{4C_\varphi C_f}{\delta}<\infty.
\end{equation}
For the \textit{independence of the angle} $\varphi$, let us consider two angles $\varphi_1<\varphi_2\in(\omega,\theta)$ and for every $0<\varepsilon<R$ the curves \medskip

\begin{minipage}{0.29\textwidth}
\begin{center}
\begin{tikzpicture}
\fill[black!15] (1.25,2.17)--(0,0)--(1.25,-2.17) arc (-60:60:2.5);
\draw (1.25,2.17)--(0,0)--(1.25,-2.17);
\fill[black!30] (2.27,1.06)--(0,0)--(2.27,-1.06) arc (-25:25:2.5);
\draw (2.27,1.06)--(0,0)--(2.27,-1.06);
\draw[->] (-0.3,0)--(2.9,0);
\draw[->] (0,-2)--(0,2.3) node[anchor=north east] {\large{$\mathbb{C}_J$}};
\draw (1,0) arc (0:35:1) (0.75,-0.09) node[anchor=south] {\small{$\varphi_1$}};
\draw (1.4,0) arc (0:50:1.4) (1.15,0.06) node[anchor=south] {\small{$\varphi_2$}};
\draw[thick] (0.45,0.54)--(1.45,1.73) arc (50:35:2.26)--(0.58,0.41);
\draw[thick] (0.45,-0.54)--(1.45,-1.73) arc (-50:-35:2.26)--(0.58,-0.41);
\draw[thick,->] (0.58,0.41)--(0.45,0.54);
\draw (0.7,0.55) node[anchor=north east] {\small{$\sigma_\varepsilon$}};
\draw[thick,->] (0.45,-0.54)--(0.58,-0.41);
\draw (0.65,-0.55) node[anchor=south east] {\small{$\sigma_\varepsilon$}};
\draw[thick,->] (1.85,1.3)--(1.28,0.89);
\draw (1.28,0.78) node[anchor=west] {\small{$\gamma_{1,\varepsilon,R}$}};
\draw[thick,->] (0.58,-0.41)--(1.28,-0.89);
\draw (1.2,-0.7) node[anchor=west] {\small{$\gamma_{1,\varepsilon,R}$}};
\draw[thick,->] (1.45,1.73)--(1,1.2);
\draw (1.1,1.3) node[anchor=east] {\small{$\gamma_{2,\varepsilon,R}$}};
\draw[thick,->] (0.45,-0.54)--(1,-1.2);
\draw (1,-1.3) node[anchor=east] {\small{$\gamma_{2,\varepsilon,R}$}};
\draw[thick,->] (1.63,1.57)--(1.6,1.6);
\draw (1.6,1.6) node[anchor=west] {\small{$\sigma_R$}};
\draw[thick,->] (1.71,-1.48)--(1.73,-1.45);
\draw (1.65,-1.6) node[anchor=west] {\small{$\sigma_R$}};
\end{tikzpicture}
\end{center}
\end{minipage}
\begin{minipage}{0.7\textwidth}
\begin{align*}
\sigma_\varepsilon(\varphi)&:=\varepsilon e^{J\varphi},\hspace{1.03cm}\varphi\in(-\varphi_2,-\varphi_1)\cup(\varphi_1,\varphi_2), \\
\sigma_R(\varphi)&:=Re^{J\varphi},\hspace{0.9cm}\varphi\in(-\varphi_2,-\varphi_1)\cup(\varphi_1,\varphi_2), \\
\gamma_{1,\varepsilon,R}(t)&:=\begin{cases} -te^{J\varphi_1}, & t\in(-R,-\varepsilon), \\ te^{-J\varphi_1}, & t\in(\varepsilon,R), \end{cases} \\
\gamma_{2,\varepsilon,R}(t)&:=\begin{cases} -te^{J\varphi_2}, & t\in(-R,-\varepsilon), \\ te^{-J\varphi_2}, & t\in(\varepsilon,R). \end{cases}
\end{align*}
\end{minipage}

\medskip

Then the Cauchy integral theorem gives
\begin{equation}\label{Eq_Integral_independence_1}
\int_{\gamma_{1,\varepsilon,R}}K(s)ds_Jf(s)=\int_{\sigma_R\oplus\gamma_{2,\varepsilon,R}\ominus\sigma_\varepsilon}K(s)ds_Jf(s).
\end{equation}
In the limit $\varepsilon\rightarrow 0^+$, the integral along $\sigma_\varepsilon$ vanishes, since for $\varepsilon\leq 1$ we get
\begin{equation}\label{Eq_Integral_independence_2}
\bigg\Vert\int_{\sigma_\varepsilon}K(s)ds_Jf(s)\bigg\Vert\leq C_{\varphi_1}C_f\int_{\varphi_1<|\varphi|<\varphi_2}\frac{1}{\varepsilon^\alpha}\varepsilon^{\alpha-1+\delta}\varepsilon d\varphi=2C_{\varphi_1}C_f(\varphi_2-\varphi_1)\varepsilon^\delta\overset{\varepsilon\rightarrow 0^+}{\longrightarrow}0.
\end{equation}
Similarly, also the integral along $\sigma_R$ vanishes in the limit $R\rightarrow\infty$, since for $R\geq 1$ we get
\begin{equation}\label{Eq_Integral_independence_3}
\bigg\Vert\int_{\sigma_R}K(s)ds_Jf(s)\bigg\Vert\leq 2C_{\varphi_1}C_f(\varphi_2-\varphi_1)R^{-\delta}\overset{R\rightarrow\infty}{\longrightarrow}0.
\end{equation}
Performing now the limits $\varepsilon\rightarrow 0^+$ and $R\rightarrow\infty$ in \eqref{Eq_Integral_independence_1} and using the fact that the integrals \eqref{Eq_Integral_independence_2} and \eqref{Eq_Integral_independence_3} vanish, we obtain the independence of the angle
\begin{equation*}
\int_{\partial(S_{\varphi_1}\cap\mathbb{C}_J)}K(s)ds_Jf(s)=\int_{\partial(S_{\varphi_2}\cap\mathbb{C}_J)}K(s)ds_Jf(s).
\end{equation*}
For the \textit{independence on the imaginary unit}, we consider $J,I\in\mathbb{S}$. For any three angles $\varphi_1<\varphi_2<\varphi_3\in(\omega,\theta)$ and $\varepsilon>0$ we define the paths \medskip

\begin{minipage}{0.29\textwidth}
\begin{center}
\begin{tikzpicture}
\fill[black!15] (0.65,2.41)--(0,0)--(0.65,-2.41) arc (-75:75:2.5);
\draw (0.65,2.41)--(0,0)--(0.65,-2.41);
\fill[black!30] (2.27,1.06)--(0,0)--(2.27,-1.06) arc (-25:25:2.5);
\draw (2.27,1.06)--(0,0)--(2.27,-1.06);
\draw[->] (-1,0)--(2.9,0);
\draw[->] (0,-2.4)--(0,2.5) node[anchor=north east] {\large{$\mathbb{C}_I$}};
\draw[thick] (0.97,2.08)--(0.3,0.64) arc (65:35:0.71)--(1.88,1.32) arc (35:65:2.3);
\draw[thick] (0.97,-2.08)--(0.3,-0.64) arc (-65:-35:0.71)--(1.88,-1.32) arc (-35:-65:2.3);
\draw[thick,->] (0.47,0.53)--(0.46,0.54);
\draw (0.6,0.6) node[anchor=north east] {\small{$\sigma_\varepsilon$}};
\draw[thick,->] (0.45,-0.55)--(0.46,-0.54);
\draw (0.65,-0.6) node[anchor=south east] {\small{$\sigma_\varepsilon$}};
\draw[thick,->] (1.64,1.62)--(1.63,1.63);
\draw (1.63,1.63) node[anchor=west] {\small{$\sigma_R$}};
\draw[thick,->] (1.62,-1.64)--(1.63,-1.63);
\draw (1.63,-1.63) node[anchor=west] {\small{$\sigma_R$}};
\draw[thick,->] (1.85,1.3)--(1.28,0.89);
\draw (1.28,0.78) node[anchor=west] {\small{$\gamma_{1,\varepsilon,R}$}};
\draw[thick,->] (0.58,-0.41)--(1.28,-0.89);
\draw (1.2,-0.7) node[anchor=west] {\small{$\gamma_{1,\varepsilon,R}$}};
\draw[thick,->] (0.96,2.05)--(0.66,1.41);
\draw (0.69,1.41) node[anchor=east] {\small{$\gamma_{3,\varepsilon,R}$}};
\draw[thick,->] (0.3,-0.64)--(0.66,-1.41);
\draw (0.66,-1.41) node[anchor=east] {\small{$\gamma_{3,\varepsilon,R}$}};
\draw[dashed] (1.61,-1.92)--(0.46,-0.54);
\draw[dashed] (0.46,0.54)--(1.61,1.92) node[anchor=south] {\small{$\gamma_{2,\varepsilon,R}$}};
\fill[black] (1.09,1.29) circle (0.07cm) node[anchor=south] {$[s]$};
\fill[black] (1.09,-1.29) circle (0.07cm) node[anchor=north] {$[s]$};
\end{tikzpicture}
\end{center}
\end{minipage}
\begin{minipage}{0.7\textwidth}
\begin{align}
\gamma_{1,\varepsilon,R}(t)&:=\begin{cases} -te^{I\varphi_1}, & t\in(-R,-\varepsilon), \\ te^{-I\varphi_1}, & t\in(\varepsilon,R), \end{cases} \notag \\
\gamma_{2,\varepsilon,R}(t)&:=\begin{cases} -te^{J\varphi_2}, & t\in(-R,-\varepsilon), \\ te^{-J\varphi_2}, & t\in(\varepsilon,R), \end{cases} \label{Eq_Integral_independence_9} \\
\gamma_{3,\varepsilon,R}(t)&:=\begin{cases} -te^{I\varphi_3}, & t\in(-R,-\varepsilon), \\ te^{-I\varphi_3}, & t\in(\varepsilon,R), \end{cases} \notag \\
\sigma_\varepsilon(\varphi)&:=\varepsilon e^{I\varphi},\hspace{0.5cm}\varphi\in(-\varphi_3,-\varphi_1)\cup(\varphi_1,\varphi_3), \notag \\
\sigma_R(\varphi)&:=Re^{I\varphi},\quad \varphi\in(-\varphi_3,-\varphi_1)\cup(\varphi_1,\varphi_3). \notag
\end{align}
\hspace{-1cm} Note, that $\gamma_{1,\varepsilon,R},\gamma_{3,\varepsilon,R},\sigma_\varepsilon,\sigma_R$ are curves in $\mathbb{C}_I$, while $\gamma_{2,\varepsilon,R}$ is in $\mathbb{C}_J$.
\end{minipage}

\medskip

For $s\in\ran(\gamma_{2,\varepsilon,\infty})$ we choose $R>|s|$, such that the Cauchy formula \eqref{Eq_Cauchy_formula} gives
\begin{equation}\label{Eq_Integral_independence_4}
f(s)=\frac{1}{2\pi}\int_{\gamma_{3,\varepsilon,R}\ominus\sigma_\varepsilon\ominus\gamma_{1,\varepsilon,R}\oplus\sigma_R}S_L^{-1}(p,s)dp_If(p).
\end{equation}
In the limit $R\rightarrow\infty$, the integral along $\sigma_R$ vanishes because of
\begin{equation}\label{Eq_Integral_independence_14}
\bigg|\int_{\sigma_R}S_L^{-1}(p,s)dp_If(p)\bigg|\leq C_f\int_{\varphi_1<|\varphi|<\varphi_3}\frac{R+|s|}{(R-|s|)^2}R^{\beta-1-\delta}Rd\varphi\overset{R\rightarrow\infty}{\longrightarrow}0,
\end{equation}
where the integral vanishes since the integrand asymptotically behaves as $\mathcal{O}(R^{\beta-1-\delta})$ and we assumed $\beta\leq 1$. Hence \eqref{Eq_Integral_independence_4} becomes
\begin{equation}\label{Eq_Integral_independence_12}
f(s)=\frac{1}{2\pi}\int_{\gamma_{3,\varepsilon,\infty}\ominus\sigma_\varepsilon\ominus\gamma_{1,\varepsilon,\infty}}S_L^{-1}(p,s)dp_If(p),\qquad s\in\ran(\gamma_{2,\varepsilon,\infty}).
\end{equation}
Next, we consider the curves \medskip

\begin{minipage}{0.34\textwidth}
\begin{center}
\begin{tikzpicture}
\fill[black!30] (1.99,0.93)--(0,0)--(1.99,-0.93) arc (-25:25:2.2);
\draw (1.99,0.93)--(0,0)--(1.99,-0.93);
\draw[->] (-2.3,0)--(2.6,0);
\draw[->] (0,-2.3)--(0,2.4);
\draw (0,1.8) node[anchor=north east] {\large{$\mathbb{C}_J$}};
\fill[black] (0.66,1.4) circle (0.07cm) node[anchor=east] {$[p]$};
\fill[black] (0.66,-1.4) circle (0.07cm) node[anchor=east] {$[p]$};
\draw[dashed] (0.85,1.81)--(0.42,0.91) arc (65:35:1)--(1.64,1.15) arc (35:65:2);
\draw (0.85,1.75) node[anchor=south] {\small{$\gamma_{3,\varepsilon,R}$}};
\draw[dashed] (0.85,-1.81)--(0.42,-0.91) arc (-65:-35:1)--(1.64,-1.15) arc (-35:-65:2);
\draw (1.6,1.26) node[anchor=west] {\small{$\gamma_{1,\varepsilon,R}$}};
\draw[thick] (1.29,1.53)--(0.32,0.38) arc (50:310:0.5)--(1.29,-1.53) arc (310:50:2);
\draw[thick,->] (0.64,-0.77)--(1.09,-1.3);
\draw (0.7,-1) node[anchor=west] {\small{$\gamma_{2,\varepsilon,R}$}};
\draw[thick,->] (1.29,1.53)--(0.91,1.08);
\draw (0.8,0.9) node[anchor=west] {\small{$\gamma_{2,\varepsilon,R}$}};
\draw[thick,->] (0.51,0.61)--(0.45,0.54);
\draw (0.6,0.75) node[anchor=east] {\small{$\gamma_{2,\frac{\varepsilon}{2},\varepsilon}$}};
\draw[thick,->] (0.45,-0.54)--(0.51,-0.61);
\draw (0.6,-0.7) node[anchor=east] {\small{$\gamma_{2,\frac{\varepsilon}{2},\varepsilon}$}};
\draw[thick,->] (-0.45,0.19)--(-0.46,0.17);
\draw (-0.41,0.17) node[anchor=east] {\small{$\tau_{\frac{\varepsilon}{2}}$}};
\draw[thick,->] (-1.86,0.73)--(-1.88,0.69);
\draw (-1.88,0.69) node[anchor=west] {\small{$\tau_R$}};
\end{tikzpicture}
\end{center}
\end{minipage}
\begin{minipage}{0.65\textwidth}
\begin{align}
\tau_{\frac{\varepsilon}{2}}(\varphi)&:=\frac{\varepsilon}{2}e^{J\varphi},\qquad\varphi\in(\varphi_2,2\pi-\varphi_2), \notag \\
\tau_R(\varphi)&:=Re^{J\varphi},\qquad\varphi\in(\varphi_2,2\pi-\varphi_2), \label{Eq_Integral_independence_10} \\
\gamma_{2,\frac{\varepsilon}{2},\varepsilon}(t)&:=\begin{cases} -te^{J\varphi_2}, & t\in(-\varepsilon,-\frac{\varepsilon}{2}), \\ te^{-J\varphi_2}, & t\in(\frac{\varepsilon}{2},\varepsilon). \end{cases} \notag
\end{align}
\end{minipage}

\medskip

For $p\in\ran(\gamma_{3,\varepsilon,\infty})$, we choose $R>|p|$, such that the Cauchy formula gives
\begin{equation}\label{Eq_Integral_independence_6}
K(p)=\frac{-1}{2\pi}\int_{\gamma_{2,\frac{\varepsilon}{2},R+1}\oplus\tau_{\frac{\varepsilon}{2}}\ominus\tau_R}K(s)ds_JS_R^{-1}(s,p)=\frac{1}{2\pi}\int_{\gamma_{2,\frac{\varepsilon}{2},R+1}\oplus\tau_{\frac{\varepsilon}{2}}\ominus\tau_R}K(s)ds_JS_L^{-1}(p,s),
\end{equation}
where the negative sign in front of the first integral above comes from the negative orientation of integration path, and in the second equality we used the connection $S_R^{-1}(s,p)=-S_L^{-1}(p,s)$ between the left and the right Cauchy kernel. So we obtain that
\begin{equation}\label{EQ_PETER}
\bigg\Vert\int_{\tau_R}K(s)ds_JS_L^{-1}(p,s)\bigg\Vert\leq C_{\varphi_2}\int_{\varphi_2}^{2\pi-\varphi_2}R^{-\beta}\frac{|p|+R}{(R-|p|)^2}Rd\varphi\overset{R\rightarrow\infty}{\longrightarrow}0,
\end{equation}
where the integral vanishes since the integrand asymptotically behaves as $\mathcal{O}(R^{-\beta})$ and we assumed $\beta>0$. This reduces the equation \eqref{Eq_Integral_independence_6} to
\begin{equation}\label{Eq_Integral_independence_11}
K(p)=\frac{1}{2\pi}\int_{\gamma_{2,\frac{\varepsilon}{2},\infty}\oplus\tau_{\frac{\varepsilon}{2}}}K(s)ds_JS_L^{-1}(p,s),\qquad p\in\ran(\gamma_{3,\varepsilon,\infty}).
\end{equation}

Let $p\in\ran(\gamma_{1,\varepsilon,\infty})$ we now reason as in \eqref{Eq_Integral_independence_6}, \eqref{EQ_PETER}, \eqref{Eq_Integral_independence_11} with the difference that the left hand side of \eqref{Eq_Integral_independence_6} equals zero instead of $K(p)$ because the points $[p]\cap\mathbb{C}_J$ lie outside the integration path,  so we obtain the formula
\begin{equation}\label{Eq_Integral_independence_5}
0=\frac{1}{2\pi}\int_{\gamma_{2,\frac{\varepsilon}{2},\infty}\oplus\tau_{\frac{\varepsilon}{2}}}K(s)ds_JS_L^{-1}(p,s),\qquad p\in\ran(\gamma_{1,\varepsilon,\infty}).
\end{equation}
Combining now \eqref{Eq_Integral_independence_12}, \eqref{Eq_Integral_independence_11} and \eqref{Eq_Integral_independence_5}, leads to the formula
\begin{align}
\int_{\gamma_{2,\varepsilon,\infty}}K(s)ds_Jf(s)&=\frac{1}{2\pi}\int_{\gamma_{2,\varepsilon,\infty}}K(s)ds_J\bigg(\int_{\gamma_{3,\varepsilon,\infty}\ominus\sigma_\varepsilon\ominus\gamma_{1,\varepsilon,\infty}}S_L^{-1}(p,s)dp_If(p)\bigg) \notag \\
&=\int_{\gamma_{3,\varepsilon,\infty}}\bigg(K(p)-\frac{1}{2\pi}\int_{\gamma_{2,\frac{\varepsilon}{2},\varepsilon}\oplus\tau_{\frac{\varepsilon}{2}}}K(s)ds_JS_L^{-1}(p,s)\bigg)dp_If(p) \notag \\
&\quad+\frac{1}{2\pi}\int_{\gamma_{1,\varepsilon,\infty}}\bigg(\int_{\gamma_{2,\frac{\varepsilon}{2},\varepsilon}\oplus\tau_{\frac{\varepsilon}{2}}}K(s)ds_JS_L^{-1}(p,s)\bigg)dp_If(p) \notag \\
&\quad-\frac{1}{2\pi}\int_{\sigma_\varepsilon}\bigg(\int_{\gamma_{2,\varepsilon,\infty}}K(s)ds_JS_L^{-1}(p,s)\bigg)dp_If(p). \label{Eq_Integral_independence_7}
\end{align}
Note, that in the above manipulations we were allowed to interchange the order of integration since the double integrals over two unbounded paths are absolute convergent due to
\begin{align*}
\bigg\Vert\int_{\gamma_{3,1,\infty}}&\bigg(\int_{\gamma_{2,1,\infty}}K(s)ds_JS_L^{-1}(p,s)\bigg)dp_If(p)\bigg\Vert \\
&\leq 4C_{\varphi_2}C_f\int_1^\infty\int_1^\infty r^{-\beta}\frac{|te^{I\varphi_3}-re^{-J\varphi_2}|}{|te^{I\varphi_3}-re^{I\varphi_2}||te^{I\varphi_3}-re^{-I\varphi_2}|}t^{\beta-1-\delta}drdt \\
&\leq\frac{16C_{\varphi_2}C_f}{|e^{I\varphi_3}-e^{I\varphi_2}||e^{I\varphi_3}-e^{-I\varphi_2}|}\int_1^\infty\int_1^\infty r^{-\beta}\frac{1}{t+r}t^{\beta-1-\delta}drdt \\
&\leq\frac{16C_{\varphi_2}C_f\theta^\theta(1-\theta)^{1-\theta}}{|e^{I\varphi_3}-e^{I\varphi_2}||e^{I\varphi_3}-e^{-I\varphi_2}|}\int_1^\infty\int_1^\infty\frac{1}{r^{\beta+\theta}t^{2+\delta-\theta-\beta}}drdt<\infty,
\end{align*}
where in the second inequality we used $\frac{1}{|te^{I\varphi_3}-re^{\pm I\varphi_2}|}\leq\frac{2}{|e^{I\varphi_3}-e^{\pm I\varphi_2}|(t+r)}$ and in the third inequality $\frac{1}{t+r}\leq\frac{\theta^\theta(1-\theta)^{1-\theta}}{r^\theta t^{1-\theta}}$, for some arbitrary $\theta\in(0,1)$ with $1<\theta+\beta<1+\delta$. Analogously the integral along $\gamma_{1,1,\infty}$ and $\gamma_{2,1,\infty}$ is absolute convergent. Since every $s\in\ran(\gamma_{2,\frac{\varepsilon}{2},\varepsilon}\oplus\tau_{\frac{\varepsilon}{2}})$ lies outside $\gamma_{1,\varepsilon,R}\ominus\gamma_{3,\varepsilon,R}\ominus\sigma_R\oplus\sigma_\varepsilon$, see also the graphic in \eqref{Eq_Integral_independence_9}, the Cauchy formula \eqref{Eq_Cauchy_formula} gives
\begin{equation*}
\int_{\gamma_{1,\varepsilon,\infty}\ominus\gamma_{3,\varepsilon,\infty}}S_L^{-1}(p,s)dp_If(p)=-\int_{\sigma_\varepsilon}S_L^{-1}(p,s)dp_If(p),\qquad s\in\ran(\gamma_{2,\frac{\varepsilon}{2},\varepsilon}\oplus\tau_{\frac{\varepsilon}{2}}),
\end{equation*}
where we used the fact that in the limit $R\rightarrow\infty$ the integral along $\sigma_R$ vanishes, see \eqref{Eq_Integral_independence_14}. This then reduces equation \eqref{Eq_Integral_independence_7} to
\begin{equation}\label{Eq_Integral_independence_8}
\int_{\gamma_{2,\varepsilon,\infty}}K(s)ds_Jf(s)=\int_{\gamma_{3,\varepsilon,\infty}}K(p)dp_If(p)-\frac{1}{2\pi}\int_{\sigma_\varepsilon}\bigg(\int_{\gamma_{2,\frac{\varepsilon}{2},\infty}\oplus\tau_{\frac{\varepsilon}{2}}}K(s)ds_JS_L^{-1}(p,s)\bigg)dp_If(p).
\end{equation}
Finally, we perform the limits when $\varepsilon\rightarrow 0^+$ in this equation and show that the double integral vanishes. For the first part of the claim, we get
\begin{align*}
\bigg\Vert&\int_{\sigma_\varepsilon}\bigg(\int_{\gamma_{2,\frac{\varepsilon}{2},\infty}}K(s)ds_JS_L^{-1}(p,s)\bigg)dp_If(p)\bigg\Vert \\
&\leq 4C_{\varphi_2}C_f\int_{\varphi_1}^{\varphi_3}\bigg(\int_{\frac{\varepsilon}{2}}^1\frac{\varepsilon+r}{r^\alpha|\varepsilon e^{I\varphi}-re^{I\varphi_2}||\varepsilon e^{I\varphi}-re^{-I\varphi_2}|}dr+\int_1^\infty\frac{\varepsilon+r}{r^\beta(\varepsilon-r)^2}dr\bigg)\varepsilon^{\alpha-1+\delta}\varepsilon d\varphi \\
&=4C_{\varphi_2}C_f\varepsilon^\delta\bigg(\int_{\varphi_1}^{\varphi_3}\int_{\frac{1}{2}}^{\frac{1}{\varepsilon}}\frac{1+\rho}{\rho^\alpha|e^{I\varphi}-\rho e^{I\varphi_2}||e^{I\varphi}-\rho e^{-I\varphi_2}|}d\rho d\varphi+\varepsilon^\alpha(\varphi_3-\varphi_1)\int_1^\infty\frac{\varepsilon+r}{r^\beta(\varepsilon-r)^2}dr\bigg) \\
&=4C_{\varphi_2}C_f\varepsilon^\delta\int_{\varphi_1}^{\varphi_3}\bigg(\int_{\frac{1}{2}}^{\frac{1}{\varepsilon}}\frac{1+\rho}{\rho^\alpha|e^{I\varphi}-\rho e^{I\varphi_2}||e^{I\varphi}-\rho e^{-I\varphi_2}|}d\rho+\varepsilon^\alpha\int_1^\infty\frac{\varepsilon+r}{r^\beta(\varepsilon-r)^2}dr\bigg)d\varphi\overset{\varepsilon\rightarrow 0^+}{\longrightarrow}0.
\end{align*}
The second part of the double integral in \eqref{Eq_Integral_independence_8} vanishes in the limit $\varepsilon\rightarrow 0^+$ because of
\begin{align*}
\bigg\Vert\int_{\sigma_\varepsilon}\bigg(\int_{\tau_{\frac{\varepsilon}{2}}}&K(s)ds_JS_L^{-1}(p,s)\bigg)dp_If(p)\bigg\Vert \\
&\leq 4C_{\varphi_1}C_f\int_{\varphi_1}^{\varphi_3}\int_{\varphi_2}^{2\pi-\varphi_2}\frac{\varepsilon+\frac{\varepsilon}{2}}{(\frac{\varepsilon}{2})^\alpha|\varepsilon e^{I\varphi}-\frac{\varepsilon}{2}e^{I\phi}||\varepsilon e^{I\varphi}-\frac{\varepsilon}{2}e^{-I\phi}|}\varepsilon^{\alpha-1+\delta}\frac{\varepsilon}{2}d\phi\varepsilon d\varphi \\
&=2^\alpha 3C_{\varphi_1}C_f\varepsilon^\delta\int_{\varphi_1}^{\varphi_3}\int_{\varphi_2}^{2\pi-\varphi_2}\frac{1}{|e^{I\varphi}-\frac{1}{2}e^{I\phi}||e^{I\varphi}-\frac{1}{2}e^{-I\phi}|}d\phi d\varphi\overset{\varepsilon\rightarrow 0^+}{\longrightarrow}0.
\end{align*}
Therefore, the limit $\varepsilon\rightarrow 0^+$ turns \eqref{Eq_Integral_independence_8} into the desired independence of the imaginary unit
\begin{equation*}
\int_{\partial(S_{\varphi_2}\cap\mathbb{C}_J)}K(s)ds_Jf(s)=\int_{\partial(S_{\varphi_3}\cap\mathbb{C}_I)}K(p)dp_If(p). \qedhere
\end{equation*}
\end{proof}

For the welldefinedness of the functional calculi in Definition \ref{defi_Functional_calculus_decaying} ii) -- iv), we also have to show that there are no two functions $f_1\neq f_2$ for which $\mathcal Df_1=\mathcal Df_2$, $\overline{\mathcal D}f_1=\overline{\mathcal D}f_2$ or $\Delta f_1=\Delta f_2$.

\begin{lem}\label{lem_Kernel_independence}
Let $\alpha\geq 1$, $\beta\leq 1$. Then for every $\theta\in(0,\pi)$, $f\in\Psi_L^{\alpha,\beta}(S_\theta)$ there holds \medskip

\begin{enumerate}
\item[i)] $\forall s\in S_\theta: \mathcal Df(s)=0\Rightarrow\forall s\in S_\theta: f(s)=0$, \\
\item[ii)] $\forall s\in S_\theta: \overline{\mathcal D}f(s)=0\Rightarrow\forall s\in S_\theta: f(s)=0$, \\
\item[iii)] $\forall s\in S_\theta: \Delta f(s)=0\Rightarrow\forall s\in S_\theta: f(s)=0$.
\end{enumerate}
\end{lem}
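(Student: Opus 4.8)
\emph{Step 1 (the kernel identities).} The plan is to translate each hypothesis into a condition on the two components $\alpha,\beta\colon\mathcal U\to\mathbb H$ of $f$, where $f(x+Jy)=\alpha(x,y)+J\beta(x,y)$, and then to deduce $f\equiv 0$ from the Cauchy--Riemann equations \eqref{Eq_Cauchy_Riemann_equations} and the prescribed decay. Writing $q=q_0+\underline q$ with $\underline q=q_1e_1+q_2e_2+q_3e_3$, $y=|\underline q|$ and $J=\underline q/|\underline q|$, so that $f(q)=\alpha(q_0,y)+J\beta(q_0,y)$, a direct differentiation (using $\partial_{q_i}y=q_i/y$, $\sum_ie_i^2=-3$, $\sum_iq_ie_i=yJ$ and $J^2=-1$) gives, for $y\neq 0$ and after inserting \eqref{Eq_Cauchy_Riemann_equations},
\begin{align*}
\mathcal Df(x+Jy)&=-\tfrac{2}{y}\,\beta(x,y),\\
\overline{\mathcal D}f(x+Jy)&=2\,\partial_x\alpha(x,y)+2J\,\partial_x\beta(x,y)+\tfrac{2}{y}\,\beta(x,y),\\
\Delta f(x+Jy)&=-\tfrac{2}{y}\,\partial_x\beta(x,y)+J\Big(\tfrac{2}{y}\,\partial_x\alpha(x,y)-\tfrac{2}{y^2}\,\beta(x,y)\Big),
\end{align*}
the third identity following from the first by applying $\overline{\mathcal D}$ (recall $\Delta=\overline{\mathcal D}\mathcal D$ and that $\mathcal Df$ has no $J$-part); note that slice hyperholomorphic functions are real-analytic, so $\Delta f$ is defined.

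\emph{Step 2 (extracting the components).} By axial symmetry of $S_\theta$, the hypothesis of each part holds at $x+Jy$ for \emph{every} $J\in\mathbb S$, whereas $\alpha,\beta$ do not depend on $J$. In case i) this gives $\beta\equiv 0$ at once; in cases ii) and iii), evaluating at $J$ and at $-J$ and using that $\mathbb H$ has no zero divisors gives, on $\{(x,y)\in\mathcal U:y\neq 0\}$, that $\partial_x\beta\equiv 0$ and $\partial_x\alpha=-\beta/y$ (case ii)), resp.\ $\partial_x\beta\equiv 0$ and $\partial_x\alpha=\beta/y$ (case iii)). In every case $\partial_y\alpha=-\partial_x\beta\equiv 0$ by \eqref{Eq_Cauchy_Riemann_equations}.

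\emph{Step 3 (ODEs along rays, and the decay).} Fix $(x_0,y_0)\in\mathcal U$ with $y_0\neq 0$; since $S_\theta$ is a cone, the whole ray $t\mapsto(tx_0,ty_0)$, $t>0$, lies in $\mathcal U$. Along it, Step~2 and \eqref{Eq_Cauchy_Riemann_equations} show that $g(t):=\beta(tx_0,ty_0)$ solves $g'=-g/t$ in case ii) and $g'=g/t$ in case iii), so $g(t)=t^{-1}\beta(x_0,y_0)$, resp.\ $t\,\beta(x_0,y_0)$; integrating the accompanying relation for $\partial_x\alpha$, one finds $t\mapsto\alpha(tx_0,ty_0)$ affine in $t^{-1}$, resp.\ in $t$, whence
\[
f\big(t(x_0+Jy_0)\big)=c+t^{\mp 1}\Big(\tfrac{x_0}{y_0}+J\Big)\beta(x_0,y_0)
\]
for some $c\in\mathbb H$. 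Since $f\in\Psi_L^{\alpha,\beta}(S_\theta)$ forces $|f(s)|\to 0$ as $|s|\to 0$ (here $\alpha\ge 1$ is used) and as $|s|\to\infty$ (here $\beta\le 1$ is used), the summand $t^{\mp1}\big(\tfrac{x_0}{y_0}+J\big)\beta(x_0,y_0)$ must vanish; as $|\tfrac{x_0}{y_0}+J|\ge 1\neq 0$ and $\mathbb H$ has no zero divisors, this yields $\beta(x_0,y_0)=0$. Together with the vanishing of $\beta$ on the real axis (by \eqref{Eq_Symmetry_condition}), we get $\beta\equiv 0$ on $\mathcal U$ in all three cases (case i) already gave this directly). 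Then \eqref{Eq_Cauchy_Riemann_equations} gives $d\alpha\equiv 0$, so $\alpha$ is constant along every ray in $\mathcal U$, and letting $|s|\to\infty$ along such a ray, the decay forces that constant to be $0$. Hence $\alpha\equiv\beta\equiv 0$, i.e.\ $f\equiv 0$, proving i), ii) and iii).

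\emph{Main obstacle.} I expect Step~1 to be the delicate part: computing $\mathcal Df$, $\overline{\mathcal D}f$ and $\Delta f$ requires careful bookkeeping of the non-commutativity between the units $e_i$ and the slice variable $J=\underline q/|\underline q|$, and one must note that the apparent singularities $1/y$ and $1/y^2$ are harmless because $\beta$ is odd and continuously differentiable in $y$, so these formulas are only needed off the real axis. Everything afterwards is elementary ODE theory and the two prescribed decay rates, which — reassuringly — enter exactly through the hypotheses $\alpha\ge 1$ and $\beta\le 1$.
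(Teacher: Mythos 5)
Your proposal is correct and follows essentially the same strategy as the paper's proof: reduce each hypothesis, via the axial decomposition $f=\alpha+J\beta$ and the Cauchy--Riemann equations \eqref{Eq_Cauchy_Riemann_equations}, to elementary ODEs for $\beta$ (and then for $\alpha$), and rule out the nontrivial solutions using the decay of $f\in\Psi_L^{\alpha,\beta}(S_\theta)$ at $0$ and at $\infty$. The only differences are cosmetic: you compute the radial form of $\mathcal D$, $\overline{\mathcal D}$, $\Delta$ on axial functions directly instead of invoking the angular-derivative identity $J\Gamma_JJ=\Gamma_J-2$ from \cite{DSS92}, and in case ii) you exclude the solution $\beta=c/y$ through the decay at the origin along rays, whereas the paper uses the regularity of $f$ on the positive real axis.
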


\begin{proof}
We will prove that in all three cases there exists some $c\in\mathbb{H}$, such that the function $\beta$ in the decomposition \eqref{Eq_Holomorphic_decomposition} is of the form $\beta(x,y)=cy$, for every $(x,y)\in\mathcal{U}$ with $y>0$. This is sufficient, since by the symmetry condition \eqref{Eq_Symmetry_condition} and the continuity of $\beta$ then also $\beta(x,y)=cy$ for every $(x,y)\in\mathcal{U}$. From \eqref{Eq_Cauchy_Riemann_equations} we then get the two differential equations
\begin{equation*}
\frac{\partial}{\partial x}\alpha(x,y)=c\qquad\text{and}\qquad\frac{\partial}{\partial y}\alpha(x,y)=0,
\end{equation*}
which admit the explicit solution $\alpha(x,y)=cx+d$, for some $d\in\mathbb{H}$. Altogether, the function $f$ is then of the form $f(s)=cs+d$, which is only possible for $c=d=0$, since it has to vanish in the limits $|s|\rightarrow 0$ and $|s|\rightarrow\infty$ due to the assumption $f\in\Psi_L^{\alpha,\beta}(S_\theta)$. \medskip

i)\;\;We write the $s_1$-, $s_2$-, $s_3$-derivatives of the Cauchy-Fueter operator $\mathcal D$ in spherical coordinates, i.e. with respect to the decomposition $s=x+Jy$, with $x\in\mathbb{R}$ and $y>0$, as
\begin{equation}\label{Eq_Kernel_independence_1}
\mathcal D=\frac{\partial}{\partial s_0}+e_1\frac{\partial}{\partial s_1}+e_2\frac{\partial}{\partial s_2}+e_3\frac{\partial}{\partial s_3}=\frac{\partial}{\partial x}+J\frac{\partial}{\partial y}+\frac{J\Gamma_J}{y},
\end{equation}
where $\Gamma_J$ is a symbol for the angular derivatives. Using now the identity
\begin{equation}\label{Eq_Angular_derivative}
J\Gamma_JJ=\Gamma_J-2,
\end{equation}
see \cite[Paragraph 1.12.1]{DSS92}, it follows from the assumption $\mathcal Df(s)=0$, that $\alpha$ and $\beta$ satisfy
\begin{equation}\label{Eq_Kernel_independence_2}
\Big(\frac{\partial}{\partial x}+J\frac{\partial}{\partial y}+\frac{J\Gamma_J}{y}\Big)\alpha(u,v)+\Big(J\frac{\partial}{\partial x}-\frac{\partial}{\partial y}+\frac{\Gamma_J-2}{y}\Big)\beta(u,v)=0.
\end{equation}
Since $\alpha$ and $\beta$ also satisfy the Cauchy-Riemann equations \eqref{Eq_Cauchy_Riemann_equations} and $\Gamma_J\alpha=\Gamma_J\beta=0$ vanish since $\alpha$ and $\beta$ only depend on $x$ and the radial variable $y$, this turns \eqref{Eq_Kernel_independence_2} into $\frac{2}{y}\beta(x,y)=0$ and hence $\beta(x,y)=0$. \medskip

ii)\;\;Similar to \eqref{Eq_Kernel_independence_1}, also the conjugate Cauchy-Fueter operator $\overline{\mathcal D}$ can be written as
\begin{equation}\label{Eq_Kernel_independence_3}
\overline{\mathcal D}=\frac{\partial}{\partial s_0}-e_1\frac{\partial}{\partial s_1}-e_2\frac{\partial}{\partial s_2}-e_3\frac{\partial}{\partial s_3}=\frac{\partial}{\partial x}-J\frac{\partial}{\partial y}-\frac{J\Gamma_J}{y},
\end{equation}
and from the assumption $\overline{\mathcal D}f(s)=0$, together with \eqref{Eq_Angular_derivative}, there follows
\begin{equation*}
\Big(\frac{\partial}{\partial x}-J\frac{\partial}{\partial y}-\frac{J\Gamma_J}{y}\Big)\alpha(x,y)+\Big(J\frac{\partial}{\partial x}+\frac{\partial}{\partial y}-\frac{\Gamma_J-2}{y}\Big)\beta(x,y)=0.
\end{equation*}
With the Cauchy-Riemann equations \eqref{Eq_Cauchy_Riemann_equations} and $\Gamma_J\alpha=\Gamma_J\beta=0$, this equation reduces to
\begin{equation*}
\Big(J\frac{\partial}{\partial x}+\frac{\partial}{\partial y}+\frac{1}{y}\Big)\beta(x,y)=0.
\end{equation*}
Since this equation has to be satisfied for every $J\in\mathbb{S}$, while the function $\beta$ may not depend on $J$, the real and the imaginary part of this equation has to be satisfied separately. This leads to the two ordinary differential equations
\begin{equation*}
\frac{\partial}{\partial x}\beta(x,y)=0\qquad\text{and}\qquad\frac{\partial}{\partial y}\beta(x,y)=-\frac{1}{y}\beta(x,y),
\end{equation*}
which have the explicit solution $\beta(x,y)=\frac{c}{y}$, $c\in\mathbb{H}$. However, since the positive real line is contained in the domain $S_\theta$, where $f$ is holomorphic, this is only possible for $c=0$. \medskip

iii)\;\;Combining \eqref{Eq_Kernel_independence_1}, \eqref{Eq_Kernel_independence_3} and using \eqref{Eq_Angular_derivative}, gives the Laplace operator in spherical coordinates
\begin{equation*}
\Delta=\mathcal D\overline{\mathcal D}=\frac{\partial^2}{\partial x^2}+\frac{\partial^2}{\partial y^2}+\frac{2}{y}\frac{\partial}{\partial y}+\frac{\Gamma_J-\Gamma_J^2}{y^2}.
\end{equation*}
From the assumption $\Delta f(s)=0$ and with $\Gamma_JJ=2J-J\Gamma_J$, there follows
\begin{equation*}
\Big(\frac{\partial^2}{\partial x^2}+\frac{\partial^2}{\partial y^2}+\frac{2}{y}\frac{\partial}{\partial y}+\frac{\Gamma_J-\Gamma_J^2}{y^2}\Big)\alpha(x,y)+J\Big(\frac{\partial^2}{\partial x^2}+\frac{\partial^2}{\partial y^2}+\frac{2}{y}\frac{\partial}{\partial y}-\frac{2-3\Gamma_J+\Gamma_J^2}{y^2}\Big)\beta(x,y)=0.
\end{equation*}
Since the Cauchy-Riemann equations \eqref{Eq_Cauchy_Riemann_equations} in particular imply
\begin{equation*}
\frac{\partial^2\alpha}{\partial x^2}=\frac{\partial^2\alpha}{\partial y^2}=\frac{\partial^2\beta}{\partial x^2}=\frac{\partial^2\beta}{\partial y^2}=0,
\end{equation*}
together with $\Gamma_J\alpha=\Gamma_J\beta=0$, they reduce the above equation to
\begin{equation*}
\Big(\frac{\partial}{\partial x}-J\frac{\partial}{\partial y}+\frac{J}{y}\Big)\beta(x,y)=0.
\end{equation*}
Since this equation has to be satisfied for every $J\in\mathbb{S}$, while the function $\beta$ may not depend on $J$, the real and the imaginary part of this equation has to be satisfied separately. This leads to the two ordinary differential equations
\begin{equation*}
\frac{\partial}{\partial x}\beta(x,y)=0\qquad\text{and}\qquad\frac{\partial}{\partial y}\beta(u,v)=\frac{1}{y}\beta(x,y),
\end{equation*}
which has the explicit solution $\beta(x,y)=cy$, for some constant $c\in\mathbb{H}$.
\end{proof}

Next we give a proper meaning to the functional calculi  \eqref{Eq_Functional_calculus_formal}. In particular, Theorem \ref{thm_Integral_independence} together with Lemma \ref{lem_Resolvent_estimates} show that the integrals converge and are independent of the integration path $\partial(S_\varphi\cap\mathbb{C}_J)$ for every $J\in \mathbb{S}$. It is moreover proven in Lemma \ref{lem_Kernel_independence}, that the functional calculus is independent of the chosen representative $f$ in the spaces \eqref{Eq_Functionspaces}.

\begin{defi}\label{defi_Functional_calculus_decaying}
Let $\alpha\geq\frac{1}{3}$, $\beta\in(0,\frac{1}{3}]$, $\omega\in(0,\pi)$ and $T$ of type $(\alpha,\beta,\omega)$. Then for every $\theta\in(\omega,\pi)$, $f\in\Psi_L^{3\alpha,3\beta}(S_\theta)$, we define
\begin{align*}
f(T):=&\frac{1}{2\pi}\int_{\partial(S_\varphi\cap\mathbb{C}_J)}S_L^{-1}(s,T)ds_Jf(s), && \textit{($S$-functional calculus)} \\
\mathcal Df(T):=&\frac{-1}{\pi}\int_{\partial(S_\varphi\cap\mathbb{C}_J)}Q_{c,s}^{-1}(T)ds_Jf(s), && \textit{($Q$-functional calculus)} \\
\overline{\mathcal D}f(T):=&\frac{1}{2\pi}\int_{\partial(S_\varphi\cap\mathbb{C}_J)}P_2^L(s,T)ds_Jf(s), && \textit{($P_2$-functional calculus)} \\
\Delta f(T):=&\frac{1}{2\pi}\int_{\partial(S_\varphi\cap\mathbb{C}_J)}F_L(s,T)ds_Jf(s). && \textit{($F$-functional calculus)}
\end{align*}
\end{defi}

For $\widehat{f}\in\{f,\mathcal Df,\overline{\mathcal D}f,\Delta f\}$ we will also use the kernels $K_L(s,T)$ from Lemma \ref{lem_K_decomposition} and write
\begin{equation}\label{Eq_K_functional_calculus}
\widehat{f}(T)=\frac{1}{2\pi}\int_{\partial(S_\varphi\cap\mathbb{C}_J)}K_L(s,T)ds_Jf(s).
\end{equation}
The next lemma collects some basic properties of the functional calculi in Definition \ref{defi_Functional_calculus_decaying}.

\begin{lem}\label{lem_Properties_decaying}
Let $\alpha\geq\frac{1}{3}$, $\beta\in(0,\frac{1}{3}]$, $\omega\in(0,\pi)$ and $T$ of type $(\alpha,\beta,\omega)$. Then for every $\theta\in(\omega,\pi)$, $f\in\Psi_L^{3\alpha,3\beta}(S_\theta)$ and any choice $\widehat{f}\in\{f,\mathcal Df,\overline{\mathcal D}f,\Delta f\}$, there holds \medskip

\begin{enumerate}
\item[i)] $\widehat{f}(T)\in\mathcal{BC}(V)$, \\
\item[ii)] If $f$ is intrinsic, then $\overline{\widehat{f}(T)}=\widehat{f}(\overline{T})$; \\
\item[iii)] If $f$ is intrinsic, we can use the right resolvent $K_R(s,T)$ from Lemma \ref{lem_K_decomposition}, to write
\begin{equation}\label{Eq_Integral_KR}
\widehat{f}(T)=\frac{1}{2\pi}\int_{\partial(S_\varphi\cap\mathbb{C}_J)}f(s)ds_JK_R(s,T).
\end{equation}
\end{enumerate}
\end{lem}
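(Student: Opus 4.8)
The plan is to handle all four functional calculi at once through the common kernel decomposition $K_L(s,T)=A(x,y,T)+B(x,y,T)J$, $K_R(s,T)=A(x,y,T)+JB(x,y,T)$ of Lemma~\ref{lem_K_decomposition}, using its structural properties \eqref{Eq_AB_sbar_property}--\eqref{Eq_AB_C_commutation} throughout; recall that in the notation \eqref{Eq_K_functional_calculus} one has $\widehat f(T)=\frac1{2\pi}\int_{\partial(S_\varphi\cap\mathbb C_J)}K_L(s,T)\,ds_J\,f(s)$. For the boundedness in i), the first step is absolute convergence: by Lemma~\ref{lem_Resolvent_estimates}, $\|K_L(s,T)\|\le C_\varphi|s|^{-3\alpha}$ for $|s|\le1$ and $\le C_\varphi|s|^{-3\beta}$ for $|s|\ge1$ in the worst case (the $F$-kernel), and smaller powers otherwise, so together with the decay of $f\in\Psi_L^{3\alpha,3\beta}(S_\theta)$ the integrand is dominated by $|s|^{-1\pm\delta}$ near $0$ and $\infty$. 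Hence Theorem~\ref{thm_Integral_independence} (applied via the right slice hyperholomorphic kernels of Corollary~\ref{cor_Holomorphic_kernels}) gives an absolutely convergent integral defining a bounded operator $\widehat f(T)$, independent of $\varphi\in(\omega,\theta)$ and of $J\in\mathbb S$.

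For the commuting components in i), I would argue as follows. The components $(A(x,y,T))_i$, $(B(x,y,T))_i$ are two-sided linear by Lemma~\ref{lem_Components_of_operators}; by \eqref{Eq_AB_Ti_commutation} and Lemma~\ref{lem_Commutation_of_components} they commute with $T_0,\dots,T_3$, hence with $T$; and then \eqref{Eq_AB_C_commutation} shows each of them commutes with $A(x',y',T)$ and $B(x',y',T)$ for every $(x',y')$, so a second application of Lemma~\ref{lem_Commutation_of_components} shows that all these components mutually commute. They therefore generate a norm-closed \emph{commutative} subalgebra $\mathcal C\subseteq\mathcal B(V)$. Expanding $K_L(s,T)\,ds_J\,f(s)$ in the units $1,e_1,e_2,e_3$ exhibits each of its two-sided linear components as a real-linear combination of elements of $\mathcal C$; since $B\mapsto B_i$ is bounded and real linear (Lemma~\ref{lem_Norm_of_components}) it commutes with the absolutely convergent integral (equivalently, with the Riemann-sum approximations), so $(\widehat f(T))_i\in\mathcal C$ for all $i$, and commutativity of $\mathcal C$ yields $\widehat f(T)\in\mathcal{BC}(V)$.

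For iii) (which I would prove before ii)), assume $f$ intrinsic, so $f(s)\in\mathbb C_J$ and $f(\bar s)=\overline{f(s)}$ on $\mathbb C_J$. I would parametrize the two rays of $\partial(S_\varphi\cap\mathbb C_J)$ by a common radius $r>0$, so that their generic points are $s=re^{J\varphi}$ and $\bar s=re^{-J\varphi}$, with path weights $w(s)$ and $w(\bar s)=\overline{w(s)}$ in $\mathbb C_J$. Using the even--odd relations \eqref{Eq_AB_sbar_property} for $A,B$ and \eqref{Eq_Symmetry_condition} for $\alpha,\beta$, and writing $w(s)f(s)=p+Jq$ with $p,q\in\mathbb R$, a short computation shows that the paired integrand of $\int K_L(s,T)\,ds_J\,f(s)$ and the paired integrand of $\int f(s)\,ds_J\,K_R(s,T)$ both collapse to $2(pA-qB)$; equivalently, the integrand difference $K_L(s,T)\,w(s)\,f(s)-f(s)\,w(s)\,K_R(s,T)$ is a commutator with $J$ and is odd under $s\mapsto\bar s$, hence integrates to zero because this reflection interchanges the two rays. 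This is exactly \eqref{Eq_Integral_KR}.

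For ii), again with $f$ intrinsic, I would use that $B\mapsto\overline B$ is bounded and real linear (Lemma~\ref{lem_Norm_of_components}), hence commutes with the integral, together with $\overline{A(x,y,T)}=A(x,y,\overline T)$, $\overline{B(x,y,T)}=B(x,y,\overline T)$ from \eqref{Eq_AB_Tbar_property} and the fact that $K_L(s,T)$ has commuting components, so that conjugation acts multiplicatively up to reversing the order of the commuting scalar factors $ds_J$ and $f(s)$ of $\mathbb C_J$; this identifies $\overline{K_L(s,T)\,ds_J\,f(s)}$ with $K_L(s,\overline T)\,ds_J\,f(s)$, and integrating gives $\overline{\widehat f(T)}=\widehat f(\overline T)$ (alternatively one may conjugate the representation \eqref{Eq_Integral_KR} proved in iii)). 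The only delicate part of the whole argument is the $e_i$-bookkeeping: checking that the components of the integrand genuinely land in the commutative algebra $\mathcal C$, and carrying the even--odd and conjugation symmetries correctly through the quaternionic products (orientation of the two rays, placement of $J$, behaviour of $ds_J$ under $s\mapsto\bar s$); the analytic input is just the estimates of Lemma~\ref{lem_Resolvent_estimates}.
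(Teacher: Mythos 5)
Your proposal is correct and follows essentially the same route as the paper's proof: boundedness comes from the kernel estimates of Lemma \ref{lem_Resolvent_estimates} via Theorem \ref{thm_Integral_independence}, and everything else is reduced to the decomposition $K_L=A+BJ$, $K_R=A+JB$ of Lemma \ref{lem_K_decomposition}, pairing the two rays of the contour so that the integrand becomes a real-linear combination of $A$ and $B$, with \eqref{Eq_AB_Ti_commutation}, \eqref{Eq_AB_C_commutation} and Lemma \ref{lem_Commutation_of_components} yielding the commuting components and \eqref{Eq_AB_Tbar_property} yielding ii). The only deviations are cosmetic: you prove iii) before ii) and phrase the component argument through a closed commutative algebra, and the loose pointwise identification $\overline{K_L(s,T)\,ds_J\,f(s)}=K_L(s,\overline T)\,ds_J\,f(s)$ in ii) (which is not literally true before pairing the rays) is repaired exactly by your own alternative, namely conjugating the paired real-coefficient representation you already derived in iii), which is what the paper does.
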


\begin{proof}
i)\;\;The boundedness of the operator $\widehat{f}(T)$ follows immediately from the estimate \eqref{Eq_Integral_independence_13}. In order to show that the components of $\widehat{f}(T)$ commute, we decompose the kernel $K_L(s,T)$ according to \eqref{Eq_K_decomposition} into
\begin{equation}\label{Eq_Properties_decaying_1}
K_L(te^{\pm J\varphi},T)=A(t\cos\varphi,t\sin\varphi,T)\pm B(t\cos\varphi,t\sin\varphi,T)J.
\end{equation}
With the property \eqref{Eq_AB_sbar_property} of the operators $A$ and $B$, we can write \eqref{Eq_K_functional_calculus} as
\begin{align}
\widehat{f}(T)&=\frac{1}{2\pi}\int_{-\infty}^0K_L(-te^{J\varphi},T)Je^{J\varphi}f(-te^{J\varphi})dt-\frac{1}{2\pi}\int_0^\infty K_L(te^{-J\varphi},T)Je^{-J\varphi}f(te^{-J\varphi})dt \notag \\
&=\frac{1}{2\pi}\int_0^\infty\Big(K_L(te^{J\varphi},T)Je^{J\varphi}f(te^{J\varphi})-K_L(te^{-J\varphi},T)Je^{-J\varphi}f(te^{-J\varphi})\Big)dt \notag \\
&=\frac{1}{2\pi}\int_0^\infty\Big(A(t\cos\varphi,t\sin\varphi,T)J\big(e^{J\varphi}f(te^{J\varphi})-e^{-J\varphi}f(te^{-J\varphi})\big) \notag \\
&\hspace{3cm}-B(t\cos\varphi,t\sin\varphi,T)\big(e^{J\varphi}f(te^{J\varphi})+e^{-J\varphi}f(te^{-J\varphi})\big)\Big)dt. \label{Eq_Properties_decaying_2}
\end{align}
This representation shows that the components of $\widehat{f}(T)$ are integrals over linear combinations of the components of $A$ and $B$. However, the components of $A$ and $B$ do pairwise commute, which can either be seen from their explicit form in \eqref{Eq_Q_decomposition}, \eqref{Eq_SL_decomposition}, \eqref{Eq_FL_decomposition} and \eqref{Eq_PL_decomposition}, or it is also a consequence of \eqref{Eq_AB_Ti_commutation}, \eqref{Eq_AB_C_commutation} and Lemma \ref{lem_Commutation_of_components}. Hence also the components of $\widehat{f}(T)$ commute. \medskip

ii)\;\;If we assume that $f$ is intrinsic, we know that $f(te^{-J\varphi})=\overline{f(te^{J\varphi})}\in\mathbb{C}_J$ and hence the integral \eqref{Eq_Properties_decaying_2} simplifies to
\begin{align}
\widehat{f}(T)=\frac{1}{\pi}\int_0^\infty\Big(A(t\cos\varphi,&t\sin\varphi,T)\Re\big(Je^{J\varphi}f(te^{J\varphi})\big) \notag \\
&-B(t\cos\varphi,t\sin\varphi,T)\Re\big(e^{J\varphi}f(te^{J\varphi})\big)\Big)dt. \label{Eq_Properties_decaying_3}
\end{align}
Hence it follows from \eqref{Eq_AB_Tbar_property}, that $\overline{\widehat{f}(T)}=\widehat{f}(\overline{T})$. \medskip

iii)\;\;Similar to \eqref{Eq_Properties_decaying_1}, we can write the right kernel $K_R(s,T)$ from \eqref{Eq_K_decomposition} as
\begin{equation*}
K_R(te^{\pm J\varphi},T)=A(t\cos\varphi,t\sin\varphi,T)\pm JB(t\cos\varphi,t\sin\varphi,T).
\end{equation*}
With the same calculations as in \eqref{Eq_Properties_decaying_2} and \eqref{Eq_Properties_decaying_3} we get
\begin{align}
\frac{1}{2\pi}\int_{\partial(S_\varphi\cap\mathbb{C}_J)}f(s)ds_JK_R(s,T)=\frac{1}{\pi}\int_0^\infty\Big(&\Re\big(f(te^{J\varphi})Je^{J\varphi}\big)A(t\cos\varphi,t\sin\varphi,T) \notag \\
&-\Re\big(f(te^{J\varphi})e^{J\varphi}\big)B(t\cos\varphi,t\sin\varphi,T)\Big)dt. \label{Eq_Properties_decaying_4}
\end{align}
Since the right hand sides of \eqref{Eq_Properties_decaying_3} and \eqref{Eq_Properties_decaying_4} coincide, the representation \eqref{Eq_Integral_KR} is proven.
\end{proof}

\begin{prop}\label{prop_Commutation_B}
Let $\alpha\geq\frac{1}{3}$, $\beta\in(0,\frac{1}{3}]$, $\omega\in(0,\pi)$ and $T$ of type $(\alpha,\beta,\omega)$. Moreover, let $B\in\mathcal{B}(V)$ which commutes with $T,T_0,T_1,T_2,T_3$ on $\dom(T)$. Then for every $\theta\in(\omega,\pi)$, $g\in\Psi^{3\alpha,3\beta}(S_\theta)$ and any choice $\widehat{g}\in\{g, \mathcal Dg,\overline{\mathcal D}g,\Delta g\}$, there also commutes
\begin{equation}\label{Eq_Commutation_B}
B\,\widehat{g}(T)=\widehat{g}(T)B.
\end{equation}
\end{prop}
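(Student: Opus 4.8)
The plan is to use the integral representation of $\widehat{g}(T)$ from \eqref{Eq_K_functional_calculus}, namely
\[
\widehat{g}(T)=\frac{1}{2\pi}\int_{\partial(S_\varphi\cap\mathbb{C}_J)}K_L(s,T)ds_Jg(s),
\]
and to pull the bounded operator $B$ inside the integral. Since $B\in\mathcal{B}(V)$ is continuous and the integral converges absolutely by \eqref{Eq_Integral_independence_13}, we have
\[
B\,\widehat{g}(T)=\frac{1}{2\pi}\int_{\partial(S_\varphi\cap\mathbb{C}_J)}B\,K_L(s,T)ds_Jg(s),
\]
and likewise $\widehat{g}(T)B=\frac{1}{2\pi}\int_{\partial(S_\varphi\cap\mathbb{C}_J)}K_L(s,T)\,ds_J\,g(s)\,B$. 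Because $g$ is intrinsic, the scalar factor $g(s)\in\mathbb{C}_J$ and $ds_J$ commute with $B$ (here we only need that $B$ is right linear and $g(s)$ is a quaternion), so the right-hand side equals $\frac{1}{2\pi}\int_{\partial(S_\varphi\cap\mathbb{C}_J)}K_L(s,T)\,B\,ds_J\,g(s)$. Hence the statement \eqref{Eq_Commutation_B} reduces to the pointwise commutation $B\,K_L(s,T)=K_L(s,T)\,B$ for every $s\in\partial(S_\varphi\cap\mathbb{C}_J)\subseteq\rho_F(T)$.

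For this pointwise commutation I would invoke Lemma \ref{lem_K_decomposition}. Writing $s=x+Jy$, the decomposition \eqref{Eq_K_decomposition} gives $K_L(s,T)=A(x,y,T)+B(x,y,T)J$, and the hypothesis that $B$ commutes with $T,T_0,T_1,T_2,T_3$ on $\dom(T)$ is exactly the assumption under which \eqref{Eq_AB_C_commutation} guarantees that $B$ commutes with both $A(x,y,T)$ and $B(x,y,T)$. Since moreover $A(x,y,T),B(x,y,T)\in\mathcal{B}(V)$ and $B$ is right linear (so it commutes with the scalar $J\in\mathbb{H}$ appearing on the right of $B(x,y,T)$), it follows that
\[
B\,K_L(s,T)=B\,A(x,y,T)+B\,B(x,y,T)J=A(x,y,T)\,B+B(x,y,T)\,B\,J=K_L(s,T)\,B,
\]
for every $s$ on the integration path. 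Plugging this back into the integral representation yields \eqref{Eq_Commutation_B} for each of the four choices $\widehat{g}\in\{g,\mathcal Dg,\overline{\mathcal D}g,\Delta g\}$ simultaneously, since all four kernels $S_L^{-1}(s,T)$, $-2Q_{c,s}^{-1}(T)$, $P_2^L(s,T)$, $F_L(s,T)$ are covered by the four cases i)--iv) of Lemma \ref{lem_K_decomposition}.

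The only genuinely delicate point is the interchange of $B$ with the path integral, which is justified by absolute convergence together with continuity of $B$, and the harmless observation that $B$ commutes with scalars — so there is no real obstacle here; the content of the proof is entirely pushed into property \eqref{Eq_AB_C_commutation} of Lemma \ref{lem_K_decomposition}, which is why that lemma was formulated with the extra clause about an arbitrary bounded $C$ commuting with $T,T_0,\dots,T_3$. One should just be careful to note that the decay estimates of Lemma \ref{lem_Resolvent_estimates} together with $g\in\Psi^{3\alpha,3\beta}(S_\theta)$ keep all the integrals absolutely convergent, so that every manipulation above is legitimate.
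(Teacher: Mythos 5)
There is a genuine gap, and it sits exactly at the point you describe as ``harmless'': the claim that $B$ commutes with the quaternionic scalars $ds_J$, $g(s)$ and the $J$ in the decomposition $K_L(s,T)=A(x,y,T)+B(x,y,T)J$ because ``$B$ is right linear''. In this setting a scalar written to the right of an operator acts on vectors by \emph{left} multiplication, i.e.\ $(Kq)v=K(qv)$; so moving $B$ past $ds_J\,g(s)$ in $\widehat{g}(T)B$ versus $B\widehat{g}(T)$ requires $B(qv)=q(Bv)$ for the quaternion $q=ds_J\,g(s)$, and likewise handling the $J$ in $K_L(s,T)$ requires $B(Jv)=J(Bv)$. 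Right linearity only gives $B(v\lambda)=(Bv)\lambda$, hence commutation with \emph{real} scalars, not with left multiplication by $J$ or by elements of $\mathbb{C}_J$; the hypotheses on $B$ (commutation with $T,T_0,\dots,T_3$ on $\dom(T)$) do not supply this either, since $B$ need not be two-sided linear. Consequently your reduction to the pointwise identity $B\,K_L(s,T)=K_L(s,T)\,B$ is not justified, and indeed that identity fails in general: from \eqref{Eq_AB_C_commutation} one only gets $BK_L(s,T)v=A(x,y,T)Bv+B(x,y,T)B(Jv)$, which need not equal $K_L(s,T)Bv=A(x,y,T)Bv+B(x,y,T)(JBv)$.

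This is precisely where the intrinsic-ness of $g$ has to enter in a substantive way, and it is how the paper argues: for intrinsic $g$ the integral can be rewritten in the form \eqref{Eq_Properties_decaying_3}, where $\widehat{g}(T)$ is an integral of $A(t\cos\varphi,t\sin\varphi,T)$ and $B(t\cos\varphi,t\sin\varphi,T)$ with \emph{real} coefficients $\Re\big(Je^{J\varphi}g(te^{J\varphi})\big)$ and $\Re\big(e^{J\varphi}g(te^{J\varphi})\big)$. Real scalars do pass through any $B\in\mathcal{B}(V)$, so the commutation \eqref{Eq_AB_C_commutation} with $A$ and $B(x,y,T)$, together with boundedness of $B$ and the absolute convergence you already noted, immediately yields $B\widehat{g}(T)=\widehat{g}(T)B$. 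So your overall strategy (Lemma \ref{lem_K_decomposition} plus pulling $B$ through the integral) is the right one, but the argument only closes after replacing the quaternionic-kernel representation \eqref{Eq_K_functional_calculus} by the real-coefficient representation \eqref{Eq_Properties_decaying_3}; as written, the step ``$B$ commutes with the scalars'' is false and the proof does not go through.
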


\begin{proof}
Since $B$ commutes with $T,T_0,T_1,T_2,T_3$, it is stated in \eqref{Eq_AB_C_commutation}, that $B$ also commutes with the operators $A(t,x,y)$ and $B(t,x,y)$ in the decomposition \eqref{Eq_K_decomposition}. It follows then from the representation \eqref{Eq_Properties_decaying_3} of the functional calculus, that $B$ also commutes with $\widehat{g}(T)$.
\end{proof}

\begin{cor}\label{cor_Commutation_decaying}
Let $\alpha\geq\frac{1}{3}$, $\beta\in(0,\frac{1}{3}]$, $\omega\in(0,\pi)$ and $T$ of type $(\alpha,\beta,\omega)$. Then for $\theta\in(\omega,\pi)$, $f,g\in\Psi_L^{3\alpha,3\beta}(S_\theta)$ and any choice $\widehat{f}\in\{f,\mathcal Df,\overline{\mathcal D}f,\Delta f\}$ and $\widehat{g}\in\{g,\mathcal Dg,\overline{\mathcal D}g,\Delta g\}$, there holds \medskip

\begin{enumerate}
\item[i)] $\widehat{f}(T)_i\widehat{g}(T)_j=\widehat{g}(T)_j\widehat{f}(T)_i$\quad and \quad $\widehat{f}(\overline{T})_i\widehat{g}(T)_j=\widehat{g}(T)_j\widehat{f}(\overline{T})_i$,\qquad $i,j\in\{0,1,2,3\}$. \\
\item[ii)] If $f,g$ are intrinsic, then \quad $\widehat{f}(T)\widehat{g}(T)=\widehat{g}(T)\widehat{f}(T)$\quad and\quad $\widehat{f}(\overline{T})\widehat{g}(T)=\widehat{g}(T)\widehat{f}(\overline{T})$. \\
\item[iii)] $\widehat{f}(T)T_j=T_j\widehat{f}(T)$,\quad on $\dom(T)$,\qquad $j\in\{0,1,2,3\}$. \\
\item[iv)] If $f$ is intrinsic, then\quad $\widehat{f}(T)T=T\widehat{f}(T)$\quad and\quad $\widehat{f}(T)\overline{T}=\overline{T}\widehat{f}(T)$,\quad on $\dom(T)$.
\end{enumerate}
\end{cor}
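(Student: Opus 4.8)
The plan is to base everything on the kernel decomposition of Lemma~\ref{lem_K_decomposition} and the integral representations of $\widehat f(T)$ from Lemma~\ref{lem_Properties_decaying}, proving iii) first, since i), ii) and iv) will follow from it. For iii) I would fix $v\in\dom(T)$ and write $\widehat f(T)=\frac1{2\pi}\int_{\mathbb R\setminus\{0\}}h(t)\,dt$ with the bounded operators $h(t):=K_L(\gamma(t),T)\,\tfrac{\gamma'(t)}{J}f(\gamma(t))$ along the path $\gamma$ from \eqref{Eq_gamma}. From \eqref{Eq_K_decomposition}, \eqref{Eq_AB_Ti_commutation} and the two-sided linearity of the components $T_i$ (Lemma~\ref{lem_Components_of_operators}) one sees that each $h(t)$ maps $\dom(T)$ into $\dom(T)$ and satisfies $T_ih(t)v=h(t)T_iv$ there. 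By Lemma~\ref{lem_Resolvent_estimates} and the decay of $f\in\Psi_L^{3\alpha,3\beta}(S_\theta)$ the products $\Vert K_L(s,T)\Vert\,|f(s)|$ are $\mathcal O(|s|^{-1+\delta})$ near $0$ and $\mathcal O(|s|^{-1-\delta})$ near $\infty$ (as in \eqref{Eq_Integral_independence_13}), so for fixed $v$ the functions $t\mapsto h(t)v$, $t\mapsto T_ih(t)v=h(t)T_iv$ and $t\mapsto T(e_kh(t)v)=\sum_l e_le_kh(t)T_lv$ ($k\in\{0,1,2,3\}$) are all Bochner integrable.

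The step I expect to be the main obstacle is then pulling $T_i$ through this integral, since the components $T_i$ need not be closed even though $T$ is. I would circumvent this using the pointwise identity \eqref{Eq_Components_representation}, which on $\dom(T)$ reads $T_iw=\tfrac{e_i}{4}\big(e_iT(e_iw)-\sum_{k\neq i}e_kT(e_kw)\big)$, expressing $T_i$ as a fixed combination of bounded left/right unit-multiplications pre- and post-composed with the \emph{closed} operator $T$ (closed since $T\in\mathcal{KC}(V)$). As a closed operator commutes with a Bochner integral whenever the integrand and its image are both integrable, the integrability above yields $\widehat f(T)v\in\dom(T)$ and $T_i\widehat f(T)v=\frac1{2\pi}\int_{\mathbb R\setminus\{0\}}T_ih(t)v\,dt=\frac1{2\pi}\int_{\mathbb R\setminus\{0\}}h(t)T_iv\,dt=\widehat f(T)T_iv$, which is iii). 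For iv), when $f$ is intrinsic I would use \eqref{Eq_Properties_decaying_3} instead, where $\widehat f(T)$ is a Bochner integral of real-scalar combinations of $A(x,y,T),B(x,y,T)$; by \eqref{Eq_AB_Ti_commutation} these commute with both $T$ and $\overline T$ on $\dom(T)$, and since both $T$ and $\overline T$ are closed (the latter by the hypothesis $\overline T\in\mathcal{KC}(V)$), pulling them through the integral as above gives $\widehat f(T)Tv=T\widehat f(T)v$ and $\widehat f(T)\overline Tv=\overline T\widehat f(T)v$.

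For i), part iii) shows the bounded operator $\widehat g(T)$ commutes with $T_0,\dots,T_3$ on $\dom(T)$, hence so does each component $\widehat g(T)_j$ by Lemma~\ref{lem_Commutation_of_components}; being two-sided linear, $\widehat g(T)_j$ then also commutes with $T=\sum_ke_kT_k$ and with $\overline T$ on $\dom(T)$. Thus \eqref{Eq_AB_C_commutation} applies with $C=\widehat g(T)_j$ and gives that $\widehat g(T)_j$ commutes with all $A(x,y,T),B(x,y,T)$ building $K_L(s,T)$ for $\widehat f$; since $\widehat g(T)_j$ is two-sided linear it commutes with the left quaternion factors as well, so $\widehat g(T)_j\widehat f(T)=\widehat f(T)\widehat g(T)_j$, and Lemma~\ref{lem_Commutation_of_components} upgrades this to $\widehat f(T)_i\widehat g(T)_j=\widehat g(T)_j\widehat f(T)_i$. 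The identity with $\widehat f(\overline T)_i$ is the same argument, using that $\widehat g(T)_j$ commutes with the components $T_0,-T_1,-T_2,-T_3$ of $\overline T$ and with $\overline T$, and that $\overline T$ is of type $(\alpha,\beta,\omega)$ by Lemma~\ref{lem_Resolvent_estimates}. Finally, for ii) with $f,g$ intrinsic: by iii) and iv) the bounded operator $\widehat g(T)$ commutes with $T,T_0,\dots,T_3$ on $\dom(T)$, so Proposition~\ref{prop_Commutation_B} applied to the intrinsic $f\in\Psi^{3\alpha,3\beta}(S_\theta)$ gives $\widehat g(T)\widehat f(T)=\widehat f(T)\widehat g(T)$; and applying iii), iv) to the operator $\overline T$ (of type $(\alpha,\beta,\omega)$ by Lemma~\ref{lem_Resolvent_estimates}) shows that $\widehat f(\overline T)$ commutes with $T,T_0,\dots,T_3$ on $\dom(T)$, so Proposition~\ref{prop_Commutation_B} applied to the intrinsic $g$ gives $\widehat f(\overline T)\widehat g(T)=\widehat g(T)\widehat f(\overline T)$.
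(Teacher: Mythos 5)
Your proof is correct and follows essentially the same route as the paper: the kernel decomposition of Lemma \ref{lem_K_decomposition} together with the representations \eqref{Eq_Properties_decaying_2} and \eqref{Eq_Properties_decaying_3}, the properties \eqref{Eq_AB_Ti_commutation}, \eqref{Eq_AB_C_commutation} and Lemma \ref{lem_Commutation_of_components}, proving iii) and iv) first and then deducing i) and ii). The only differences are that you explicitly justify pulling the (possibly non-closed) components $T_i$ through the Bochner integral via \eqref{Eq_Components_representation} and the closedness of $T$ -- a step the paper's proof treats as immediate -- and that you package ii) through Proposition \ref{prop_Commutation_B} instead of repeating the kernel argument; both are harmless refinements.
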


\begin{proof}
iii), iv)\;\;Since, by \eqref{Eq_AB_Ti_commutation}, the operators $A$ and $B$ of the decomposition of the kernel of $\widehat{f}(T)$ commute with $T_0,T_1,T_2,T_3$, it follows from the integral representation \eqref{Eq_Properties_decaying_2}, that also $\widehat{f}(T)$ commutes with $T_0,T_1,T_2,T_3$, on $\dom(T)$. If $f$ is intrinsic, it follows from \eqref{Eq_Properties_decaying_3} that $\widehat{f}(T)$ even commutes with $T,\overline{T}$, on $\dom(T)$. \medskip

i)\;\;It is shown in iii) that $\widehat{g}(T)$ commutes with $T_0,T_1,T_2,T_3$, on $\dom(T)$. It follows then from Lemma \ref{lem_Commutation_of_components} that its components $\widehat{g}(T)_j$ commute with $T,T_0,T_1,T_2,T_3$ as well. The property \eqref{Eq_AB_C_commutation} shows that $\widehat{g}(T)_j$ commutes with the operators $A(x,y,T)$ and $B(x,y,T)$ from the decomposition of the resolvent of $\widehat{f}(T)$. Again, by the representation \eqref{Eq_Properties_decaying_2} of $\widehat{f}(T)$ we get $\widehat{g}(T)_j\widehat{f}(T)=\widehat{f}(T)\widehat{g}(T)_j$ and by Lemma \ref{lem_Commutation_of_components} then the commutation of the components $\widehat{g}(T)_j\widehat{f}(T)_i=\widehat{f}(T)_i\widehat{g}(T)_j$. The commutation $\widehat{g}(T)_j\widehat{f}(\overline{T})_i=\widehat{f}(\overline{T})_i\widehat{g}(T)_j$, follows analogously. \medskip

ii)\;\;It is already shown in iii), iv) that $\widehat{g}(T)$ commutes with $T,T_0,T_1,T_2,T_3$, on $\dom(T)$. By \eqref{Eq_AB_C_commutation} it then also commutes with $A(x,y,T)$ and $B(x,y,T)$. Since $f$ is intrinsic as well, the commutation of $\widehat{f}(T)$ and $\widehat{g}(T)$ then follows from the representation \eqref{Eq_Properties_decaying_3} of the integrals. The commutation of $\widehat{f}(\overline{T})$ and $\widehat{g}(T)$ follows analogously.
\end{proof}

Next we want to derive the very important product rules \eqref{Eq_Product_rules_formal} of the four functional calculi. The basic ingredient will be the following resolvent identities.

\begin{lem}\label{lem_Resolvent_identities}
Let $T,\overline{T}\in\mathcal{KC}(V)$ and $D$ as in Assumption \ref{ass_D}. Then for every $s,p\in\rho_F(T)$ with $s\notin[p]$, there holds the following resolvent identities
\begin{subequations}
\begin{align}
\text{i)}\;\;&\big(S_R^{-1}(s,T)p-S_L^{-1}(p,T)p-\overline{s}S_R^{-1}(s,T)+\overline{s}S_L^{-1}(p,T)\big)(p^2-2s_0p+|s|^2)^{-1} \notag \\
&\hspace{9.55cm}=S_R^{-1}(s,T)S_L^{-1}(p,T), \label{Eq_S_resolvent_identity} \\
\text{ii)}\;\;&\big(Q_{c,s}^{-1}(T)p-Q_{c,p}^{-1}(T)p-\overline{s}Q_{c,s}^{-1}(T)+\overline{s}Q_{c,p}^{-1}(T)\big)(p^2-2s_0p+|s|^2)^{-1} \notag \\
&\hspace{6.5cm}=Q_{c,s}^{-1}(T)S_L^{-1}(p,T)+S_R^{-1}(p,\overline{T})Q_{c,p}^{-1}(T) \label{Eq_Q_resolvent_identity} \\
&\hspace{6.5cm}=Q_{c,s}^{-1}(T)S_L^{-1}(p,\overline{T})+S_R^{-1}(p,T)Q_{c,p}^{-1}(T), \notag \\
\text{iii)}\;\;&\big(P_2^R(s,T)p-P_2^L(p,T)p-\overline{s}P_2^R(s,T)+\overline{s}P_2^L(p,T)\big)(p^2-2s_0p+|s|^2)^{-1} \notag \\
&\hspace{0.2cm}=P_2^R(s,T)S_L^{-1}(p,T)+S_R^{-1}(s,T)P_2^L(p,T)-2Q_{c,s}^{-1}(T)\big(S_L^{-1}(p,T)-S_L^{-1}(p,\overline{T})\big), \label{Eq_P_resolvent_identity} \\
\text{iv)}\;\;&\big(F_R(s,T)p-F_L(p,T)p-\overline{s}F_R(s,T)+\overline{s}F_L(p,T)\big)(p^2-2s_0p+|s|^2)^{-1} \notag \\
&\hspace{3cm}=F_R(s,T)S_L^{-1}(p,T)+S_R^{-1}(s,T)F_L(p,T)-4Q_{c,s}^{-1}(T)Q_{c,p}^{-1}(T). \label{Eq_F_resolvent_identity}
\end{align}
\end{subequations}
\end{lem}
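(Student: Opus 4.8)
The plan is to derive all four identities from the single scalar resolvent identity for the $S$-functional calculus kernel and the defining relations \eqref{Eq_Cauchy_kernel_derivatives} between the kernels. First I would establish identity i) for the $S$-resolvents. Writing $Q_{c,s}(T)=s^2-2sT_0+|T|^2$ and using the commutation relations \eqref{Eq_Commutation_Ti_Qsinv}, one multiplies the candidate right-hand side $S_R^{-1}(s,T)S_L^{-1}(p,T)=Q_{c,s}^{-1}(T)(s-\overline T)(p-\overline T)Q_{c,p}^{-1}(T)$ by $Q_{c,s}(T)$ from the left and by $Q_{c,p}(T)$ from the right, expand $(s-\overline T)(p-\overline T)$, and re-collect terms using $\overline T^2 = 2T_0\overline T - |T|^2$ (valid on the relevant domain by Lemma \ref{lem_TT}, since $\overline{\overline T}=T$). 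One recognizes $(s^2-2s_0 p + |s|^2)$-type combinations and $Q$-operators appearing, and after dividing back out by the (commuting, invertible) factors one recovers the left-hand side of \eqref{Eq_S_resolvent_identity}. This is the standard slice-hyperholomorphic $S$-resolvent equation; I would essentially reproduce the computation from \cite[Theorem 3.1.x]{FJBOOK} adapted to the present domain conventions, taking care that $s\notin[p]$ is exactly what makes $p^2-2s_0p+|s|^2 = (p-s)(p-\overline s)$ invertible as a scalar polynomial in $p$ with quaternionic coefficient $s$.

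Next I would obtain ii), iii), iv) by applying the operators $\mathcal D$, $\overline{\mathcal D}$, $\Delta$ in the $p$-variable to identity i) — more precisely, by using the algebraic relations $-2Q_{c,p}^{-1}(T) = \mathcal D_{(p)} S_L^{-1}(p,T)$, $P_2^L(p,T) = \overline{\mathcal D}_{(p)} S_L^{-1}(p,T)$, $F_L(p,T) = \Delta_{(p)} S_L^{-1}(p,T)$ together with their right-kernel analogues, and the product rules \eqref{Eq_Product_rules_formal} (or rather their kernel-level versions) that describe how $\mathcal D$, $\overline{\mathcal D}$, $\Delta$ act on products. Concretely, for iv) one differentiates both sides of i) with $\Delta_{(p)}$: on the right one gets $S_R^{-1}(s,T)\Delta S_L^{-1}(p,T) = S_R^{-1}(s,T)F_L(p,T)$ from the factor depending on $p$, while the cross term from the Leibniz rule for $\Delta$ acting on the product structure produces the $-4Q_{c,s}^{-1}(T)Q_{c,p}^{-1}(T)$ correction (since $\mathcal D_{(p)}$ hitting one factor and $\overline{\mathcal D}_{(p)}$ the other both yield $Q$-kernels, and the Laplacian product rule contains exactly one such mixed term); on the left one differentiates the explicit rational-in-$p$ expression, noting that $(p^2-2s_0p+|s|^2)^{-1}$ is (intrinsic) slice hyperholomorphic in $p$ so $\mathcal D$ and $\Delta$ act on it in the usual way and in fact annihilate it appropriately, leaving precisely the combination $F_R(s,T)p - F_L(p,T)p - \overline s F_R(s,T) + \overline s F_L(p,T)$ times $(p^2-2s_0p+|s|^2)^{-1}$. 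The cases ii) and iii) are analogous with $\mathcal D_{(p)}$ and $\overline{\mathcal D}_{(p)}$ respectively; for ii) the two different right-hand expressions simply reflect the two factorizations $\Delta = \mathcal D\overline{\mathcal D} = \overline{\mathcal D}\mathcal D$ and the identity $S_R^{-1}(p,T) - S_R^{-1}(p,\overline T)$ being expressible through $Q$-kernels.

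Alternatively — and this may be cleaner to write — I would prove ii), iii), iv) directly by brute force in exactly the same manner as i): multiply the conjectured right-hand side by $Q_{c,s}(T)$ (or $Q_{c,s}(T)^2$, resp. $Q_{c,s}(T)Q_{c,\overline s}(T)\cdots$) on the left and by the appropriate power of $Q_{c,p}(T)$ on the right, clearing all the inverse operators, expand everything into components $T_0,\dots,T_3$, use the commutativity \eqref{Eq_Commuting_components}, \eqref{Eq_Commutation_Ti_TTbar} freely, and check the resulting polynomial identity in $s,p,T_0,|T|^2$; then divide back. The main obstacle I anticipate is bookkeeping: keeping track of domains (everything must be justified on $D_2$ or $\dom(T)$ where the manipulations are legal — Lemma \ref{lem_Properties_Qs} and \ref{lem_Properties_rhoF} supply exactly what is needed), and managing the noncommutativity between the scalars $s,p$ and the operator, since $\overline s$ appears on the left and $(p^2-2s_0p+|s|^2)$ has $s_0,|s|$ real but $s$ itself quaternionic. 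The cleanest route is to first prove i) carefully and then deduce ii)–iv) by applying $\mathcal D_{(p)}$, $\overline{\mathcal D}_{(p)}$, $\Delta_{(p)}$ and invoking the already-known kernel relations \eqref{Eq_Cauchy_kernel_derivatives} and the product rules, so that the noncommutative bookkeeping is done only once.
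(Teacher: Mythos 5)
Your plan for part i) --- reproducing the standard $S$-resolvent-equation computation with attention to the domains --- is reasonable and close to what the paper does (the paper simply cites \cite[Theorem 2.33]{CG18_2} and \cite[Lemma 3.9]{MPS23} for i) and ii)). The genuine gap is in your preferred route to ii)--iv): differentiating identity i) in the spectral variable $p$ and invoking \eqref{Eq_Cauchy_kernel_derivatives}. Those kernel relations are derivatives in the variable $q$ that is afterwards replaced by $T$, not in the spectral variable. In particular $\Delta_{(p)}S_L^{-1}(p,T)\neq F_L(p,T)$: already for $T$ replaced by a real scalar $a$ one has $S_L^{-1}(p,a)=(p-a)^{-1}$, hence $\Delta_p S_L^{-1}(p,a)=-4\,\overline{(p-a)}\,|p-a|^{-4}$, while $F_L(p,a)=-4(p-a)^{-3}$; these differ unless $p$ is real. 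Conceptually, $F_L(s,T)$ is right slice hyperholomorphic in $s$ (Corollary \ref{cor_Holomorphic_kernels}), whereas the Laplacian in $p$ of the slice hyperholomorphic function $p\mapsto S_L^{-1}(p,T)$ is axially monogenic in $p$; and the paper's own computation $\partial_s S_L^{-1}(s,T)=Q_{c,s}^{-1}(T)-\tfrac{1}{2}P_2^L(s,T)$ shows that spectral-variable derivatives mix the kernels in a different way. There is also a structural mismatch: the left-hand sides of ii)--iv) carry the upgraded kernels in the $s$-variable as well ($Q_{c,s}^{-1}(T)$, $P_2^R(s,T)$, $F_R(s,T)$), which cannot be produced by differentiating i) in $p$, since the $s$-dependent factors are constant in $p$; and in the right-hand side $S_R^{-1}(s,T)S_L^{-1}(p,T)$ of i) only one factor depends on $p$, so the Leibniz cross terms you invoke to generate $-4Q_{c,s}^{-1}(T)Q_{c,p}^{-1}(T)$ simply do not arise.

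What does work --- and is the paper's actual argument for iii) and iv) --- is a purely algebraic reduction to i): use the factorizations $P_2^R(s,T)=2\big(S_R^{-1}(s,T)+S_R^{-1}(s,\overline{T})\big)S_R^{-1}(s,T)$, $P_2^L(p,T)=2S_L^{-1}(p,T)\big(S_L^{-1}(p,T)+S_L^{-1}(p,\overline{T})\big)$, $F_R(s,T)=-4Q_{c,s}^{-1}(T)S_R^{-1}(s,T)$, $F_L(p,T)=-4S_L^{-1}(p,T)Q_{c,p}^{-1}(T)$, multiply the claimed right-hand sides by $(p^2-2s_0p+|s|^2)$, substitute identity i) for each product $S_R^{-1}(s,T)S_L^{-1}(p,T)$, and recollect; the correction terms $-2Q_{c,s}^{-1}(T)\big(S_L^{-1}(p,T)-S_L^{-1}(p,\overline{T})\big)$ and $-4Q_{c,s}^{-1}(T)Q_{c,p}^{-1}(T)$ fall out of this algebra (via $T-\overline{T}$ and the defining expression of $Q_{c,p}(T)$), not from any product rule. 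Your brute-force fallback of clearing all inverses and expanding into components could in principle be completed, but as written it is only a sketch, and for ii) you offer no concrete derivation at all; the short path is to argue i) and ii) as in the cited references and then deduce iii)--iv) algebraically as above.
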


\begin{proof}
Although the assumptions on the operator $T$ and the domain of the operator $Q_{c,s}(T)$ are different, the proof of the $S$-resolvent identity \eqref{Eq_S_resolvent_identity} and the $Q$-resolvent identities \eqref{Eq_Q_resolvent_identity} follow the same steps as in \cite[Theorem 2.33]{CG18_2} and \cite[Lemma 3.9]{MPS23}. For the proof of the $P_2$-resolvent identity \eqref{Eq_P_resolvent_identity}, we use \eqref{Eq_S_resolvent_identity} and
\begin{align}
P_2^R(s,&T)S_L^{-1}(p,T)(p^2-2s_0p+|s|^2) \notag \\
&=2\big(S_R^{-1}(s,T)+S_R^{-1}(s,\overline{T})\big)S_R^{-1}(s,T)S_L^{-1}(p,T)(p^2-2s_0p+|s|^2) \notag \\
&=2\big(S_R^{-1}(s,T)+S_R^{-1}(s,\overline{T})\big)\big(S_R^{-1}(s,T)p-S_L^{-1}(p,T)p-\overline{s}S_R^{-1}(s,T)+\overline{s}S_L^{-1}(p,T)\big) \notag \\
&=P_2^R(s,T)p-\overline{s}P_2^R(s,T)-2\big(S_R^{-1}(s,T)+S_R^{-1}(s,\overline{T})\big)
\big(S_L^{-1}(p,T)p-\overline{s}S_L^{-1}(p,T)\big) \notag \\
&=P_2^R(s,T)p-\overline{s}P_2^R(s,T)-4Q_{c,s}^{-1}(T)(s-T_0)
\big((p-\overline{T})p-\overline{s}(p-\overline{T})\big)Q_{c,p}^{-1}(T). \label{Eq_P_resolvent_identity_1}
\end{align}
The same calculation also gives
\begin{align}
S_R^{-1}&(s,T)P_2^L(p,T)(p^2-2s_0p+|s|^2) \notag \\
&=\overline{s}P_2^L(p,T)-P_2^L(s,T)p+4Q_{c,s}^{-1}(T)\big((s-\overline{T})p-\overline{s}(s-\overline{T})\big)(p-T_0)Q_{c,s}^{-1}(T). \label{Eq_P_resolvent_identity_2}
\end{align}
Adding now \eqref{Eq_P_resolvent_identity_1} and \eqref{Eq_P_resolvent_identity_2} leads to the stated $P_2$-resolvent identity
\begin{align*}
\big(P_2^R&(s,T)S_L^{-1}(p,T)+S_R^{-1}(s,T)P_2^L(p,T)\big)(p^2-2s_0p+|s|^2) \\
&=P_2^R(s,T)p-\overline{s}P_2^R(s,T)+\overline{s}P_2^L(p,T)-P_2^L(p,T)p \\
&\quad-4Q_{c,s}^{-1}(T)\Big((s-T_0)\big((p-\overline{T})p-\overline{s}(p-\overline{T})\big)-\big((s-\overline{T})p-\overline{s}(s-\overline{T})\big)(p-T_0)\Big)Q_{c,p}^{-1}(T) \\
&=P_2^R(s,T)p-\overline{s}P_2^R(s,T)+\overline{s}P_2^L(p,T)-P_2^L(p,T)p \\
&\quad+2Q_{c,s}^{-1}(T)(T-\overline{T})(p^2-2s_0p+|s|^2)Q_{c,p}^{-1}(T) \\
&=P_2^R(s,T)p-\overline{s}P_2^R(s,T)+\overline{s}P_2^L(p,T)-P_2^L(p,T)p \\
&\quad+2Q_{c,s}^{-1}(T)\big(S_L^{-1}(p,T)-S_L^{-1}(p,\overline{T})\big)(p^2-2s_0p+|s|^2).
\end{align*}
For the proof of the $F$-resolvent identity \eqref{Eq_F_resolvent_identity}, we again use \eqref{Eq_S_resolvent_identity} and get
\begin{align*}
\big(&F_R(s,T)S_L^{-1}(p,T)+S_R^{-1}(s,T)F_L(p,T)\big)(p^2-2s_0p+|s|^2) \\
&=-4\big(Q_{c,s}^{-1}(T)S_R^{-1}(s,T)S_L^{-1}(p,T)+S_R^{-1}(s,T)S_L^{-1}(p,T)Q_{c,p}^{-1}(T)\big)(p^2-2s_0p+|s|^2) \\
&=-4Q_{c,s}^{-1}(T)\big(S_R^{-1}(s,T)p-S_L^{-1}(p,T)p-\overline{s}S_R^{-1}(s,T)+\overline{s}S_L^{-1}(p,T)\big) \\
&\quad-4\big(S_R^{-1}(s,T)p-S_L^{-1}(p,T)p-\overline{s}S_R^{-1}(s,T)+\overline{s}S_L^{-1}(p,T)\big)Q_{c,p}^{-1}(T) \\
&=F_R(s,T)p-\overline{s}F_R(s,T)-F_L(p,T)p+\overline{s}F_L(p,T) \\
&\quad+4Q_{c,s}^{-1}\big((p-\overline{T})p-\overline{s}(p-\overline{T})-(s-\overline{T})p+\overline{s}(s-\overline{T})\big)Q_{c,p}^{-1}(T) \\
&=F_R(s,T)p-\overline{s}F_R(s,T)-F_L(p,T)p+\overline{s}F_L(p,T)+4Q_{c,s}^{-1}(T)(p^2-2s_0p+|s|^2)Q_{c,p}^{-1}(T). \qedhere
\end{align*}
\end{proof}

The forthcoming lemma holds significant importance for the upcoming product rule in Theorem \ref{thm_Product_rule_decaying}.

\begin{lem}\label{lem_Integral_identity}
Let $B\in\mathcal{B}(V)$, $g\in\Psi^{\alpha,\beta}(S_\theta)$ for some $\alpha\geq 1$, $\beta\leq 1$, $\theta\in(0,\pi)$. Then for every $\varphi\in(0,\theta)$, $J\in\mathbb{S}$ there holds
\begin{equation}\label{Eq_Integral_identity}
Bg(p)=\frac{1}{2\pi}\int_{\partial(S_\varphi\cap\mathbb{C}_J)}g(s)ds_J(\overline{s}B-Bp)(p^2-2s_0p+|s|^2)^{-1},\qquad p\in S_\varphi.
\end{equation}
\end{lem}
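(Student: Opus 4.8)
The plan is to reduce the operator identity to a scalar (quaternion-valued) Cauchy formula by peeling off the two-sided components of $B$, and then absorbing the remaining constant quaternion into a rotation of the integration plane $\mathbb{C}_J$.

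\textbf{Reduction to a scalar identity.} It suffices to prove that for every $c\in\mathbb{H}$ and $p\in S_\varphi$,
\begin{equation}\label{Eq_scalar_Integral_identity}
cg(p)=\frac{1}{2\pi}\int_{\partial(S_\varphi\cap\mathbb{C}_J)}g(s)\,ds_J\,(\overline s c-cp)(p^2-2s_0p+|s|^2)^{-1},
\end{equation}
the integral being absolutely convergent by the decay $g\in\Psi^{\alpha,\beta}(S_\theta)$ together with $\|(\overline s c-cp)(p^2-2s_0p+|s|^2)^{-1}\|=\mathcal{O}(|s|^{-1})$ as $|s|\to\infty$ and $\mathcal{O}(1)$ as $|s|\to0$ (here $p^2-2s_0p+|s|^2\neq 0$ since $p\in S_\varphi$ while $[s]\subseteq\partial S_\varphi$), exactly as in \eqref{Eq_Integral_independence_13}. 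Granting \eqref{Eq_scalar_Integral_identity}, the general statement follows by writing $B=\sum_{k=0}^3e_kB_k$ with the two-sided linear components $B_k$ from Lemma \ref{lem_Components_of_operators}: applying \eqref{Eq_scalar_Integral_identity} with $c=e_k$, multiplying on the right by $B_k$, and summing over $k$ recovers the claimed identity, because each $B_k$ commutes with all quaternionic scalars (in particular with $\overline s$, $p$ and $(p^2-2s_0p+|s|^2)^{-1}$) and, since $g$ is intrinsic and hence $g(p)$ lies in a complex plane, also with $g(p)$.

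\textbf{Rotating the plane.} To prove \eqref{Eq_scalar_Integral_identity} for $c\neq0$ (the case $c=0$ being trivial), I would use that conjugation by $c$ fixes real parts and moduli and satisfies $\overline{c^{-1}sc}=c^{-1}\overline sc$ (since $\overline c=|c|^2c^{-1}$), whence
\begin{equation*}
(\overline sc-cp)(p^2-2s_0p+|s|^2)^{-1}=c\,(c^{-1}\overline sc-p)\big(p^2-2(c^{-1}sc)_0p+|c^{-1}sc|^2\big)^{-1}=c\,S_R^{-1}(c^{-1}sc,p),
\end{equation*}
using $S_R^{-1}(s,p)=-S_L^{-1}(p,s)=(\overline s-p)(p^2-2s_0p+|s|^2)^{-1}$. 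Now substitute $s=c\sigma c^{-1}$ in the integral. Conjugation by $c$ fixes $1$ and sends $J$ to $J':=c^{-1}Jc\in\mathbb{S}$, so it carries the path \eqref{Eq_gamma} generating $\partial(S_\varphi\cap\mathbb{C}_J)$ onto the analogous, identically oriented path generating $\partial(S_\varphi\cap\mathbb{C}_{J'})$, with $ds_J=c\,d\sigma_{J'}c^{-1}$; and $g(c\sigma c^{-1})=c\,g(\sigma)\,c^{-1}$ because $g$ is intrinsic. Hence
\begin{equation*}
g(s)\,ds_J\,c\,S_R^{-1}(c^{-1}sc,p)=c\,g(\sigma)\,c^{-1}\cdot c\,d\sigma_{J'}\,c^{-1}\cdot c\,S_R^{-1}(\sigma,p)=c\,g(\sigma)\,d\sigma_{J'}\,S_R^{-1}(\sigma,p),
\end{equation*}
so the right-hand side of \eqref{Eq_scalar_Integral_identity} equals $c\cdot\frac{1}{2\pi}\int_{\partial(S_\varphi\cap\mathbb{C}_{J'})}g(\sigma)\,d\sigma_{J'}\,S_R^{-1}(\sigma,p)$.

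\textbf{Conclusion and main obstacle.} It remains to apply the Cauchy integral formula for right slice hyperholomorphic functions in the form $g(p)=\frac{1}{2\pi}\int_{\partial(S_\varphi\cap\mathbb{C}_{J'})}g(\sigma)\,d\sigma_{J'}\,S_R^{-1}(\sigma,p)$ for $p\in S_\varphi$; since $g\in\mathcal{SH}_R(S_\theta)$ and decays like $|s|^{\alpha-1+\delta}$ near $0$ and like $|s|^{\beta-1-\delta}$ near $\infty$ with $\alpha\ge1$, $\beta\le1$, one first applies \eqref{Eq_Cauchy_formula} on the truncated sector $S_\varphi\cap\{\varepsilon<|s|<R\}$ and then lets $\varepsilon\to0^+$, $R\to\infty$, the two circular arcs contributing terms that vanish by the same estimates as in \eqref{Eq_Integral_independence_2} and \eqref{Eq_Integral_independence_3}. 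This yields \eqref{Eq_scalar_Integral_identity}, hence the lemma. The only genuinely delicate point is the change of variables $s\mapsto c\sigma c^{-1}$ — verifying that it transports the oriented contour $\partial(S_\varphi\cap\mathbb{C}_J)$ onto $\partial(S_\varphi\cap\mathbb{C}_{J'})$, transforms $ds_J$ into $c\,d\sigma_{J'}c^{-1}$, and intertwines correctly with $g$ precisely because $g$ is intrinsic — together with the sector version of the right Cauchy formula (where $\alpha\ge1$, $\beta\le1$ are used); the rest is elementary quaternionic algebra.
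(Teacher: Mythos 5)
Your proposal is correct, but it takes a genuinely different route from the paper's proof. You first decompose $B=\sum_k e_kB_k$ into its two-sided components from Lemma \ref{lem_Components_of_operators}, which commute with all quaternionic scalars, thereby reducing \eqref{Eq_Integral_identity} to the purely quaternionic identity with $c=e_k$ in place of $B$; you then recognize the scalar kernel as $c\,S_R^{-1}(c^{-1}sc,p)$, transport the contour from $\mathbb{C}_J$ to $\mathbb{C}_{J'}$, $J'=c^{-1}Jc$, via $s=c\sigma c^{-1}$ (legitimate exactly because $g$ is intrinsic, so $g(c\sigma c^{-1})=cg(\sigma)c^{-1}$ and $ds_J=c\,d\sigma_{J'}c^{-1}$, with the orientation of \eqref{Eq_gamma} preserved), and conclude with the right slice hyperholomorphic Cauchy formula on the sector, justified by truncation at radii $\varepsilon$ and $R$, where $\alpha\geq1$, $\beta\leq1$ make the circular arcs vanish. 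The paper instead keeps $B$ throughout: it verifies the swap identity $(s^2-2p_0s+|p|^2)(\overline{s}B-Bp)=(sB-B\overline{p})(p^2-2s_0p+|s|^2)$, which turns the integrand into a classical complex contour integral of $g(s)\big((s-p_J)(s-\overline{p_J})\big)^{-1}$ in the single plane $\mathbb{C}_J$ with constant operator coefficients, closes the path on the right, and evaluates residues at $p_J,\overline{p_J}$ (with a separate computation when $p$ is real), finally commuting $B$ and replacing $p_J$ by $p$ using the real-valuedness of the symmetric difference quotients. Your route buys a cleaner structure with no case distinction and no residue bookkeeping, at the price of invoking the right-kernel Cauchy formula on an unbounded slice domain (which you rightly justify by truncation; note that what you need is the right companion of \eqref{Eq_Cauchy_formula}, the same formula the paper uses implicitly in \eqref{Eq_Integral_independence_6} via $S_R^{-1}(s,p)=-S_L^{-1}(p,s)$) and the component decomposition of $B$; the paper's argument is more computational but self-contained in one complex plane. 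One small remark: intrinsicality of $g$ is not needed for $B_k$ to commute with $g(p)$ — two-sided linearity already gives commutation with every quaternion — though it is of course essential in your rotation step.
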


\begin{proof}
First, we note that the integral \eqref{Eq_Integral_identity} is absolute convergent due to the asymptotics
\begin{align}
g(s)(\overline{s}B-Bp)(p^2-2s_0p+|s|^2)^{-1}&=\mathcal{O}(|s|^{\alpha-1+\delta}),\quad\text{as }|s|\rightarrow 0^+, \notag \\
g(s)(\overline{s}B-Bp)(p^2-2s_0p+|s|^2)^{-1}&=\mathcal{O}(|s|^{\beta-\delta-2}),\quad\text{as }|s|\rightarrow\infty. \label{Eq_Integral_identity_4}
\end{align}
Next, one immediately verifies the identity
\begin{equation*}
(s^2-2p_0s+|p|^2)(\overline{s}B-Bp)=(sB-B\overline{p})(p^2-2s_0p+|s|^2),
\end{equation*}
by expanding both sides of the equation. Multiplying $(s^2-2p_0s+|p|^2)^{-1}$ from the left, $(p^2-2s_0p+|s|^2)^{-1}$ from the right and plugging it into \eqref{Eq_Integral_identity}, gives
\begin{equation*}
\int_{\partial(S_\varphi\cap\mathbb{C}_J)}g(s)ds_J
(\overline{s}B-Bp)(p^2-2s_0p+|s|^2)^{-1}=\int_{\partial(S_\varphi\cap\mathbb{C}_J)}
\frac{g(s)}{(s-p_J)(s-\overline{p_J})}ds_J(sB-B\overline{p}),
\end{equation*}
where we factorized $p^2-2s_0p+|s|^2=(s-p_J)(s-\overline{p_J})$, using those two zeros $\{p_J,\overline{p_J}\}:=[p]\cap\mathbb{C}_J$ which lie in the complex plane $\mathbb{C}_J$. Apart from the constant factors $B$ and $\overline{p}$, this makes the right hand side a classical complex path integral in the complex plane $\mathbb{C}_J$. If we close the path $\partial(S_\varphi\cap\mathbb{C}_J)$ on the right at infinity, the integral along this path vanishes due to the asymptotic decay \eqref{Eq_Integral_identity_4}. Since this closed path surrounds both singularities $p_J$ and $\overline{p_J}$, we are able to evaluate the integrals using the Cauchy formula and distinguish two cases: \medskip

$\circ$\;\;If $\Im(p_J)\neq 0$, i.e. $p_J\neq\overline{p_J}$, we get
\begin{align*}
\frac{1}{2\pi}\int_{\partial(S_\varphi\cap\mathbb{C}_J)}\frac{g(s)}{(s-p_J)(s-\overline{p_J})}ds_J(sB-B\overline{p})&=\frac{g(p_J)}{p_J-\overline{p_J}}(p_JB-B\overline{p})+\frac{g(\overline{p_J})}{\overline{p_J}-p_J}(\overline{p_J}B-B\overline{p}) \\
&=\frac{g(p_J)p_J-g(\overline{p_J})\overline{p_J}}{p_J-\overline{p_J}}B-\frac{g(p_J)-g(\overline{p_J})}{p_J-\overline{p_J}}B\overline{p} \\
&=B\Big(\frac{g(p)p-g(\overline{p})\overline{p}}{p-\overline{p}}-\frac{g(p)-g(\overline{p})}{p-\overline{p}}\overline{p}\Big)=Bg(p),
\end{align*}
where we are allowed to replace $p_J$ by $p$ and shift $B$ to the left since
\begin{equation*}
\frac{g(p_J)p_J-g(\overline{p_J})\overline{p_J}}{p_J-\overline{p_J}}=\frac{g(p)p-g(\overline{p})\overline{p}}{p-\overline{p}}\qquad\text{and}\qquad\frac{g(p_J)-g(\overline{p_J})}{p_J-\overline{p_J}}=\frac{g(p)-g(\overline{p})}{p-\overline{p}},
\end{equation*}
and both equations are real valued. This can be seen by decomposing $p=u+Iv$ for $u,v\in\mathbb{R}$, $I\in\mathbb{S}$, which leads to $p_J=u+Jv$ and according to \eqref{Eq_Holomorphic_decomposition} also to $g(p_J)=\alpha(u,v)+J\beta(u,v)$ and $g(p)=\alpha(u,v)+I\beta(u,v)$ with real valued functions $\alpha$ and $\beta$. Moreover, in the fourth line we are allowed to shift the operator $B$ all the way to the left since the two fractions are real valued. \medskip

$\circ$\;\;If $\Im(p_J)=0$, i.e. $p_J=\overline{p_J}=p\in\mathbb{R}$, we get by the Cauchy formula of the derivative
\begin{align*}
\frac{1}{2\pi}\int_{\partial(S_\varphi\cap\mathbb{C}_J)}\frac{g(s)}{(s-p)^2}ds_J(sB-Bp)&=\frac{d}{ds}\big(g(s)(sB-Bp)\big)\Big|_{s=p} \\
&=g'(p)(pB-Bp)+g(p)B=Bg(p),
\end{align*}
where we interchange $pB=Bp$ and $g(p)B=Bg(p)$ since both $p$ and $g(p)$ are real valued.
\end{proof}

Equipped with the resolvent formulas in Lemma \ref{lem_Resolvent_identities} and the integral identity of Lemma \ref{lem_Integral_identity}, we are now ready to prove the product rules of the four functional calculi in Definition \ref{defi_Functional_calculus_decaying}. In particular such product rules will be the starting point for the $H^\infty$-versions of the functional calculi for the fine structure, see Definition \ref{defi_Functional_calculus_growing}.

\begin{thm}\label{thm_Product_rule_decaying}
Let $\alpha\geq\frac{1}{3}$, $\beta\in(0,\frac{1}{3}]$, $\omega\in(0,\pi)$ and $T$ of type $(\alpha,\beta,\omega)$. Then for any $\theta\in(\omega,\pi)$, $g\in\Psi^{3\alpha,3\beta}(S_\theta)$, $f\in\Psi_L^{3\alpha,3\beta}(S_\theta)$ we obtain the product rules
\begin{subequations}
\begin{align}
\text{i)}\;\;\;\;\;(gf)(T)&=g(T)f(T), \label{Eq_S_product_rule_decaying} \\
\text{ii)}\;\; \mathcal D(gf)(T)&=\mathcal Dg(T)f(T)+g(\overline{T})\mathcal Df(T) \notag \\
&=\mathcal Dg(T)f(\overline{T})+g(T)\mathcal Df(T), \label{Eq_Q_product_rule_decaying} \\
\text{iii)}\;\;\overline{\mathcal D}(gf)(T)&=\overline{\mathcal D}g(T)f(T)+g(T)\overline{\mathcal D}f(T)+\mathcal Dg(T)\big(f(T)-f(\overline{T})\big), \label{Eq_P_product_rule_decaying} \\
\text{iv)}\;\;\Delta(gf)(T)&=\Delta g(T)f(T)+g(T)\Delta f(T)-\mathcal Dg(T) \mathcal Df(T). \label{Eq_F_product_rule_decaying}
\end{align}
\end{subequations}
\end{thm}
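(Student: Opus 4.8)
The plan is to prove all four product rules by a single scheme: in each case I expand a suitable product of functional calculi as a double contour integral over two nested paths, insert the corresponding resolvent identity from Lemma~\ref{lem_Resolvent_identities} into the product of kernels, and then collapse the two double integrals to single ones by means of the Cauchy integral theorem (which kills the terms whose kernel singularities in the inner variable lie outside the inner contour) and of Lemma~\ref{lem_Integral_identity} (which evaluates the remaining terms). Throughout I use that $g$ is intrinsic: this lets me represent $g(T)$, $g(\overline T)$ and $\mathcal Dg(T)$ through the right kernels $K_R(s,T)$ as in Lemma~\ref{lem_Properties_decaying}~iii), and lets me freely commute the $\mathbb C_J$-valued factors $g(s)$, $ds_J$ with the intrinsic operators $Q_{c,s}^{-1}(T)$ and with the polynomial factors in $s$ and $p$. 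As a preliminary step one notes that $gf\in\Psi_L^{3\alpha,3\beta}(S_\theta)$ --- it is left slice hyperholomorphic as a product of an intrinsic and a left slice hyperholomorphic function, and multiplying the defining bounds and using $3\alpha\ge1$, $3\beta\le1$ yields the required decay --- so that $(gf)(T)$, $\mathcal D(gf)(T)$, $\overline{\mathcal D}(gf)(T)$, $\Delta(gf)(T)$ are well defined.

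I would carry out case i) first as the model. Fix $J\in\mathbb S$ and angles $\omega<\varphi_2<\varphi_1<\theta$, write $g(T)$ on the path $\partial(S_{\varphi_1}\cap\mathbb C_J)$ using $K_R=S_R^{-1}$, and $f(T)$ on the path $\partial(S_{\varphi_2}\cap\mathbb C_J)$ using $K_L=S_L^{-1}$, so that
\[
g(T)f(T)=\frac1{(2\pi)^2}\int_{\partial(S_{\varphi_1}\cap\mathbb C_J)}\int_{\partial(S_{\varphi_2}\cap\mathbb C_J)}g(s)\,ds_J\,S_R^{-1}(s,T)S_L^{-1}(p,T)\,dp_J\,f(p).
\]
Replace $S_R^{-1}(s,T)S_L^{-1}(p,T)$ by the right-hand side of \eqref{Eq_S_resolvent_identity}. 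This splits the integrand into a part carrying the factor $S_R^{-1}(s,T)$ and a part carrying $S_L^{-1}(p,T)$, both multiplied by $(p^2-2s_0p+|s|^2)^{-1}$. In the first part $S_R^{-1}(s,T)$ and $\overline s$ are constant in $p$, and the inner $p$-integral involves only $p\,(p^2-2s_0p+|s|^2)^{-1}f(p)$ and $(p^2-2s_0p+|s|^2)^{-1}f(p)$; since the only $p$-singularities lie on $[s]$, which is outside $S_{\varphi_2}$, and $f$ is holomorphic on $S_\theta\supseteq S_{\varphi_2}$, closing the path at infinity (the closing arc vanishes by the decay of $f$) and applying the Cauchy integral theorem shows that this part is $0$. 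In the second part, after interchanging the order of integration, the inner integral in $s$ is exactly the one in Lemma~\ref{lem_Integral_identity} with constant operator $B=S_L^{-1}(p,T)$ and point $p\in S_{\varphi_1}$; it equals $2\pi S_L^{-1}(p,T)g(p)$. Since $g(p)$ and $dp_J$ commute (both lie in $\mathbb C_J$) and $(gf)(T)$ may be computed on $\partial(S_{\varphi_2}\cap\mathbb C_J)$ by Theorem~\ref{thm_Integral_independence}, the remaining integral is $(gf)(T)$, which proves \eqref{Eq_S_product_rule_decaying}.

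Cases ii)--iv) follow the same pattern, now starting from the product matching the first summand on the right of the relevant resolvent identity, namely $\mathcal Dg(T)f(T)$ with \eqref{Eq_Q_resolvent_identity}, $\overline{\mathcal D}g(T)f(T)$ with \eqref{Eq_P_resolvent_identity}, and $\Delta g(T)f(T)$ with \eqref{Eq_F_resolvent_identity}. In each case the "main" kernel product $K_R(s,T)K_L(p,T)$ produces, exactly as in case i), the operators $\mathcal D(gf)(T)$, $\overline{\mathcal D}(gf)(T)$, $\Delta(gf)(T)$; while the additional terms in those identities --- $S_R^{-1}(p,\overline T)Q_{c,p}^{-1}(T)$ for $Q$, $-2Q_{c,s}^{-1}(T)\bigl(S_L^{-1}(p,T)-S_L^{-1}(p,\overline T)\bigr)$ for $P_2$, and $-4Q_{c,s}^{-1}(T)Q_{c,p}^{-1}(T)$ for $F$ --- after integration against $g\,ds_J$ and $dp_J\,f$ and simplification by the same Cauchy-theorem / Lemma~\ref{lem_Integral_identity} mechanism, reproduce precisely the correction addends $g(\overline T)\mathcal Df(T)$, $\mathcal Dg(T)\bigl(f(T)-f(\overline T)\bigr)$ and $-\mathcal Dg(T)\mathcal Df(T)$ in \eqref{Eq_Q_product_rule_decaying}--\eqref{Eq_F_product_rule_decaying}; here one also uses the commutation statements of Corollary~\ref{cor_Commutation_decaying} and Lemma~\ref{lem_Properties_decaying}~ii), together with the elementary identity $Q_{c,s}(\overline T)=Q_{c,s}(T)$. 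The second equality in \eqref{Eq_Q_product_rule_decaying} is obtained analogously from the second form of the $Q$-resolvent identity.

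The main obstacle is analytic bookkeeping rather than a conceptual point. The interchange of the two contour integrations must be justified by absolute convergence of the double integrals; as in the proof of Theorem~\ref{thm_Integral_independence} this is done by estimating the kernels via Lemma~\ref{lem_Resolvent_estimates} and the functions via the defining bounds of $\Psi^{3\alpha,3\beta}$ and $\Psi_L^{3\alpha,3\beta}$, rescaling by $\rho=|p|/|s|$, and keeping the terms grouped so that the resulting bounds remain integrable near both $0$ and $\infty$. Since the kernels $S_L^{-1}$, $P_2^L$, $F_L$ are not intrinsic, one must also track carefully the position of the quaternions $s,\overline s,p$ and the distinction between $T$ and $\overline T$ --- this is where the explicit forms of the resolvent identities and of the decomposition $K_{L},K_{R}$ in Lemma~\ref{lem_K_decomposition} are indispensable. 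Finally one must keep straight, term by term, which of the singularities on $[s]$ and $[p]$ is enclosed by which of the two nested contours; this is exactly what the ordering $\varphi_2<\varphi_1$ arranges, making the vanishing by the Cauchy theorem and the applicability of Lemma~\ref{lem_Integral_identity} simultaneously valid.
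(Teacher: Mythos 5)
Your proposal is correct and follows essentially the same route as the paper's proof: write the right-hand sides as double contour integrals over nested sector boundaries, insert the resolvent identities of Lemma \ref{lem_Resolvent_identities}, kill the terms whose kernel depends only on the outer variable via Cauchy's theorem (using $\varphi_2<\varphi_1$ and the decay of $f$), and collapse the remaining double integral with Lemma \ref{lem_Integral_identity} applied to $B=K_L(p,T)$, recognizing the result as the functional calculus of $gf$. The differences are cosmetic only (you keep both paths in a single plane $\mathbb{C}_J$ where the paper uses two planes, and you explicitly flag the Fubini justification that the paper leaves implicit).
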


\begin{proof}
It is obvious that for $g\in\Psi^{3\alpha,3\beta}(S_\theta)$ and $f\in\Psi_L^{3\alpha,3\beta}(S_\theta)$ also their product is in $gf\in\Psi_L^{3\alpha,3\beta}(S_\theta)$ and hence all the functional calculi in \eqref{Eq_S_product_rule_decaying} -- \eqref{Eq_F_product_rule_decaying} are well defined. \medskip

In the \textit{first step} we fix $\varphi_2<\varphi_1\in(\omega,\theta)$ and use the functional calculi of Definition \ref{defi_Functional_calculus_decaying} and the resolvent identities of Lemma \ref{lem_Resolvent_identities}, to write all the right hand sides of \eqref{Eq_S_product_rule_decaying} -- \eqref{Eq_F_product_rule_decaying} in a similar form. For the right hand side of \eqref{Eq_S_product_rule_decaying} we use \eqref{Eq_S_resolvent_identity} and get
\begin{align*}
g&(T)f(T)=\frac{1}{4\pi^2}\int_{\partial(S_{\varphi_1}\cap\mathbb{C}_J)}\int_{\partial(S_{\varphi_2}\cap\mathbb{C}_I)}g(s)ds_JS_R^{-1}(s,T)S_L^{-1}(p,T)dp_If(p) \\
&=\frac{1}{4\pi^2}\int_{\partial(S_{\varphi_1}\cap\mathbb{C}_J)}\int_{\partial(S_{\varphi_2}\cap\mathbb{C}_I)}g(s)ds_J\big(S_R^{-1}(s,T)p-S_L^{-1}(p,T)p-\overline{s}S_R^{-1}(s,T)+\overline{s}S_L^{-1}(p,T)\big) \\
&\hspace{10.0cm}\times(p^2-2s_0p+|s|^2)^{-1}dp_If(p).
\end{align*}
Regarding the $Q$-, the $P_2$- and the $F$-resolvent identities \eqref{Eq_Q_resolvent_identity}, \eqref{Eq_P_resolvent_identity} and \eqref{Eq_F_resolvent_identity}, we rewrite the right hand sides of \eqref{Eq_Q_product_rule_decaying} -- \eqref{Eq_F_product_rule_decaying} in a similar way. Considering all the additional terms we end up with all the right hand sides (RHS) of \eqref{Eq_S_product_rule_decaying} -- \eqref{Eq_F_product_rule_decaying} written in the form
\begin{align}
\text{RHS}&=\frac{1}{4\pi^2}\int_{\partial(S_{\varphi_1}\cap\mathbb{C}_J)}\int_{\partial(S_{\varphi_2}\cap\mathbb{C}_I)}g(s)ds_J\big(K_R(s,T)p-K_L(p,T)p-\overline{s}K_R(s,T)+\overline{s}K_L(p,T)\big) \notag \\
&\hspace{8cm}\times(p^2-2s_0p+|s|^2)^{-1}dp_If(p), \label{Eq_Product_rule_decaying_5}
\end{align}
using the operators $K_L(p,T)$ and $K_R(s,T)$ from Lemma \ref{lem_K_decomposition}. \medskip

In the \textit{second step} we will further simplify \eqref{Eq_Product_rule_decaying_5}. Since $\varphi_2<\varphi_1$, for every $s\in\partial(S_{\varphi_1}\cap\mathbb{C}_J)$, all the singularities $[s]\cap\mathbb{C}_I$ of $(p^2-2s_0p+|s|^2)^{-1}$ in the plane $\mathbb{C}_I$, lie outside the integration path $\partial(S_{\varphi_2}\cap\mathbb{C}_I)$, if we close the path on the right at infinity. Since the integrals along this closing path vanish due to the asymptotics
\begin{equation*}
\big(K_R(s,T)p-\overline{s}K_R(s,T)\big)(p^2-2s_0p+|s|^2)^{-1}g(p)=\mathcal{O}\big(|p|^{-2+3\beta-\delta}\big),\qquad\text{as }|p|\rightarrow\infty,
\end{equation*}
and the assumption $\beta\leq\frac{1}{3}$. Hence the Cauchy integral formula gives
\begin{equation*}
\int_{\partial(S_{\varphi_2}\cap\mathbb{C}_I)}\big(K_R(s,T)p-\overline{s}K_R(s,T)\big)(p^2-2s_0p+|s|^2)^{-1}dp_If(p)=0,
\end{equation*}
which reduces \eqref{Eq_Product_rule_decaying_5} to
\begin{equation*}
\text{RHS}=\frac{1}{4\pi^2}\int_{\partial(S_{\varphi_1}\cap\mathbb{C}_J)}\int_{\partial(S_{\varphi_2}\cap\mathbb{C}_I)}g(s)ds_J\big(\overline{s}K_L(p,T)-K_L(p,T)p\big)(p^2-2s_0p+|s|^2)^{-1}dp_If(p).
\end{equation*}
Next, the integral identity \eqref{Eq_Integral_identity} further reduces this integral to
\begin{equation*}
\text{RHS}=\frac{1}{2\pi}\int_{\partial(S_{\varphi_2}\cap\mathbb{C}_I)}K_L(p,T)f(p)dp_If(p)=\frac{1}{2\pi}\int_{\partial(S_{\varphi_2}\cap\mathbb{C}_I)}K_L(p,T)p_Ig(p)f(p),
\end{equation*}
where in the second equation we were allowed to interchange $g(p)dp_I=dp_Ig(p)$ since $g$ is intrinsic. However, by the possible choices of $K_L(p,T)$ in Lemma \ref{lem_K_decomposition}, it turns out that the right hand side is exactly the left hand side in the product rules \eqref{Eq_S_product_rule_decaying} -- \eqref{Eq_F_product_rule_decaying}.
\end{proof}

Next we want to derive a connection between the $S$-, the $Q$- and the $P_2$-functional calculus, which is motivated by the fact that $$\mathcal D+\overline{\mathcal D}=2\frac{\partial}{\partial x_0}.$$

\begin{prop}
Let $\alpha\geq\frac{1}{3}$, $\beta\in(0,\frac{1}{3}]$, $\omega\in(0,\pi)$ and $T$ of type $(\alpha,\beta,\omega)$. Then for any $\theta\in(\omega,\pi)$ and $f\in\Psi_L^{3\alpha,3\beta}(S_\theta)$ with $f'\in\Psi_L^{3\alpha,3\beta}(S_\theta)$, there holds
\begin{equation*}
\overline{ \mathcal D}f(T)=2f'(T)-\mathcal Df(T),
\end{equation*}
where $f'(T)$ is understood as the $S$-functional calculus in Definition \ref{defi_Functional_calculus_decaying} i) for the function $f'$.
\end{prop}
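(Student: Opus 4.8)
The plan is to lift to the operator level the elementary identity $\overline{\mathcal D}f+\mathcal Df=2\tfrac{\partial}{\partial x_0}f=2f'$, which holds for slice hyperholomorphic $f$ because $\mathcal D+\overline{\mathcal D}=2\tfrac{\partial}{\partial x_0}$; concretely it suffices to prove $2f'(T)=\overline{\mathcal D}f(T)+\mathcal Df(T)$. The first step is to establish a kernel identity: for every $s\in\rho_F(T)$,
\begin{equation*}
-2\,\partial_s S_L^{-1}(s,T)=P_2^L(s,T)-2Q_{c,s}^{-1}(T),
\end{equation*}
where $\partial_s$ denotes the slice derivative in $s$, which exists in operator norm since $s\mapsto S_L^{-1}(s,T)$ is right slice hyperholomorphic by Corollary~\ref{cor_Holomorphic_kernels}. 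This is a direct computation: the scalar $s$ commutes with the two-sided linear operators $T_0$ and $|T|^2$, so $\partial_s Q_{c,s}(T)=2(s-T_0)$, hence $\partial_s\big(Q_{c,s}^{-1}(T)\big)=-2Q_{c,s}^{-1}(T)(s-T_0)Q_{c,s}^{-1}(T)$, and applying the product rule to $S_L^{-1}(s,T)=(s-\overline T)Q_{c,s}^{-1}(T)$ gives $\partial_s S_L^{-1}(s,T)=Q_{c,s}^{-1}(T)-2S_L^{-1}(s,T)(s-T_0)Q_{c,s}^{-1}(T)$, which equals $Q_{c,s}^{-1}(T)-\tfrac12P_2^L(s,T)$ by the identity $P_2^L(s,T)=4S_L^{-1}(s,T)(s-T_0)Q_{c,s}^{-1}(T)$ recorded in the proof of Corollary~\ref{cor_Holomorphic_kernels}.

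Next I would carry out an integration by parts. Fixing $\varphi\in(\omega,\theta)$ and the path $\gamma$ from \eqref{Eq_gamma}, the Cauchy--Riemann equations \eqref{Eq_Cauchy_Riemann_equations} give the chain rule $\tfrac{d}{dt}f(\gamma(t))=\gamma'(t)f'(\gamma(t))$, while \eqref{Eq_Cauchy_Riemann_equations_operators}, applied to the right slice hyperholomorphic function $s\mapsto S_L^{-1}(s,T)$, gives $\tfrac{d}{dt}S_L^{-1}(\gamma(t),T)=\big(\partial_s S_L^{-1}\big)(\gamma(t),T)\,\gamma'(t)$ in norm. Since $\gamma'(t)\in\mathbb{C}_J$ commutes with $J$, we have $\tfrac{\gamma'(t)}{J}f'(\gamma(t))=\tfrac1J\tfrac{d}{dt}f(\gamma(t))$, so $f'(T)=\tfrac{1}{2\pi}\int_{\mathbb{R}\setminus\{0\}}S_L^{-1}(\gamma(t),T)\tfrac1J\tfrac{d}{dt}f(\gamma(t))\,dt$. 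Integrating by parts separately on $(-\infty,0)$ and $(0,\infty)$ and using again $\gamma'(t)\tfrac1J=\tfrac{\gamma'(t)}{J}$, the bulk term becomes $-\tfrac{1}{2\pi}\int_{\partial(S_\varphi\cap\mathbb{C}_J)}\big(\partial_s S_L^{-1}\big)(s,T)\,ds_J f(s)$; combining with the kernel identity and the defining formulas of Definition~\ref{defi_Functional_calculus_decaying} yields
\begin{equation*}
2f'(T)=\frac{1}{2\pi}\int_{\partial(S_\varphi\cap\mathbb{C}_J)}\big(P_2^L(s,T)-2Q_{c,s}^{-1}(T)\big)\,ds_J f(s)=\overline{\mathcal D}f(T)+\mathcal Df(T),
\end{equation*}
which is the statement after rearranging.

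The main obstacle is the rigorous justification of this integration by parts. One needs the differentiability in norm of $t\mapsto S_L^{-1}(\gamma(t),T)$ (which follows from Corollary~\ref{cor_Holomorphic_kernels} and the smoothness of $\gamma$), the absolute convergence of all integrals involved (guaranteed by Definition~\ref{defi_Functional_calculus_decaying} together with the kernel identity), and, above all, the vanishing of the four boundary contributions $S_L^{-1}(\gamma(t),T)\tfrac1J f(\gamma(t))$ as $t\to\pm\infty$ and $t\to0^\pm$. At infinity this is routine: by Lemma~\ref{lem_Resolvent_estimates} and $f\in\Psi_L^{3\alpha,3\beta}(S_\theta)$ one has $\|S_L^{-1}(s,T)\|\,|f(s)|\le C|s|^{-\beta}|s|^{3\beta-1-\delta}=C|s|^{2\beta-1-\delta}\to0$ because $\beta\le\tfrac13$. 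At the origin the bare decay of $f$ is in general not enough, and this is precisely where the hypothesis $f'\in\Psi_L^{3\alpha,3\beta}(S_\theta)$ enters: since $3\alpha\ge1$ forces $f(s)\to0$ as $s\to0$, integrating $f'$ along the segment to the origin gives the improved bound $|f(s)|\le C|s|^{3\alpha+\delta}$ near $0$, whence $\|S_L^{-1}(s,T)\|\,|f(s)|\le C|s|^{2\alpha+\delta}\to0$. Handling these boundary terms carefully is the crux of the argument.
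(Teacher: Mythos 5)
Your proof follows essentially the same route as the paper: the same integration by parts along $\partial(S_\varphi\cap\mathbb{C}_J)$ combined with the kernel identity $\partial_s S_L^{-1}(s,T)=Q_{c,s}^{-1}(T)-\tfrac{1}{2}P_2^L(s,T)$, which the paper also obtains from $S_L^{-1}(s,T)=(s-\overline{T})Q_{c,s}^{-1}(T)$ and $P_2^L(s,T)=4S_L^{-1}(s,T)(s-T_0)Q_{c,s}^{-1}(T)$. Your handling of the boundary terms (in particular the improved bound on $f$ near the origin extracted from the hypothesis $f'\in\Psi_L^{3\alpha,3\beta}(S_\theta)$) is in fact more detailed than the paper's, which simply asserts that the boundary contributions vanish at $t=0$ and $|t|\to\infty$.
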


\begin{proof}
First, we use the path \eqref{Eq_gamma} to parametrize the integral of the $S$-functional calculus as
\begin{equation*}
f'(T)=\frac{1}{2\pi}\int_{\mathbb{R}\setminus\{0\}}S_L^{-1}\big(\gamma(t),T\big)\frac{\gamma'(t)}{J}f'(\gamma(t))dt=\frac{1}{2\pi}\int_{\mathbb{R}\setminus\{0\}}S_L^{-1}\big(\gamma(t),T\big)\frac{1}{J}\frac{d}{dt}f(\gamma(t))dt.
\end{equation*}
Integration by parts then gives
\begin{equation}\label{Eq_Combination_of_S_Q_P2}
f'(T)=-\frac{1}{2\pi}\int_{\mathbb{R}\setminus\{0\}}\frac{d}{dt}S_L^{-1}(\gamma(t),T)\frac{1}{J}f(\gamma(t))dt=-\frac{1}{2\pi}\int_{\partial(S_\varphi\cap\mathbb{C}_J)}\partial_sS_L^{-1}(s,T)ds_Jf(s),
\end{equation}
where there are no boundary terms since the integrand vanishes for $t=0$ and $|t|\rightarrow\infty$. Explicitly, calculating the derivative, we get
\begin{align*}
\partial_sS_L^{-1}(s,T)&=\frac{\partial}{\partial s}\big((s-\overline{T})Q_{c,s}^{-1}(T)\big)=Q_{c,s}^{-1}(T)-2(s-\overline{T})(s-T_0)Q_{c,s}^{-2}(T) \\
&=Q_{c,s}^{-1}(T)-S_L^{-1}(s,T)\big(S_L^{-1}(s,T)+S_L^{-1}(s,\overline{T})\big)=Q_{c,s}^{-1}(T)-\frac{1}{2}P_2^L(s,T).
\end{align*}
and plugging it into \eqref{Eq_Combination_of_S_Q_P2} we obtain
\begin{equation*}
2f'(T)=\frac{1}{2\pi}\int_{\partial(S_\varphi\cap\mathbb{C}_J)}\big(-2Q_{c,s}^{-1}(T)+P_2^L(s,T)\big)ds_Jf(s)=\mathcal Df(T)+\overline{\mathcal D}f(T). \qedhere
\end{equation*}
\end{proof}

We conclude this section investigating how the functional calculi of Definition \ref{defi_Functional_calculus_decaying} act on functions of the form $s^nf(s)$. In particular, we give a recurrence relation between the functional calculi associated with $s^nf(s)$ and with  $s^{n-1}f(s)$. An implication of the following Proposition \ref{prop_Recurrence_powers}, contained in Proposition \ref{prop_Functional_calculus_of_powers}, demonstrates that the $H^\infty$-functional calculus of powers yields the operators defined using the quaternionic derivatives.

\begin{prop}\label{prop_Recurrence_powers}
Let $\alpha\geq\frac{1}{3}$, $\beta\in(0,\frac{1}{3}]$, $\omega\in(0,\pi)$ and $T$ of type $(\alpha,\beta,\omega)$. Consider now $\theta\in(\omega,\pi)$, $N\in\mathbb{N}$, $f\in\Psi_L^{3\alpha,3\beta-N}(S_\theta)$. Then for every $n\in\{1,\dots,N\}$ there holds \medskip

\begin{enumerate}
\item[i)] $(s^nf)(T)=T^nf(T)$, \\
\item[ii)] $\mathcal D(s^nf)(T)=T\mathcal D(s^{n-1}f)(T)-2\overline{T}^{n-1}f(\overline{T})=\overline{T}\mathcal D(s^{n-1}f)(T)-2T^{n-1}f(T)$, \\
\item[iii)] $\overline{\mathcal D}(s^nf)(T)=T\overline{\mathcal D}(s^{n-1}f)(T)+2\overline{T}^{n-1}f(\overline{T})+2T^{n-1}f(T)$, \\
\item[iv)] $\Delta(s^nf)(T)=T\Delta(s^{n-1}f)(T)+2\mathcal D(s^{n-1}f)(T)$.
\end{enumerate}
\end{prop}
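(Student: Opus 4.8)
The plan is to prove all four statements by the same mechanism: insert the factor $s^n = s\cdot s^{n-1}$ and apply the product rules from Theorem~\ref{thm_Product_rule_decaying} with the intrinsic function $g(s)=s$, or more precisely with a suitable regularized version thereof. The subtlety is that $g(s)=s$ does not lie in any class $\Psi^{3\alpha,3\beta}(S_\theta)$, so one cannot apply Theorem~\ref{thm_Product_rule_decaying} literally. The clean way around this is an induction on $n$: for the base case $n=1$ one writes $sf(s)$ as a product where the non-decaying factor is absorbed into the resolvent via the identities $(s-\overline{T})Q_{c,s}^{-1}(T)=S_L^{-1}(s,T)$ etc., turning $\int K_L(s,T)\,ds_J\,s f(s)$ into $\int \big(\text{resolvent}\cdot T + \text{lower order}\big)\,ds_J f(s)$ by the algebraic relation $K_L(s,T)\,s = (\text{something})\cdot T + (\text{something else})$ read off from \eqref{Eq_S_resolvent}--\eqref{Eq_F_resolvent}. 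Concretely, $S_L^{-1}(s,T)\,s = s\,Q_{c,s}^{-1}(T)\,(s-\overline{T})\cdot(\ldots)$ — better: use $S_L^{-1}(s,T)(s-\overline{T})^{-1}$ is not available, so instead use the resolvent equation $s\,Q_{c,s}^{-1}(T) = T_0\,Q_{c,s}^{-1}(T)\cdot 2 - \ldots$; I will in fact run everything through the single master identity $s\,Q_{c,s}^{-1}(T)=Q_{c,s}^{-1}(T)\,s$ (intrinsicity) together with $(s^2-2sT_0+|T|^2)Q_{c,s}^{-1}(T)=1$, which gives $s^2\,Q_{c,s}^{-1}(T)=1+2sT_0Q_{c,s}^{-1}(T)-|T|^2Q_{c,s}^{-1}(T)$ and lets one lower the power of $s$ at the cost of operator factors.

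First I would establish i) directly. Using \eqref{Eq_K_functional_calculus} with $K_L=S_L^{-1}$ and the relation $S_L^{-1}(s,T)\,s = T\,S_L^{-1}(s,T) + \big(S_L^{-1}(s,T)\,s - T\,S_L^{-1}(s,T)\big)$, I would show the correction term $S_L^{-1}(s,T)s-TS_L^{-1}(s,T)=(s-\overline{T})Q_{c,s}^{-1}(T)s - T(s-\overline{T})Q_{c,s}^{-1}(T)$; using $\overline{T}=2T_0-T$, intrinsicity of $Q_{c,s}^{-1}(T)$, and the commutation relations \eqref{Eq_Commutation_Ti_Qsinv}, this simplifies to an integrable kernel whose integral against $f$ vanishes by the Cauchy theorem argument already used in the proof of Theorem~\ref{thm_Integral_independence} — provided $f\in\Psi_L^{3\alpha,3\beta-1}(S_\theta)$, which is guaranteed since $f\in\Psi_L^{3\alpha,3\beta-N}(S_\theta)$ and $3\beta-N\leq 3\beta-n\leq 3\beta-1$. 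Iterating gives $(s^nf)(T)=T^n f(T)$. For ii)--iv) I would argue analogously, computing $K_L(s,T)\,s$ for $K_L\in\{-2Q_{c,s}^{-1}(T),P_2^L(s,T),F_L(s,T)\}$ and reading off the recurrence; here the extra terms are exactly the ones appearing in the product rules \eqref{Eq_Q_product_rule_decaying}--\eqref{Eq_F_product_rule_decaying} specialized to $g(s)=s$, namely $\mathcal D g=-2$ (constant), $\overline{\mathcal D}g=2$, $\Delta g=0$, and $g(\overline{T})=\overline{T}$, $g(T)=T$. That is the conceptual content: \emph{the recurrences are the $g(s)=s$ instances of the product rules}, and the only real work is justifying the substitution rigorously via regularization or via the direct Cauchy-theorem computation sketched above.

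For the regularization route (which I slightly prefer as it reuses Theorem~\ref{thm_Product_rule_decaying} verbatim), fix $n\le N$ and pick $e\in\Psi^{3\alpha',3\beta'}(S_\theta)$ intrinsic with $e(s)\to 1$ pointwise and such that $s^{n}e(s)\in\Psi^{3\alpha,3\beta}(S_\theta)$ after choosing parameters so both $e f$ and $s^n e f$ stay in the relevant classes; apply the product rule with $g=s^n e$ (intrinsic) and $f$, then let $e\to 1$. One must check that $\widehat{(s^n e)f}(T)\to\widehat{(s^nf)}(T)$ and that $\widehat{s^n e}(T)\to T^n$, $\mathcal D\widehat{s^ne}(T)\to$ the right limit, etc.; these convergences follow from dominated convergence in the integral \eqref{Eq_Properties_decaying_2} using the resolvent estimates of Lemma~\ref{lem_Resolvent_estimates} together with the fact established in i) (for the already-proved base of the induction). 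The main obstacle is precisely this passage to the limit: ensuring the dominating function is integrable \emph{uniformly in the regularizer} near $s=0$ and $s=\infty$, where $s^n$ grows; this is why the hypothesis $f\in\Psi_L^{3\alpha,3\beta-N}(S_\theta)$ is exactly tuned so that $s^n f$ still decays like $|s|^{3\beta-1}$ at infinity (since $3(\beta) - N + n - 1 \le 3\beta - 1$) and like $|s|^{3\alpha-1+\delta}$ at the origin. Once the dominated-convergence bookkeeping is in place, items i)--iv) drop out by comparing with \eqref{Eq_S_product_rule_decaying}--\eqref{Eq_F_product_rule_decaying} and using $g(\overline T)=\overline T^{\,n}\cdots$ — more carefully, $g(s)=s^n$ gives $g(T)=T^n$ (by i)), $g(\overline T)=\overline T^{\,n}$, $\mathcal D g$, $\overline{\mathcal D}g$, $\Delta g$ which one computes once, and then the stated recurrences follow after peeling off one power of $s$ at a time.
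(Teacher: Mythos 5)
Your ``direct route'' is, in substance, the paper's own argument: the paper proves i)--iv) by induction on $n$ from the explicit kernel identities
$S_L^{-1}(s,T)s=TS_L^{-1}(s,T)+1$,
$Q_{c,s}^{-1}(T)s=TQ_{c,s}^{-1}(T)+S_L^{-1}(s,\overline{T})=\overline{T}Q_{c,s}^{-1}(T)+S_L^{-1}(s,T)$,
$P_2^L(s,T)s=TP_2^L(s,T)+2S_L^{-1}(s,T)+2S_L^{-1}(s,\overline{T})$ and
$F_L(s,T)s=TF_L(s,T)-4Q_{c,s}^{-1}(T)$,
then pulls the closed operator $T$ out of the integral (Hille's theorem) and recognizes the remaining integrals as the lower-order functional calculi, the constant term vanishing by Cauchy's theorem. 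You gesture at exactly this, but you never derive these identities, and for i) you describe the correction $S_L^{-1}(s,T)s-TS_L^{-1}(s,T)$ as ``an integrable kernel whose integral against $f$ vanishes'' when it is in fact the constant kernel $1$ (it is $\int ds_J\,s^{n-1}f(s)$ that vanishes by holomorphy). More importantly, you never address how the unbounded $T$ is moved through the integral; this is where closedness of $T$ enters and where the implicit domain statements (e.g.\ $\operatorname{ran}\widehat{(s^{n-1}f)}(T)\subseteq\dom(T)$), which are part of the claim, get proved.

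The route you say you prefer --- regularize $g=s^ne$, apply Theorem \ref{thm_Product_rule_decaying}, and let $e\to1$ --- has a genuine gap. The limits you need on the right-hand side, $\widehat{s^ne}(T)\to T^n$, $\mathcal D(s^ne)(T)\to\cdots$, $\overline{\mathcal D}(s^ne)(T)\to\cdots$, are limits toward \emph{unbounded} operators; they cannot come from dominated convergence in \eqref{Eq_Properties_decaying_2}, because $S_L^{-1}(s,T)s^n$ is not integrable at infinity, so $(s^ne_\varepsilon)(T)$ does not converge in norm, and strong convergence of the relevant products (e.g.\ $g_\varepsilon(\overline{T})\,\mathcal Df(T)\to\overline{T}\,\mathcal Df(T)$) presupposes exactly the domain inclusions and rational identities such as $(s^ne_\varepsilon)(T)=T^ne_\varepsilon(T)$ and $\mathcal D(se_\varepsilon)(T)=T\mathcal De_\varepsilon(T)-2e_\varepsilon(\overline{T})$ --- i.e.\ the recurrences being proved --- so as sketched the argument is circular; the paper also provides no approximation lemma of the form $e_\varepsilon(T)\to I$ strongly that you could invoke. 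Finally, a computational slip that matters: $\overline{\mathcal D}s=4$, not $2$ (see the displayed formulas before Proposition \ref{prop_Functional_calculus_of_powers} at $n=1$); with your value, the ``product rule with $g(s)=s$'' heuristic yields iii) without the $2T^{n-1}f(T)$ term, so it does not even reproduce the claimed recurrence.
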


\begin{proof}
First, we observe that  $s^nf\in\Psi_L^{3\alpha+n,3\beta-N+n}(S_\theta)\subseteq\Psi_L^{3\alpha,3\beta}(S_\theta)$ for every $n\in\{0,\dots,N\}$ and hence all the functional calculi in i) -- iv) are well defined. \medskip

i)\;\; We reason by induction and the step $n=0$ is trivial. For the induction step $n-1$ to $n$ we use the identity
\begin{equation}\label{Eq_Recurrence_powers_1}
S_L^{-1}(s,T)s=TS_L^{-1}(s,T)+1,
\end{equation}
to get
\begin{align*}
(s^nf)(T)&=\frac{1}{2\pi}\int_{\partial(S_\varphi\cap\mathbb{C}_J)}S_L^{-1}(s,T)ds_Js^nf(s)=\frac{1}{2\pi}\int_{\partial(S_\varphi\cap\mathbb{C}_J)}\big(TS_L^{-1}(s,T)+1\big)ds_Js^{n-1}f(s) \\
&=\frac{1}{2\pi}T\int_{\partial(S_\varphi\cap\mathbb{C}_J)}S_L^{-1}(s,T)ds_Js^{n-1}f(s)=T(s^{n-1}f)(T)=T^nf(T),
\end{align*}
where in the third term above we used Hills theorem to carry the closed operator $T$ outside the integral and then we observe that the integral over $s^{n-1}f(s)$ vanishes due to the holomorphicity of the function on $S_\varphi$. \medskip

ii)\;\;For the $Q$-functional calculus we use the identity
\begin{equation*}
Q_{c,s}^{-1}(T)s=TQ_{c,s}^{-1}(T)+S_L^{-1}(s,\overline{T}),
\end{equation*}
which immediately follows from the definition \eqref{Eq_S_resolvent} of the left $S$-resolvent, to get the recurrence relation
\begin{align*}
\mathcal D(s^nf)(T)&=\frac{-1}{\pi}\int_{\partial(S_\varphi\cap\mathbb{C}_J)}Q_{c,s}^{-1}(T)ds_Js^nf(s) \\
&=\frac{-1}{\pi}\int_{\partial(S_\varphi\cap\mathbb{C}_J)}\big(TQ_{c,s}^{-1}(T)+S_L^{-1}(s,\overline{T})\big)ds_Js^{n-1}f(s) \\
&=T\mathcal D(s^{n-1}f)(T)-2(s^{n-1}f)(\overline{T})=T\mathcal D(s^{n-1}f)(T)-2\overline{T}^{n-1}f(\overline{T}),
\end{align*}
where in the last line we used i). Analogously, from the identity
\begin{equation*}
Q_{c,s}^{-1}(T)s=\overline{T}Q_{c,s}^{-1}(T)+S_L^{-1}(s,T),
\end{equation*}
there get the second recurrence relation
\begin{equation*}
\mathcal D(s^nf)(T)=\overline{T}\mathcal D(s^{n-1}f)(T)+T^{n-1}f(T).
\end{equation*}
iii)\;\;For the $P_2$-functional calculus we use \eqref{Eq_Recurrence_powers_1} and the fact that $s$ commutes with the sum
\begin{equation*}
S_L^{-1}(s,T)+S_L^{-1}(s,\overline{T})=2(s-T_0)Q_{c,s}^{-1}(T).
\end{equation*}
This leads to
\begin{align*}
P_2^L(s,T)s&=2S_L^{-1}(s,T)\big(S_L^{-1}(s,T)+S_L^{-1}(s,\overline{T})\big)s \\
&=2\big(TS_L^{-1}(s,T)+1\big)\big(S_L^{-1}(s,T)+S_L^{-1}(s,\overline{T})\big) \\
&=TP_2^L(s,T)+2S_L^{-1}(s,T)+2S_L^{-1}(s,\overline{T}),
\end{align*}
and consequently we obtain the recurrence relation
\begin{align*}
\overline{\mathcal D}(s^nf)(T)&=\frac{1}{2\pi}\int_{\partial(S_\varphi\cap\mathbb{C}_J)}P_2^L(s,T)ds_Js^nf(s) \\
&=\frac{1}{2\pi}\int_{\partial(S_\varphi\cap\mathbb{C}_J)}\big(TP_2^L(s,T)+2S_L^{-1}(s,\overline{T})+2S_L^{-1}(s,T)\big)ds_Js^{n-1}f(s) \\
&=T\overline{\mathcal D}(s^{n-1}f)(T)+2(s^{n-1}f)(\overline{T})+2(s^{n-1}f)(T) \\
&=T\overline{\mathcal D}(s^{n-1}f)(T)+2\overline{T}^{n-1}f(\overline{T})+2T^{n-1}f(T).
\end{align*}
iv)\;\;For the $F$-functional calculus we use \eqref{Eq_Recurrence_powers_1}, to get
\begin{equation*}
F_L(s,T)s=-4S_L^{-1}(s,T)Q_{c,s}^{-1}(T)s=-4\big(TS_L^{-1}(s,T)+1\big)Q_{c,s}^{-1}(T)=TF_L(s,T)-4Q_{c,s}^{-1}(T).
\end{equation*}
This then gives the final relation
\begin{align*}
\Delta(s^nf)(T)&=\frac{1}{2\pi}\int_{\partial(S_\varphi\cap\mathbb{C}_J)}F_L(s,T)ds_Js^nf(s) \\
&=\frac{1}{2\pi}\int_{\partial(S_\varphi\cap\mathbb{C}_J)}\big(TF_L(s,T)-4Q_{c,s}^{-1}(T)\big)ds_Js^{n-1}f(s) \\
&=T\Delta(s^{n-1}f)(T)+2\mathcal D(s^{n-1}f)(T). \qedhere
\end{align*}
\end{proof}

\section{The quaternionic $H^\infty$-functional calculi}\label{HINFTY}

In this section we will extend the quaternionic functional calculi of Definition \ref{defi_Functional_calculus_decaying} for decaying functions, to slice hyperholomorphic functions on a sector which are polynomially growing at $0$ and at $\infty$. More precisely, we consider the following spaces of functions.

\begin{defi}\label{defi_Space_of_increasing_functions}
For every $\theta\in(0,\pi)$ we define the function spaces

\begin{enumerate}
\item[i)] $\mathcal{F}_L(S_\theta):=\Set{f\in\mathcal{SH}_L(S_\theta) | \exists k>0,\,C_k\geq 0: |f(s)|\leq C_k\big(|s|^k+\frac{1}{|s|^k}\big),\,s\in S_\theta}$
\item[ii)] $\mathcal{F}(S_\theta):=\Set{f\in\mathcal{N}(S_\theta) | \exists k>0,\,C_k\geq 0: |f(s)|\leq C_k\big(|s|^k+\frac{1}{|s|^k}\big),\,s\in S_\theta}$
\end{enumerate}
\end{defi}

The main idea behind the $H^\infty$-functional calculi is to choose a regularizer function $e$ which imposes enough decay at $0$ and at $\infty$, such that $ef$ is regular enough in order to apply the functional calculus of Definition \ref{defi_Functional_calculus_decaying}. Motivated by the product rules in Theorem \ref{thm_Product_rule_decaying}, we define the so called \textit{quaternionic $H^\infty$-functional calculus}, where it is crucial for $e(T)$ and $e(\overline{T})$ to be injective, and hence in contrast to the bounded functional calculus of Section \ref{sec_Functional_calculi_for_decaying_functions} we are only able to treat injective operators $T$ and $\overline{T}$ by using the choice \eqref{Eq_Regularizer} for the regularizing function.

\begin{defi}\label{defi_Functional_calculus_growing}
Let $\alpha\geq\frac{1}{3}$, $\beta\in(0,\frac{1}{3}]$, $\omega\in(0,\pi)$ and $T$ of type $(\alpha,\beta,\omega)$ with $T,\overline{T}$ injective. Then for every $\theta\in(\omega,\pi)$, $f\in\mathcal{F}_L(S_\theta)$ we define the \textit{$H^\infty$-functional calculi}
\begin{align*}
\text{i)}\;\;\;\;f(T):=&e(T)^{-1}(ef)(T), \hspace{7cm} (S\textit{-functional calculus}) \\
\text{ii)}\;\mathcal Df(T):=&\big(e(T)e(\overline{T})\big)^{-1}\Big(e(T)\mathcal D(ef)(T)-\mathcal De(T)(ef)(T)\Big), \hspace{1.45cm} (Q\textit{-functional calculus}) \\
\text{iii)}\;\overline{\mathcal D}f(T):=&\big(e(T)^2e(\overline{T})\big)^{-1}\Big(e(T)e(\overline{T})\overline{\mathcal D}(ef)(T)-e(\overline{T})\overline{\mathcal D}e(T)(ef)(T) \hspace{0.05cm} (P_2\textit{-functional calculus}) \\
&\hspace{2.5cm}+e(T)\mathcal De(T)(ef)(\overline{T})-e(\overline{T})\mathcal De(T)(ef)(T)\Big),
\end{align*}
\begin{align*}
\text{iv)}\;\Delta f(T):=&\big(e(T)^2e(\overline{T})\big)^{-1}\Big(e(T)e(\overline{T})\Delta(ef)(T)-e(\overline{T})\Delta e(T)(ef)(T) \hspace{0.25cm} (F\textit{-functional calculus}) \\
&\hspace{2.4cm}+e(T) \mathcal De(T) \mathcal D(ef)(T)-(\mathcal De(T))^2(ef)(T)\Big),
\end{align*}
where $e\in\Psi^{3\alpha,3\beta}(S_\theta)$ is such that $e(T),e(\overline{T})$ are injective and $ef\in\Psi_L^{3\alpha,3\beta}(S_\theta)$. Here $e(T)$, $e(\overline{T})$, $(ef)(T)$, $(ef)(\overline{T})$ are understood as the $S$-functional calculus, $\mathcal De(T)$, $\mathcal D(ef)(T)$ as the $Q$-functional calculus, $\overline{\mathcal D}e(T)$, $\overline{\mathcal D}(ef)(T)$ as the $P_2$-functional calculus and $\Delta e(T)$, $\Delta(ef)(T)$ as the $F$-functional calculus of Definition \ref{defi_Functional_calculus_decaying}.
\end{defi}

\begin{thm}
Let $\alpha\geq\frac{1}{3}$, $\beta\in(0,\frac{1}{3}]$, $\omega\in(0,\pi)$ and $T$ of type $(\alpha,\beta,\omega)$ with $T,\overline{T}$ injective. Then for every $\theta\in(\omega,\pi)$, $f\in\mathcal{F}_L(S_\theta)$ there exists a regularizer function $e$ in the sense of Definition~\ref{defi_Functional_calculus_growing} and no one of the functional calculi i) -- iv) in Definition \ref{defi_Functional_calculus_growing} depend on the choice of the regularizer $e$.
\end{thm}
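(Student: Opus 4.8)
The proof has two halves: exhibiting a regularizer, and showing the four calculi do not depend on the choice of regularizer.

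\textbf{Construction of a regularizer.} Given $f\in\mathcal{F}_L(S_\theta)$, fix $k>0$, $C_k\geq 0$ with $|f(s)|\leq C_k(|s|^k+|s|^{-k})$ on $S_\theta$, and take $e$ as in \eqref{Eq_Regularizer}, i.e. a high power $\big(\tfrac{s}{(1+s)^2}\big)^m$. Since $-1\notin S_\theta$, this $e$ is intrinsic and slice hyperholomorphic on $S_\theta$, with $e(s)=\mathcal{O}(|s|^m)$ as $|s|\to 0^+$ and $e(s)=\mathcal{O}(|s|^{-m})$ as $|s|\to\infty$. Choosing $m$ larger than $\max\{3\alpha-1+k,\ 1-3\beta+k\}$ forces $e\in\Psi^{3\alpha,3\beta}(S_\theta)$ and $ef\in\Psi_L^{3\alpha,3\beta}(S_\theta)$. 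Finally, since $-1\notin\sigma_F(T)=\sigma_F(\overline{T})$ the operators $1+T$ and $1+\overline{T}$ are boundedly invertible, and together with the injectivity of $T$ and $\overline{T}$ this yields the injectivity of $e(T)$ and $e(\overline{T})$, exactly as in the classical $H^\infty$-theory; recall that $\overline{T}$ is itself of type $(\alpha,\beta,\omega)$ by Lemma~\ref{lem_Resolvent_estimates}, so all operators in Definition~\ref{defi_Functional_calculus_growing} are well defined.

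\textbf{Reduction to a common refinement.} Let $e_1,e_2$ be two regularizers in the sense of Definition~\ref{defi_Functional_calculus_growing}. Then $e:=e_1e_2$ is again one: it is intrinsic, decays at least as fast as $e_1$ (so $e\in\Psi^{3\alpha,3\beta}(S_\theta)$ and $ef\in\Psi_L^{3\alpha,3\beta}(S_\theta)$), and the product rule~\eqref{Eq_S_product_rule_decaying} gives $(e_1e_2)(T)=e_1(T)e_2(T)$ and $(e_1e_2)(\overline{T})=e_1(\overline{T})e_2(\overline{T})$, which are injective as products of injective operators. It therefore suffices to prove that each of the four calculi built from $e_1$ agrees with the one built from $e=e_1e_2$; interchanging $e_1$ and $e_2$ then finishes the proof. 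The structural fact used throughout is that the bounded operators $e_i(T)$, $e_i(\overline{T})$, $\mathcal De_i(T)$, $\overline{\mathcal D}e_i(T)$, $\Delta e_i(T)$ ($i=1,2$) and their conjugates form a commuting family and commute with $T,\overline{T},T_0,\dots,T_3$; this follows from Corollary~\ref{cor_Commutation_decaying} and Proposition~\ref{prop_Commutation_B} applied to $T$ and to $\overline{T}$, using that the calculus of an intrinsic function commutes with $T_0,\dots,T_3$ always and with $T$ because it is intrinsic, while $e_i(T)e_i(\overline{T})=|e_i(T)|^2$ is two-sided linear by Lemma~\ref{lem_TT}.

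\textbf{The computation.} For the $S$-calculus this already suffices: writing $(ef)(T)=(e_2(e_1f))(T)=e_2(T)(e_1f)(T)$ by~\eqref{Eq_S_product_rule_decaying} and using injectivity of $e_1(T)$, one checks that $(ef)(T)v\in\ran(e_1(T)e_2(T))$ is equivalent to $(e_1f)(T)v\in\ran(e_2(T))$, and on this common domain
\begin{equation*}
e(T)^{-1}(ef)(T)=\big(e_1(T)e_2(T)\big)^{-1}e_2(T)(e_1f)(T)=e_1(T)^{-1}(e_1f)(T).
\end{equation*}
For the $Q$-, $P_2$- and $F$-calculi one proceeds analogously, expanding $\mathcal D(e_1e_2f)(T)$, $\mathcal D(e_1e_2)(T)$ and, for $P_2$ and $F$, also $(e_1e_2f)(\overline{T})$, $\overline{\mathcal D}(e_1e_2f)(T)$, $\Delta(e_1e_2f)(T)$ by means of the product rules~\eqref{Eq_Q_product_rule_decaying}--\eqref{Eq_F_product_rule_decaying} (using both forms of~\eqref{Eq_Q_product_rule_decaying} for the $(\cdot)(\overline{T})$ contributions) into sums of products of the commuting operators above with the unaltered rightmost factors $(e_1f)(T),(e_1f)(\overline{T}),\mathcal D(e_1f)(T),\dots$. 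The commutation relations make all cross terms cancel pairwise — the extra summands in Definition~\ref{defi_Functional_calculus_growing}~iii)--iv) are precisely what absorbs the extra terms of the product rules — so that the numerator and the prefactor of the $e_1e_2$-definition both become $M_{e_2}$ times the numerator, resp. prefactor, of the $e_1$-definition, with $M_{e_2}=|e_2(T)|^2$ in the $Q$-case and $M_{e_2}=e_2(T)^2e_2(\overline{T})$ in the $P_2$- and $F$-cases. Since the prefactors are injective, $M_{e_2}$ cancels (together with the matching of domains), and the $e_1e_2$-definition reduces to the $e_1$-definition.

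\textbf{Main obstacle.} The real work lies in this last term-by-term bookkeeping for $P_2$ and $F$, where the product rules carry three and four summands: one must verify that every term produced by the expansions is matched by a term of Definition~\ref{defi_Functional_calculus_growing}, keeping in mind that the intrinsic calculi may be commuted freely among one another and past $T,\overline{T},T_0,\dots,T_3$, whereas the $(ef)$-type operators (which need not commute with anything beyond the components $T_j$) must be kept to the right throughout. The construction step is comparatively routine once~\eqref{Eq_Regularizer} is in hand, the only genuinely new ingredient being the injectivity of $e(T)$ and $e(\overline{T})$, which is exactly where the hypothesis that $T$ and $\overline{T}$ be injective enters.
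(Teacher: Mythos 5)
Your construction of the regularizer coincides with the paper's: the function \eqref{Eq_Regularizer} with a sufficiently large power, bijectivity of $(1+T)^{2n}$, and injectivity of $e(T),e(\overline{T})$ inherited from that of $T,\overline{T}$ (note that $-1\notin\sigma_F(T)$ only concerns $Q_{c,-1}(T)$, so the bounded invertibility of $1+T$ is not literally immediate; the paper gets it from \cite[Equation (58)]{MPS23} together with Proposition \ref{prop_Recurrence_powers}). For the independence you take a genuinely different, though closely related, route: the paper compares $e_1$ and $e_2$ directly, applying the product rules of Theorem \ref{thm_Product_rule_decaying} to the two factorizations of $(e_1e_2f)$, deriving symmetric identities such as \eqref{Eq_Independence_regularizer_3}--\eqref{Eq_Independence_regularizer_4}, multiplying by suitable intrinsic factors and finally by the inverse prefactor; you instead compare each $e_i$ with the common refinement $e_1e_2$ and let symmetry conclude. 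What this buys is an essentially immediate proof for the $S$- and $Q$-calculi: the cross terms cancel on the nose and the common factor $M_{e_2}$ ($e_2(T)e_2(\overline{T})$, resp. $e_2(T)^2e_2(\overline{T})$) cancels against the prefactor by injectivity and commutativity, exactly as in your displayed computation --- except for the slip that the domain condition should read $(e_1f)(T)v\in\ran(e_1(T))$, not $\ran(e_2(T))$. For $P_2$ and $F$ you only describe the cancellation scheme rather than carry it out; the mechanism you give (keep $(e_1f)(T)$, $(e_1f)(\overline{T})$ rightmost, commute the intrinsic calculi freely, factor out $M_{e_2}$) is correct and I checked it closes for $P_2$, with the $F$-case analogous, but it is not purely mechanical pairwise cancellation: at a couple of spots one must switch between the equivalent expansions $\mathcal De_1(T)e_2(T)+e_1(\overline{T})\mathcal De_2(T)=\mathcal De_2(T)e_1(\overline{T})+e_2(T)\mathcal De_1(T)$ of $\mathcal D(e_1e_2)(T)$, i.e.\ invoke the identity underlying \eqref{Eq_Independence_regularizer_3}, which is precisely the trick the paper isolates. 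Since you explicitly anticipate needing both forms of \eqref{Eq_Q_product_rule_decaying}, the remaining work is bookkeeping rather than a missing idea, so the proposal is sound; the paper's direct $e_1$-versus-$e_2$ comparison does the same amount of algebra but avoids introducing the auxiliary regularizer $e_1e_2$ and verifying that it is admissible.
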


\begin{proof}
Let $f\in\mathcal{F}_L(S_\theta)$, i.e. there exists $k>0$, $C_k\geq 0$, such that
$$
|f(s)|\leq C_k(|s|^k+\frac{1}{|s|^k}).
$$
 Choosing now $n\in\mathbb{N}$ with $n>\max\{k+3\alpha-1,k-3\beta+1\}$ and consider the function
\begin{equation}\label{Eq_Regularizer}
e(s):=\frac{s^n}{(1+s)^{2n}},
\end{equation}
it is obvious that $e\in\Psi^{3\alpha,3\beta}(S_\theta)$ as well as $ef\in\Psi^{3\alpha,3\beta}(S_\theta)$. From \cite[Equation (58)]{MPS23} and Proposition \ref{prop_Recurrence_powers} i), it then follows that $(1+T)^{2n}$ is bijective and $e(T)=T^n(1+T)^{-2n}$. Since $T$ is injective, $e(T)$ is then injective as well. Analogously, the injectivity of $e(\overline{T})=\overline{T}^n(1+\overline{T})^{2n}$ follows from the assumed injectivity of $\overline{T}$. \medskip

For the independence of the regularizer, let $e_1,e_2\in\Psi^{3\alpha,3\beta}(S_\theta)$, with $e_1(T),e_1(\overline{T}),e_2(T),e_2(\overline{T})$ are injective and $e_1f,e_2f\in\Psi_L^{3\alpha,3\beta}(S_\theta)$. Note, that the symbols $e_1$ and $e_2$ for the regularizers are the same as the one for the imaginary units of the quaternions. This fact doesn't pose an issue, as they do not appear in the proofs of the following theorems. \medskip

To enhance the clarity of the calculations in this proof, we will adopt the following notation
\begin{equation}\label{Eq_Notation}
\begin{array}{llll} e\text{ for }e(T), & (ef)\text{ for }(ef)(T), & e^{-1}\text{ for }e(T)^{-1}, & \mathcal De\text{ for }\mathcal De(T), \\ \overline{e}\text{ for }e(\overline{T}), & (\overline{ef})\text{ for }(ef)(\overline{T}), & (\overline{e})^{-1}\text{ for }e(\overline{T})^{-1}, & \mathcal D(ef)\text{ for }\mathcal D(ef)(T), \\ \overline{\mathcal D}e\text{ for }\overline{\mathcal D}e(T), & \overline{\mathcal D}(ef)\text{ for }\overline{\mathcal D}(ef)(T), & \Delta e\text{ for }\Delta e(T), & \Delta(ef)\text{ for }\Delta(ef)(T). \end{array}
\end{equation}
Throughout this proof we will always use the fact that (which is due to Corollary \ref{cor_Commutation_decaying} ii)):
\begin{equation*}
e_i,\overline{e_i}\quad\text{commute with}\quad e_j, \mathcal De_j,\overline{\mathcal D}e_j,\Delta e_j,\qquad\text{for every }i,j\in\{1,2\}.
\end{equation*}
i)\;\;For the $S$-functional calculus, it follows from the product rule \eqref{Eq_S_product_rule_decaying}, that
\begin{equation}\label{Eq_Independence_regularizer_2}
e_2(e_1f)=(e_2e_1f)=(e_1e_2f)=e_1(e_2f).
\end{equation}
Multiplying $(e_1e_2)^{-1}$ from the left gives the independence of the regularizer
\begin{equation*}
e_1^{-1}(e_1f)=e_2^{-1}(e_2f).
\end{equation*}
ii)\;\;For the $Q$-functional calculus, the two versions of the product rule \eqref{Eq_Q_product_rule_decaying} state, that
\begin{equation*}
\mathcal De_2(\overline{e_1f})+e_2\mathcal D(e_1f)=\mathcal D(e_2e_1f)=\mathcal D(e_1e_2f)=\mathcal De_1(e_2f)+\overline{e_1}\mathcal D(e_2f).
\end{equation*}
Rearranging this equation, gives
\begin{equation}\label{Eq_Independence_regularizer_3}
e_2\mathcal D(e_1f)-\mathcal De_1(e_2f)=\overline{e_1}\mathcal D(e_2f)-\mathcal De_2(\overline{e_1f}).
\end{equation}
Multiplying $e_1\overline{e_2}$ from the left and using \eqref{Eq_Independence_regularizer_2}, gives
\begin{align}
e_2\overline{e_2}\big(e_1\mathcal D(e_1f)-\mathcal De_1(e_1f)\big)&=e_1\overline{e_1}\big(\overline{e_2}\mathcal D(e_2f)-\mathcal De_2(\overline{e_2f})\big) \notag \\
&=e_1\overline{e_1}\big(e_2\mathcal D(e_2f)-\mathcal De_2(e_2f)\big), \label{Eq_Independence_regularizer_4}
\end{align}
where the second equation comes from \eqref{Eq_Independence_regularizer_3} with the choice $e_1=e_2$. Multiplying $(e_1e_2\overline{e_1e_2})^{-1}$ from the left, then gives the independence of the regularizer
\begin{equation*}
(e_1\overline{e_1})^{-1}\big(e_1\mathcal D(e_1f)-\mathcal De_1(e_1f)\big)=(e_2\overline{e_2})^{-1}\big(e_2\mathcal D(e_2f)-\mathcal De_2(e_2f)\big).
\end{equation*}
iii)\;\;For the $P_2$-functional calculus we apply \eqref{Eq_P_product_rule_decaying} to both sides of \eqref{Eq_Independence_regularizer_2} and get
\begin{equation*}
\overline{\mathcal D}e_2(e_1f)+e_2\overline{\mathcal D}(e_1f)+\mathcal De_2\big((e_1f)-(\overline{e_1f})\big)=\overline{\mathcal D}e_1(e_2f)+e_1\overline{\mathcal D}(e_2f)+\mathcal De_1\big((e_2f)-(\overline{e_2f})\big).
\end{equation*}
Rearranging the terms leads to
\begin{equation*}
e_2\overline{\mathcal D}(e_1f)+\mathcal De_1(\overline{e_2f})-(\mathcal De_1+\overline{\mathcal D}e_1)(e_2f)=e_1\overline{\mathcal D}(e_2f)+\mathcal De_2(\overline{e_1f})-(\mathcal De_2+\overline{\mathcal D}e_2)(e_1f).
\end{equation*}
Multiplying $e_1e_2\overline{e_1e_2}$ from the left and using again \eqref{Eq_Independence_regularizer_2}, gives
\begin{align}
e_2\overline{e_2}\big(&e_1e_2\overline{e_1}\overline{\mathcal D}(e_1f)+e_1\overline{e_2}\mathcal De_1(\overline{e_1f})-e_2\overline{e_1}(\mathcal De_1+\overline{\mathcal D}e_1)(e_1f)\big) \notag \\
&=e_1\overline{e_1}\big(e_1e_2\overline{e_2}\overline{\mathcal D}(e_2f)+e_2\overline{e_1}\mathcal De_2(\overline{e_2f})-e_1\overline{e_2}(\mathcal De_2+\overline{\mathcal D}e_2)(e_2f)\big). \label{Eq_Independence_regularizer_1}
\end{align}
Furthermore, similar to \eqref{Eq_Independence_regularizer_3}, the two versions of the product rule \eqref{Eq_Q_product_rule_decaying} give the identity
\begin{equation*}
\overline{e_2}\mathcal De_1-\overline{e_1}\mathcal De_2=e_2\mathcal De_1-e_1\mathcal De_2,
\end{equation*}
which, multiplied by $e_1e_2(\overline{e_1e_2f})$ from the right, becomes
\begin{equation*}
e_2\overline{e_2}e_1\overline{e_2}\mathcal De_1(\overline{e_1f})-e_1\overline{e_1}e_2\overline{e_1}\mathcal De_2(\overline{e_2f})=e_2^2\overline{e_2}e_1\mathcal De_1(\overline{e_1f})-e_1^2\overline{e_1}e_2\mathcal De_2(\overline{e_2f}).
\end{equation*}
Using this in \eqref{Eq_Independence_regularizer_1} gives
\begin{align*}
e_2^2\overline{e_2}\big(&e_1\overline{e_1}\overline{\mathcal D}(e_1f)+e_1\mathcal De_1(\overline{e_1f})-\overline{e_1}(\mathcal De_1+\overline{\mathcal D}e_1)(e_1f)\big) \\
&=e_1^2\overline{e_1}\big(e_2\overline{e_2}\overline{\mathcal D}(e_2f)+e_2\mathcal De_2(\overline{e_2f})-\overline{e_2}(\mathcal De_2+\overline{\mathcal D}e_2)(e_2f)\big).
\end{align*}
Multiplying now $(e_1^2e_2^2\overline{e_1e_2})^{-1}$ from the left, gives the independence of the regularizer
\begin{align*}
(e_1^2\overline{e_1})^{-1}\big(&e_1\overline{e_1}\overline{\mathcal D}(e_1f)+e_1\mathcal De_1(\overline{e_1f})-\overline{e_1}(\mathcal De_1+\overline{\mathcal D}e_1)(e_1f)\big) \\
&=(e_2^2\overline{e_2})^{-1}\big(e_2\overline{e_2}\overline{\mathcal D}(e_2f)+e_2\mathcal De_2(\overline{e_2f})-\overline{e_2}(\mathcal De_2+\overline{\mathcal D}e_2)(e_2f)\big).
\end{align*}
iv)\;\;For the $F$-functional calculus, we apply \eqref{Eq_F_product_rule_decaying} to both sides of \eqref{Eq_Independence_regularizer_2} and get
\begin{equation*}
\Delta e_2(e_1f)+e_2\Delta(e_1f)-\mathcal De_2\mathcal D(e_1f)=\Delta e_1(e_2f)+e_1\Delta(e_2f)-\mathcal De_1 \mathcal D(e_2f).
\end{equation*}
Rearranging the terms and multiplying the resulting equation with $e_1e_2\overline{e_1e_2}$, gives
\begin{equation*}
e_1e_2\overline{e_1e_2}\big(e_2\Delta(e_1f)-\Delta e_1(e_2f)+\mathcal De_1 \mathcal D(e_2f)\big)=e_1e_2\overline{e_1e_2}\big(e_1\Delta(e_2f)-\Delta e_2(e_1f)+\mathcal De_2\mathcal D(e_1f)\big).
\end{equation*}
Using the identity \eqref{Eq_Independence_regularizer_3} on the left hand side, and the same one with $e_1\leftrightarrow e_2$ exchanged on the right hand side, turns this equation into
\begin{align*}
e_1e_2\overline{e_2}\Big(&\overline{e_1}e_2\Delta(e_1f)-\overline{e_1}\Delta e_1(e_2f)+\mathcal De_1\big(\mathcal De_2(\overline{e_1f})+e_2\mathcal D(e_1f)-\mathcal De_1(e_2f)\big)\Big) \\
&=e_1e_2\overline{e_1}\Big(\overline{e_2}e_1\Delta(e_2f)-\overline{e_2}\Delta e_2(e_1f)+\mathcal De_2\big(\mathcal De_1(\overline{e_2f})+e_1\mathcal D(e_2f)-\mathcal De_2(e_1f)\big)\Big).
\end{align*}
Since the term $e_1e_2\overline{e_2} \mathcal De_1 \mathcal De_2(\overline{e_1f})$ cancels with $e_1e_2\overline{e_1}\mathcal De_2\mathcal De_1(\overline{e_2f})$ on the right due, this equation reduces to
\begin{align*}
e_2^2\overline{e_2}\big(&e_1\overline{e_1}\Delta(e_1f)-\overline{e_1}\Delta e_1(e_1f)+e_1\mathcal De_1\mathcal D(e_1f)-(\mathcal De_1)^2(e_1f)\big) \\
&=e_1^2\overline{e_1}\big(e_2\overline{e_2}\Delta(e_2f)-\overline{e_2}\Delta e_2(e_2f)+e_2\mathcal De_2\mathcal D(e_2f)-(\mathcal De_2)^2(e_2f)\big),
\end{align*}
where we once more used \eqref{Eq_Independence_regularizer_2}. Multiplying both sides with $(e_1^2e_2^2\overline{e_1e_2})^{-1}$ gives the independence of the regularizer
\begin{align*}
(e_1^2\overline{e_1})^{-1}\big(&e_1\overline{e_1}\Delta(e_1f)-\overline{e_1}\Delta e_1(e_1f)+e_1\mathcal De_1\mathcal D(e_1f)-(\mathcal De_1)^2(e_1f)\big) \\
&=(e_2^2\overline{e_2})^{-1}\big(e_2\overline{e_2}\Delta(e_2f)-\overline{e_2}\Delta e_2(e_2f)+e_2\mathcal De_2\mathcal D(e_2f)-(\mathcal De_2)^2(e_2f)\big). \qedhere
\end{align*}
\end{proof}

\begin{lem}\label{lem_Properties_growing}
Let $\alpha\geq\frac{1}{3}$, $\beta\in(0,\frac{1}{3}]$, $\omega\in(0,\pi)$ and $T$ of type $(\alpha,\beta,\omega)$ with $T,\overline{T}$ injective. Then for every $\theta\in(0,\pi)$, $f\in\mathcal{F}_L(S_\theta)$ the functional calculi
\begin{equation*}
f(T),\,\mathcal Df(T),\,\overline{\mathcal D}f(T),\,\Delta f(T)\quad\text{are closed operators}.
\end{equation*}
\end{lem}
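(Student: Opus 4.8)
The plan is to exploit the purely algebraic shape of the operators in Definition \ref{defi_Functional_calculus_growing}: each of $f(T)$, $\mathcal Df(T)$, $\overline{\mathcal D}f(T)$, $\Delta f(T)$ is presented in the form $A^{-1}B$, where $A$ is a finite product of the operators $e(T)$ and $e(\overline{T})$ (namely $e(T)$ in i), $e(T)e(\overline{T})$ in ii), and $e(T)^2e(\overline{T})$ in iii) and iv)), and $B$ is a finite sum of products of operators of the form $e(T)$, $e(\overline{T})$, $(ef)(T)$, $(ef)(\overline{T})$, $\mathcal De(T)$, $\mathcal D(ef)(T)$, $\overline{\mathcal D}e(T)$, $\overline{\mathcal D}(ef)(T)$, $\Delta e(T)$, $\Delta(ef)(T)$. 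By the preceding theorem the four functional calculi do not depend on the choice of the regularizer, so we may fix once and for all the concrete regularizer $e(s)=s^n/(1+s)^{2n}$ from \eqref{Eq_Regularizer}, for which $e(T)=T^n(1+T)^{-2n}$ and $e(\overline{T})=\overline{T}^n(1+\overline{T})^{-2n}$ are injective thanks to the assumed injectivity of $T$ and $\overline{T}$, and $e,ef\in\Psi^{3\alpha,3\beta}(S_\theta)$ resp.\ $\Psi_L^{3\alpha,3\beta}(S_\theta)$.

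First I would record that all the building blocks $e(T)$, $e(\overline{T})$, $(ef)(T)$, $(ef)(\overline{T})$, $\mathcal De(T)$, $\mathcal D(ef)(T)$, $\overline{\mathcal D}e(T)$, $\overline{\mathcal D}(ef)(T)$, $\Delta e(T)$, $\Delta(ef)(T)$ lie in $\mathcal{B}(V)$ by Lemma \ref{lem_Properties_decaying} i), since $e$ and $ef$ belong to the appropriate $\Psi$-classes. Consequently the operator $B$ occurring on the right in each of i)--iv) is bounded and everywhere defined. Next I would check that a product of injective bounded operators is again injective: if $e(T)e(\overline{T})v=0$ then $e(\overline{T})v=0$ by injectivity of $e(T)$, hence $v=0$ by injectivity of $e(\overline{T})$; the same reasoning applied twice covers $e(T)^2e(\overline{T})$. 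Thus in all four cases $A$ is an injective element of $\mathcal{B}(V)$, and therefore $A^{-1}$, with domain $\ran(A)$, is a well-defined closed operator — this is the elementary fact that the inverse of a bounded injective operator is closed.

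Finally I would invoke the standard stability statement that if $C$ is a closed operator and $B\in\mathcal{B}(V)$ is everywhere defined, then the product $CB$, taken with domain $\Set{v\in V | Bv\in\dom(C)}$, is again closed (if $v_k\to v$ and $CBv_k\to w$, then $Bv_k\to Bv$ by continuity of $B$, and closedness of $C$ forces $Bv\in\dom(C)$ with $CBv=w$). Applying this with $C=A^{-1}$ and $B$ the corresponding bounded operator yields that $f(T)=e(T)^{-1}(ef)(T)$, $\mathcal Df(T)$, $\overline{\mathcal D}f(T)$ and $\Delta f(T)$ are closed, which is precisely the assertion of the lemma. I do not expect a genuine obstacle here; the only points demanding a little care are verifying the injectivity of the composite regularizer operators $A$ (so that $A^{-1}$ really is a closed operator) and observing that every factor multiplied from the right is bounded and everywhere defined, so that no domain subtleties interfere with the closedness argument.
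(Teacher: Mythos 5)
Your proposal is correct and follows essentially the same route as the paper: the prefactors $e(T)^{-1}$, $\big(e(T)e(\overline{T})\big)^{-1}$, $\big(e(T)^2e(\overline{T})\big)^{-1}$ are closed as inverses of injective bounded operators, the remaining factors are bounded by Lemma \ref{lem_Properties_decaying} i), and a closed operator composed after a bounded everywhere-defined one is closed. The extra details you supply (fixing the concrete regularizer and checking injectivity of the composite) are harmless elaborations of what the paper leaves implicit.
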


\begin{proof}
Since in the Definition \ref{defi_Functional_calculus_growing} i) -- iv) the operators $e(T),e(\overline{T})$ are bounded and injective, the inverses $e(T)^{-1},\big(e(T)e(\overline{T})\big)^{-1}$ and $\big(e(T)^2e(\overline{T})\big)^{-1}$ are closed operators. Moreover, the remaining term $(ef)(T)$ in i) and the large brackets in ii) -- iv), are bounded operators. Altogether, $f(T),\mathcal Df(T),\overline{\mathcal D}f(T),\Delta f(T)$ are the product of a bounded and a closed operator and hence are closed operators themselves.
\end{proof}

\begin{prop}\label{prop_Commutation_growing}
Let $\alpha\geq\frac{1}{3}$, $\beta\in(0,\frac{1}{3}]$, $\omega\in(0,\pi)$ and $T$ of type $(\alpha,\beta,\omega)$ with $T,\overline{T}$ injective. Moreover, let $B\in\mathcal{B}(V)$ which commutes with $T,T_0,T_1,T_2,T_3$ on $\dom(T)$. Then for every $\theta\in(\omega,\pi)$, $g\in\mathcal{F}(S_\theta)$ and any choice $\widehat{g}\in\{g,\mathcal Dg,\overline{\mathcal D}g,\Delta g\}$, there holds
\begin{equation*}
B\widehat{g}(T)\subseteq\widehat{g}(T)B.
\end{equation*}
\end{prop}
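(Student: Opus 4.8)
The plan is to deduce this from the commutation result for decaying functions, Proposition \ref{prop_Commutation_B}, together with a short abstract lemma on the commutation of $B$ with operators of the form $A^{-1}C$.

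First I would fix a regularizer $e\in\Psi^{3\alpha,3\beta}(S_\theta)$ with $e(T),e(\overline T)$ injective and $eg\in\Psi^{3\alpha,3\beta}(S_\theta)$; the existence of such an $e$ is granted by the preceding theorem through the explicit choice \eqref{Eq_Regularizer}, and since $g$ is intrinsic the product $eg$ is intrinsic as well, hence indeed lies in $\Psi^{3\alpha,3\beta}(S_\theta)\subseteq\Psi_L^{3\alpha,3\beta}(S_\theta)$. Next I would observe that $B$ also commutes with $\overline T$ and with all of its components on $\dom(\overline T)=\dom(T)$: this follows from $\overline T=2T_0-T$ together with the fact that the components of $\overline T$ are $T_0,-T_1,-T_2,-T_3$. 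Since $\overline T$ is again of type $(\alpha,\beta,\omega)$ by Lemma \ref{lem_Resolvent_estimates}, Proposition \ref{prop_Commutation_B} applied once to $T$ and once to $\overline T$ shows that $B$ commutes with every bounded operator appearing in Definition \ref{defi_Functional_calculus_growing}, namely with
\begin{equation*}
e(T),\ e(\overline T),\ (eg)(T),\ (eg)(\overline T),\ \mathcal De(T),\ \mathcal D(eg)(T),\ \overline{\mathcal D}e(T),\ \overline{\mathcal D}(eg)(T),\ \Delta e(T),\ \Delta(eg)(T).
\end{equation*}

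The second step is the elementary fact that if $A,C\in\mathcal B(V)$ with $A$ injective and $B\in\mathcal B(V)$ commutes with both $A$ and $C$, then $BA^{-1}C\subseteq A^{-1}CB$. Indeed, for $v\in\dom(A^{-1}C)$ one has $Cv=Aw$ for a unique $w\in V$, and then $CBv=BCv=BAw=ABw\in\ran(A)$, so $Bv\in\dom(A^{-1}C)$ and $A^{-1}CBv=Bw=B(A^{-1}C)v$. I would also record that $e(T)e(\overline T)$ and $e(T)^2e(\overline T)$ are injective, being products of injective bounded operators, so these are legitimate choices for $A$.

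Finally, each of the four functional calculi in Definition \ref{defi_Functional_calculus_growing} is exactly of the form $\widehat g(T)=A^{-1}C$, where $A\in\{e(T),\,e(T)e(\overline T),\,e(T)^2e(\overline T)\}$ and $C$ is a finite sum of products of the bounded operators listed above; since $B$ commutes with each of these it commutes with every such product and sum, so the abstract lemma yields $B\widehat g(T)\subseteq\widehat g(T)B$ for every $\widehat g\in\{g,\mathcal Dg,\overline{\mathcal D}g,\Delta g\}$, which is the assertion. I do not expect a genuine obstacle here: the only points that need care are the transfer of the commutation hypothesis from $T$ to $\overline T$ (clean, via Lemma \ref{lem_Resolvent_estimates} and the explicit components of $\overline T$) and the domain bookkeeping in the abstract lemma, both of which are routine; the essential content is entirely carried by Proposition \ref{prop_Commutation_B}.
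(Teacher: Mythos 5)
Your proposal is correct and follows essentially the same route as the paper: reduce everything to Proposition \ref{prop_Commutation_B} for the bounded building blocks of Definition \ref{defi_Functional_calculus_growing} and then use the elementary fact that $B$ commutes (in the sense of inclusion) with $A^{-1}C$ when $A$ is bounded, injective and commutes with $B$. Your treatment is in fact slightly more explicit than the paper's, since you justify the commutation with the terms evaluated at $\overline{T}$ (such as $e(\overline{T})$ and $(eg)(\overline{T})$) by passing to $\overline{T}=2T_0-T$ and invoking Lemma \ref{lem_Resolvent_estimates}, a point the paper leaves implicit.
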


\begin{proof}
First of all, we note that for any injective operator $A\in\mathcal{B}(V)$ which commutes with $B$, we get the inclusion
\begin{equation*}
BA^{-1}=A^{-1}ABA^{-1}=A^{-1}BAA^{-1}\subseteq A^{-1}B.
\end{equation*}
Due to Proposition \ref{prop_Commutation_B} this in particular holds true for all prefactors in Definition \ref{defi_Functional_calculus_growing} i) -- iv), i.e.  $A=e(T)$, $A=e(T)e(\overline{T})$ and $A=e(T)^2e(\overline{T})$. Also by Proposition \ref{prop_Commutation_growing}, $B$ commutes with $(ef)(T)$ from Definition \ref{defi_Functional_calculus_growing} i) and with the big bracket terms from Definition \ref{defi_Functional_calculus_growing} ii) -- iv). Altogether, this shows the commutation $B\widehat{g}(T)\subseteq\widehat{g}(T)B$.
\end{proof}

\begin{thm}\label{thm_Product_rule_growing}
Let $\alpha\geq\frac{1}{3}$, $\beta\in(0,\frac{1}{3}]$, $\omega\in(0,\pi)$ and $T$ of type $(\alpha,\beta,\omega)$ with $T,\overline{T}$ injective. Then for any $\theta\in(0,\pi)$, $g\in\mathcal{F}(S_\theta)$, $f\in\mathcal{F}_L(S_\theta)$, there holds the product rules
\begin{subequations}
\begin{align}
\text{i)}\;\;\;\;(gf)(T)&\supseteq g(T)f(T), \label{Eq_S_product_rule_growing} \\
\text{ii)}\;\mathcal D(gf)(T)&\supseteq \mathcal Dg(T)f(T)+g(\overline{T})\mathcal Df(T)\quad\text{and} \notag \\
\mathcal D(gf)(T)&\supseteq \mathcal Dg(T)f(\overline{T})+g(T)\mathcal Df(T), \label{Eq_Q_product_rule_growing} \\
\text{iii)}\;\overline{\mathcal D}(gf)(T)&\supseteq\overline{\mathcal D}g(T)f(T)+g(T)\overline{\mathcal D}f(T)+\mathcal Dg(T)\big(f(T)-f(\overline{T})\big). \label{Eq_P_product_rule_growing} \\
\text{iv)}\;\Delta(gf)(T)&\supseteq\Delta g(T)f(T)+g(T)\Delta f(T)-\mathcal Dg(T)\mathcal Df(T). \label{Eq_F_product_rule_growing}
\end{align}
\end{subequations}
\end{thm}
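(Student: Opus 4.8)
The overall strategy is to reduce each product rule for growing functions to the corresponding product rule for decaying functions in Theorem~\ref{thm_Product_rule_decaying}, using the $H^\infty$-definitions in Definition~\ref{defi_Functional_calculus_growing}, together with the commutation facts from Corollary~\ref{cor_Commutation_decaying} and Proposition~\ref{prop_Commutation_B}. The reason only an inclusion ``$\supseteq$'' appears (rather than equality) is that on the right-hand side we compose operators which may be unbounded (being of the form ``bounded times $e(T)^{-1}$''), and composing such operators shrinks the domain; so the content of each statement is that on the intersection of the relevant domains the two sides agree, and that domain is exactly $\dom$ of the left-hand side.

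\textbf{Step 1: choose a common regularizer.} Given $g\in\mathcal{F}(S_\theta)$ and $f\in\mathcal{F}_L(S_\theta)$, pick via \eqref{Eq_Regularizer} a single regularizer $e\in\Psi^{3\alpha,3\beta}(S_\theta)$ with $n$ large enough that simultaneously $ef$, $eg$, $e^2f$, $e^2g$, $e(gf)$ and $e^2(gf)$ all lie in $\Psi_L^{3\alpha,3\beta}(S_\theta)$ (such an $n$ exists since $g,f,gf$ all have polynomial growth). Note that $eg\in\Psi^{3\alpha,3\beta}(S_\theta)$ is again intrinsic, so $eg$ is itself a legitimate regularizer for $gf$, and by the just-proven regularizer-independence theorem we may compute $g(T)$, $\mathcal{D}g(T)$, etc.\ using $e$, while computing $(gf)(T)$, $\mathcal{D}(gf)(T)$, etc.\ using $eg$. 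This is the key bookkeeping choice that makes the cancellations work.

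\textbf{Step 2: expand and match, using the decaying product rules.} For i), $(gf)(T)$ computed with regularizer $eg$ is $\big(e(T)g(T)\big)^{-1}(e g f)(T)$; using \eqref{Eq_S_product_rule_decaying} twice we get $(egf)(T)=e(T)\,(gf)(T)$ as well as $(egf)(T)=(ef)(T)g(T)=e(T)\,f(T)\,g(T)$ on suitable domains, and since $e(T)$ commutes with $g(T)$ (Corollary~\ref{cor_Commutation_decaying}~ii)) one peels off $\big(e(T)g(T)\big)^{-1}$ to obtain $g(T)f(T)\subseteq (gf)(T)$. For ii)--iv) one does the same with the more elaborate defining formulas: substitute the definitions of $\mathcal{D}(gf)(T)$, $\overline{\mathcal{D}}(gf)(T)$, $\Delta(gf)(T)$ relative to regularizer $eg$, then apply the decaying product rules \eqref{Eq_Q_product_rule_decaying}--\eqref{Eq_F_product_rule_decaying} to each term $\mathcal{D}(egf)(T)$, $\overline{\mathcal{D}}(egf)(T)$, $\Delta(egf)(T)$, $\mathcal{D}(eg)(T)$, etc.\ (viewing $egf=(eg)f$ or $=(ef)g$ as needed, and $eg\cdot(\text{regularizer of }f)$ as needed). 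After expansion, the terms regroup so that, using the commutation of all the bounded operators $e(T)$, $e(\overline T)$, $\mathcal{D}e(T)$, $\overline{\mathcal D}e(T)$, $\Delta e(T)$ among themselves and with each other (Corollary~\ref{cor_Commutation_decaying}~ii)), the factor corresponding to the prefactor $\big(e(T)e(\overline T)\big)^{-1}$ resp.\ $\big(e(T)^2e(\overline T)\big)^{-1}$ can be extracted, and what remains on the inner parenthesis is precisely $e(T)e(\overline T)\mathcal{D}g(T)f(T)+\cdots$ etc.\ --- i.e.\ exactly the big bracket defining $\mathcal{D}g(T)f(T)+g(\overline T)\mathcal{D}f(T)$ after one also applies the $H^\infty$-definition of $g(\overline T)$, $\mathcal{D}f(T)$, $\mathcal{D}g(T)$, $f(T)$ themselves. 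For ii) there are two forms because the decaying rule \eqref{Eq_Q_product_rule_decaying} has two forms.

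\textbf{The main obstacle} will be the domain bookkeeping in Step 2, rather than any single algebraic identity: one must verify that every rearrangement is legitimate on the domain where both sides are defined, i.e.\ that one is allowed to insert $e(T)^{-1}e(T)=\mathrm{id}$ on ranges, cancel $e(T)e(T)^{-1}\subseteq\mathrm{id}$, and commute the closed operators $e(T)^{-1}$ past the bounded ones (which is only ``$\subseteq$'', by the computation $BA^{-1}=A^{-1}ABA^{-1}=A^{-1}BAA^{-1}\subseteq A^{-1}B$ used in Proposition~\ref{prop_Commutation_growing}). Keeping track of these inclusions is what produces ``$\supseteq$'' rather than ``$=$'' in the final statement, and the bulk of the write-up consists in confirming that no spurious domain loss occurs beyond what the statement already concedes. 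The algebraic heart --- that after clearing denominators the identity is exactly one of the decaying product rules in Theorem~\ref{thm_Product_rule_decaying} applied to $e$, $f$, $g$ and $ef$, $eg$ --- is then a finite, if lengthy, manipulation using only commutativity of the bounded building blocks.
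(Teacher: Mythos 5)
There is a genuine gap, and it sits exactly at what you call ``the key bookkeeping choice'': using $eg$ as a regularizer for $gf$. An admissible regularizer $\tilde e$ in Definition~\ref{defi_Functional_calculus_growing} must satisfy not only $\tilde e\in\Psi^{3\alpha,3\beta}(S_\theta)$ and $\tilde e\,(gf)\in\Psi_L^{3\alpha,3\beta}(S_\theta)$, but also that $\tilde e(T)$ and $\tilde e(\overline T)$ are \emph{injective}; only then do the inverse prefactors exist and only then does the regularizer-independence theorem apply. For $\tilde e=eg$ there is no reason for this: the hypotheses give injectivity of $T$ and $\overline T$ (hence of $e(T)$, $e(\overline T)$ for the rational choice \eqref{Eq_Regularizer}), but $g\in\mathcal F(S_\theta)$ is an arbitrary polynomially growing intrinsic function, and $(eg)(T)$ (equivalently, essentially $g(T)$) can perfectly well have a nontrivial kernel, e.g.\ when $g$ vanishes at spectral points in $\overline{S_\omega}$. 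So the operator $\big((eg)(T)\big)^{-1}$ you want to peel off need not exist, and the appeal to regularizer independence for this choice is not available. A second, related problem occurs in Step~2: you invoke the decaying product rules for factorizations such as $(egf)(T)=(ef)(T)\,g(T)$ or $(egf)(T)=(eg)(T)\,f(T)$, but Theorem~\ref{thm_Product_rule_decaying} only applies when \emph{both} factors are decaying ($g\in\Psi^{3\alpha,3\beta}$, $f\in\Psi_L^{3\alpha,3\beta}$); with one factor merely in $\mathcal F$ or $\mathcal F_L$ this is precisely the statement you are trying to prove, so the argument becomes circular at that point. (Also, with regularizer $eg$ the regularized numerator would be $(eg\cdot gf)(T)=(eg^2f)(T)$, not $(egf)(T)$, so even the bookkeeping as written is off.)

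The fix is the route the paper takes: choose two \emph{decaying} regularizers, $e_1$ for $g$ and $e_2$ for $f$ (you may take $e_1=e_2=e$, so your ``common regularizer'' idea survives in that form), and use $e_1e_2$ as the regularizer for $gf$. Then every application of Theorem~\ref{thm_Product_rule_decaying} involves only decaying functions ($e_1g$, $e_2f$, $e_1e_2$, $e_1e_2gf$, \dots), injectivity of $e_i(T)$, $e_i(\overline T)$ is available from \eqref{Eq_Regularizer} and the injectivity of $T,\overline T$, and the cross terms that appear when expanding $\mathcal D(e_1e_2gf)$, $\overline{\mathcal D}(e_1e_2gf)$, $\Delta(e_1e_2gf)$ are recombined using the identity \eqref{Eq_Independence_regularizer_3} (a byproduct of the regularizer-independence proof), after which the prefactors $\big(e(T)e(\overline T)\big)^{-1}$, $\big(e(T)^2e(\overline T)\big)^{-1}$ can be distributed with the ``$\supseteq$'' inclusions you correctly list. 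Your domain-tracking remarks and the explanation of why only inclusions hold are fine; it is the choice of $eg$ as regularizer and the ensuing applications of the decaying product rule to non-decaying factors that must be replaced.
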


\begin{proof}
In this proof we will use again the notation \eqref{Eq_Notation}. Let $e_1$ be a regularizer of $g$ and $e_2$ a regularizer of $f$ according to Definition \ref{defi_Functional_calculus_growing}. Then it is clear that $e_1e_2$ is a regularizer of the product $gf$. Note, that the symbols $e_1$ and $e_2$ for the regularizers are the same as the one for the imaginary units of the quaternions. This fact doesn't pose an issue, as they do not appear in the proofs of the following theorems. \medskip

In the following we will do many manipulations like interchanging the order of operators which will be allowed by Corollary \ref{cor_Commutation_decaying}. We will also use the following list of operator identities (or inclusions) and we will not mention them any more at any point where they appear in the proof:

\begin{enumerate}
\item[$\circ$] $A^{-1}B\supseteq BA^{-1}$\hfill for any $A,B\in\mathcal{B}(V,V)$ with $A$ injective and $AB=BA$,
\item[$\circ$] $(AB)^{-1}=A^{-1}B^{-1}=B^{-1}A^{-1}$\hfill for any injective $A,B\in\mathcal{B}(V)$ with $AB=BA$.
\item[$\circ$] $A^{-1}(B+C)\supseteq A^{-1}B+A^{-1}C$\hfill for any $A,B,C\in\mathcal{B}(V)$ with $A$ injective.
\end{enumerate}

i)\;\;For \eqref{Eq_S_product_rule_growing}, we use the product rule \eqref{Eq_S_product_rule_decaying} for $(e_1ge_2f)$ in the Definition \ref{defi_Functional_calculus_growing} i), to get
\begin{equation*}
(gf)=(e_1e_2)^{-1}(e_1e_2gf)=e_1^{-1}e_2^{-1}(e_1g)(e_2f)\supseteq e_1^{-1}(e_1g)e_2^{-1}(e_2f)=gf.
\end{equation*}
ii)\;\;For the $Q$-functional calculus, we only prove the first equation in \eqref{Eq_Q_product_rule_growing}, while the second one follows the same steps. Using the product rule \eqref{Eq_S_product_rule_decaying} for $(e_1ge_2f)$ as well as \eqref{Eq_Q_product_rule_decaying} for $\mathcal D(e_1ge_2f)$ and $\mathcal D(e_1e_2)$, we can rewrite the Definition \ref{defi_Functional_calculus_growing} ii) as
\begin{align*}
\mathcal D(gf)(T)&=(e_1e_2\overline{e_1e_2})^{-1}\big((e_1e_2)\mathcal D(e_1e_2gf)-(e_1e_2gf)\mathcal D(e_1e_2)\big) \\
&=(e_1e_2\overline{e_1e_2})^{-1}\Big(e_1e_2\big(\mathcal D(e_1g)(e_2f)+(\overline{e_1g})\mathcal D(e_2f)\big)-(e_1g)\big(\overline{e_2}\mathcal De_1+e_1\mathcal De_2\big)(e_2f)\Big).
\end{align*}
The identity \eqref{Eq_Independence_regularizer_3} with $g$ instead of $f$, then turns the above equation into
\begin{align*}
\mathcal D(gf)(T)&=(e_1e_2\overline{e_1e_2})^{-1}\big(e_1\overline{e_2}\mathcal D(e_1g)(e_2f)+e_1e_2(\overline{e_1g})\mathcal D(e_2f) \\
&\hspace{3cm}-(e_1g)\overline{e_2}\mathcal De_1(e_2f)-(\overline{e_1g})e_1\mathcal De_2(e_2f)\big) \\
&\supseteq(e_1e_2\overline{e_1})^{-1}\big(e_1\mathcal D(e_1g)-(e_1g)\mathcal De_1\big)(e_2f)+(e_2\overline{e_1e_2})^{-1}(\overline{e_1g})\big(e_2\mathcal D(e_2f)-\mathcal De_2(e_2f)\big) \\
&\supseteq(e_1\overline{e_1})^{-1}\big(e_1\mathcal D(e_1g)-(e_1g)\mathcal De_1\big)e_2^{-1}(e_2f) \\
&\quad+(\overline{e_1})^{-1}(\overline{e_1g})(e_2\overline{e_2})^{-1}\big(e_2\mathcal D(e_2f)-\mathcal De_2(e_2f)\big)=\mathcal Dg\,f+\overline{g}\,\mathcal Df.
\end{align*}
iii)\;\;For \eqref{Eq_P_product_rule_growing}, we use the product rule \eqref{Eq_S_product_rule_decaying} for $(e_1ge_2f)$ and $(\overline{e_1ge_2f})$, \eqref{Eq_Q_product_rule_decaying} for $D(e_1e_2)$, as well as \eqref{Eq_P_product_rule_decaying} for $\overline{\mathcal D}(e_1ge_2f)$ and $\overline{\mathcal D}(e_1e_2)$, we can rewrite Definition \ref{defi_Functional_calculus_growing} iii) as
\begin{align}
\overline{\mathcal D}(gf)&=(e_1^2e_2^2\overline{e_1e_2})^{-1}\big(e_1e_2\overline{e_1e_2}\overline{\mathcal D}(e_1e_2gf)-\overline{e_1e_2}\overline{\mathcal D}(e_1e_2)(e_1e_2gf) \notag \\
&\hspace{2.7cm}+e_1e_2\mathcal D(e_1e_2)(\overline{e_1e_2gf})-\overline{e_1e_2}\mathcal D(e_1e_2)(e_1e_2gf)\big) \notag \\
&=(e_1^2e_2^2\overline{e_1e_2})^{-1}\Big(e_1e_2\overline{e_1e_2}\big(\overline{\mathcal D}(e_1g)(e_2f)+(e_1g)\overline{\mathcal D}(e_2f)+\mathcal D(e_1g)(e_2f)-\mathcal D(e_1g)(\overline{e_2f})\big) \notag \\
&\hspace{2.7cm}-\overline{e_1e_2}(e_1g)\big(\overline{\mathcal D}e_1e_2+e_1\overline{\mathcal D}e_2+\mathcal De_1e_2-\mathcal De_1\overline{e_2}\big)(e_2f) \notag \\
&\hspace{2.7cm}+e_1e_2(\overline{e_1g})\big(\mathcal De_1e_2+\overline{e_1}\mathcal De_2\big)(\overline{e_2f})-\overline{e_1e_2}(e_1g)\big(\mathcal De_1\overline{e_2}+e_1\mathcal De_2\big)(e_2f)\Big) \notag \\
&=(e_1^2e_2^2\overline{e_1e_2})^{-1}\Big(e_2\overline{e_2}\big(e_1\overline{e_1}\overline{\mathcal D}(e_1g)-\overline{e_1}\overline{\mathcal D}e_1(e_1g)+e_1\mathcal De_1(\overline{e_1g})-\overline{e_1}\mathcal De_1(e_1g)\big)(e_2f) \notag \\
&\hspace{2.7cm}+e_1\overline{e_1}(e_1g)\big(e_2\overline{e_2}\overline{\mathcal D}(e_2f)-\overline{e_2}\overline{\mathcal D}e_2(e_2f)+e_2\mathcal De_2(\overline{e_2f})-\overline{e_2}\mathcal De_2(e_2f)\big) \notag \\
&\hspace{2.7cm}+e_1e_2\big(\overline{e_1}\mathcal D(e_1g)-\mathcal De_1(\overline{e_1g})\big)\big(\overline{e_2}(e_2f)-e_2(\overline{e_2f})\big) \notag \\
&\hspace{2.7cm}+e_1e_2\overline{e_1}\big(\mathcal D(e_1g)e_2+(\overline{e_1g})\mathcal De_2-\mathcal D(e_1g)\overline{e_2}-\mathcal De_2(e_1g)\big)(\overline{e_2f}). \label{Eq_Product_rule_growing_1}
\end{align}
Carrying now parts of the inverse $(e_1^2e_2^2\overline{e_1e_2})^{-1}$ inside the bracket gives the inclustion
\begin{align*}
\overline{\mathcal D}(gf)&\supseteq(e_1^2\overline{e_1})^{-1}\big(e_1\overline{e_1}\overline{\mathcal D}(e_1g)-\overline{e_1}\overline{\mathcal D}e_1(e_1g)+e_1\mathcal De_1(\overline{e_1g})-\overline{e_1}\mathcal De_1(e_1g)\big)e_2^{-1}(e_2f) \\
&\quad+e_1^{-1}(e_1g)(e_2^2\overline{e_2})^{-1}\big(e_2\overline{e_2}\overline{\mathcal D}(e_2f)-\overline{e_2}\overline{\mathcal D}e_2(e_2f)+e_2\mathcal De_2(\overline{e_2f})-\overline{e_2}\mathcal De_2(e_2f)\big) \\
&\quad+(e_1\overline{e_1})^{-1}\big(\overline{e_1}\mathcal D(e_1g)-\mathcal De_1(\overline{e_1g})\big)\big(e_2^{-1}(e_2f)-(\overline{e_2})^{-1}(\overline{e_2f})\big) \\
&=\overline{\mathcal D}g\,f+g\overline{\mathcal D}f+\mathcal Dg(f-\overline{f}),
\end{align*}
where in the third equation we simply rearranged the terms, and the line \eqref{Eq_Product_rule_growing_1} vanishes due to \eqref{Eq_Independence_regularizer_3} with $g$ instead of $f$. \medskip

iv)\;\;For \eqref{Eq_F_product_rule_growing}, we use the product rules \eqref{Eq_S_product_rule_decaying} for $(e_1ge_2f)$, \eqref{Eq_Q_product_rule_decaying} for $D(e_1ge_2f)$ and $D(e_1e_2)$ as well as \eqref{Eq_F_product_rule_decaying} for $\Delta(e_1ge_2f)$ and $\Delta(e_1e_2)$, to rewrite Definition \ref{defi_Functional_calculus_growing} iv) as
\begin{align*}
\Delta(gf)&=(e_1^2e_2^2\overline{e_1e_2})^{-1}\big(e_1e_2\overline{e_1e_2}\Delta(e_1e_2gf)-\overline{e_1e_2}\Delta(e_1e_2)(e_1e_2gf) \\
&\hspace{2.7cm}+e_1e_2D(e_1e_2)\mathcal D(e_1e_2gf)-(\mathcal D(e_1e_2))^2(e_1e_2gf)\big) \\
&=(e_1^2e_2^2\overline{e_1e_2})^{-1}\Big(e_1e_2\overline{e_1e_2}\big(\Delta(e_1g)(e_2f)+(e_1g)\Delta(e_2f)-\mathcal D(e_1g)\mathcal D(e_2f)\big) \\
&\hspace{2.7cm}-\overline{e_1e_2}(e_1g)\big(\Delta e_1e_2+e_1\Delta e_2-\mathcal De_1\mathcal De_2\big)(e_2f) \\
&\hspace{3cm}+e_1e_2\mathcal D(e_1e_2)\mathcal D(e_1g)(\overline{e_2f})+e_1e_2(\mathcal De_1e_2+\overline{e_1}\mathcal De_2)(e_1g)\mathcal D(e_2f) \\
&\hspace{2.7cm}-(e_1g)\big(\mathcal De_1\overline{e_2}+e_1\mathcal De_2\big)\big(\mathcal De_1e_2+\overline{e_1}\mathcal De_2\big)(e_2f)\Big).
\end{align*}
Rearranging the terms and using in \eqref{Eq_Product_rule_growing_1} the identity
\begin{align*}
\mathcal D(e_1e_2)(\overline{e_2f})-\overline{e_1e_2}\mathcal D(e_2f)&=\mathcal D(e_1e_2)(e_2f)-e_1e_2\mathcal D(e_2f) \\
&=\overline{e_2}\mathcal De_1(e_2f)+e_1\mathcal De_2(e_2f)-e_1e_2\mathcal D(e_2f),
\end{align*}
which is a result of the two versions of the product rule \eqref{Eq_Q_product_rule_decaying}, in the fourth line of the upcoming equation, turns \eqref{Eq_Product_rule_growing_1} into.
\begin{align*}
\Delta(gf)&=(e_1^2e_2^2\overline{e_1e_2})^{-1}\Big(e_2\overline{e_2}\big(e_1\overline{e_1}\Delta(e_1g)-\overline{e_1}\Delta e_1(e_1g)-(\mathcal De_1)^2(e_1g)\big)(e_2f) \\
&\hspace{2.7cm}+e_1\overline{e_1}(e_1g)\big(e_2\overline{e_2}\Delta(e_2f)-\overline{e_2}\Delta e_2(e_2f)+e_2\mathcal De_2\mathcal D(e_2f)-(\mathcal De_2)^2(e_2f)\big) \\
&\hspace{2.7cm}-e_1e_2\mathcal De_1(e_1g)\big(\mathcal De_2(e_2f)-e_2\mathcal D(e_2f)\big) \\
&\hspace{2.7cm}+e_1e_2\mathcal D(e_1g)\big(\mathcal D(e_1e_2)(\overline{e_2f})-\overline{e_1e_2}\mathcal D(e_2f)\big) \\
&=(e_1^2e_2^2\overline{e_1e_2})^{-1}\Big(e_2\overline{e_2}\big(e_1\overline{e_1}\Delta(e_1g)-\overline{e_1}\Delta e_1(e_1g)+e_1\mathcal De_1\mathcal D(e_1g)-(\mathcal De_1)^2(e_1g)\big)(e_2f) \\
&\hspace{2.7cm}+e_1\overline{e_1}(e_1g)\big(e_2\overline{e_2}\Delta(e_2f)-\overline{e_2}\Delta e_2(e_2f)+e_2\mathcal De_2 \mathcal D(e_2f)-(\mathcal De_2)^2(e_2f)\big) \\
&\hspace{2.7cm}-e_1e_2\big(e_1\mathcal D(e_1g)-\mathcal De_1(e_1g)\big)\big(e_2\mathcal D(e_2f)-\mathcal De_2(e_2f)\big) \\
&\supseteq(e_1^2\overline{e_1})^{-1}\big(e_1\overline{e_1}\Delta(e_1g)-\overline{e_1}\Delta e_1(e_1g)+e_1\mathcal De_1\mathcal D(e_1g)-(\mathcal De_1)^2(e_1g)\big)e_2^{-1}(e_2f) \\
&\quad+e_1^{-1}(e_1g)(e_2^2\overline{e_2})^{-1}\big(e_2\overline{e_2}\Delta(e_2f)-\overline{e_2}\Delta e_2(e_2f)+e_2\mathcal De_2\mathcal D(e_2f)-(\mathcal De_2)^2(e_2f)\big) \\
&\quad-(e_1\overline{e_1})^{-1}\big(e_1\mathcal D(e_1g)-\mathcal De_1(e_1g)\big)(e_2\overline{e_2})^{-1}\big(e_2\mathcal D(e_2f)-\mathcal De_2(e_2f)\big) \\
&=\Delta g\,f+g\Delta f-\mathcal Dg\mathcal Df. \qedhere
\end{align*}
\end{proof}

As the final result of this paper, we investigate the action of the $H^\infty$-functional calculi act on powers $f(s)=s^n$. Comparing it, to how $\mathcal D$, $\overline{\mathcal D}$ and $\Delta$ from \eqref{Eq_Cauchy_Fueter_operator}, \eqref{Eq_Cauchy_Fueter_operator_conjugate} and \eqref{Eq_Laplace_operator} act as quaternionic derivatives on powers (see also \cite[Lemma 1]{B}) as
\begin{align*}
\mathcal D(q^n)&=-2\sum\limits_{k=0}^{n-1}\overline{q}^{n-1-k}q^k, \\
\overline{\mathcal D}(q^n)&=2nq^{n-1}+2\sum\limits_{k=0}^{n-1}\overline{q}^{n-1-k}q^k, \\
\Delta(q^n)&=-4\sum\limits_{k=1}^{n-1}k\overline{q}^{n-1-k}q^{k-1}.
\end{align*}

\begin{prop}\label{prop_Functional_calculus_of_powers}
Let $\alpha\geq\frac{1}{3}$, $\beta\in(0,\frac{1}{3}]$, $\omega\in(0,\pi)$ and $T$ of type $(\alpha,\beta,\omega)$ with $T,\overline{T}$ injective. Then for every $n\in\mathbb{N}$ there holds
\begin{align*}
&\text{i)}\;\;(s^n)(T)=T^n, && \text{iii)}\;\;\overline{\mathcal D}(s^n)(T)\supseteq 2nT^{n-1}+2\sum\limits_{k=0}^{n-1}\overline{T}^{n-1-k}T^k \\
&\text{ii)}\;\;\mathcal D(s^n)(T)\supseteq-2\sum\limits_{k=0}^{n-1}\overline{T}^{n-1-k}T^k, && \text{iv)}\;\;\Delta(s^n)(T)\supseteq-4\sum\limits_{k=1}^{n-1}k\overline{T}^{n-1-k}T^{k-1}.
\end{align*}
\end{prop}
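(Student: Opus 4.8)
The plan is as follows. Every power $s\mapsto s^n$ is intrinsic and satisfies $|s^n|\le|s|^n+|s|^{-n}$, hence lies in $\mathcal F(S_\theta)\subseteq\mathcal F_L(S_\theta)$, so all four $H^\infty$-functional calculi of Definition~\ref{defi_Functional_calculus_growing} are defined on it. By Lemma~\ref{lem_Resolvent_estimates} the operator $\overline{T}$ is again of type $(\alpha,\beta,\omega)$, and it is injective by hypothesis, so the calculi apply to $\overline{T}$ as well; once~i) is established it also yields $(s^n)(\overline{T})=\overline{T}^n$. I would prove~i) directly from Definition~\ref{defi_Functional_calculus_growing}~i), and then deduce~ii)--iv) by induction on $n$, feeding the intrinsic function $g(s)=s$ and $f(s)=s^{n-1}$ into the $H^\infty$-product rules of Theorem~\ref{thm_Product_rule_growing}.

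For~i), fix the regularizer $e(s)=s^m/(1+s)^{2m}$ with $m>n$, exactly as in the proof that the calculi are well defined, so that $e\in\Psi^{3\alpha,3\beta}(S_\theta)$, $e(T)$ and $e(\overline{T})$ are injective, and $s^ke\in\Psi_L^{3\alpha,3\beta-N}(S_\theta)$ for all $k\le N:=n$. Proposition~\ref{prop_Recurrence_powers}~i) gives $(s^ne)(T)=T^ne(T)$, whence $(s^n)(T)=e(T)^{-1}(s^ne)(T)=e(T)^{-1}T^ne(T)$; since $e(T)=T^m(1+T)^{-2m}$ commutes with $T^n$ (Corollary~\ref{cor_Commutation_decaying}~iv)) and $\ran e(T)\subseteq\dom(T^n)$, a short domain argument upgrades this to $(s^n)(T)=T^n$. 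For the base case $n=1$ of~ii)--iv) I would compute $\mathcal Ds(T),\overline{\mathcal D}s(T),\Delta s(T)$ straight from Definition~\ref{defi_Functional_calculus_growing}, using $(se)(T)=Te(T)$, $(se)(\overline{T})=\overline{T}e(\overline{T})$ and the $n=1$ cases of Proposition~\ref{prop_Recurrence_powers}, namely $\mathcal D(se)(T)=T\mathcal De(T)-2e(\overline{T})$, $\overline{\mathcal D}(se)(T)=T\overline{\mathcal D}e(T)+2e(\overline{T})+2e(T)$ and $\Delta(se)(T)=T\Delta e(T)+2\mathcal De(T)$. Because $e$ is intrinsic, all of $T,\overline{T},e(T),e(\overline{T}),\mathcal De(T),\overline{\mathcal D}e(T),\Delta e(T)$ commute pairwise (Corollary~\ref{cor_Commutation_decaying}), and every term carrying a regularizer derivative cancels; applying the prefactor to what remains leaves $\mathcal Ds(T)=-2$, $\overline{\mathcal D}s(T)=4$, $\Delta s(T)=0$. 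The only non-obvious cancellation — needed for $\overline{\mathcal D}s(T)$ — uses
\begin{equation*}
(T-\overline{T})\mathcal De(T)=2\bigl(e(\overline{T})-e(T)\bigr),
\end{equation*}
which follows by equating the two forms of the $Q$-recurrence in Proposition~\ref{prop_Recurrence_powers}~ii) at $n=1$.

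For the inductive step $n-1\mapsto n$ with $n\ge2$, I would apply Theorem~\ref{thm_Product_rule_growing}~ii)--iv) with $g(s)=s\in\mathcal F(S_\theta)$ and $f(s)=s^{n-1}\in\mathcal F_L(S_\theta)$, inserting $(s)(T)=T$, $(s)(\overline{T})=\overline{T}$, $(s^{n-1})(T)=T^{n-1}$, $(s^{n-1})(\overline{T})=\overline{T}^{n-1}$ from~i) (the last also applied to $\overline{T}$), the base cases above, and the inductive hypotheses. For~ii) this reads $\mathcal D(s^n)(T)\supseteq\mathcal Ds(T)\,T^{n-1}+\overline{T}\,\mathcal D(s^{n-1})(T)\supseteq-2T^{n-1}-2\sum_{k=0}^{n-2}\overline{T}^{n-1-k}T^{k}=-2\sum_{k=0}^{n-1}\overline{T}^{n-1-k}T^{k}$; for~iii), $\overline{\mathcal D}(s^n)(T)\supseteq4T^{n-1}+T\,\overline{\mathcal D}(s^{n-1})(T)-2\bigl(T^{n-1}-\overline{T}^{n-1}\bigr)$, which after an index shift equals $2nT^{n-1}+2\sum_{k=0}^{n-1}\overline{T}^{n-1-k}T^{k}$; and for~iv), $\Delta(s^n)(T)\supseteq T\,\Delta(s^{n-1})(T)+2\,\mathcal D(s^{n-1})(T)$, which after merging the two sums equals $-4\sum_{k=1}^{n-1}k\,\overline{T}^{n-1-k}T^{k-1}$. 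The inclusions propagate because $\mathcal Ds(T),\overline{\mathcal D}s(T),\Delta s(T)$ are bounded and $(s)(T)=T,(s)(\overline{T})=\overline{T}$ act from the correct side.

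The delicate part is the $n=1$ computation — verifying that all regularizer-dependent contributions cancel in the (rather involved) formulas of Definition~\ref{defi_Functional_calculus_growing}, which rests on the commutation relations of Corollary~\ref{cor_Commutation_decaying} and on the identity $(T-\overline{T})\mathcal De(T)=2(e(\overline{T})-e(T))$. The rest — propagating the inclusions ``$\supseteq$'' via closedness of the calculi (Lemma~\ref{lem_Properties_growing}) and the equality in~i) — is routine domain bookkeeping. Alternatively, one can bypass the product rule and prove all four identities simultaneously by iterating the recurrences of Proposition~\ref{prop_Recurrence_powers} applied to $s^ne$ and substituting into Definition~\ref{defi_Functional_calculus_growing}; the same identity and cancellations reappear there.
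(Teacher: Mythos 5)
Your part i), your $n=1$ base computations (including the identity $(T-\overline{T})\mathcal De(T)=2\big(e(\overline{T})-e(T)\big)$, which is the paper's $T\mathcal De(T)+2e(T)=\overline{T}\mathcal De(T)+2e(\overline{T})$), and your inductive step for ii) are fine; for ii) the product rule $\mathcal D(s^n)(T)\supseteq\mathcal Ds(T)(s^{n-1})(T)+(s)(\overline{T})\mathcal D(s^{n-1})(T)$ works because the new unbounded factor is $\overline{T}$ and it lands on the left, exactly where the asserted formula has it. The gap is in the inductive steps for iii) and iv). There the product rules of Theorem \ref{thm_Product_rule_growing} hand you the terms $g(T)\overline{\mathcal D}f(T)=T\,\overline{\mathcal D}(s^{n-1})(T)$ and $g(T)\Delta f(T)=T\,\Delta(s^{n-1})(T)$, so after inserting the induction hypothesis you obtain sums of operators of the form $T\,\overline{T}^{\,n-2-k}T^{k}$, whereas the proposition asserts $\overline{T}^{\,n-2-k}T^{k+1}$. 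These are not the same unbounded operator: to pass from one to the other you must commute $T$ past powers of $\overline{T}$ on the whole domain of the asserted right-hand side, and nothing in the paper provides this (Lemma \ref{lem_TT} gives $T\overline{T}v=\overline{T}Tv$ only for $v\in\dom(|T|^2)$, and Corollary \ref{cor_Commutation_decaying} only concerns the bounded operators of the decaying calculus). Your phrase ``after an index shift equals'' silently performs exactly this unjustified commutation; the inclusion you need (asserted operator $\subseteq$ derived operator) requires equality of values on every vector of the asserted domain, which is precisely what is not available. Swapping the roles $g=s^{n-1}$, $f=s$ does not help: the term $\mathcal Dg(T)\big(f(T)-f(\overline{T})\big)$ then produces $\overline{T}^{\,a}T^{k}\overline{T}$, mis-ordered in the same way.

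There is a secondary problem in iv): the summand $\Delta s(T)(s^{n-1})(T)=0\cdot T^{n-1}$ is the zero operator restricted to $\dom(T^{n-1})$, so your derived right-hand side is only defined there, while the asserted operator $-4\sum_{k=1}^{n-1}k\overline{T}^{\,n-1-k}T^{k-1}$ has a larger domain (already for $n=2$ it is $-4I$ on all of $V$); isolated cases can be patched by closedness of $\Delta(s^n)(T)$, but not the general step. The paper avoids both obstacles by running the induction inside the regularized expressions of Definition \ref{defi_Functional_calculus_growing} and multiplying the induction hypothesis by $T$ \emph{from the right}, then invoking the recurrences of Proposition \ref{prop_Recurrence_powers}: right multiplication appends the new factor next to the $T$-powers, so the products come out in the stated order $\overline{T}^{\,n-1-k}T^{k}$ with no commutation of unbounded operators needed. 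Your closing alternative (iterating Proposition \ref{prop_Recurrence_powers} on $s^{n}e$ and substituting into Definition \ref{defi_Functional_calculus_growing}) is in fact the paper's method and is the way to repair the argument; as written, however, the main route through Theorem \ref{thm_Product_rule_growing} does not close for iii) and iv).
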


\begin{proof}
Throughout this proof we will use the commutation properties from Corollary \ref{cor_Commutation_decaying} iv) and relation in Proposition \ref{prop_Recurrence_powers} i), namely
\begin{equation*}
f(T)T\subseteq Tf(T)=(sf)(T)\qquad\text{and}\qquad\widehat{f}(T)T\subseteq T\widehat{f}(T).
\end{equation*}
In order to make the calculations more readable, we will again use the notation \eqref{Eq_Notation}. \medskip

i)\;\;Let us choose the special regularizer function $e(s)=\frac{s^N}{(1+s)^{2n}}$ from \eqref{Eq_Regularizer} with $N$ large enough. Then with the Definition \ref{defi_Functional_calculus_growing} i) of the $S$-functional calculus, we get
\begin{align*}
(s^n)(T)&=e(T)^{-1}\Big(\frac{s^{n+N}}{(1+s)^{2N}}\Big)(T)=(1+T)^{2N}T^{-N}T^{n+N}(1+T)^{-2N} \\
&=(1+T)^{2N}T^n(1+T)^{-2N}=T^n(1+T)^{2N}(1+T)^{-2N}=T^n.
\end{align*}
ii)\;\;For the induction start $n=0$, the Definition \ref{defi_Functional_calculus_growing} ii) with $f(s)=1$ gives
\begin{equation*}
\mathcal D(1)=(e\overline{e})^{-1}(e\mathcal De-\mathcal De\,e)=0,
\end{equation*}
where we used that $\mathcal De\,e=e\mathcal De$ commutes due to Corollary \ref{cor_Commutation_decaying}. For the induction step $n-1\rightarrow n$ we know by the induction assumption and Definition \ref{defi_Functional_calculus_growing} ii), that
\begin{equation}\label{Eq_Rational_equivalence_growing_6}
-2\sum\limits_{k=0}^{n-2}\overline{T}^{n-2-k}T^k\subseteq(e\overline{e})^{-1}\big(e\mathcal D(es^{n-1})-\mathcal De(es^{n-1})\big).
\end{equation}
Multiplying this equation with $T$ from the left, subtracting the term $2\overline{T}^{n-1}$ and using the recurrence relation from Proposition \ref{prop_Recurrence_powers} ii), gives
\begin{align*}
-2\sum\limits_{k=0}^{n-1}\overline{T}^{n-1-k}T^k&\subseteq(e\overline{e})^{-1}\big(e\mathcal D(es^{n-1})-\mathcal De(es^{n-1})\big)T-2\overline{T}^{n-1} \\
&\subseteq(e\overline{e})^{-1}\big(eT\mathcal D(es^{n-1})-\mathcal De(es^n)-2e\overline{T}^{n-1}\overline{e}\big) \\
&\subseteq(e\overline{e})^{-1}\big(e\mathcal D(es^n)-\mathcal De(es^n)\big)=\mathcal D(s^n).
\end{align*}
iii)\;\;For the induction start $n=0$, it follows from the commutation of $\mathcal De$ and $\overline{\mathcal D}e$ with $e$ and $\overline{e}$, see Corollary \ref{cor_Commutation_decaying} ii), that Definition \ref{defi_Functional_calculus_growing} iii) with $f(s)=1$ turns into
\begin{equation*}
\overline{\mathcal D}(1)=(e^2\overline{e})^{-1}\big(e\overline{e}\overline{\mathcal D}e-\overline{e}\overline{\mathcal D}e\,e+e\mathcal De\,\overline{e}-\overline{e}\mathcal De\,e\big)=0.
\end{equation*}
For the induction step $n-1\rightarrow n$ we know by the induction assumption, that
\begin{align*}
2(n-1)T^{n-2}+2\sum\limits_{k=0}^{n-2}\overline{T}^{n-2-k}T^k\subseteq(e^2\overline{e})^{-1}\Big(&e\overline{e}\overline{\mathcal D}(es^{n-1})-\overline{e}\overline{\mathcal D}e(es^{n-1}) \\
&+e\mathcal De(\overline{es^{n-1}})-\overline{e}\mathcal De(es^{n-1})\Big).
\end{align*}
Multiplying $T$ from the right, adding $2T^{n-1}+2\overline{T}^{n-1}$ and using Proposition \ref{prop_Recurrence_powers} iii), gives
\begin{align*}
&2nT^{n-1}+2\sum\limits_{k=0}^{n-1}\overline{T}^{n-1-k}T^k \\
&\subseteq(e^2\overline{e})^{-1}\Big(e\overline{e}\overline{\mathcal D}(es^{n-1})-\overline{e}\overline{\mathcal D}e(es^{n-1})+e\mathcal De(\overline{es^{n-1}})-\overline{e}\mathcal De(es^{n-1})\Big)T+2T^{n-1}+2\overline{T}^{n-1} \\
&\subseteq(e^2\overline{e})^{-1}\Big(e\overline{e}T\overline{\mathcal D}(es^{n-1})-\overline{e}\overline{\mathcal D}e(es^n)+eT\mathcal De(\overline{es^{n-1}})-\overline{e}\mathcal De(es^n)+2e(es^{n-1})\overline{e}+2e^2(\overline{es^{n-1}})\Big) \\
&=(e^2\overline{e})^{-1}\Big(e\overline{e}\overline{\mathcal D}(es^n)-\overline{e}\overline{\mathcal D}e(es^n)-\overline{e}\mathcal De(es^n)+e\big(T\mathcal De+2e-2\overline{e}\big)(\overline{es^{n-1}})\Big).
\end{align*}
Using now the identity $T\mathcal De+2e=\overline{T}\mathcal De+2\overline{e}$, which follows from the two equivalent versions of the recurrence relation in Proposition \ref{prop_Recurrence_powers} ii) with $n=1$, reduces the last term to
\begin{equation*}
2nT^{n-1}+2\sum\limits_{k=0}^{n-1}\overline{T}^{n-1-k}T^k\subseteq(e^2\overline{e})^{-1}\Big(e\overline{e}\overline{\mathcal D}(es^n)-\overline{e}\overline{\mathcal D}e(es^n)-\overline{e}\mathcal De(es^n)+e\mathcal De(\overline{es^n})\Big)=\overline{\mathcal D}(s^n).
\end{equation*}
iv)\;\;For the induction start $n=0$ we get from the commutation in Corollary \ref{cor_Commutation_decaying}, that
\begin{equation*}
\Delta(1)=(e^2\overline{e})^{-1}\big(e\overline{e}\Delta e-\overline{e}\Delta e\,e+e\mathcal De\mathcal De-(\mathcal De)^2e\big)=0.
\end{equation*}
For $n=1$, we use $(es)=Te$, $\mathcal D(es)=T\mathcal De-2\overline{e}$ and $\Delta(es)=T\Delta e+2\mathcal De$ from Proposition \ref{prop_Recurrence_powers} i), ii) and iv) with $n=1$, to get
\begin{align*}
\Delta(s)&=(e^2\overline{e})^{-1}\big(e\overline{e}\Delta(es)-\overline{e}\Delta e(es)+e\mathcal De\mathcal D(es)-(\mathcal De)^2(es)\big) \\
&=(e^2\overline{e})^{-1}\big(e\overline{e}(T\Delta e+2\mathcal De)-\overline{e}\Delta eTe+e\mathcal De(T\mathcal De-2\overline{e})-(\mathcal De)^2Te\big)=0.
\end{align*}
For the induction step $n-1\rightarrow n$ we know by the induction assumption, that
\begin{equation*}
-4\sum\limits_{k=1}^{n-2}k\overline{T}^{n-2-k}T^{k-1}\subseteq(e^2\overline{e})^{-1}\big(e\overline{e}\Delta(es^{n-1})-\overline{e}\Delta e(es^{n-1})+e\mathcal De\mathcal D(es^{n-1})-(\mathcal De)^2(es^{n-1})\big).
\end{equation*}
Multiplying $T$ from the right and subtracting $4\sum_{k=1}^{n-1}\overline{T}^{n-1-k}T^{k-1}$, gives
\begin{align}
&-4\sum\limits_{k=1}^{n-1}k\overline{T}^{n-1-k}T^{k-1} \notag \\
&\subseteq(e^2\overline{e})^{-1}\Big(e\overline{e}\Delta(es^{n-1})-\overline{e}\Delta e(es^{n-1})+e\mathcal De\mathcal D(es^{n-1})-(\mathcal De)^2(es^{n-1})\Big)T-4\sum\limits_{k=1}^{n-1}\overline{T}^{n-1-k}T^{k-1} \notag \\
&\subseteq(e^2\overline{e})^{-1}\Big(e\overline{e}T\Delta(es^{n-1})-\overline{e}\Delta e(es^n)+e\mathcal DeT\mathcal D(es^{n-1})-(\mathcal De)^2(es^n)-4e^2\overline{e}\sum\limits_{k=1}^{n-1}\overline{T}^{n-1-k}T^{k-1}\Big) \notag \\
&=(e^2\overline{e})^{-1}\Big(e\overline{e}\Delta(es^n)-\overline{e}\Delta e(es^n)+e\mathcal De\mathcal D(es^n)-(\mathcal De)^2(es^n) \notag \\
&\hspace{2cm}-2e\overline{e}\mathcal D(es^{n-1})+2e\mathcal De(\overline{es^{n-1}})-4e^2\overline{e}\sum\limits_{k=1}^{n-1}\overline{T}^{n-1-k}T^{k-1}\Big), \label{Eq_Functional_calculus_of_powers_1}
\end{align}
where in the last equation we used the recurrence relation in Proposition \ref{prop_Recurrence_powers} ii) \& iv). Multiplying \eqref{Eq_Rational_equivalence_growing_6} with $e\overline{e}$ and using the two versions of the product rule \eqref{Eq_Q_product_rule_decaying}, gives
\begin{equation*}
-2e\overline{e}\sum\limits_{k=0}^{n-2}\overline{T}^{n-2-k}T^k\subseteq e\mathcal D(es^{n-1})-\mathcal De(es^{n-1})=\overline{e}\mathcal D(es^{n-1})-\mathcal De(\overline{es^{n-1}}).
\end{equation*}
This now turns \eqref{Eq_Functional_calculus_of_powers_1} into the stated
\begin{equation*}
-4\sum\limits_{k=1}^{n-1}k\overline{T}^{n-1-k}T^{k-1}\subseteq(e^2\overline{e})^{-1}\big(e\overline{e}\Delta(es^n)-\overline{e}\Delta e(es^n)+e\mathcal De\mathcal D(es^n)-(\mathcal De)^2(es^n)\big)=\Delta(s^n). \qedhere
\end{equation*}
\end{proof}


\begin{thebibliography}{99}

\bibitem{FISH} L.D. Abreu, H. G. Feichtinger: \textit{Function spaces of polyanalytic functions}. Harmonic and complex analysis and its applications, 1-38, Trends Math. Birkh\"auser/Springer, Cham (2014).

\bibitem{ACK} D. Alpay, F. Colombo, D.P. Kimsey: \textit{The spectral theorem for quaternionic unbounded normal operators based on the $S$-spectrum}. J. Math. Phys. \textbf{57}(2), 023503, 27 (2016).

\bibitem{ACKS16} D. Alpay, F. Colombo, D. Kimsey, I. Sabadini: \textit{The spectral theorem for unitary operators based in the $S$-spectrum}. Milan J. Math. \textbf{84} (2016) 41--61.

\bibitem{ACS2016} D. Alpay, F. Colombo, I. Sabadini: \textit{Slice hyperholomorphic Schur analysis}. Volume 256 of Operator Theory: Advances and Applications, Basel, Birkh\"auser/Springer (2016).

\bibitem{AlpayColSab2020} D. Alpay, F. Colombo, I. Sabadini: \textit{Quaternionic de Branges spaces and characteristic operator function}. SpringerBriefs in Mathematics, Springer, Cham (2020).

\bibitem{ACQS2016} D. Alpay, F. Colombo, T. Qian, I. Sabadini: \textit{The $H^\infty$ functional calculus based on the $S$-spectrum for
quaternionic operators and for $n$-tuples of noncommuting operators}. J. Funct. Anal. \textbf{271}(6) (2016) 1544--1584.

\bibitem{MC10} P. Auscher, A. Axelsson, A. McIntosh: \textit{On a quadratic estimate related to the Kato conjecture and boundary value problems}. Harmonic analysis and partial differential equations, Contemp. Math., 505, Amer. Math. Soc., Providence, RI (2010) 105-129.

\bibitem{MC97} P. Auscher, A. McIntosh, A. Nahmod. \textit{Holomorphic functional calculi of operators, quadratic estimates and interpolation}. Indiana Univ. Math. J. \textbf{46} (1997) 375--403.

\bibitem{MC06} A. Axelsson, S. Keith, A. McIntosh: \textit{Quadratic estimates and functional calculi of perturbed Dirac operators}. Invent. Math. \textbf{163} (2006) 455--497.

\bibitem{Balk} M.B. Balk: \textit{Polyanalytic functions}. Volume 63 of Mathematical Research. Akademie-Verlag, Berlin (1991).

\bibitem{B} H. Begeher: \textit{Iterated integral operators in Clifford analysis}. J. Anal. Appl. \textbf{18} (1999) 361-377.

\bibitem{BF} G. Birkhoff, J. von Neumann: \textit{The logic of quantum mechanics}. Ann. of Math. (2) \textbf{37}(4) (1936) 823--843.

\bibitem{CereColKaSab} P. Cerejeiras, F. Colombo, U. K\"ahler, I. Sabadini. \textit{Perturbation of normal quaternionic operators}. Trans. Amer. Math. Soc. \textbf{372}(5) (2019) 3257--3281.

\bibitem{TRIANG} P. Cerejeiras, F. Colombo, U. K\"ahler, I. Sabadini: \textit{Quaternionic triangular linear operators}. Math. Methods Appl. Sci. \textbf{46} (2023) 2093-2116.

\bibitem{DEB_NU} P. Cerejeiras, F. Colombo, A. Debernardi Pinos, U. K\"ahler, I. Sabadini: \textit{Nuclearity and Grothendieck-Lidskii formula for quaternionic operators}. preprint arXiv:2306.14189.

\bibitem{CDP23} F. Colombo, A. De Martino, S. Pinton: \textit{Harmonic and polyanalytic functional calculi on the $S$-spectrum for unbounded operators}. Banach J. Math. Anal \textbf{17} no. 4, Paper No. 84 (2023) 41 pp.

\bibitem{CDPS1} F. Colombo, A. De Martino, S. Pinton, I. Sabadini: \textit{Axially harmonic functions and the harmonic functional calculus on the $S$-spectrum}. J. Geom. Anal. \textbf{33}(2) (2023), 54 pp.

\bibitem{CDS} F. Colombo, A. De Martino, I. Sabadini: \textit{The $ \mathcal{F}$-resolvent equation and Riesz projectors for the $\mathcal{F}$-functional calculus}. Complex Anal. Oper. Th. \textbf{17}, no. 2, Paper No. 26 (2023), 42 pp.

\bibitem{CDS1} F. Colombo, A. De Martino, I. Sabadini: \textit{Towards a general $F$-resolvent equation and Riesz projectors}. J. Math. Anal. Appl. \textbf{517}(2) (2023) 126652.

\bibitem{Fivedim} F. Colombo, A. De Martino, S. Pinton, I. Sabadini: \textit{The fine structure of the spectral theory on the $S$-spectrum in dimension five}. J. Geom. Anal. \textbf{33}, no. 9, Paper No. 300 (2023) 73 pp.

\bibitem{ColomboDenizPinton2020} F. Colombo, D. Deniz Gonz\'alez, S. Pinton: \textit{Fractional powers of vector operators with first order boundary conditions}. J. Geom. Phys. \textbf{151}, 18 (2020) 103618.

\bibitem{ColomboDenizPinton2021} F. Colombo, D. Deniz Gonz\'alez, S. Pinton: \textit{The noncommutative fractional Fourier law in bounded and unbounded domains}. Complex Anal. Oper. Theory \textbf{15}(7), Paper No. 114, 27 (2021).

\bibitem{CG} F. Colombo, J. Gantner: \textit{Formulations of the ${F}$-functional calculus and some consequences}. Proc. Roy. Soc. Edinburgh Sect. A \textbf{146}(3) (2016) 509--545.

\bibitem{CG18_2} F. Colombo, J. Gantner: \textit{Fractional powers of quaternionic operators and Katos formula using slice hyperholomorphicity}. Trans. Amer. Math. Soc. \textbf{370}, no. 2 (2018) 1045--1100.

\bibitem{CGdiffusion2018} F. Colombo. J. Gantner: \textit{An application of the $S$-functional calculus to fractional diffusion processes}. Milan J. Math. \textbf{86}(2) (2018) 225--303.

\bibitem{FJBOOK} F. Colombo, J. Gantner: \textit{Quaternionic closed operators, fractional powers and fractional diffusion processes}. Operator Theory: Advances and Applications \textbf{274}, Birkh\"auser/Springer, Cham (2019) viii+322.

\bibitem{CGK} F. Colombo, J. Gantner, D.P. Kimsey: \textit{Spectral theory on the $S$-spectrum for quaternionic operators}. Volume 270 of Operator Theory: Advances and Applications, Birkh\"auser/Springer, Cham (2018) ix+356.

\bibitem{ADVCGKS} F. Colombo, J. Gantner, D.P. Kimsey, I. Sabadini: \textit{Universality property of the $S$-functional calculus, noncommuting matrix variables and Clifford operators}. Adv. Math. \textbf{410}, Paper No. 108719, (2022) 39 pp.

\bibitem{ColKim} F. Colombo, D.P. Kimsey: \textit{The spectral theorem for normal operators on a Clifford module}. Anal. Math. Phys., \textbf{12}(1), Paper No. 25, 92 (2022).

\bibitem{PAMSCKPS} F. Colombo, D.P. Kimsey, S. Pinton, I. Sabadini: \textit{Slice monogenic functions of a Clifford variable via the $S$-functional calculus}. Proc. Amer. Math. Soc. Ser. B 8 (2021) 281-296.

\bibitem{ColomboPelosoPinton2019} F. Colombo, M.M. Peloso, S. Pinton: \textit{The structure of the fractional powers of the noncommutative Fourier law}. Math. Methods Appl. Sci. \textbf{42}(18) (2019) 6259--6276.

\bibitem{CS11} F. Colombo, I. Sabadini: \textit{The Cauchy-formula with $S$-monogenic and a functional calculus for noncommuting operators}. J. Math. Anal. Appl. \textbf{373} (2011) 655-679.

\bibitem{CS} F. Colombo, I. Sabadini: \textit{The $\mathcal{F}$-spectrum and the $\mathcal{SC}$-functional calculus}. Proc. Roy. Soc. Edinburgh Sect. A \textbf{142}(3) (2012) 479--500.

\bibitem{CSF} F. Colombo, I. Sabadini: \textit{The $F$-functional calculus for unbounded operators}. J. Geom. Phys. \textbf{86} (2014) 392--407.

\bibitem{CSS} F. Colombo, I. Sabadini, F. Sommen: \textit{The Fueter mapping theorem in integral form and the $F$-functional calculus}. Math. Methods Appl. Sci. \textbf{33}(17) (2010) 2050--2066.

\bibitem{ColSabStrupSce} F. Colombo, I. Sabadini, D.C. Struppa: \textit{Michele Sce's works in hypercomplex analysis -- A translation with commentaries}. Birkh\"auser/Springer, Cham (2020).

\bibitem{ColomboSabadiniStruppa2011} F. Colombo, I. Sabadini, D.C. Struppa: \textit{Noncommutative functional calculus. Theory and applications of slice hyperholomorphic functions}. Volume 289 of Progress in Mathematics, Birkh\"auser/Springer Basel AG, Basel (2011) vi+221 pp.

\bibitem{ColSab2006} F. Colombo, I. Sabadini, D.C. Struppa: \textit{A new functional calculus for noncommuting operators}. J. Funct. Anal. \textbf{254}(8) (2008) 2255--2274.

\bibitem{DSS92} R. Delanghe, F. Sommen, V. Sou\v{c}ek: \textit{Clifford algebra and spinor-valued functions}. Mathematics and its Applications, 53, Kluwer Academic Publishers Group, Dordrecht (1992).

\bibitem{DDG} A. De Martino, K. Diki, A. Guzmán Adán: \textit{On the connection between the Fueter-Sce-Qian theorem and the generalized CK-extension}. Results Math. \textbf{78}(2), \textbf{55} (2023).

\bibitem{DDG1} A. De Martino, K. Diki, A. Guzmán Adán: \textit{The Fueter-Sce mapping and the Clifford–Appell polynomials}. Proc. Edinb. Math. Soc., \textbf{66}(2), no. 3 (2023) 642–688.

\bibitem{Polyf1} A. De Martino, S. Pinton: \textit{A polyanalytic functional calculus of order 2 on the $S$-spectrum}. Proc. Amer. Math. Soc. \textbf{151} (2023) 2471--2488.

\bibitem{Polyf2} A. De Martino, S. Pinton: \textit{Properties of a polyanalytic functional calculus on the $S$-spectrum}. Math. Nachr. \textbf{296} (2023) 5190-5226.

\bibitem{MPS23} A. De Martino, S. Pinton, P. Schlosser: \textit{The harmonic $H^\infty$-functional calculus based on the $S$-spectrum}. To appear in Journal of Spectral Theory.

\bibitem{RD} N. Dunford, J. Schwartz: \textit{Linear operators, part I: general theory}. J. Wiley and Sons (1988).

\bibitem{MC98} E. Franks, A. McIntosh: \textit{Discrete quadratic estimates and holomorphic functional calculi in Banach spaces}. Bull. Austral. Math. Soc. \textbf{58} (1998) 271--290.

\bibitem{Fueter} R. Fueter: \textit{Die Funktionentheorie der Differentialgleichungen $\Delta u=0$ und $\Delta\Delta u=0$ mit vier reellen Variablen}. Comment. Math. Helv. \textbf{7}(1) (1934) 307--330.

\bibitem{G17} J. Gantner: \textit{A direct approach to the $S$-functional calculus for closed operators}. J. Operator Theory \textbf{77} (2017) 287--331.

\bibitem{G18} J. Gantner: \textit{Slice hyperholomorphic functions and the quaternionic functional calculus}. Diploma thesis, Technische Universität Wien. reposiTUm.

\bibitem{Haase} M. Haase: \textit{The functional calculus for sectorial operators}. Operator Theory: Advances and Applications \textbf{169}, Birkh\"auser Verlag, Basel (2006).

\bibitem{HYTONBOOK1} T. Hytönen, J. van Neerven, M. Veraar, L. Weis: \textit{Analysis in Banach spaces. Vol. II. Probabilistic methods and operator theory}. Ergebnisse der Mathematik und ihrer Grenzgebiete. 3. Folge. A Series of Modern Surveys in Mathematics \textbf{67}, Springer, Cham (2017) xxi+616 pp.

\bibitem{HYTONBOOK2} T. Hytönen, J. van Neerven, M. Veraar, and L. Weis: \textit{Analysis in Banach spaces. Vol. I. Martingales and Littlewood-Paley theory}. Ergebnisse der Mathematik und ihrer Grenzgebiete. 3. Folge. A Series of Modern Surveys in Mathematics \textbf{63}, Springer, Cham (2016) xvi+614 pp.

\bibitem{JBOOK} B. Jefferies: \textit{Spectral properties of noncommuting operators}. Volume 1843, Lecture Notes in Mathematics. Springer-Verlag, Berlin (2004).

\bibitem{JM} B. Jefferies, A. McIntosh, J. Picton-Warlow: \textit{The monogenic functional calculus}. Studia Math. \textbf{136} (1999) 99-119.

\bibitem{McI1} A. McIntosh: \textit{Operators which have an $H^\infty$ functional calculus}. Miniconference on operator theory and partial differential equations (North Ryde, 1986), 210--231, Proc. Centre Math. Anal. Austral. Nat. Univ., 14, Austral. Nat. Univ., Canberra, (1986).

\bibitem{Russ} N.I. Muskhelishvili: \textit{Some basic problems of the mathematical theory of elasticity}. Noordhoff International Publishing, Leiden, english edition. Fundamental equations, plane theory of elasticity, torsion and bending, Translated from the fourth, corrected and augmented Russian edition by J. R. M. Radok (1977).

\bibitem{TaoQian1} T. Qian: \textit{Generalization of Fueter's result to $\textbf{R}^{n+1}$}. Atti Accad. Naz. Lincei Cl. Sci. Fis. Mat. Natur. Rend. Lincei (9) Mat. Appl. \textbf{8}(2) (1997) 111--117.

\bibitem{TAOBOOK} T. Qian, P. Li: \textit{Singular integrals and Fourier theory on Lipschitz boundaries}. Science Press Beijing, Beijing, Springer, Singapore (2019) xv+306 pp.

\bibitem{Sce} M. Sce: \textit{Osservazioni sulle serie di potenze nei moduli quadratici}. Atti Accad. Naz. Lincei Rend. Cl. Sci. Fis. Mat. Nat. \textbf{23}(8) (1957) 220--225.

\bibitem{Bergmanvasi} N.L. Vasilevski: \textit{On the structure of Bergman and poly-Bergman spaces}. Integral Equations Operator Theory \textbf{33}(4) (1999) 471--488.
\end{thebibliography}
\end{document}